 \newtheorem{theorem}{Theorem}[section]
\newtheorem{definition}[theorem]{Definition}
\newtheorem{proposition}[theorem]{Proposition}
\newtheorem{lemma}[theorem]{Lemma}
\newtheorem{corollary}[theorem]{Corollary}
\newtheorem{conjecture}[theorem]{Conjecture}
\theoremstyle{definition}
\newtheorem{remark}[theorem]{Remark}
\newtheorem{question}[theorem]{Question}
\newtheorem{example}[theorem]{Example}
\newenvironment{customconj}[1]
  {\innercustomconj}
  {\endinnercustomconj}
\def\C{\mathbb{C}}
\def\N{\mathbb{N}}
\def\P{\mathbb{P}}
\def\Q{\mathbb{Q}} 
\def\R{\mathbb{R}}
\def\T{\mathcal{T}}
\def\Z{\mathbb{Z}}
\def\k{\Bbbk}
\def\m{\mathfrak{m}}
\def\tM{\widetilde{M}}
\def\tN{\widetilde{N}}
\def\1{\mathbf{1}}
\def\cS{\mathcal{S}}
\def\trho{\widetilde{\rho}}
\def\<{\langle}
\def\>{\rangle}
\DeclareMathOperator{\BBox}{Box}
\newcommand{\Conv}[1]{\operatorname{Conv}\left\{{#1}\right\}}
\DeclareMathOperator{\Ehr}{Ehr}
\DeclareMathOperator{\Hom}{Hom}
\DeclareMathOperator{\interior}{int}
\DeclareMathOperator{\lk}{lk}
\DeclareMathOperator{\mult}{mult}
\DeclareMathOperator{\ord}{ord}
\DeclareMathOperator{\reg}{reg}
\DeclareMathOperator{\Star}{Star}
\DeclareMathOperator{\supp}{supp}
\DeclareMathOperator{\Sym}{Sym}
\DeclareMathOperator{\Newt}{Newt}
\DeclareMathOperator{\pr}{pr}
\DeclareMathOperator{\triv}{triv}
\DeclareMathOperator{\Vol}{Vol}
\DeclareMathOperator{\Sec}{Sec}
\DeclareMathOperator{\Spec}{Spec}
\DeclareMathOperator{\GL}{GL}
\DeclareMathOperator{\Aff}{Aff}
\DeclareMathOperator{\id}{id}
\DeclareMathOperator{\EHR}{EHR}
\DeclareMathOperator{\Tor}{Tor}
\DeclareMathOperator{\ver}{Vert}
\DeclareMathOperator{\lcm}{lcm}
\DeclareMathOperator{\EE}{EE}
\DeclareMathOperator{\tr}{tr}
\DeclareMathOperator{\PA}{PA}
\DeclareMathOperator{\PL}{PL}
\DeclareMathOperator{\HH}{H}
\DeclareMathOperator{\Ind}{Ind}
\DeclareMathOperator{\ev}{ev}
\DeclareMathOperator{\Stab}{Stab}
\DeclareMathOperator{\Gr}{Gr}
\DeclareMathOperator{\Proj}{Proj}
\DeclareMathOperator{\init}{init}
\DeclareMathOperator{\Aut}{Aut}
\DeclareMathOperator{\HT}{pr}
\DeclareMathOperator{\Irr}{Irr}
\DeclareMathOperator{\Int}{Int}
\DeclareMathOperator{\Pyr}{Pyr}
\DeclareMathOperator{\codeg}{codeg}
\DeclareMathOperator{\Ceil}{Ceil}
\DeclareMathOperator{\UH}{UH}
\DeclareMathOperator{\Supp}{Supp}
\DeclareMathOperator{\SR}{SR}
\DeclareMathOperator{\Hilb}{Hilb}
\newcommand\nullset\varnothing
\newcommand{\alan}[1]{{ \sf $\diamondsuit\diamondsuit\diamondsuit$ {\textcolor{red}{Alan: [#1]}}}}
\begin{document}
	
\title[Equivariant Ehrhart theory]{Equivariant Ehrhart theory, commutative algebra and invariant triangulations of polytopes} 

%\title[Equivariant Ehrhart theory: a commutative algebra perspective]{Equivariant Ehrhart theory: a commutative algebra perspective}           
    
\author{Alan Stapledon}

%\address{Stanford U. Department of Mathematics, 450 Serra Mall, Stanford, CA 94305}
%\email{mwlarson@stanford.edu}
%
%\address{UT Department of Mathematics, 2515 Speedway, RLM 8.100, Austin, TX 78712}
%\email{sampayne@utexas.edu}

\address{Sydney Mathematics Research Institute, L4.42, Quadrangle A14, University of Sydney, NSW 2006, Australia}
\email{astapldn@gmail.com}

%\address{}
%\email{astapldn@gmail.com}

%
%\medskip
%
%\noindent \textbf{Acknowledgments.}  %The work of SP is supported in part by NSF DMS-1702428.

\begin{abstract}
	Ehrhart theory is the study of the enumeration of lattice points in lattice polytopes. Equivariant Ehrhart theory is a generalization of Ehrhart theory that takes into account the
	action of a finite group acting via affine transformations on the underlying lattice and  preserving the polytope. We further develop equivariant Ehrhart theory in part by establishing
	connections with commutative algebra as well as the question of when there  exists an invariant lattice triangulation of a lattice polytope. 
\end{abstract}

\maketitle

\vspace{-20 pt}

\setcounter{tocdepth}{1}
\tableofcontents

\section{Introduction}

The presence of a group action can lead to subtle questions. 
 For example,
given a lattice polytope $P$, there always exists a regular lattice triangulation of $P$. On the other hand, if a group $G$ acts on $P$ via an affine representation, then it is a subtle question to determine whether there exists a $G$-invariant regular lattice triangulation of $P$. 
As another example, in Ehrhart theory, one associates to $P$ an invariant  $h^*(P;t)$, which is a polynomial with nonnegative integer coefficients. If  a group $G$ acts on $P$  via an affine representation $\rho$, then there exists a natural invariant $h^*(P,\rho;t)$ whose coefficients are virtual representations of $G$ and which equals $h^*(P;t)$ when the group action is trivial. It is a subtle question to determine  whether the coefficients of 
$h^*(P,\rho;t)$ have nonnegative coefficients when expressed as sums of the classes of irreducible representations. The goal of this paper is to
 study 
and relate several such problems.

\emph{Ehrhart theory} is the study of the enumeration of lattice points in lattice polytopes. \emph{Equivariant Ehrhart theory} is a generalization of Ehrhart theory that takes into account the
 action of a finite group acting via affine transformations on the underlying lattice and  preserving the polytope.
%$G$ on $M$ via affine transformations that preserves the lattice polytope $P$. 
It
was introduced in \cite{StapledonEquivariant} and studied in \cite{ASV20,ASV21,EKS22,CHK23}. 
%The goal of this paper is to further develop equivariant Ehrhart theory in part by establishing
%a connection with commutative algebra that 
%%  and equivariant Ehrhart theory, 
%generalizes the known connection between commutative algebra and Ehrhart theory. In what may be considered either a feature or a bug,
%in contrast to the non-equivariant case, the connection with commutative algebra may or may not hold, and much of the paper is devoted to the question of when it holds. As we will see, this is related to the question of when a lattice polytope admits  an invariant lattice triangulation. 
Before stating our results, we first recall some background on  equivariant Ehrhart theory. Let $M \cong \Z^d$ be a lattice of rank $d$ 
and write $M_k := M \otimes_\Z k$ for a field $k$. Let $P \subset M_\R$ be a $d$-dimensional lattice polytope. Let $\Aff(M)$ be the group of affine transformations of $M$ and let $\rho: G \to \Aff(M)$ be an affine representation of $G$ (see Section~\ref{ss:affine} and Section~\ref{ss:reptheorybasics})). We assume that $P$ is $G$-invariant. Let $R(G)$ be the complex representation ring of $G$
(see Section~\ref{ss:reptheorybasics}). Then $R(G)$ is the free $\Z$-module with basis given by the classes $[V]$ in $R(G)$, as $V$ varies over  all irreducible finite-dimensional $\C G$-modules, and an element of $R(G)$ 
with nonnegative integer coefficients with respect to this basis is called \emph{effective}. We say that a power series 
in $R(G)[[t]]$ is effective if all its coefficients  are effective.

Given a positive integer $m$,  let $L(P,\rho;m)$ in  $R(G)$ be the class of the permutation representation of $G$ acting on $P \cap \frac{1}{m}M$. Then $L(P,\rho;m)$ is a quasi-polynomial of degree $d$ with coefficients in $R(G) \otimes_\Z \Q$ \cite[Theorem~5.7]{StapledonEquivariant} that precisely encodes the Ehrhart quasi-polynomials of the rational polytopes $P^g = \{ x \in P : g \cdot x = x \} \subset M_\R$ for $g$ in $G$ \cite[Lemma~5.2]{StapledonEquivariant}. This is alternatively encoded via a power series 
$h^*(P,\rho;t)$ in $R(G)[[t]]$ defined by the equality (see Definition~\ref{d:seriesdefs}):
%\[
%\Ehr(P,\rho;t) := \sum_{m \ge 0} L(P,\rho;m) t^m \in R(G)[[t]].
%\]
\begin{equation*}%\label{e:defineequivhstar}
	\Ehr(P,\rho;t) := 1 + \sum_{m > 0} L(P,\rho;m) t^m  = \frac{h^*(P,\rho;t)}{\det(I - \tM_\C t)}. 
\end{equation*}
Above, 
we let $\tM = M \oplus \Z$ and consider $\Aff(M)$ as the subgroup of  $\GL(\tM)$ consisting of linear transformations that preserve projection onto the last coordinate $\HT : \tM \to \Z$. In particular, 
we consider $\tM_\C$ as a $\C[G]$-module.
If $V$ is a finite-dimensional $\C[G]$-module, we write
$\det(I - V t) := \sum_i (-1)^i [\bigwedge^i V] t^i \in R(G)[t]$ (see \eqref{e:detdef}).
%Also, %Then $G$ acts on $M_\R$ and 
%an element in $R(G)$. 
A priori, $h^*(P,\rho;t)$  depends on both $\Ehr(P,\rho;t)$ and  the complex representation $\tM_\C$ of $G$, but, in fact, $\Ehr(P,\rho;t)$ precisely encodes both  $h^*(P,\rho;t)$ and  $\tM_\C$ (see Lemma~\ref{l:encoding}).

When $G$ acts trivially these invariants agree with the usual invariants in Ehrhart theory: $L(P,\rho;m)$ is the \emph{Ehrhart polynomial} $L(P;m)$ of $P$ and $h^*(P,\rho;t)$ is the \emph{Ehrhart $h^*$-polynomial} $h^*(P;t)$ of $P$. 
In the general case, and in contrast to the usual invariants of Ehrhart theory,
$h^*(P,\rho;t)$ is not necessarily effective, and is not necessarily a polynomial. %i.e. an element of $R(G)[t]$. 
Alternative notations for $h^*(P,\rho;t)$ in the literature include $\phi[t]$, $\HH^*[t]$ and 
$\HH^*(P;z)$.

Consider an element $f = \sum_{u \in M} \lambda_u x^u \in \C[M]$ for some $\lambda_u \in \C$. We have an induced action of $G$ on $\C[M]$ and $f$ is $G$-invariant if and only if $\lambda_u = \lambda_{g \cdot u}$ for all $g \in G$ and $u \in M$. 
The notion that $f$ is \emph{nondegenerate with respect to $P$} (see Definition~\ref{d:nondegeneratesmooth})  first appeared in a special case as a technical condition in \cite{DworkZetaI,DworkZetaII}, and was later introduced in \cite{Varchenko76} and then independently in \cite{GKZ94}. It implies that $\lambda_u = 0$ for $u \notin P$, and is a Zariski open condition on the coefficients $\{ \lambda_u : u \in P \cap M \}$.  Importantly, since $G$-invariance is a closed condition, the existence of some $f \in \C[M]$ that is both $G$-invariant and nondegenerate with respect to $P$ is not guaranteed.

The following result follows from \cite[Corollary~6.6]{StapledonRepresentations11} which interprets $h^*(P,\rho;t)$ in terms of representations on the graded pieces of the Hodge filtration of the cohomology of a hypersurface associated to $f$. 
% (see Remark~\ref{r:Hodgeinterpretation}). 

\begin{theorem}\label{t:mainoriginal}(see \cite[Theorem~7.7]{StapledonEquivariant})
	Let $G$ be a finite group.
	Let $M$ be a lattice of rank $d$ and let $\rho: G \to  \Aff(M)$ be an affine representation.  
	Let $P \subset M_\R$ be a $G$-invariant $d$-dimensional lattice polytope. Consider the following conditions: 
	
	\begin{enumerate}
		\item\label{i:orignondeg} There exists $f \in \C[M]$ that is $G$-invariant and nondegenerate with respect to $P$.
		
		\item\label{i:origeffective} $h^*(P,\rho;t)$ is effective. 
		
		\item\label{i:origpolynomial} $h^*(P,\rho;t)$ is a  polynomial.
		
	\end{enumerate}
	Then the following implications hold:  
	\eqref{i:orignondeg} $\Rightarrow$ \eqref{i:origeffective} $\Rightarrow$ \eqref{i:origpolynomial}. 
\end{theorem}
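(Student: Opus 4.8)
The plan is to establish the two implications separately, with the first, \eqref{i:orignondeg} $\Rightarrow$ \eqref{i:origeffective}, resting on the geometric interpretation of $h^*(P,\rho;t)$ recalled in the excerpt. Specifically, if $f \in \C[M]$ is $G$-invariant and nondegenerate with respect to $P$, then the zero locus of $f$ defines a $G$-equivariant hypersurface $Z$ in the torus (or a suitable compactification thereof), and by \cite[Corollary~6.6]{StapledonRepresentations11} the coefficients of $h^*(P,\rho;t)$ are expressed, up to the $G$-equivariant correction coming from $\tM_\C$, in terms of the $G$-representations on the graded pieces $\Gr^p_F \HH^*(Z)$ of the Hodge filtration on the cohomology of $Z$. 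Since these are genuine (not merely virtual) representations of $G$ — each $\Gr^p_F \HH^q(Z)$ is an honest $\C[G]$-module — and since the formula assembles them with nonnegative-integer multiplicities, one concludes that each coefficient of $h^*(P,\rho;t)$ lies in the effective cone of $R(G)$. The first thing I would do is carefully unwind the precise statement of \cite[Corollary~6.6]{StapledonRepresentations11} to confirm that the signs and the $\tM_\C$-twist work out so that no cancellation is forced, i.e.\ that the virtual character is in fact a genuine character.

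For the second implication, \eqref{i:origeffective} $\Rightarrow$ \eqref{i:origpolynomial}, the strategy is to reduce to a statement about ordinary $h^*$-polynomials by evaluating at group elements. For each $g \in G$, taking the $g$-trace of the identity
\begin{equation*}
	\Ehr(P,\rho;t) = \frac{h^*(P,\rho;t)}{\det(I - \tM_\C t)}
\end{equation*}
and using \cite[Lemma~5.2]{StapledonEquivariant}, which identifies the $g$-component of $\Ehr(P,\rho;t)$ with the Ehrhart series of the rational polytope $P^g$, one gets that the $g$-trace of $h^*(P,\rho;t)$ is, up to the factor $\det(I - \tM_\C^g t)$ versus the denominator appearing in the Ehrhart series of $P^g$, controlled by the (finite-degree) $h^*$-polynomial of the rational polytope $P^g$. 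The key point is that the Ehrhart series of any rational polytope is a rational function whose numerator has bounded degree, so the only way $h^*(P,\rho;t)$ could fail to be a polynomial is through poles introduced by clearing denominators; effectivity is what rules this out. Concretely, if $h^*(P,\rho;t)$ is effective, write it as $\sum_{V} c_V(t) [V]$ with $c_V(t) \in \Z_{\geq 0}[[t]]$; I would argue that for $t$ real and approaching $1$ from below, $\Ehr(P,\rho;1) \cdot \det(I - \tM_\C t)$ stays bounded coefficientwise (because $L(P,\rho;m)$ grows only polynomially in $m$), which forces each $c_V(t)$ to be a polynomial.

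The main obstacle I anticipate is the second implication rather than the first. For the first, once the cited cohomological interpretation is in hand the argument is essentially a bookkeeping check that the assembled class is a genuine character. For the second, the subtlety is that effectivity is a condition on coefficients in the basis of irreducibles, whereas the natural handle on $h^*(P,\rho;t)$ is via traces at group elements — and a virtual character that has nonnegative trace at every $g$ need not be effective, so one must genuinely use the coefficientwise nonnegativity, not just positivity of traces. I would therefore phrase the boundedness argument directly in $R(G) \otimes_\Z \R$ using a $G$-invariant inner product: effectivity means $h^*(P,\rho;t)$ lies in the closed cone spanned by the $[V]$, and I would show that the partial sums of $\Ehr(P,\rho;t)$, after multiplication by $\det(I - \tM_\C t)$, remain in a bounded region of this cone, so stabilize, which is exactly polynomiality. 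An alternative, possibly cleaner route for the second implication is to combine the effectivity of $h^*(P,\rho;t)$ with the known fact that $\sum_g h^*(P^g;t)$-type data has bounded degree; I would try both and keep whichever is shorter.
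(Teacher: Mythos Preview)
Your approach to \eqref{i:orignondeg} $\Rightarrow$ \eqref{i:origeffective} is correct and matches the paper's: invoke \cite[Corollary~6.6]{StapledonRepresentations11} to interpret the coefficients of $h^*(P,\rho;t)$ as genuine $\C[G]$-modules coming from the graded pieces of the Hodge filtration on the cohomology of the hypersurface $\{f=0\}$. (The paper also gives an alternative commutative-algebra route via the Jacobian ring in its main theorem, but the Hodge argument is the one cited for this statement.)

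For \eqref{i:origeffective} $\Rightarrow$ \eqref{i:origpolynomial} you are making the argument far harder than it needs to be, and in doing so you have talked yourself past the one-line proof. You worry that ``a virtual character that has nonnegative trace at every $g$ need not be effective,'' and therefore try to work with cones, inner products, and boundedness of partial sums. But effectivity is the \emph{hypothesis} here, not something you need to verify from traces. Given that each coefficient $h^*_m \in R(G)$ is effective, evaluating the identity
\[
h^*(P,\rho;t)(\id) = h^*(P;t)
\]
at $g=\id$ shows that $\dim h^*_m$ equals the $m$th coefficient of the classical $h^*$-polynomial of $P$, which vanishes for $m > d$. An effective class of dimension zero is zero, so $h^*_m = 0$ for $m>d$ and $h^*(P,\rho;t)$ is a polynomial. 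This is exactly the argument the paper records (see Remark~\ref{r:effectiveimpliespolynomial}). Your boundedness/cone sketch could presumably be made to work, but it is an elaborate repackaging of this single observation; the step you flagged as the ``main obstacle'' is in fact the trivial one.
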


Moreover, it was conjectured in \cite[Conjecture~12.1]{StapledonEquivariant} that 
conditions \eqref{i:orignondeg}-\eqref{i:origpolynomial} in Theorem~\ref{t:mainoriginal} are equivalent. For the %$(d + 1)$-
permutahedron, the conjecture was verified for the action of the symmetric group %$\Sym_{d + 1}$ 
\cite[Theorem~1.2]{ASV20} as well as the subgroup $\Z/p\Z$ acting by cycling coordinates when $d + 1 = p$ is prime \cite[Theorem~3.52]{EKS22} (see Example~\ref{e:permutahedronv1}, Example~\ref{e:permutahedronv2} and Example~\ref{e:permutahedronv3} below). The conjecture was verified for the action of the symmetric group on hypersimplices \cite[Theorem~3.60]{EKS22}.
However, a counterexample to the implication 
\eqref{i:origeffective} $\Rightarrow$ \eqref{i:orignondeg} 
%(and hence to \eqref{i:origpolynomial} $\Rightarrow$ \eqref{i:orignondeg}) 
was given by Santos and Stapledon and appeared in \cite[Theorem~1.2]{EKS22} (see Example~\ref{e:SantosStapledon} below). We will present additional counterexamples in %Example~\ref{e:d+2v4}, 
Example~\ref{e:d+3counterexample} and Example~\ref{e:p=5examplev2}. 
On the other hand, the following weaker conjecture remains open.

 \begin{conjecture}(The effectiveness conjecture)\label{c:origmod} %\cite[Conjecture~12.1]{StapledonEquivariant}
	The conditions \eqref{i:origeffective}-\eqref{i:origpolynomial} in Theorem~\ref{t:mainoriginal} are equivalent. That is, $h^*(P,\rho;t)$ is effective if and only $h^*(P,\rho;t)$ is a polynomial. 
\end{conjecture}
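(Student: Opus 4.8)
The implication \eqref{i:origeffective} $\Rightarrow$ \eqref{i:origpolynomial} is already contained in Theorem~\ref{t:mainoriginal}, so the content of Conjecture~\ref{c:origmod} is the converse: if $h^*(P,\rho;t)$ is a polynomial then it is effective. The plan would be to realize $h^*(P,\rho;t)$, or a controlled modification of it, as the equivariant Hilbert series of a \emph{finite-dimensional} graded $\C[G]$-module, since effectiveness is then automatic. Recall that the semigroup ring $S := \C[C(P) \cap \tM]$ of the cone $C(P) := \R_{\geq 0}(P \times \{1\}) \subset \tM_\R$, graded by the height $\HT$, is a normal (hence Cohen--Macaulay, by Hochster) $G$-algebra whose $m$-th graded piece is the permutation module $L(P,\rho;m)$; thus its equivariant Hilbert series is exactly $\Ehr(P,\rho;t)$, and $\det(I - \tM_\C t)\cdot \Ehr(P,\rho;t) = h^*(P,\rho;t)$. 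One therefore seeks a $G$-equivariant finite free complex over $S$, resolving a finite-length module, whose alternating sum of ranks reproduces the factor $\det(I - \tM_\C t)$.

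The first thing to try is the direct analogue of the classical argument: when $P$ has the integer decomposition property, $S$ is standard graded, and if there exists a $G$-stable $(d+1)$-dimensional subspace $V \subseteq S_1$ forming a homogeneous system of parameters with $[V] = [\tM_\C]$ in $R(G)$, then the Koszul complex on a basis of $V$ resolves $S/VS$, and $S/VS$ is a finite-dimensional graded $G$-module with equivariant Hilbert series $h^*(P,\rho;t)$. The obstruction is that $S_1$ is the permutation module $L(P,\rho;1)$, and such a $V$ is hard to produce: the logarithmic derivatives of a $G$-invariant $f$ together with $f$ span exactly such a $V$ when $f$ is nondegenerate, so the existence of this naive module is essentially condition \eqref{i:orignondeg}, and the counterexamples to \eqref{i:origeffective} $\Rightarrow$ \eqref{i:orignondeg} (Example~\ref{e:SantosStapledon}, Example~\ref{e:d+3counterexample}, Example~\ref{e:p=5examplev2}) show it cannot be the right object in general. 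One must instead allow $V$ to be replaced by a \emph{virtual} $G$-module --- i.e., work with a $G$-equivariant ``approximate system of parameters'' that is a regular sequence only modulo a lower-dimensional error --- and control the $G$-module structure on the resulting finite-length homology.

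A more structural route is induction on $|G|$ together with the local fixed-point data. Evaluating characters, $h^*(P,\rho;t)(g) = (1-t)\det(I - \rho(g)|_{M_\C}\, t)\,\Ehr(P^g;t)$ for each $g \in G$, so polynomiality of $h^*(P,\rho;t)$ is precisely the vanishing, for all $n > d$ and all $g$, of the $t^n$-coefficient of these $g$-local rational functions. Using a combinatorial formula for $h^*(P;t)$ coming from a (not necessarily $G$-invariant) lattice triangulation and $G$-averaging, one can write each coefficient $[h^*_n(P,\rho)]$ as an explicit alternating sum of induced characters $\Ind_{\Stab(\sigma)}^G(\cdot)$ over orbits of simplices $\sigma$, the summands built from the box polynomials of the simplices with their stabilizer actions; the task is to show this alternating sum is effective, using polynomiality --- the degree bound $n \le d$ --- to forbid the ``bad'' cancellations among the box contributions (it is precisely sign characters on box points that obstruct effectiveness). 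One can also try restricting to cyclic subgroups and applying Brauer/Artin induction to reduce to $G = \Z/m\Z$, where $P^g$ and the eigenvalues of $\rho(g)$ on $M_\C$ are explicit, though this must be combined with global information since restriction to proper subgroups does not by itself detect effectiveness.

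\textbf{The main obstacle.}
In any of these approaches the difficulty is that polynomiality is an \emph{after-cancellation} phenomenon: it asserts only that a fixed alternating sum of genuine characters truncates, and there is at present no mechanism forcing the individual (already cancelled) coefficients of $h^*(P,\rho;t)$ to be effective merely from this truncation. What is really needed is an intrinsic, preferably homological, characterization of when $h^*(P,\rho;t)$ is a polynomial --- for instance in terms of $G$-equivariant depth or Cohen--Macaulayness of some canonical module attached to $S$, or a $G$-equivariant Hilbert--Serre criterion --- from which effectiveness can be read off directly by exhibiting the correct finite-dimensional graded $G$-module whose nonvanishing degrees are exactly $\{0,\dots,d\}$. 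Identifying that characterization is the crux, and the failure of \eqref{i:orignondeg} $\Leftrightarrow$ \eqref{i:origeffective} shows that the most obvious candidate, $S$ modulo a degree-one $\tM_\C$-isotypic system of parameters, will not suffice.
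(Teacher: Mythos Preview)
This statement is a \emph{conjecture}, not a theorem: the paper explicitly presents it as open (``the following weaker conjecture remains open''), so there is no proof in the paper to compare against. You have correctly recognized this and written a discussion of possible approaches and obstacles rather than a claimed proof, which is the appropriate response.

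One correction to your first approach: the existence of a $G$-stable $(d+1)$-dimensional subspace $V \subseteq S_1$ forming a linear system of parameters with $[V] = [\tM_\C]$ is \emph{not} essentially condition~\eqref{i:orignondeg}; it is strictly weaker. This is precisely condition~\eqref{i:mainhilbert} in Theorem~\ref{t:mainfull}, and the paper exhibits by direct computation (Example~\ref{e:d+3counterexample}, Example~\ref{e:p=5examplev2}, Example~\ref{e:SantosStapledon}) that such a $V$ can exist even when no $G$-invariant nondegenerate $f$ does. So the l.s.o.p.\ route is \emph{not} ruled out by the known counterexamples to \eqref{i:origeffective}~$\Rightarrow$~\eqref{i:orignondeg}; whether \eqref{i:mainpolynomial}~$\Rightarrow$~\eqref{i:mainhilbert} always holds is left open as part of Question~\ref{q:extendconjecture}, and in every example the paper analyzes this implication does hold. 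Your identification of the main obstacle---that polynomiality is an after-cancellation condition with no known intrinsic homological characterization---is accurate and matches the state of the problem.
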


 %Conjecture~\ref{c:origmod}
 The effectiveness conjecture
has been verified in the following additional cases: $d = 2$ \cite[Corollary~6.7]{StapledonEquivariant}, $P$ is a simplex \cite[Proposition~6.1]{StapledonEquivariant} or the hypercube \cite[Section~9]{StapledonEquivariant},  $G = \Z/2\Z$ acting on centrally symmetric lattice polytopes when the unique fixed point lies in $M$ \cite[Corollary~11.1]{StapledonEquivariant},  
the Weyl group acting on the `convex hull' of the primitive integer vectors of the associated Coxeter fan \cite[Corollary~8.4]{StapledonEquivariant}, $\Z/2\Z$ acting on the graphic zonotope of a path graph with an odd number of vertices \cite[Section~3]{EKS22}, certain subgroups of the dihedral group acting on the symmetric edge polytope of the cycle graph \cite[Theorem~3.4, Theorem~3.6]{CHK23}.

 We will additionally verify %Conjecture~\ref{c:origmod} 
 the effectiveness conjecture
 when $P$ admits a $G$-invariant lattice triangulation (see Theorem~\ref{t:mainfull}), when $p$ is a prime and $G = \Z/p\Z$ acts with a unique fixed point (see Example~\ref{e:primenofixedpts}) and 
when $P$ has $d + 2$ vertices and the vertices of $P$ affinely generate $M$ (see Proposition~\ref{p:d+2}). See Question~\ref{q:extendconjecture} for a possible extension of the effectiveness conjecture. 

The following conjecture was also made. It was verified for the action of the symmetric group on the permutahedron in \cite[Proposition~5.9]{ASV20}. 
We provide a counterexample in Example~\ref{e:d+2v4}. See also %Corollary~\ref{c:trivialconjtriangulations} for a proof of the conjecture in a special case, as well as 
Question~\ref{q:trivialtriangulation}. 

\begin{conjecture}\label{c:trivialrepalwaysappears}\cite[Conjecture~12.4]{StapledonEquivariant}
	Assume that $h^*(P,\rho;t)$ is a polynomial, and let $h^*(P,\rho;t)_m$ in $R(G)$ denote the coefficient of $t^m$ for any nonnegative integer $m$. If $h^*(P,\rho;t)_m$ is nonzero and effective, then the trivial representation occurs with nonzero multiplicity in $h^*(P,\rho;t)_m$. 
\end{conjecture}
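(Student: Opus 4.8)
We begin by noting two extreme cases, which are easy and which make the statement plausible. Since $\Ehr(P,\rho;t) = 1 + O(t)$ and $\det(I - \tM_\C t) = 1 + O(t)$, the coefficient $h^*(P,\rho;t)_0$ is always the trivial representation. At the other extreme, writing $s = \deg h^*(P,\rho;t)$, equivariant Ehrhart reciprocity identifies $h^*(P,\rho;t)_s$ with the class of the permutation representation of $G$ on $\Int(cP) \cap M$, where $c = \codeg P$; any nonzero permutation representation contains the trivial representation (the invariant vector $\sum_x [x]$), so the statement holds for $m = 0$ and for $m = s$. The plan is to interpolate: to realize each intermediate coefficient $h^*(P,\rho;t)_m$ as the class of a genuine $G$-module assembled from lattice points, and then to exhibit a $G$-fixed vector in it whenever it is nonzero and effective.

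Concretely, I would first try to reduce to the situation where $P$ carries a $G$-invariant lattice triangulation $\mathcal{T}$: this holds for the polytopes covered by Theorem~\ref{t:mainfull}, and one hopes more generally that effectiveness of $h^*(P,\rho;t)$ is always witnessed by some $G$-invariant combinatorial model (the effectiveness conjecture, Conjecture~\ref{c:origmod}, being a signal in this direction). Given such a $\mathcal{T}$ together with a $G$-equivariant shelling, one obtains a box decomposition $h^*(P,\rho;t) = \bigoplus_{\sigma \in \mathcal{T}} B_\sigma(t)$, where $B_\sigma(t)$ is the height-graded generating function for the permutation action of $\Stab_G(\sigma)$ on the lattice points of the half-open fundamental parallelepiped of $\sigma$, induced up to $G$. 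Then $h^*(P,\rho;t)_m$ is the class of the permutation representation of $G$ on the finite $G$-set of box points of height $m$, and it remains to show that if this $G$-set is nonempty it contains a $G$-fixed point; the natural candidate is a $G$-orbit sum of the ``diagonal'' box point $\sum_{v \in \sigma} v$ of a face $\sigma$ of maximal dimension among those contributing at height $m$.

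The main obstacle is exactly this last step, and it is where the statement breaks down. The faces of $\mathcal{T}$ are permuted by $G$, and the box points of height $m$ do form a genuine $G$-set, but there is no reason a $G$-orbit of half-open parallelepipeds should contain a $G$-fixed lattice point of the prescribed height even when the union of that orbit contributes nontrivially to $h^*(P,\rho;t)_m$: the diagonal point of a single face need not be fixed, and averaging over an orbit leaves the lattice. Secondary obstacles are that a $G$-invariant lattice triangulation need not exist, and, even when it does, a $G$-equivariant shelling order need not, so the box decomposition is unavailable in the required generality. These obstructions are real: Example~\ref{e:d+2v4} exhibits a polytope with $d+2$ vertices and a $G$-action for which $h^*(P,\rho;t)$ is a polynomial possessing a coefficient that is nonzero and effective yet contains no copy of the trivial representation.
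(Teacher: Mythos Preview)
Your analysis is correct and reaches the same conclusion as the paper: this is a conjecture, not a theorem, and the paper disproves it by pointing to Example~\ref{e:d+2v4}, where $G=\Z/2\Z$ and $h^*(P,\rho;t)=1+\chi t+t^2$ with $\chi$ the sign character, so the linear coefficient is nonzero, effective, and contains no copy of the trivial representation. You cite the same counterexample, so on the essential point you and the paper agree.

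What you add beyond the paper is motivational: the easy verification at $m=0$ and at $m=\deg h^*(P,\rho;t)$ (the latter via Proposition~\ref{p:reciprocity}, which identifies the top coefficient with a nonzero permutation representation), and a sketch of how one might try to interpolate using a $G$-invariant triangulation together with an equivariant shelling. This discussion is reasonable, though one detail is imprecise: even granting a $G$-invariant lattice triangulation, the formula of Proposition~\ref{p:formulatriangulation} does not in general realize $h^*(P,\rho;t)_m$ as a pure permutation module on height-$m$ box points---the pieces are induced representations twisted by equivariant Hilbert polynomials of Stanley--Reisner quotients. This does not affect your conclusion. Note finally that in Example~\ref{e:d+2v4} there is no $G$-invariant lattice triangulation at all, so your triangulation-based heuristic is vacuous there; the paper explicitly leaves open whether the conjecture survives under that extra hypothesis (Question~\ref{q:trivialtriangulation}).
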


We now present Theorem~\ref{t:mainfull}, a generalization of Theorem~\ref{t:mainoriginal} that is the main result of the paper. Let $C_P \subset \tM_\R$ be the cone generated by $P \times \{ 1 \}$, and consider the graded semigroup algebra $S_P := \C[C_P \cap \tM]$, 
%$S_P := \C[C_P \cap \tM]  = \oplus_m S_{P,m}$, 
with grading induced by $\HT: \tM \to \Z$. 
 For any positive integer $m$, there is a natural bijection between elements of 
$C_P \cap \HT^{-1}(m)$
and elements of $P \cap \frac{1}{m} M$, taking $(u,m) \mapsto \frac{1}{m}u$. 
 In particular, $G$ acts on $S_P$ as a graded $\C$-algebra and 	$\Ehr(P,\rho;t)$ is the equivariant Hilbert series of $S_P$ (see Section~\ref{ss:commutative}).  By a result of Hochster \cite{HochsterRingsInvariants}, $S_P$ is a Cohen-Macaulay ring. 
 A \emph{linear system of parameters} (l.s.o.p.) is a collection of $d + 1$ homogeneous elements
 $F_1,\ldots,F_{d + 1}$  of degree $1$ in $S_P$  such that 
 $S_P/(F_1,\ldots,F_{d + 1})$ is a finite dimensional $\C$-vector space (see Definition~\ref{d:hsop} and Theorem~\ref{t:hsop}).
% 
% 
%  A \emph{homogeneous system of parameters} (h.s.o.p.) of degree $r$ is a collection of $d + 1$ elements 
% $F_1,\ldots,F_{d + 1} \in S_{P,r}$ such that 
% $S/(F_1,\ldots,F_{d + 1})$ is a finite dimensional $\C$-vector space (see Definition~\ref{d:hsop} and Theorem~\ref{t:hsop}).
% %$\theta_1,\ldots,\theta_{d + 1} \in S_r$ such that $S/(\theta_1,\ldots,\theta_{d + 1})$ is finite dimensional. 
% When $r = 1$, this is known as a 
% \emph{linear system of parameters} (l.s.o.p.). 
% Such a l.s.o.p. always exists and $h^*(P;t)$ is the Hilbert polynomial of the corresponding graded finite-dimensional $\C$-vector space $S/(F_1,\ldots,F_{d + 1})$. 
%We have an induced action of $G$ on $S_P$ as a graded $\C$-algebra
If $F_1,\ldots,F_{d + 1}$ is a l.s.o.p. for $S_P$, then the ideal $(F_1,\ldots,F_{d + 1})$ is $G$-invariant if and only if the $\C$-vector space 
%$(F_1,\ldots,F_{d + 1})_1 = \C F_1 + \cdots + \C F_{d + 1}$ 
$\C F_1 + \cdots + \C F_{d + 1}$
is $G$-invariant if and only if  we may consider
$[\C F_1 + \cdots + \C F_{d + 1}]$ as an element of $R(G)$.  

\begin{theorem}\label{t:mainfull}
		Let $G$ be a finite group.
	Let $M$ be a lattice of rank $d$ and let $\rho: G \to  \Aff(M)$ be an affine representation.  
	Let $P \subset M_\R$ be a $G$-invariant $d$-dimensional lattice polytope. 
	Let $\tM = M \oplus \Z$ and let $C_P$ be the cone over $P \times \{ 1\}$ in $\tM_\R$ with corresponding graded semigroup algebra $S_P = \C[C_P \cap \tM]$.
	Consider the following conditions: 
	
%	Let $G$ be a finite group.
%	Let $M$ be a lattice of rank $d$ and let $\rho: G \to  \Aff(M)$ be a representation of $G$ acting on $M$ via affine transformations.  
%	Let $P \subset M_\R$ be a $G$-invariant $d$-dimensional lattice polytope. 
%	Let $\tM = M \oplus \Z$ and let $C_P$ be the cone over $P \times \{ 1\}$ in $\tM_\R$ with corresponding semigroup algebra $S = \C[C_P \cap \tM]$.
%	Consider the following conditions: 
	
	\begin{enumerate}
		
		\item\label{i:maintriangulation} There exists a $G$-invariant regular lattice triangulation of $P$.

		\item\label{i:mainnondegenerate} 
		There exists $f \in \C[M]$ that is $G$-invariant and nondegenerate with respect to $P$. 
		
		\item\label{i:mainhilbert} 
		There exists a l.s.o.p. $F_1,\ldots,F_{d + 1}$ of $S_P$ such that 
		$[\C F_1 + \cdots + \C F_{d + 1}] = [\tM_\C]$ in $R(G)$.
		
		\item\label{i:maineffective} $h^*(P , \rho ; t)$ is effective.
		
		\item\label{i:mainpolynomial} $h^*(P , \rho ; t)$ is a polynomial.
		
	\end{enumerate}
	
	Then the following implications hold: \eqref{i:maintriangulation} $\Rightarrow$
	%\eqref{i:mainsubdivision} $\Rightarrow$ 
	\eqref{i:mainnondegenerate} $\Rightarrow$
	%\eqref{i:mainperverted} $\Rightarrow$
	\eqref{i:mainhilbert} $\Rightarrow$
	\eqref{i:maineffective} $\Rightarrow$
	\eqref{i:mainpolynomial}.
	Moreover, if there exists a $G$-invariant (not necessarily regular) lattice triangulation of $P$, then  
	\eqref{i:maineffective} holds. 
\end{theorem}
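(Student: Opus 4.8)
The plan is to establish the chain of implications \eqref{i:maintriangulation} $\Rightarrow$ \eqref{i:mainnondegenerate} $\Rightarrow$ \eqref{i:mainhilbert} $\Rightarrow$ \eqref{i:maineffective} $\Rightarrow$ \eqref{i:mainpolynomial}, together with the final remark that a (not necessarily regular) $G$-invariant lattice triangulation already forces \eqref{i:maineffective}. Since \eqref{i:maineffective} $\Rightarrow$ \eqref{i:mainpolynomial} is exactly the corresponding implication from Theorem~\ref{t:mainoriginal}, and \eqref{i:mainnondegenerate} $\Rightarrow$ \eqref{i:maineffective} likewise reproves the core content of Theorem~\ref{t:mainoriginal}, the genuinely new work is in the first two implications plus the last sentence.

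For \eqref{i:maintriangulation} $\Rightarrow$ \eqref{i:mainnondegenerate}: given a $G$-invariant regular lattice triangulation $\T$ of $P$, it is induced by a piecewise-linear convex height function $\nu: P \to \R$ which we may average over $G$ to assume is itself $G$-invariant, and which we may take to be rational and hence (after scaling) integral on $P \cap M$. Consider $f = \sum_{u \in P \cap M} \lambda_u x^u$ where the coefficients $\lambda_u$ are ``generic'' subject to $\lambda_u = \lambda_{g\cdot u}$, i.e.\ constant on $G$-orbits. The standard argument (as in \cite{GKZ94}) shows that a Laurent polynomial whose Newton polytope is $P$ and whose coefficients dominate a regular-triangulation height function is nondegenerate; the point is that the initial forms along each face/cone of the normal fan cut out smooth hypersurfaces because each maximal cell of $\T$ is a unimodular simplex. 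The only subtlety over the non-equivariant case is that we are restricting $\{\lambda_u\}$ to the $G$-invariant subspace, so we must check that nondegeneracy, being Zariski open, is nonempty on this subspace — and this holds precisely because the triangulation $\T$ is $G$-invariant, so a $G$-invariant height function realizing it exists, which is the hypothesis we are using.

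For \eqref{i:mainnondegenerate} $\Rightarrow$ \eqref{i:mainhilbert}: this is where the commutative-algebra reinterpretation enters, and I expect it to be the main obstacle. Given $f$ that is $G$-invariant and nondegenerate with respect to $P$, I would produce a l.s.o.p.\ of $S_P = \C[C_P \cap \tM]$ out of $f$. Concretely, $f$ corresponds to a degree-one element $F$ of $S_P$ together with its ``partial derivatives'' $x_i \partial f/\partial x_i$, which likewise give degree-one elements; nondegeneracy of $f$ is exactly the statement that $f$ and these logarithmic derivatives have no common zero on the torus, equivalently that the corresponding $d+1$ elements of $S_P$ form a regular sequence (cutting out a finite-length quotient). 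One must check that the span $\C F_0 + \cdots + \C F_d \subset (S_P)_1$ is $G$-invariant and carries the representation $\tM_\C$: the logarithmic derivative construction is $\Aff(M)$-equivariant with respect to the natural action on $\tM$, so $G$-invariance of $f$ makes this span a $G$-submodule, and identifying it with $\tM_\C$ amounts to observing that the map $\tM_\C \to (S_P)_1$ sending a coweight direction to the corresponding logarithmic derivative of $f$ (and the last coordinate to $F$ itself) is a $G$-equivariant isomorphism onto its image when $f$ is nondegenerate. The delicate point is verifying this really is an l.s.o.p.\ (finite-dimensional quotient), which follows from nondegeneracy via the theory of Newton nondegenerate hypersurfaces / toric geometry: the quotient $S_P/(F_0,\dots,F_d)$ is essentially the Jacobian ring of the hypersurface, which is finite-dimensional precisely under nondegeneracy; I would cite the relevant statement from \cite{StapledonRepresentations11} rather than reprove it.

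Then \eqref{i:mainhilbert} $\Rightarrow$ \eqref{i:maineffective} is a formal consequence of Cohen--Macaulayness: if $F_1,\dots,F_{d+1}$ is a $G$-invariant l.s.o.p.\ with $[\C F_1 + \cdots + \C F_{d+1}] = [\tM_\C]$, then since $S_P$ is Cohen--Macaulay (Hochster) it is a free module over the polynomial subring generated by the $F_i$, so $\Ehr(P,\rho;t) = [S_P/(F_1,\dots,F_{d+1})]\text{-Hilbert series} \cdot \det(I - \tM_\C t)^{-1}$ as virtual-representation power series, whence $h^*(P,\rho;t)$ equals the equivariant Hilbert series of the \emph{honest} (not virtual) graded $\C[G]$-module $S_P/(F_1,\dots,F_{d+1})$, which is manifestly effective and a polynomial. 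For the final sentence, given a $G$-invariant lattice triangulation $\T$ that need not be regular, one cannot in general get nondegeneracy, but one can still run the combinatorial side: a (not necessarily regular) unimodular $G$-invariant triangulation gives a $G$-equivariant decomposition (equivariant shelling or the equivariant version of Stanley's argument) expressing $h^*(P,\rho;t)$ as a sum over the faces of $\T$ of genuine permutation representations weighted by $t^{(\cdot)}$, each term effective; concretely I would build the equivariant analogue of the half-open decomposition of $C_P$ along $\T$, noting $G$ permutes the cells, and read off that each contribution to the $h^*$-series is the class of an actual $G$-set, so the sum is effective. The main obstacle throughout is the identification in \eqref{i:mainhilbert} of the Jacobian-type quotient with the cone-algebra quotient $S_P/(F_1,\dots,F_{d+1})$ and the verification that it has finite length — everything else is either cited or a standard averaging/genericity argument.
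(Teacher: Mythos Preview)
Your treatment of \eqref{i:mainnondegenerate} $\Rightarrow$ \eqref{i:mainhilbert} and \eqref{i:mainhilbert} $\Rightarrow$ \eqref{i:maineffective} is essentially the paper's argument: the Jacobian ideal $J_{f,P}$ of a $G$-invariant nondegenerate $f$ gives the l.s.o.p., the span $(J_{f,P})_1$ is $G$-equivariantly identified with $\tM_\C$ (this is Lemma~\ref{l:JacobianisomtotM}), and finite length is Batyrev's characterisation of nondegeneracy (Theorem~\ref{t:nondegenerateequiv}), not something from \cite{StapledonRepresentations11}. Then Hochster plus the equivariant Hilbert series formula (Lemma~\ref{l:equivHilbertofquotient}) gives effectiveness.

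There are two genuine problems elsewhere. First, in \eqref{i:maintriangulation} $\Rightarrow$ \eqref{i:mainnondegenerate} you assume the maximal cells of $\T$ are \emph{unimodular} simplices; a lattice triangulation need not be unimodular, and the argument must work without this. What you actually need is that a polynomial supported on any lattice simplex with all nonzero coefficients is nondegenerate, which holds because the principal $A$-determinant of a simplex is $\pm\prod_a z_a$ (Example~\ref{e:simplexprincipalAdet}). More seriously, your claim that ``nondegeneracy is Zariski-open and nonempty on the $G$-invariant subspace because $\T$ is $G$-invariant'' is exactly what has to be proved; the paper does this precisely via the Gelfand--Kapranov--Zelevinsky theorem on initial forms of $E_{A,M}$ (Theorem~\ref{t:initdeg}): the $G$-invariant $f=\sum_{a\in\ver(\T)}x^a$ is $\T$-nondegenerate, hence $\init_\omega E_{A,M}(\lambda)\neq 0$ for the $G$-invariant height function $\omega$, and then the deformation $\sum_a t^{\omega(a)}\lambda_a x^a$ for small $t>0$ is $G$-invariant and $P$-nondegenerate (Lemma~\ref{l:existencenondegeneraterefinement}). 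Your sketch gestures at this but does not supply the mechanism.

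Second, and more substantively, your approach to the final sentence via an equivariant half-open decomposition or equivariant shelling does not work in general. A half-open decomposition of $C_P$ along $\T$ depends on a choice of generic reference point, and there is no reason a $G$-fixed point is generic with respect to $\T$; without this, $G$ does not permute the half-open cells and the contributions are not classes of $G$-sets. (Indeed, the combinatorial formulas of this type in \cite{EKS22} require extra hypotheses on $\T$.) The paper instead passes to the \emph{toric face ring} $S_\cS=\C[\Sigma_\cS]$ associated to the fan over $\T$: it is Cohen--Macaulay (Theorem~\ref{t:faceringisCM}), has the same equivariant Hilbert series as $S_P$, and the explicit $G$-invariant $f=\sum_{a\in\ver(\T)}x^a$ is $\T$-nondegenerate, so its Jacobian ideal in $S_\cS$ gives a $G$-invariant l.s.o.p.\ with degree-one part $\cong\tM_\C$ (Lemma~\ref{l:invarianttriangulation}). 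Effectiveness then follows exactly as in \eqref{i:mainhilbert} $\Rightarrow$ \eqref{i:maineffective}, but with $S_\cS$ in place of $S_P$. This sidesteps regularity entirely and is the key idea you are missing.
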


When $G$ acts trivially, all the  conditions of  Theorem~\ref{t:mainfull} %below 
are well-known to hold. See, for example, \cite[Lemma~2.3.15]{DRSTriangulations10} for condition \eqref{i:maintriangulation} and
\cite{StanleyMagic,StanleyDecompositions} for conditions \eqref{i:mainhilbert} and \eqref{i:maineffective}. 
We observe that condition \eqref{i:maintriangulation} in Theorem~\ref{t:mainfull} is often easier to verify in practice than conditions \eqref{i:mainnondegenerate} or \eqref{i:mainhilbert}. In Proposition~\ref{p:existtriangulation} we give an explicit criterion to guarantee that \eqref{i:maintriangulation} holds i.e. a $G$-invariant regular lattice triangulation of $P$ exists. Using the notion of a \emph{translative} action studied in \cite{DRSemimatroids}, \cite{DDStanleyReisner}, \cite{BDSupersolvable} and \cite{DDEquivariantEhrhart} (see Definition~\ref{d:translative}) we deduce two corollaries for refining and gluing invariant lattice polyhedral subdivisions to construct invariant lattice triangulations (see Corollary~\ref{c:translativeK} and Corollary~\ref{c:translativeglue} respectively).
For example, we deduce that \eqref{i:maintriangulation} holds when $G = \Z/2\Z$ acts on a centrally symmetric lattice polytope (see Example~\ref{e:centrallysymmetrictriangulation}). 
On the other hand, we can view condition \eqref{i:mainpolynomial}  in Theorem~\ref{t:mainfull} as an obstruction that can be used to construct examples where conditions \eqref{i:maintriangulation}-\eqref{i:mainhilbert} fail.
 
For $G$-invariant lattice triangulations with some special additional structure, combinatorial formulas for the coefficients of $h^*(P,\rho;t)$ as explicit permutation representations were given in 
\cite[Theorem~3.30]{EKS22} (see, also, \cite[Theorem~3.32]{EKS22}).
When the action of $G$ is translative and there exists a $G$-invariant unimodular lattice triangulation of $P$, D'al\`{i} and Delucchi showed in independent work that the equivariant Ehrhart series can be expressed as a rational function with denominator $(1 - t)^{d + 1}$ and gave a criterion which guarantees that the corresponding numerator is effective \cite[Theorem~5.2]{DDEquivariantEhrhart}, with applications to order polytopes and Lipschitz polytopes of posets (\cite[Theorem~6.7]{DDEquivariantEhrhart} and \cite[Corollary~7.12]{DDEquivariantEhrhart} respectively).

  In the general case when there exists a $G$-invariant lattice triangulation of $P$,
we provide a formula for $h^*(P,\rho;t)$ involving both combinatorial terms and equivariant Hilbert polynomials of Stanley-Reisner rings in Proposition~\ref{p:formulatriangulation}. This generalizes known formulas when $P$ is a simplex \cite[Proposition~6.1]{StapledonEquivariant}, when the triangulation is unimodular \cite[Proposition~8.1]{StapledonEquivariant} or when $G$ acts trivially 
\cite[Theorem~1]{BMLatticePoints}. 
%Geometrically, if $P$ admits a $G$-invariant lattice triangulation, then 
%%the formula states that 
%$h^*(P,\rho;t)$ is the 
% equivariant Hilbert polynomial of the orbifold cohomology ring of an associated  toric variety (see the discussion before Proposition~\ref{p:formulatriangulation}  and \cite[(1.1)]{KaruEhrhart}). 

Let us make a few additional comments on the statement and proof of Theorem~\ref{t:mainfull}. Firstly, 
we deduce the implication \eqref{i:maintriangulation} $\Rightarrow$ 
\eqref{i:mainnondegenerate} as a corollary of a theorem of Gelfand, Kapranov and Zelevinsky \cite[Theorem 10.1.12']{GKZ94} (see Theorem~\ref{t:initdeg}). Secondly, in the implication \eqref{i:mainnondegenerate} $\Rightarrow$
\eqref{i:mainhilbert}, the ideal generated by the l.s.o.p. corresponding to $f$ is known as the \emph{Jacobian ideal} of $f$, and the quotient of $S_P$ by this ideal is known as the \emph{Jacobian ring} of $f$ \cite[Definition~4.7]{BatyrevVariations} (see Definition~\ref{d:Jacobiandef}). 
The original proof of Theorem~\ref{t:mainoriginal} can be recovered from the proof of Theorem~\ref{t:mainfull} using Batyrev's geometric interpretation of the Jacobian ring of $f$ \cite[Corollary~6.10]{BatyrevVariations}. 
Thirdly, 
either in the implication \eqref{i:mainhilbert} $\Rightarrow$
\eqref{i:maineffective} or in the case where there exists a (not necessarily regular) $G$-invariant lattice triangulation of $P$, $h^*(P,\rho;t)$ is realized as the equivariant Hilbert polynomial of the quotient by a l.s.o.p. of either $S_P$ or a deformed version of $S_P$ (see Definition~\ref{d:toricfacering}), known as a \emph{toric face ring} or \emph{deformed group ring} in the literature, respectively. 
In the latter case,  $h^*(P,\rho;t)$  is also interpreted geometrically as the equivariant Hilbert series  of an induced action of $G$ on the orbifold cohomology of 
 an associated  toric variety (see the discussion before Proposition~\ref{p:formulatriangulation}  and \cite[(1.1)]{KaruEhrhart}). 
 
 Finally, one may ask whether any of the converses of  the implications in Theorem~\ref{t:mainfull} hold.  
 When $d = 2$, we show that all conditions in Theorem~\ref{t:mainfull} are equivalent (see Proposition~\ref{p:dim2mainthm}). In the general case,
  the question of whether any of the conditions \eqref{i:mainhilbert}, \eqref{i:maineffective} and \eqref{i:mainpolynomial} are equivalent is left open as part of a possible extension of the effectiveness conjecture in Question~\ref{q:extendconjecture}. On the other hand, all other converse implications  in Theorem~\ref{t:mainfull} are false. See Example~\ref{e:d+2v4},
 Example~\ref{e:d+3counterexample},
 Example~\ref{e:p=5examplev2} and 
 Example~\ref{e:SantosStapledon}.

Our next goal is to extend our commutative algebra interpretation to the case when the conditions of Theorem~\ref{t:mainfull} don't necessarily apply. Our first result in this direction implies that we may apply Theorem~\ref{t:mainfull} after an appropriate %sufficiently divisible 
scaling of the lattice $M$. Explicitly,  given a positive integer $N$, 
$\rho: G \to \Aff(M)$ induces an action 
$\rho_N: G \to \Aff(\frac{1}{N}M)$ that preserves $P$, and Theorem~\ref{t:triangulationOrderGroup} below implies that we may apply Theorem~\ref{t:mainfull} if we replace $\rho$ with $\rho_N$, provided that the order $|G|$ of $G$ divides $N$. 

%consider the corresponding invariant 
%$h^*(P,\rho_N;t) \in R(G)[[t]]$.
% Below, $|G|$ denotes the order of $G$. 
%
%Firstly, given a positive integer $N$, 
%$\rho: G \to \Aff(M)$ induces an action 
%$\rho_N: G \to \Aff(\frac{1}{N}M)$ that preserves $P$, and we may consider the corresponding invariant 
%$h^*(P,\rho_N;t) \in R(G)[[t]]$.

\begin{theorem}\label{t:triangulationOrderGroup}
	Let $G$ be a finite group.
	Let $M$ be a lattice of rank $d$ and let $\rho: G \to  \Aff(M)$ be an affine representation.  
	Let $P \subset M_\R$ be a $G$-invariant $d$-dimensional lattice polytope. 
	Let $N$ be a positive integer 
	%such that $N | |G|$. 
	divisible by $|G|$. 
	Then there exists a $G$-invariant regular triangulation of $P$ with vertices in $\frac{1}{N} M$.
\end{theorem}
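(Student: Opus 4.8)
The plan is to build the desired triangulation by averaging a generic choice of heights over the group, exploiting the fact that after scaling the lattice by $N$ (a multiple of $|G|$), the barycenter of every $G$-orbit of lattice points lands back in $\frac{1}{N}M$, so that the vertex set can be taken to be $G$-stable. Concretely, I would proceed as follows.

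\textbf{Step 1: Set up the vertex set.} Let $A \subset P \cap \frac{1}{N}M$ be any finite $G$-invariant set of lattice points (in the scaled lattice) containing the vertices of $P$; the simplest choice is $A = P \cap \frac{1}{N}M$ itself, which is automatically $G$-invariant and contains $\ver(P)$. Since we only need \emph{a} triangulation with vertices in $\frac{1}{N}M$, not a triangulation using a prescribed vertex set, there is no constraint forcing us to use all of $A$ as vertices — a regular triangulation on heights supported on $A$ will use some subset of $A$ as its vertices, and that is fine.

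\textbf{Step 2: Average a generic height function.} Choose a height function $\omega: A \to \R$ in general position, so that the lower hull of $\{(a, \omega(a)) : a \in A\} \subset M_\R \times \R$ induces a regular triangulation $\T_\omega$ of $P$ with vertices in $A \subset \frac{1}{N}M$; such $\omega$ exists and is generic by \cite[Lemma~2.3.15]{DRSTriangulations10} (the standard existence of regular triangulations). Now replace $\omega$ by its $G$-average $\bar{\omega}(a) := \frac{1}{|G|}\sum_{g \in G} \omega(g^{-1} \cdot a)$. This is well-defined because $\rho$ acts on $\frac{1}{N}M$ by affine transformations permuting $A$, and it is $G$-invariant by construction. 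The key subtlety: $\bar\omega$ need not itself be generic, but a generic $\omega$ can be chosen so that $\bar\omega$ is generic \emph{within the affine-linear-functions-mod-out} sense needed for regularity — more carefully, one shows that the set of $\omega$ for which $\bar\omega$ induces a triangulation (rather than a coarser subdivision) is the complement of a proper Zariski-closed subset of the space of height functions, hence nonempty. Alternatively, one perturbs $\bar\omega$ by a $G$-invariant generic perturbation; the space of $G$-invariant height functions on $A$ is itself a nonempty real vector space, and within it the non-generic locus is a finite union of proper affine subspaces (each coming from a potential coplanarity), so a generic $G$-invariant height exists.

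\textbf{Step 3: Conclude $G$-invariance and regularity.} Given a $G$-invariant generic height function $\bar\omega$ on $A$, the induced regular subdivision $\T_{\bar\omega}$ of $P$ is a triangulation (by genericity) whose defining data — the lower hull of $\{(a,\bar\omega(a))\}$ — is $G$-equivariant, since for $g \in G$ the affine transformation $\rho(g) \times \id_\R$ sends $(a, \bar\omega(a))$ to $(g\cdot a, \bar\omega(g \cdot a)) = (g \cdot a, \bar\omega(g\cdot a))$ by $G$-invariance of $\bar\omega$, hence preserves the lower hull and its face structure. Therefore $\T_{\bar\omega}$ is a $G$-invariant regular triangulation of $P$, and all its vertices lie in $A \subseteq \frac{1}{N}M$, as required.

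\textbf{Main obstacle.} The one genuinely delicate point is Step 2: ensuring that among $G$-invariant height functions there is one inducing an honest triangulation rather than merely a polytopal subdivision with some non-simplicial cells. The worry is that $G$-invariance might \emph{force} coplanarities — e.g., a $G$-orbit of points that is constrained to be affinely dependent could never be "broken" by any invariant height. This is where the hypothesis $|G| \mid N$ is essential: one must check that in the scaled lattice $\frac{1}{N}M$, and with $A$ chosen appropriately (possibly enlarging $A$ by all orbit barycenters of lattice subsimplices, which lie in $\frac{1}{N}M$ precisely because $|G|$ divides $N$), no such forced coplanarity survives — equivalently, that the $G$-fixed subspace of the space of height functions on $A$ is not contained in any of the finitely many "coplanarity hyperplanes." I expect this to follow from a dimension count together with the observation that each relevant coplanarity condition is a single nontrivial linear equation that does not contain the whole fixed subspace (since one can always find an invariant height separating a given orbit of would-be-coplanar vertices after the barycentric refinement), but making this rigorous is the crux of the argument.
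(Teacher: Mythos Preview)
Your proposal has a genuine gap: Step 2 asserts that within the space of $G$-invariant height functions on $A$, the locus of heights \emph{not} inducing a triangulation is a \emph{proper} closed subset, hence has nonempty complement. But this properness is exactly the content of the theorem, and you acknowledge as much in your ``Main obstacle'' paragraph. To see that it genuinely fails without the hypothesis $|G|\mid N$, take $P = \Conv{e_1,\dots,e_d}\times[0,e_{d+1}]$ with $G=\Sym_d$ permuting the first $d$ coordinates (Example~\ref{e:Symprism4}): for $N=1$ there is no $G$-invariant lattice triangulation at all, so the ``bad locus'' is the entire space of $G$-invariant heights, not a proper subspace. Your dimension-count sketch gives no mechanism by which the divisibility $|G|\mid N$ rules out such forced coplanarities; the phrase ``possibly enlarging $A$ by all orbit barycenters of lattice subsimplices'' gestures at the right idea but does not constitute an argument.

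The paper's proof is constructive and quite different in spirit. Starting from the trivial subdivision, it builds a sequence $u_1,\dots,u_r\in P\cap\frac1N M$ and applies successive pulling refinements along the orbits $G\cdot u_1,\dots,G\cdot u_r$, invoking Lemma~\ref{l:constructtriangulation} to guarantee the result is a $G$-invariant regular triangulation. The first point $u_1$ is any $G$-fixed point (an orbit average). At each later step, one picks a lattice point $u$ in the remaining complex, finds a face $Q$ on which $|Q\cap(G\cdot u)|$ is maximal, and takes $u_i$ to be a $G_Q$-fixed interior point of a suitable subpolytope $S\subset Q$. The divisibility hypothesis enters precisely in bounding the denominator of $u_i$: one shows the number $\ell$ of $G_Q$-orbits in $Q\cap(G\cdot u)$ satisfies $\ell\le |G|/|G_Q|$, so the fixed locus $S^{G_Q}$ is a lattice polytope in $\frac{1}{|G_Q|}M$ of dimension $<\ell$, whence (Remark~\ref{r:interiorpointsexist}) it has an interior point in $\frac{1}{\ell|G_Q|}M\subset\frac{1}{|G|}M\subset\frac1N M$. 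This is the place where $|G|\mid N$ does real work, and it is not captured by your averaging-and-genericity outline.
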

The content of this theorem is that the vertices of a $G$-invariant regular triangulation can be chosen to lie in $\frac{1}{|G|} M$. The existence of a $G$-invariant regular triangulation (with no restriction on the vertices) can be achieved easily, for example, by taking a %specific choice of 
barycentric subdivision using the averages of the vertices of all the faces of $P$ (cf. %Example~\ref{e:permutahedronv2} and 
the proof of Corollary~\ref{c:translativeK}).

Secondly, 
for a positive integer $N$, define 
$h^*_{N}(P,\rho;t)$ 
in $ R(G)[[t^{\frac{1}{N}}]]$ by the equality:
\begin{equation*}%\label{e:defineequivhstarrational}
	%1 + \sum_{m > 0} L(P, \rho;m) t^{\frac{m}{N}} 
%	\Ehr(P,\rho;t^{\frac{1}{N}}) = \frac{h^*_{N}(P,\rho;t)}{\det(I - \tM_\C t)} \in R(G)[[t^{\frac{1}{N}}]].
	\Ehr(P,\rho;t) = \frac{h^*_{N}(P,\rho;t^N)}{\det(I - \tM_\C t^N)} \in R(G)[[t]].
\end{equation*}
%The invariant $h^*_{N}(P,\rho;t)$ will be studied %in more detail and 
%in a more general context in \cite{StapledonFramework}. 
On the one hand, the definition is motivated by the non-equivariant case when $h^*_{N}(P,\rho;t)$ has an interpretation as the weighted Ehrhart polynomial of the $N$th dilate of the pyramid over $P$
%in terms of weighted Ehrhart theory
(see Lemma~\ref{l:intandceil} and \cite{StapledonWeighted}).
% and \cite{StapledonFramework}). 
% 
%In particular,
%%the invariant
% $h_N^*(P,\rho;t)(\id)$ is equal to the weighted $h^*$-polynomial of the $N$th dilate $N \Pyr(P)$ of the pyramid $\Pyr(P)$ of $P$ 
On the other hand,
the definition %above is also partly 
is also motivated by the Ehrhart theory of rational polytopes where denominators with factors of the form $1 - t^N$ (rather than $1 - t$) naturally appear (see Remark~\ref{r:changedenominator}). In fact, we leave it as an open problem to extend the theory of this paper to the case when $P$ is a rational (rather than a lattice) polytope (cf. \cite{CHK23}). 
We recover the invariant $h^*(P,\rho_N;t)$ from $h_N^*(P,\rho;t)$ by extracting all terms with integer (rather than just rational) exponents in $t$ (see Lemma~\ref{l:intandceil}). 

The benefit of studying $h_N^*(P,\rho;t)$, as opposed to just $h^*(P,\rho;t)$ is that Theorem~\ref{t:triangulationOrderGroup}, together with Corollary~\ref{c:maincorollaryshort}) below, ensure that one may always choose $N$ sufficiently divisible such that $h_N^*(P,\rho;t)$ is effective i.e. its coefficients are effective. For example, 
in the case of the symmetric group acting on the permutahedron,  \cite[Proposition~5.3-5.4]{ASV20} showed that 
 $h^*(P,\rho;t)$ is a polynomial if and only if  $h^*(P,\rho;t)$ is effective if and only if $d \le 2$. We will show that  $h^*_N(P,\rho;t)$ is a polynomial if and only if  $h^*_N(P,\rho;t)$ is effective if and only if
 $N$ is even or $d \le 2$
(see Example~\ref{e:permutahedronv1} and Example~\ref{e:permutahedronv2}). 

We will deduce the following result as a corollary of 
Theorem~\ref{t:mainfull} and its proof. 
%Below an element of $R(G)[[t^{\frac{1}{N}}]]$ is effective if all its coefficients are effective.
Below, we say $h^*_N(P , \rho ; t)$ is a polynomial if $h^*_N(P , \rho ; t) \in R(G)[t^{\frac{1}{N}}]$.

\begin{corollary}\label{c:maincorollaryshort}
	Let $G$ be a finite group.
	Let $M$ be a lattice of rank $d$ and let $\rho: G \to  \Aff(M)$ be an affine representation.  
	Let $P \subset M_\R$ be a $G$-invariant $d$-dimensional lattice polytope. 
	Let $N$ be a positive integer.
	Consider the following conditions:

	\begin{enumerate}
		
		\item\label{i:corrtriangulationshort} There exists a $G$-invariant triangulation of $P$ with vertices in $\frac{1}{N} M$.

		\item\label{i:correffectiveshort} $h^*_N(P , \rho ; t)$ is effective. 
		
		\item\label{i:corrpolynomialshort} $h^*_N(P , \rho ; t)$ is a polynomial.
		% i.e. $h^*_N(P , \rho ; t) \in R(G)[t^{\frac{1}{N}}]$. 
	
	\end{enumerate}
	
	Then the following implications hold: \eqref{i:corrtriangulationshort} $\Rightarrow$
	\eqref{i:correffectiveshort} $\Rightarrow$
	\eqref{i:corrpolynomialshort}.

\end{corollary}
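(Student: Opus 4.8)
The plan is to deduce both implications by reducing to Theorem~\ref{t:mainfull} applied to the rescaled representation $\rho_N: G \to \Aff(\frac{1}{N}M)$, using the relationship between $h^*_N(P,\rho;t)$ and $h^*(P,\rho_N;t)$ recorded in Lemma~\ref{l:intandceil}. First I would observe that a $G$-invariant triangulation of $P$ with vertices in $\frac{1}{N}M$ is precisely a $G$-invariant lattice triangulation of $P$ with respect to the lattice $\frac{1}{N}M$, i.e.\ with respect to $\rho_N$. Hence condition \eqref{i:corrtriangulationshort} is exactly the hypothesis of the last sentence of Theorem~\ref{t:mainfull} for $\rho_N$, which gives that $h^*(P,\rho_N;t)$ is effective. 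So the first task is to transfer effectiveness of $h^*(P,\rho_N;t)$ back to effectiveness of $h^*_N(P,\rho;t)$.

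For the implication \eqref{i:corrtriangulationshort} $\Rightarrow$ \eqref{i:correffectiveshort}, the key point is the defining identity
\[
	\Ehr(P,\rho;t) = \frac{h^*_N(P,\rho;t^N)}{\det(I - \tM_\C t^N)},
\]
combined with the fact that, upon rescaling the lattice by $N$, $\Ehr(P,\rho_N;t)$ is obtained from $\Ehr(P,\rho;t)$ by a change of variable. Concretely I expect Lemma~\ref{l:intandceil} to say that $h^*(P,\rho_N;t)$ is exactly the "integer-exponent part'' of $h^*_N(P,\rho;t)$, and more: that the individual coefficients of $h^*_N(P,\rho;t)$ (including the ones attached to fractional powers of $t$) each arise as a coefficient of $h^*(P,\rho_{N'};t)$ for a suitable further rescaling, or directly as coefficients of $h^*(P,\rho_N;t)$ after tracking the bijection $C_P \cap \HT^{-1}(m) \leftrightarrow P \cap \frac{1}{m}M$ at level $m$ divisible by $N$. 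Either way, each coefficient of $h^*_N(P,\rho;t)$ equals a coefficient of some $h^*(P,\rho_M;t)$ with $M$ a multiple of $N$; since each such $h^*(P,\rho_M;t)$ is effective by the triangulation hypothesis for $\rho_M$ (a $\frac1N M$-triangulation refines to a $\frac1M M$-triangulation by taking a further barycentric-type subdivision, cf.\ the proof of Corollary~\ref{c:translativeK}, or one simply applies Theorem~\ref{t:triangulationOrderGroup} directly), effectiveness of $h^*_N(P,\rho;t)$ follows.

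For the implication \eqref{i:correffectiveshort} $\Rightarrow$ \eqref{i:corrpolynomialshort}, I would argue exactly as in the non-equivariant Ehrhart setting: apply the evaluation homomorphism $R(G) \to \Z$ sending a virtual representation to the trace of the identity, i.e.\ to its dimension, turning $h^*_N(P,\rho;t)$ into the ordinary rational $h^*$-series of $P$ with denominator $(1 - t^N)^{d+1}$. Since that ordinary series is a polynomial in $t^{1/N}$ (this is classical, being the weighted Ehrhart polynomial of the $N$th dilate of the pyramid over $P$, as in Lemma~\ref{l:intandceil} and \cite{StapledonWeighted}), the coefficients of $h^*_N(P,\rho;t)$ eventually have dimension zero; an effective virtual representation of dimension zero is the zero representation, so $h^*_N(P,\rho;t) \in R(G)[t^{\frac1N}]$.

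The main obstacle I anticipate is purely bookkeeping: correctly relating $\Ehr(P,\rho;t)$, its rescaled version $\Ehr(P,\rho_N;t)$, and the denominators $\det(I - \tM_\C t)$ versus $\det(I - \tM_\C t^N)$ under the lattice dilation, so that "coefficient of $h^*_N(P,\rho;t)$'' is matched up with the right "coefficient of $h^*(P,\rho_M;t)$''. This is exactly the content I am assuming is packaged in Lemma~\ref{l:intandceil}, so in the actual write-up the proof should be short once that lemma is invoked; the only genuine input beyond Theorem~\ref{t:mainfull} is the refinement statement ensuring the triangulation hypothesis passes from $\frac1N M$ to $\frac1M M$ for multiples $M$ of $N$, which is immediate from Theorem~\ref{t:triangulationOrderGroup} (or a direct barycentric refinement).
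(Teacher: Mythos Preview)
Your argument for \eqref{i:correffectiveshort} $\Rightarrow$ \eqref{i:corrpolynomialshort} is correct and is essentially the paper's argument.

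For \eqref{i:corrtriangulationshort} $\Rightarrow$ \eqref{i:correffectiveshort} there is a genuine gap. Applying Theorem~\ref{t:mainfull} to $\rho_N$ does give that $h^*(P,\rho_N;t)$ is effective, and Lemma~\ref{l:intandceil} does identify $h^*(P,\rho_N;t)$ with the integer-exponent part $\Psi_{\Int}(h^*_N(P,\rho;t))$. But that controls only the integer-exponent coefficients of $h^*_N(P,\rho;t)$; the coefficients at fractional exponents $t^{m/N}$ with $N \nmid m$ are not accounted for. Your claim that each such coefficient equals a coefficient of some $h^*(P,\rho_M;t)$ with $N \mid M$ is not justified: passing from $N$ to $M$ changes the denominator from $\det(I - \tM_\C t^N)$ to $\det(I - \tM_\C t^M)$, so $h^*_N$ and $h^*_M$ differ by a nontrivial multiplicative factor (cf.\ \eqref{e:hNstarintermsofh}), not by a mere reindexing. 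The remark following Lemma~\ref{l:intandceil} does show that $h^*_N(P,\rho;t)$ is determined by $\{h^*(\Pyr^r(P),\rho_N;t) : 0 \le r < N\}$, but the inversion formula there involves alternating signs $(-1)^i\binom{k}{i}$, so effectiveness of each $h^*(\Pyr^r(P),\rho_N;t)$ does not propagate to effectiveness of the individual coefficients of $h^*_N(P,\rho;t)$.

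The paper sidesteps this by a direct commutative-algebra argument at degree $N$. The triangulation $\cS$ with vertices in $\frac{1}{N}M$ yields, via Lemma~\ref{l:invarianttriangulation} and Lemma~\ref{l:JacobianisomtotM}, a l.s.o.p.\ $F_1,\ldots,F_{d+1}$ for the Veronese subring $S' = \bigoplus_{m \in N\Z_{\ge 0}} S_{\cS,m}$ whose span is isomorphic to $\tM_\C$ as a $\C G$-module. Since $S_\cS$ is module-finite over $S'$, these same elements form a h.s.o.p.\ of degree $N$ for the full toric face ring $S_\cS$ (Corollary~\ref{c:hsopfiniteextension}). Lemma~\ref{l:equivHilbertofquotient} together with Theorem~\ref{t:faceringisCM} then identifies $h^*_N(P,\rho;t^N)$ with the equivariant Hilbert series of $S_\cS/(F_1,\ldots,F_{d+1})$, which is manifestly effective. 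The point is that one needs to realise \emph{all} of $h^*_N(P,\rho;t)$, not just its integer-exponent part, as an equivariant Hilbert series; this requires working in $S_\cS$ (graded by $\Z$) with a degree-$N$ h.s.o.p., rather than in $S'$ alone.
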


Finally, we anticipate that one can extend other results in Ehrhart theory that use the commutative algebra viewpoint to the equivariant setting. We mention one such result. When $G$ acts trivially, the theorem below reduces to a classic result of Stanley \cite[Theorem~3.3]{StanleyMonotonicity}. In fact, the proof of Theorem~\ref{t:monotonicity} below may be regarded as an equivariant analogue of Stanley's proof. Below, if $f(t) = \sum_m f_m t^m$ and $g(t) = \sum_m g_m t^m$ are elements in $R(G)[t]$, we write $f(t) \le g(t)$ is $g_m - f_m$ is effective for all $m$. If $Q$ is a $G$-invariant lattice polytope contained in $P$, let 
 $M_Q$ be  the intersection of $M$ and the affine span of $Q$. Then $M_Q$ is an affine lattice with an induced affine representation  $\rho_Q : G \to \Aff(M_Q)$, and one may consider the associated invariant $h^*(Q, \rho_Q;t)$ (see Remark~\ref{r:altformulation}).

\begin{theorem}\label{t:monotonicity}
		Let $G$ be a finite group.
	Let $M$ be a lattice of rank $d$ and let $\rho: G \to  \Aff(M)$ be an affine representation.  
	Let $P \subset M_\R$ be a $G$-invariant $d$-dimensional lattice polytope.
	Let $Q \subset P$ be a $G$-invariant (not necessarily full dimensional) lattice polytope. 
%	Let $M_Q$ be  the intersection $M$ and the affine span of $Q$, and let 
%	 $\rho_Q : G \to \Aff(M_Q)$ to be the induced affine representation.
	Suppose there exists a $G$-invariant lattice triangulation of $P$ that restricts to a $G$-invariant lattice triangulation of $Q$. Then $h^*(Q, \rho_Q;t) \le h^*(P, \rho;t)$.
	
	%Suppose there exists a $G$-invariant regular triangulation of $P$  with vertices in $\frac{1}{N} M$ that restricts to a triangulation of $Q$. Then $h^*(Q, \rho|_Q, N;t) \le h^*(P, \rho, N;t)$.
	
	%$Q \subset P$ (not necessarily $\dim Q = \dim P$); $P$ admits a (regular \alan{morally shouldn't need this but easier}) $G$-invariant triangulation that restricts to a triangulation of $Q$. Then $h^*(Q;t) \le h^*(P;t)$. 
\end{theorem}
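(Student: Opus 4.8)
The plan is to mimic Stanley's proof of \cite[Theorem~3.3]{StanleyMonotonicity} in the equivariant, commutative-algebra setting provided by Theorem~\ref{t:mainfull} and Proposition~\ref{p:formulatriangulation}. The key point is that a $G$-invariant lattice triangulation $\mathcal{T}$ of $P$ restricting to a $G$-invariant lattice triangulation $\mathcal{T}_Q$ of $Q$ induces, on the level of toric face rings (deformed group rings), a $G$-equivariant surjection of graded $\C$-algebras from the toric face ring of $(P,\mathcal{T})$ onto the toric face ring of $(Q,\mathcal{T}_Q)$, whose kernel is generated by those monomials supported on simplices of $\mathcal{T}$ not lying in $Q$. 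Both rings are Cohen–Macaulay of the appropriate dimensions ($d+1$ and $\dim Q + 1$), and by the proof of Theorem~\ref{t:mainfull} the invariants $h^*(P,\rho;t)$ and $h^*(Q,\rho_Q;t)$ are realized as the equivariant Hilbert series of the quotients of these toric face rings by $G$-invariant l.s.o.p.'s (with the appropriate identification of the degree-one part with $\tM_\C$, resp.\ $(\tM_Q)_\C$).

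First I would set up the toric face ring $R := R(P,\mathcal{T})$ together with the surjection $\pi : R \twoheadrightarrow R(Q,\mathcal{T}_Q) =: R'$, noting that $\pi$ is $G$-equivariant since $\mathcal{T}$ and $Q$ are $G$-invariant, and that the ideal $\ker \pi$ is generated in degree $\ge 1$. Next I would choose a $G$-invariant l.s.o.p.\ $\theta_1,\dots,\theta_{q+1}$ (where $q = \dim Q$) for $R'$ whose span realizes $[(\tM_Q)_\C]$, as in the proof of Theorem~\ref{t:mainfull}; then I would lift these to $G$-invariant degree-one elements of $R$ and extend, using $G$-invariance of the ambient degree-one space and prime avoidance of associated primes equivariantly (averaging over $G$), to a full $G$-invariant l.s.o.p.\ $\theta_1,\dots,\theta_{d+1}$ for $R$ whose span realizes $[\tM_\C]$. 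The crucial compatibility is that, because $\theta_1,\dots,\theta_{q+1}$ restrict to an l.s.o.p.\ of $R'$ and $R, R'$ are Cohen–Macaulay, they form a regular sequence on both; hence $\pi$ descends to a $G$-equivariant surjection $\bar\pi : R/(\theta_1,\dots,\theta_{d+1}) \twoheadrightarrow R'/(\theta_1,\dots,\theta_{q+1})$ of finite-dimensional graded $G$-modules.

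From the surjection $\bar\pi$ we get, coefficientwise in $R(G)$, that the class of each graded piece of the target is dominated by that of the source, i.e.\ $h^*(Q,\rho_Q;t) \le h^*(P,\rho;t)$; here one uses that for finite-dimensional $\C G$-modules a surjection $A \to B$ gives $[B] \le [A]$ in $R(G)$ since $[A] = [B] + [\ker]$ and $[\ker]$ is effective. The main obstacle I anticipate is the equivariant lifting/extension step: ensuring that one can simultaneously lift the l.s.o.p.\ of $R'$ to $R$, extend it to a genuine l.s.o.p.\ of $R$, keep the whole system $G$-invariant, and arrange the degree-one span to have class exactly $[\tM_\C]$ — all at once. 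This requires a $G$-equivariant version of the usual Noether-normalization/prime-avoidance argument (replacing a chosen generic section by its $G$-average, and working with the $G$-fixed subspace of the relevant degree-one space, over the infinite field $\C$ so that avoidance still works); the compatibility with $[\tM_\C]$ should follow from how that identification is made in the proof of Theorem~\ref{t:mainfull}, namely via the canonical $G$-equivariant degree-one maps $\tM_\C \to R$ and $(\tM_Q)_\C \to R'$ intertwined by $\pi$. Once this lifting is in place, the monotonicity is immediate.
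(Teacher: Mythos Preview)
Your high-level strategy---toric face rings, a $G$-equivariant surjection $\pi: R \twoheadrightarrow R'$, quotient by compatible l.s.o.p.'s, then read off monotonicity from the induced surjection---is exactly the paper's approach. The difference lies in how the compatible l.s.o.p.'s are produced, and this is where you make your own life harder than necessary.

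You propose to start on the $Q$ side with an l.s.o.p.\ realizing $[(\tM_Q)_\C]$, lift to $R$, then equivariantly extend to a full l.s.o.p.\ realizing $[\tM_\C]$, with the extra $d-q$ elements required to lie in $\ker\pi$ so that $\bar\pi$ is well-defined. You correctly flag this lifting/extension step as the main obstacle, and indeed carrying it out abstractly (equivariant prime avoidance with a prescribed $G$-isotype for the span) is not pleasant. The paper bypasses this entirely by working in the opposite direction: take $f = \sum_{a \in A} x^a$ where $A$ is the vertex set of $\cS$, and use the \emph{canonical} Jacobian l.s.o.p.\ $(J_{f,\cS})_1 = \{\partial_v F : v \in \tN_\C\}$ on the $P$ side, furnished by Lemma~\ref{l:invarianttriangulation}. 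The compatibility with $Q$ is then a one-line computation: for $v \in \tN_\C$ one has $p(\partial_v F) = \partial_{v_Q}\, p(F)$, where $v_Q$ is the restriction of $v$ to $(\tM_Q)_\C$. Since the restriction map $\tN_\C \to \Hom_\C((\tM_Q)_\C,\C)$ is surjective, $p(J_{f,\cS}) = J_{f|_Q,\cS|_Q}$, and $\bar p: S_\cS/J_{f,\cS} \to S_{\cS|_Q}/J_{f|_Q,\cS|_Q}$ is the desired $G$-equivariant surjection. Lemma~\ref{l:invarianttriangulation} applied to both $P$ and $Q$ identifies the two quotients with the equivariant $h^*$-polynomials, and surjectivity gives the inequality.

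So there is no genuine gap in your outline, but the ``main obstacle'' you identify dissolves once you realize that the Jacobian l.s.o.p.\ already on the $P$ side restricts correctly---no lifting, no extension, no equivariant prime avoidance required.
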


We briefly discuss the structure of the paper. In Section~\ref{s:equivariantEhrhart}, we recall and develop equivariant Ehrhart theory.
In Section~\ref{s:triangulations}, we develop criterion to guarantee the existence of a $G$-invariant lattice triangulation and prove Theorem~\ref{t:triangulationOrderGroup}.
In Section~\ref{s:commutativealgebra}, we recall background on commutative algebra and the principal $A$-determinant, before proving many of our results including Theorem~\ref{t:mainfull}, Corollary~\ref{c:maincorollaryshort} and  Theorem~\ref{t:monotonicity}. % and Proposition~\ref{p:formulatriangulation}.   
Below, we include a short erratum for the paper \cite{StapledonEquivariant}, before listing some notation to be used throughout the paper.

\emph{Erratum to \cite{StapledonEquivariant}}: We mention three errors in \cite{StapledonEquivariant}, ordered from most to least serious. 
Firstly, 
\cite[Remark~7.8]{StapledonEquivariant}, together with \cite[Remark~7.8]{StapledonEquivariant}, give a criterion that guarantees that condition
\eqref{i:mainnondegenerate} in Theorem~\ref{t:mainfull} holds. 
 However, Example~\ref{e:p=5example} shows that this is false. The fix is that in equation $(6)$ in \cite[Remark~7.8]{StapledonEquivariant} `$G_Q$-orbit' needs to be replaced by `$G$-orbit'. Alternatively, we provide Corollary~\ref{c:translativeK} and Corollary~\ref{c:translativeglue}, which together with Theorem~\ref{t:mainfull} may act as a replacement (with stronger hypotheses). As a consequence of this error, some of the proofs of subsequent statements are incomplete:  \cite[Corollary~7.10]{StapledonEquivariant} (we do not have a proof or a counterexample), \cite[Corollary~7.12]{StapledonEquivariant} (the statement is correct by Proposition~\ref{p:dim2mainthm}), \cite[Corollary~7.14]{StapledonEquivariant} (the statement is correct by Theorem~\ref{t:mainfull} and Theorem~\ref{t:triangulationOrderGroup}), %Corollary~\ref{c:maincorollaryshort} and Lemma~\ref{l:intandceil}), 
 \cite[Example~7.16]{StapledonEquivariant} (we do not have a proof or a counterexample). 
 In the cases above where we do not have a counterexample, we do have reason to expect that the statements are true. 
 Unfortunately, the proof of \cite[Theorem~3.52]{EKS22} references \cite[Corollary~7.10]{StapledonEquivariant} and hence is incomplete (the version of Corollary~7.10 stated in \cite[Corollary~2.10]{EKS22} is false by Example~\ref{e:p=5example}). %\cite[Example~7.16]{StapledonEquivariant}). 
 However, the statement of \cite[Theorem~3.52]{EKS22} is true and Example~\ref{e:permutahedronv3} below provides a generalization (using Theorem~\ref{t:mainfull}).

 Secondly, in the statement of condition \eqref{i:mainnondegenerate} we consider a polynomial $f \in \C[M]$ that is $G$-invariant. This implies that the hypersurface $\{ f  = 0 \} \subset \Spec \C[M]$ is $G$-invariant. However, the converse is false. More precisely, there may be a nontrivial character $\chi: G \to \C$ such that $g \cdot f = \chi(g)f$ for all $g$ in $G$, and then $\{ f = 0 \}$ is $G$-invariant but $f$ is not. In particular, 
 references to $G$-invariant hypersurfaces in
 \cite{StapledonEquivariant} and \cite{StapledonRepresentations11}
%  \cite[Theorem~7.7]{StapledonEquivariant} and \cite[Conjecture~12.1]{StapledonEquivariant} 
  should be references to $G$-invariant polynomials. 
  We leave open the problem of extending the theory to deal with all $G$-invariant hypersurfaces. 
%  This issue will be treated in more detail in \cite{StapledonFramework}. 

Finally, it what follows we will consider an affine representation $\rho \colon G \to \Aff(M)$. % be a representation. 
After possibly replacing $G$ with $G/\ker(\rho)$, we may always assume that $G$ acts faithfully. The statements of \cite[Corollary~5.9]{StapledonEquivariant} and \cite[Corollary~5.11]{StapledonEquivariant} should include the assumption that $G$ acts faithfully.

\emph{Notation and Conventions}:
Let $m,n$ be positive integers. We write $m | n$ if $m$ divides $n$, and 
write $\lcm(m,n)$ and $\gcd(m,n)$ for the lowest common multiple and greatest common divisor respectively of $m$ and $n$. 
If $G$ is a finite group and $g \in G$, we let $\ord(g)$ be the order of $g$, and 
let $\exp(G) := \lcm( \ord(g) : g \in G )$ denote the exponent of $G$. 
A \emph{lattice} $M$ is a   free abelian group of finite rank. 
We write $M_k := M \otimes_\Z k$ for a field $k$. If a group $G$ acts on a set $X$, we will often write $g \cdot x$ for the action of $g$ in $G$ on $x$ in $X$. 
We let  $\{ e_i : 1 \le i \le n \}$ denote the standard basis vectors in $\Z^n$  for a positive integer $n$.  We write $\Conv{A,B}$ to denote the convex hull of sets $A,B$. If $L$ is a lattice and $r$ is a ray in $L_\R$ such that $L \cap r$ is nonzero, then the  \emph{primitive integer vector} of $r$ is 
 the generator of the semigroup $r \cap L$. 

\emph{Acknowledgements}: 
We thank D'al\`{i} and Delucchi  for bringing their independent work \cite{DDEquivariantEhrhart} to the author's attention and for friendly discussions of their results. 
We also thank  Lev Borisov and Tim R\"omer for enlightening 
discussions.
%email correspondences. 

\section{Equivariant Ehrhart theory}\label{s:equivariantEhrhart}

In this section, we recall and develop equivariant Ehrhart theory. The relevant background material is first introduced in Section~\ref{ss:rationalpolytopes}, Section~\ref{ss:affine} and
Section~\ref{ss:reptheorybasics}. The main invariants are then introduced and their properties are developed in 
Section~\ref{ss:equivariantEhrhart} and Section~\ref{ss:equivarianthNstar}

%Throughout the paper, we let  $M \cong \Z^d$ be a lattice of rank $d$ 
%and write $M_k := M \otimes_\Z k$ for a field $k$. Let $P \subset M_\R$ be a $d$-dimensional lattice polytope. 

\subsection{Ehrhart theory of rational polytopes}\label{ss:rationalpolytopes}

We first recall some basic facts about Ehrhart theory of rational polytopes. We refer the reader to \cite[Section~4.6.2]{StanleyEnumerative} and \cite{BeckRobinsComputing}  for more details.
Let $M$ be a lattice and 
%%Let $M$ be a free abelian group of finite rank. 
%Let $M$ be a \emph{lattice} i.e. 
%%$M \cong \Z^d$ for some nonnegative integer $d$. 
%M is a free abelian group of finite rank. 
%%Let  $M \cong \Z^d$ be a lattice of rank $d$. 
%Recall from the introduction that we write $M_k := M \otimes_\Z k$ for a field $k$.
let 
$Q \subset M_\R$ be a \emph{rational polytope} i.e. the convex hull of finitely many points in $M_\Q$.
Then $Q$ is a \emph{lattice polytope} if its vertices lie in $M$. 
We say that $Q$ is a \emph{reflexive polytope} if it is a lattice polytope containing the origin in its relative interior, and every nonzero element of $M$ lies on the boundary of a positive integer dilate of $Q$. 
%If we write
%Recall that we write  
Let 
$L(Q;m) := |Q \cap \frac{1}{m} M|$ for a positive integer $m$. 
%$m \in \Z_{>0}$. 
Then $L(Q;m)$ is a quasi-polynomial of degree $\dim Q$ with constant term $1$ called the \emph{Ehrhart quasi-polynomial} of $Q$. 
%a positive integer $m$,  
%and 
The associated generating series
\[
\Ehr(Q;t) := \sum_{m \ge 0} L(Q;m) t^m
\]
is called the \emph{Ehrhart series} of $Q$.
We will need the following theorem.

\begin{theorem}\cite[Theorem~4.6.8]{StanleyEnumerative}\label{t:orderpole}
Let $Q \subset M_\R$ be a rational polytope.
Then the Ehrhart series $\Ehr(Q;t)$ is a rational function in $t$ and the order of $t = 1$ as a pole is equal to $\dim Q + 1$. 
%All poles are roots of unity and have order at most $\dim Q + 1$. 
\end{theorem}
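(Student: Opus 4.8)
The plan is to prove Theorem~\ref{t:orderpole} by reducing to the case of a lattice polytope via dilation, then invoking the standard structure of Ehrhart series of lattice polytopes. First I would recall that for a rational polytope $Q$, there is a positive integer $N$ (the \emph{denominator} of $Q$, the least common multiple of the denominators of the coordinates of the vertices of $Q$) such that $NQ$ is a lattice polytope. The key observation is that $L(Q;m)$ restricted to each residue class of $m$ modulo $N$ is a genuine polynomial in $m$ of degree $\dim Q$ with the same leading coefficient $\vol(Q)$ (normalized volume), so $\Ehr(Q;t)$ splits as a sum over residues and each piece is, up to substitution $t \mapsto t^{1/N}$ and a polynomial prefactor, the Ehrhart series of the lattice polytope $NQ$.

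The main technical input I would cite or reprove is that for a $d$-dimensional \emph{lattice} polytope $P$, the Ehrhart series has the form $\Ehr(P;t) = h^*(P;t)/(1-t)^{d+1}$ with $h^*(P;t)$ a polynomial of degree at most $d$ satisfying $h^*(P;1) = \vol(P) \neq 0$; since the numerator does not vanish at $t=1$, the order of the pole at $t=1$ is exactly $d+1 = \dim P + 1$. This is precisely \cite[Section~4.6.2]{StanleyEnumerative} territory and is already referenced in the excerpt. Then for the rational polytope $Q$ of denominator $N$, writing $\Ehr(Q;t) = \sum_{r=0}^{N-1} t^r g_r(t^N)$ where $g_r$ is a polynomial coming from the $r$-th residue class, one shows each nonzero $g_r(s)$ contributes a pole at $s = 1$, i.e. at $t^N = 1$; the pole orders at the various primitive $N$-th roots of unity can differ, but the crucial point is that the pole order at $t = 1$ itself is governed by the top-degree asymptotics $L(Q;m) \sim \vol(Q) m^{\dim Q}$, which forces the pole at $t=1$ to have order exactly $\dim Q + 1$: an order-$(\dim Q + 1)$ pole at $t=1$ produces growth $\sim c\, m^{\dim Q}$, and no larger pole is possible since $L(Q;m)$ is a quasi-polynomial of degree $\dim Q$.

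Concretely, the steps in order: (1) introduce the denominator $N$ of $Q$; (2) recall that $L(Q;m)$ is a quasi-polynomial of degree $d := \dim Q$ with leading term $\vol(Q) m^{d}$ and $\vol(Q) > 0$; (3) decompose $\Ehr(Q;t) = \sum_m L(Q;m)t^m$ according to $m \bmod N$ and express each piece via the (lattice) Ehrhart series of $NQ$ with $t$ replaced by $t^N$, concluding rationality; (4) from the degree-$d$ quasi-polynomial growth, deduce that near $t=1$ one has $\Ehr(Q;t) \sim c/(1-t)^{d+1}$ with $c = d!\,\vol(Q) \neq 0$ (using that $\sum_m m^k t^m$ has a pole of order $k+1$ at $t=1$ and that lower-degree terms and the periodic fluctuations contribute lower-order poles at $t=1$), hence the order of the pole is exactly $d+1$.

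The part requiring the most care is step (4): one must argue that the periodic (residue-dependent) coefficients of the quasi-polynomial do not accidentally conspire to raise or lower the pole order at $t=1$ specifically. The clean way is to write $L(Q;m) = \sum_{j=0}^{d} c_j(m) \binom{m}{j}$-style or just $L(Q;m) = \sum_{j=0}^d a_j(m) m^j$ with $a_j$ periodic of period $N$, note $a_d(m) \equiv \vol(Q)$ is constant and nonzero (the leading coefficient of an Ehrhart quasi-polynomial is constant), and then observe $\sum_m a_j(m) m^j t^m$ has a pole at $t=1$ of order at most $j+1$, with equality for $j=d$ because the coefficient is the nonzero constant $\vol(Q)$ rather than a mean-zero periodic function. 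Summing, the pole at $t=1$ has order exactly $d+1$. Since all the ingredients — rationality, quasi-polynomiality of degree $\dim Q$, constancy and positivity of the leading coefficient — are standard and cited in the excerpt, this assembles into a short proof.
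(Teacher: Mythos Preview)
The paper does not prove this theorem; it is simply cited from Stanley \cite[Theorem~4.6.8]{StanleyEnumerative} as background and immediately followed by Remark~\ref{r:changedenominator} with no intervening proof. Your argument is a correct standard proof of the cited result: rationality via the quasi-polynomial decomposition, and the exact pole order at $t=1$ from the fact that the leading coefficient of the Ehrhart quasi-polynomial is the constant (nonzero) volume term, so that the $j=d$ summand contributes a genuine pole of order $d+1$ while lower-degree terms contribute at most order $j+1 \le d$. There is nothing to compare against in the paper itself.
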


\begin{remark}\label{r:changedenominator}
In fact, a stronger result holds. 
The \emph{denominator} of $Q$ is the smallest positive integer $N$ such that $NQ$ is a lattice polytope. 
%If 
%$N \in \Z_{>0}$
%$N$ is a positive integer such that $NQ$ is a lattice polytope, 
Then $\Ehr(Q;t)$ can be expressed as a rational function with denominator 
$(1 - t^N)^{\dim Q + 1}$ and numerator a polynomial of degree strictly less than $N (\dim Q + 1)$ with constant term equal to $1$ and nonnegative integer coefficients \cite{StanleyMagic,StanleyDecompositions}.
When $Q$ is a lattice polytope,  $N = 1$ and the numerator is the \emph{$h^*$-polynomial} $h^*(Q;t)$ of $Q$.
\end{remark}

Assume further than $Q$ is a simplex. In this case, there is a well-known formula for $\Ehr(Q;t)$ due to Ehrhart \cite[p.54]{EhrhartPolynomes} 
 that we now recall (cf. Proposition~\ref{p:formulatriangulation}). 
 Let $L$ be a lattice and let $\psi: L \to \Z$ be a surjective group homomorphism with corresponding linear map $\psi_\R : L_\R \to \R$. Assume that $Q \subset L_\R$ is a rational polytope with respect to $L$ that is contained in $\psi_\R^{-1}(1)$. Let $C_Q$ 
 be the cone generated by $Q$ in $L_\R$. Let 
 $u_1,\ldots,u_s$ be the primitive integer vectors of the rays of $C_Q$, and define
 \[
 \BBox(C_Q) := \{ u \in C_Q \cap L : u = \sum_{j = 1}^s \lambda_j u_{j} \textrm{ for some } 0 \le  \lambda_j < 1 \}.
 \]
 Then
 \begin{equation}\label{e:rationalsimplex}
 	\Ehr(Q;t) = \frac{\sum_{u \in \BBox(C_Q) } t^{\psi(u)}}{\prod_j (1 - t^{\psi(u_j)})}.
 \end{equation}
 
% Let $\tM = M \oplus \Z$ with projection map  $\HT : \tM \to \Z$ onto the last coordinate. Let $C_Q$ be the cone generated by $Q \times \{1\}$ in $\tM_\R$. Let 
%$u_1,\ldots,u_s$ be the primitive integer vectors of the rays of $C_Q$, and define
%\[
%\BBox(C_Q) := \{ u \in C_Q \cap \tM : u = \sum_{j = 1}^s \alpha_j u_{j} \textrm{ for some } 0 \le  \alpha_j < 1 \}.
%\]
%Then 
%\begin{equation}\label{e:rationalsimplex}
%	\Ehr(Q;t) = \frac{\sum_{u \in \BBox(C_Q) } t^{\HT(u)}}{\prod_j (1 - t^{\HT(u_j)})}.
%\end{equation}

The following example will be useful for later computations.

\begin{example}\label{e:d+2v1}
	Assume that $d$ is an even positive integer, and let $r = \frac{d}{2} + 1$. 
	Let $a_1,\ldots,a_r$ be positive integers with $\gcd(a_1,\ldots,a_r) = 1$.
	Let $e_1,\ldots,e_r,f_1,\ldots,f_r$ be a basis of $\Z^{d + 2}$. 
	Let $\bar{e}_i$ and $\bar{f_i}$ denote the images of $e_i$ and $f_i$ respectively in the lattice
	$L := \Z^{d + 2}/\Z(\sum_i a_i (e_i - f_i))$. 
	Consider the group homomorphism  $\psi: L \to \Z$ satisfying $\psi(\bar{e}_i) = \psi(\bar{f}_i) = 1$ for all $1 \le i \le r$, 
	and the corresponding linear map $\psi_\R: L_\R \to \R$. 
	%\alan{Consider notation $[r]$ for $\{1,\ldots,r\}$}
	Let $I_1,\ldots,I_s$  be a partition of  $\{1,\ldots,r\}$. Assume that 
	$a_i = a_j$ if $i,j \in I_k$ for any $1 \le i,j \le r$ and $1 \le k \le s$. 
%	and define $a_{I_k} := a_i$ for any choice of $i \in I_k$.  \alan{do we need this notation?}
	Let $Q = \Conv{\frac{1}{2|I_k|} \sum_{i \in I_k} (\bar{e}_i + \bar{f}_i) :  1 \le k \le s} \subset L_\R$. Then $Q \subset \psi_\R^{-1}(1)$ is a rational simplex with $s$ vertices.
	We claim that 
%we may apply \eqref{e:rationalsimplex} (with $\tM$ replaced by $L$) to compute:
\[
\Ehr(Q;t) = \frac{1 + t^{|\{ i : a_{i}\textrm{ odd } \}|}}{\prod_k (1 - t^{2|I_k|})}.
\]
To establish the claim, consider the cone $C_Q$ generated by $Q$ in $L_\R$. 
The primitive integer vectors of $C_Q$ are $\{ \sum_{i \in I_k} (\bar{e}_i + \bar{f}_i) : 1 \le k \le s \}$. 
%Let $\hat{u} := \frac{1}{2}\sum_{i: a_{i} \textrm{ odd }} (\bar{e}_i + \bar{f}_i)$. 
The claim follows from \eqref{e:rationalsimplex} if we can establish that $\BBox(C_Q) = \{ 0,  \frac{1}{2}\sum_{i: a_{i} \textrm{ odd }} (\bar{e}_i + \bar{f}_i)  \}$.
Observe that $\BBox(C_Q)$ consists of the images $\bar{u}$ in $L$ of all elements $u$ in $\Z^{d + 2}$ that have the form:
	\[
u =  \sum_k \lambda_k \sum_{i \in I_k} (e_i + f_i) = 
%\sum_i (\alpha_i  + m_i + \beta a_i)(e_i + f_i) 
\sum_i m_i  e_i +   \sum_i m_i' f_i + \beta \sum_i a_i (e_i - f_i)
%+ \sum_i (m_i' - \beta a_i)e_i
\]
for some $0 \le \lambda_k < 1$, $m_i,m_i' \in \Z$ and $\beta \in \R$. 
% such that $m_i + \beta a_i = m_i' - \beta a_i$ for $1 \le i \le r$.
Note that $\lambda_k$ is determined by $\lambda_k = \beta a_i = -\beta a_i \in \R/\Z$ for any $i \in I_k$. Since $\gcd(a_1,\ldots,a_r) = 1$, the condition $2 \beta a_i = 0 \in \R/\Z$ implies that either $\beta = 0 \in \R/\Z$ or $\beta = \frac{1}{2} \in \R/\Z$. If $\beta = 0 \in \R/\Z$, then $\lambda_k = 0$ for all $k$
and $\bar{u} = 0$. Assume that $\beta = \frac{1}{2} \in \R/\Z$. Then $\lambda_k = \frac{1}{2}$ if $a_{i}$ is odd for some/any $i \in I_k$, and  $\lambda_k = 0$ otherwise. 
The only solution is $u = \frac{1}{2}\sum_{i: a_{i} \textrm{ odd }} (e_i + f_i)$. 
\end{example}

\subsection{Affine transformations}\label{ss:affine}

We next recall some basic facts about affine spaces and affine transformations. 
We refer the reader to \cite{BergerGeometry} and  \cite[Section~5.3.D]{GKZ94} for details.

%A \emph{lattice} is a finitely generated free abelian group. 
An \emph{affine lattice} $M'$ is a principal homogeneous space with respect to a lattice $M$. That is, $M$ is a free abelian group that acts transitively and freely on $M'$. Given $u_1,u_2$ in $M'$, there is a unique element $u_1 - u_2$ in $M$ such that $u_1 = (u_1 - u_2) + u_2$. %, and $M = \{ u_1 - u_2 : u_1,u_2 \in M' \}$. 
If we fix an element $u'$ in $M'$, then we have a bijection 
$f_{u'}: M' \to M$ defined by
$f_{u'}(u) = u - u'$ for all $u \in M'$,
%$u \mapsto u - u'$ 
that we can use to give $M'$ the structure of a lattice. Conversely, $M$ naturally has the structure of an affine lattice with respect to the action of itself. Informally, we view
$M$ as being the data of $M'$ together with a choice of origin. 
Given $u_1,\ldots,u_r$ in $M'$ and $\lambda_1,\ldots,\lambda_r \in \Z$ such that 
$\sum_i \lambda_i = 1$, there is a well-defined element $\sum_i \lambda_i u_i$ in $M'$ such that   $f_{u'}(\sum_i \lambda_i u_i) = \sum_i \lambda_i (u_i - u') \in M$ for all choices of $u'$ in $M'$. 
We say that a finite set $A \subset M'$ \emph{affinely generates} $M'$ if $M' = \{ \sum_{a \in A} \lambda_a a : \lambda_a \in \Z, \sum_{a} \lambda_a = 1 \}$.

An \emph{affine transformation} of $M'$ is a map $\phi: M' \to  M'$ such that we have a well-defined group homomorphism
%linear map
  $\bar{\phi}: M \to M$ satisfying $\bar{\phi}(u_1 - u_2) = \phi(u_1) - \phi(u_2)$ for all $u_1,u_2 \in M'$. The set of all %invertible 
bijective affine transformations forms a group $\Aff(M')$. On the other hand, an affine transformation of the lattice $M$ is a map  $\nu: M \to M$ of the form $\nu(u) = A(u) + b$  for some $A \in \GL(M)$, $b \in M$ and any $u \in M$. These form a group $\Aff(M)$ which may be identified with the semidirect product $M \rtimes \GL(M)$. 
If we fix an element $u'$ in $M'$, then we have an isomorphism of groups 
$\Aff(M') \to \Aff(M)$, $\phi \mapsto f_{u'} \circ \phi \circ f_{u'}^{-1}$. 
%Observe that for any field $k$, $\nu \in \Aff(M)$ extends to a map $\nu_k: M_k \to M_k$ i.e. an element in  the group of invertible affine transformations of the affine space $M_k$.  \alan{possibly mention abuse of notation here?}

We will also need the following alternative interpretation of $\Aff(M)$. 
Let $\tM = M \oplus \Z$ with projection map  $\HT : \tM \to \Z$ onto the last coordinate. Then $M \times \{1\} = \HT^{-1}(1)$ is an affine lattice. Moreover, if $0_M$ denotes the origin in $M$, then we may use the distinguished point $(0_M, 1)$ in $M \times \{1\}$ to identify $\Aff(M)$ with $\Aff(M \times \{1\})$. On the other hand, $\Aff(M \times \{1\})$ is isomorphic (via restriction) to the  subgroup $\{ \widehat{\phi} \in \GL(\tM) : \HT \circ  \widehat{\phi} = \HT \} \subset \GL(\tM)$ i.e. the subgroup of 
 $\GL(\tM)$ that preserves the affine lattice $M \times \{1\}$. 
 In this way, we will often view $\Aff(M)$ as a subgroup of $\GL(\tM)$. 
 Explicitly, if $\nu(u) = A(u) + b$  for some $A \in \GL(M)$, $b \in M$ and any $u \in M$, then the corresponding linear transformation is $\widehat{\phi}(u,m) = (A(u) + mb, m)$ for any $u \in M$ and $m \in \Z$.

 Let $W$ be a real affine space i.e.  a principal homogeneous space with respect to a real vector space $V$. As in the case of affine lattices, given $w_1,w_2 \in W$, there is a unique element $w_1 - w_2$ in $V$ such that $w_1 = (w_1 - w_2) + w_2$, and 
 given $w_1,\ldots,w_r$ in $W$ and $\lambda_1,\ldots,\lambda_r \in \R$ such that 
 $\sum_i \lambda_i = 1$, there is a well-defined element $\sum_i \lambda_i w_i$ in $W$. In particular, a polytope in $W$ is defined to be the convex hull of finitely many points in $W$. 
 % such that   $f_{w'}(\sum_i \lambda_i w_i) = \sum_i \lambda_i (w_i - w') \in M$ for all $w'$ in $W$. 
 An affine transformation of $W$ is a map $\phi: W \to  W$ such that we have a well-defined 
 $\R$-linear map
 $\bar{\phi}: V \to V$ satisfying $\bar{\phi}(w_1 - w_2) = \phi(w_1) - \phi(w_2)$ for all $w_1,w_2 \in W$, and the set of all %invertible 
 bijective affine transformations forms a group $\Aff(W)$.
 
 Following \cite[Section~5.3.D]{GKZ94}, a subset $M' \subset W$ is an \emph{affine lattice in $W$} if $M'$ is an affine lattice with corresponding lattice $M \subset V$, where $M$ is a lattice of rank %the rank of $M$ equals
  $\dim V = \dim W$. In that case, one verifies that $\Aff(M')$ may be identified with the subgroup $\{ \phi \in \Aff(W) : \phi(M') = M' \}$ of $\Aff(W)$.  
 Also, the inclusion $M \subset V$ induces an isomorphism of vector spaces $M_\R \cong V$. 
 If we fix an element $u'$ in $M'$. Then $f_{u'}: M' \to M$ extends to a bijection from $W \to M_\R$ that takes $w \in W$ to $w - u' \in V \cong M_\R$. For any positive integer $m$, we may consider the affine lattice $\frac{1}{m} M' := \{ \sum_{i} \lambda_i u_i : u_i \in M', \lambda_i \in \frac{1}{m} \Z, \sum_i \lambda_i = 1 \}$ in $W$. Then $f_{u'}(\frac{1}{m} M') = \frac{1}{m}M \subset M_\R$ for all $u'$ in $M'$. If $P \subset W$ is a polytope, then %Observe that 
 $P \cap (\cup_{m > 0} \frac{1}{m}M')$ has the structure of a semigroup with the origin removed. Indeed, given $u_1 \in P \cap \frac{1}{m_1} M'$ and $u_2 \in P \cap \frac{1}{m_2}M'$, then the sum of $u_1$ and $u_2$ is defined to be $\frac{m_1}{m_1 + m_2} u_1 + \frac{m_2}{m_1 + m_2} u_2 \in P \cap
\frac{1}{m_1 + m_2}M'$. Let $(P \cap (\cup_{m > 0} \frac{1}{m}M'))^\times$ denote the corresponding semigroup. 

\begin{example}\label{e:constructaffine}
	 Suppose that $L$ is a lattice, $\psi: L \to \Z$ is a group homomorphism and $m$ is an integer in the image of $\psi$. 
	Let $\psi_\R: L_\R \to \R$ be the corresponding map of $\R$-vector spaces. 
	Then $W = \psi_\R^{-1}(m)$ is a real affine space with corresponding vector space $V = \psi_\R^{-1}(0)$, and 
	$M' = \psi^{-1}(m)$ is an affine lattice in $W$ with corresponding lattice $\psi^{-1}(0)$. 
\end{example}
 
 Finally, if $Q$ is a lattice polytope with respect to an affine lattice $M'$ in $W$ then  the \emph{normalized volume} $\Vol(Q) \in \Z_{>0}$ 
 %of a lattice polytope 
 %$Q \subset M_\R$ 
 is $d!$ times the Euclidean volume of $\psi(Q)$ if one chooses an isomorphism 
 $\psi: W \to \R^d$ of affine spaces such that $\psi(M') = \Z^d$
% $M' \cong \Z^d$ of affine lattices 
 (see, for example, \cite[Definition~5.1.3]{GKZ94}).

%\alan{review if need direct sum below!!}
% 
%\alan{filler}
%Consider two affine lattices $M_1'$ and $M_2'$ with corresponding lattices $M_1$ and $M_2$ respectively. Then $M_1' \oplus M_2' := \{ (u_1,u_2) : u_i \in M_i' \}$ has the structure of an affine lattice with corresponding lattice $M_1 \oplus M_2$. 
%\alan{might need affinely generated??}

%OLD BELOW
%
%Let $M$ be a lattice and let $\tM = M \oplus \Z$ with projection map  $\HT : \tM \to \Z$ onto the last coordinate. The semidirect product $\Aff(M) = M \rtimes \GL(M)$ is identified with the set of \emph{affine transformations} of $M$ i.e. maps $\phi: M \to M$ of the form $\phi(u) = A(u) + b$ for some $A \in \GL(M)$, $b \in M$ and any $u \in M$. 
%We will often identify $\Aff(M)$ with the subgroup $\{ \widehat{\phi} \in \GL(\tM) : \HT \circ  \widehat{\phi} = \HT \} \subset \GL(\tM)$.
%Under this identification, $\phi$ is the restriction of $\widehat{\phi}$ to $M$, which is identified with $M \times \{ 1 \} \subset \tM$, and $\widehat{\phi}(u,m) = (A(u) + mb, m)$ for any $u \in M$ and $m \in \Z$. In particular, we have an induced $\R$-vector space map
%$\widehat{\phi}_\R : \tM_\R \to \tM_\R$ corresponding to a map $\phi_\R: M_\R \to M_\R$ extending $\phi$. We will often slightly abuse notation and identity $\phi$ with the corresponding map on $M_\R$. 
%%An element $c$ in $M_\R$ is fixed by $\phi$ if and only if $\phi(u) = A(u - c) + c$ for all $u \in M$,  and, in this case, $b = c - Ac$.

\subsection{Representation theory of finite groups}\label{ss:reptheorybasics}

We now recall some basic facts about representation theory of finite groups. We refer the reader to 
\cite{SerreLinearRepresentations} for an excellent reference on this subject.

%Let $M$ be a lattice and let $\tM = M \oplus \Z$ with projection map  $\HT : \tM \to \Z$ onto the last coordinate. The semidirect product $\Aff(M) = M \rtimes \GL(M)$ is identified with the set of \emph{affine transformations} of $M$ i.e. maps $\phi: M \to M$ of the form $\phi(u) = A(u) + b$ for some $A \in \GL(M)$, $b \in M$ and any $u \in M$. 
%We will often identify $\Aff(M)$ with the subgroup $\{ \widehat{\phi} \in \GL(\tM) : \HT \circ  \widehat{\phi} = \HT \} \subset \GL(\tM)$.
%Under this identification, $\phi$ is the restriction of $\widehat{\phi}$ to $M$, which is identified with $M \times \{ 1 \} \subset \tM$, and $\widehat{\phi}(u,m) = (A(u) + mb, m)$ for any $u \in M$ and $m \in \Z$. In particular, we have an induced map
%$\widehat{\phi} \otimes \id: \tM_\R \to \tM_\R$ corresponding to a map from $M_\R \to M_\R$ extending $\phi$. We will often slightly abuse notation and identity $\phi$ with the corresponding map on $M_\R$. 
%%An element $c$ in $M_\R$ is fixed by $\phi$ if and only if $\phi(u) = A(u - c) + c$ for all $u \in M$,  and, in this case, $b = c - Ac$.  

Let $G$ be a finite group with identity element $\id$ and let $M$ be a lattice. 
A (linear) \emph{representation} of $G$ acting on $M$ is a group homomorphism $\hat{\rho}: G \to \GL(M)$. 
An \emph{affine representation} of $G$ acting on $M$ is a group homomorphism $\rho: G \to \Aff(M)$.
%When $\rho$ is understood, we will often write 
%$g \cdot u := \rho(g)(u)$ for $g \in G$ and $u \in M_\R$. 
The following lemma is well known and we provide a proof only for the convenience of the reader. In the statement below, note that $\hat{\rho}$ is uniquely defined, while $c$ may be any $G$-fixed point of $\frac{1}{|G|}M$. %$M_\Q$.
  
\begin{lemma}\label{l:affinereps}
Affine representations of $G$ acting on $M$ are precisely functions $G \times M \to M$ of the form $$\rho(g)(u) = 
\hat{\rho}(g)(u - c) + c \textrm{ for any } g \in G, u \in M,$$ where 
$\hat{\rho}: G \to \GL(M)$ is a representation of $G$
and $c \in \frac{1}{|G|}M$ %M_\Q$ 
satisfies 
% i.e. $\hat{\rho}$ is a group homomorphism.
 $c - \hat{\rho}(g)(c) \in M$ for all $g \in G$. 
\end{lemma}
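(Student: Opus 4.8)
The statement has two directions. For the easy direction, suppose we are given a representation $\hat{\rho}: G \to \GL(M)$ and a point $c \in \frac{1}{|G|}M$ with $c - \hat{\rho}(g)(c) \in M$ for all $g$. I would first check that $\rho(g)(u) := \hat{\rho}(g)(u-c) + c$ genuinely lands in $M$ (here $u - c \in \frac{1}{|G|}M$, so $\hat{\rho}(g)(u-c) \in \frac{1}{|G|}M$, and one writes $\rho(g)(u) = u + (\hat\rho(g) - \mathrm{id})(u - c) = u + (\hat\rho(g)-\mathrm{id})(u) - (\hat\rho(g)(c) - c)$; the first parenthesis is in $M$ since $\hat\rho(g)(u) \in M$, and the second is in $M$ by hypothesis). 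Then I would verify $\rho$ is a homomorphism into $\Aff(M)$: the linear part of $\rho(g)$ is visibly $\hat\rho(g) \in \GL(M)$, so each $\rho(g)$ is an affine transformation, and the cocycle-free conjugation form $\rho(g)\rho(h)(u) = \hat\rho(g)(\hat\rho(h)(u - c) + c - c) + c = \hat\rho(gh)(u-c) + c = \rho(gh)(u)$ makes composition immediate. Invertibility is automatic.

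For the converse, start with an arbitrary affine representation $\rho: G \to \Aff(M)$, so $\rho(g)(u) = \hat\rho(g)(u) + b(g)$ where $\hat\rho: G \to \GL(M)$ is the linear part (a homomorphism, since $\widebar{\rho(g)\rho(h)} = \widebar{\rho(g)}\,\widebar{\rho(h)}$) and $b(g) \in M$. The functional equation $\rho(gh) = \rho(g)\rho(h)$ forces the cocycle identity $b(gh) = \hat\rho(g)(b(h)) + b(g)$. The goal is to find a common fixed point $c$; the standard averaging trick is to set $c := -\frac{1}{|G|}\sum_{h \in G} \hat\rho(h)^{-1}(b(h))$, or equivalently to average the points $\rho(h)^{-1}(0_M)$ over $h \in G$. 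By construction $c \in \frac{1}{|G|}M$. I would then compute $\rho(g)(c)$ using the cocycle identity: a short manipulation (reindexing the sum $h \mapsto gh$ or $h \mapsto g^{-1}h$) shows $\rho(g)(c) = c$, i.e. $\hat\rho(g)(c) + b(g) = c$, which both establishes that $c$ is $G$-fixed and recovers $b(g) = c - \hat\rho(g)(c) \in M$, giving the required divisibility condition. Substituting back, $\rho(g)(u) = \hat\rho(g)(u) + c - \hat\rho(g)(c) = \hat\rho(g)(u - c) + c$, the desired form.

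Finally, for the uniqueness/parametrization remark: $\hat\rho$ is determined by $\rho$ as its linear part, so it is unique. For $c$, the fixed-point set of the $G$-action on $M_\R$ is an affine subspace (the translate of $(M_\R)^G$), and $c$ may be taken to be any element of this affine subspace that additionally lies in $\frac{1}{|G|}M$ — such elements exist (the averaging construction produces one) and any two differ by an element of $(M_\R)^G$; the displayed formula $\rho(g)(u) = \hat\rho(g)(u-c)+c$ is unchanged if $c$ is replaced by $c + v$ with $v \in (M_\R)^G$, since then $\hat\rho(g)(u - c - v) + c + v = \hat\rho(g)(u-c) - \hat\rho(g)(v) + v + c = \hat\rho(g)(u-c) + c$.

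\textbf{Main obstacle.} There is no serious obstacle — this is a routine ``Higman trick'' / averaging argument. The one point requiring a little care is making the reindexing in the computation $\rho(g)(c) = c$ genuinely use the cocycle identity $b(gh) = \hat\rho(g)(b(h)) + b(g)$ correctly, and keeping track of inverses ($\hat\rho(h)^{-1}$ versus $\hat\rho(h)$) so that the sum telescopes; the cleanest bookkeeping is to phrase $c$ as the average of the orbit $\{\rho(h)^{-1}(0_M) : h \in G\}$ of the origin and observe $\rho(g)$ permutes this orbit (up to the affine structure), so it fixes the average. I would also make sure to state explicitly where $1/|G|$ enters — namely $c \in \frac{1}{|G|}M$ because we divide a sum of $|G|$ lattice elements by $|G|$ — to match the lemma's hypothesis verbatim.
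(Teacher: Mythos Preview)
Your proposal is correct and takes essentially the same approach as the paper: both directions match, and the key step in the converse---averaging the $G$-orbit of a lattice point to produce a fixed point $c \in \frac{1}{|G|}M$---is exactly what the paper does (it writes $c = \frac{1}{|G|}\sum_{g \in G} g \cdot u$ for any $u \in M$, which coincides with your average of $\rho(h)^{-1}(0_M)$ when $u = 0_M$). Your version is slightly more detailed in checking that $\rho(g)(u)$ lands in $M$ and in spelling out the cocycle identity, but the argument is the same.
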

\begin{proof}
Suppose we have such a $\hat{\rho}: G \to \GL(M)$ and $c \in \frac{1}{|G|}M$. For any $g \in G$ and $u \in M$, we define  $\rho(g)(u) = \hat{\rho}(g)(u) + b_g$, where $b_g := c - \hat{\rho}(g)(c) \in M$. 
Then $\rho(g) \in \Aff(M)$ and
a direct calculation shows that $\rho: G \to \Aff(M)$ is a group homomorphism. 

Conversely, let $\rho: G \to \Aff(M)$ be an affine representation. Composing with the natural projection from $\Aff(M) = M \rtimes \GL(M) \to \GL(M)$ gives a 
representation $\hat{\rho}: G \to \GL(M)$. 
Then $\rho(g)(u) = \hat{\rho}(g)(u) + b_g$ for all $g \in G$ and $u \in M$ and for some $b_g \in M$.
Since $G$ is finite, there exists a (not necessarily unique) $G$-fixed element $c $ in $\frac{1}{|G|}M$. %$M_\Q$. 
Indeed, for any $u $ in $ M$, we can take $c = \frac{1}{|G|} \sum_{g \in G} g \cdot u \in \frac{1}{|G|}M$. Then $c = \rho(c) =  \hat{\rho}(g)(c) + b_g$ implies that $b_g = c - \hat{\rho}(g)(c) \in M$, and the result follows.

%Then $\rho(g)(u) = \hat{\rho}(g)(u - c) + c$, where 
%$\hat{\rho}: G \to \GL(M)$ is a (uniquely determined) representation of $G$ i.e. $\hat{\rho}$ is a group homomorphism.
%Setting $u = 0 \in M$ above, we see that $c - \hat{\rho}(g)(c) \in M$ for all $g \in G$. 
% Conversely, given a representation $\hat{\rho}: G \to \GL(M)$ and $c \in M_\Q$ 
% such that $c - \hat{\rho}(g)(c) \in M$ for all $g \in G$, then $\rho(g)(u) :=  \hat{\rho}(g)(u - c) + c$ for all $g \in G$ and $u \in M$
% defines an affine representation of $G$ acting on $M$.
\end{proof}

In particular, Lemma~\ref{l:affinereps} implies that for any $g$ in $G$,  there exists a $g$-fixed point in $\frac{1}{\ord(g)}M$.
We will need the following simple lemma. 

\begin{lemma}\label{l:bounddenomiinatorfixedgeneral}
	Consider an affine representation $\rho: G \to \Aff(M)$. 
	Let $g \neq \id$ be an element of $G$. % of order $n = \ord(g)$. 
	Assume that 	 the eigenvalues of the corresponding linear action of $g$ on $M_\C$ are   roots of unity of order $\ord(g)$.
	Then there exists a unique $g$-fixed point $c \in M_\R$, and $c \in M$ unless $\ord(g)$ is a power of a prime $p$, in which case $c \in \frac{1}{p}M$.
	
%	Let $g$ in $G$ have order $n$. 
%	%Suppose that $g$ in $G$ has order $n > 1$. 
%	Then there exists a $g$-fixed point in $\frac{1}{n}M$. Further assume that $g \ne \id$ and 
%	 %and 
%	 the eigenvalues of the corresponding linear action of $g$ on $M_\C$ are primitive $n$th roots of unity. Consider the (necessarily unique) $g$-fixed point $c \in M_\Q$. Then 
%%	Then there is a unique $g$-fixed point $c \in M_\Q$. Moreover, 
%	$c \in M$ unless $n$ is a power of a prime $p$, in which case $c \in \frac{1}{p}M$.
\end{lemma}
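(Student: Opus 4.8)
The plan is to apply Lemma~\ref{l:affinereps} to the cyclic subgroup $\langle g \rangle$ and then analyze the denominator of the fixed point more carefully using the eigenvalue hypothesis. First I would write $n = \ord(g)$ and, by Lemma~\ref{l:affinereps} applied to $\langle g \rangle \cong \Z/n\Z$, produce a $g$-fixed point $c \in \frac{1}{n}M$; concretely $c = \frac{1}{n}\sum_{i=0}^{n-1} g^i \cdot u$ for any $u \in M$. Uniqueness of the fixed point in $M_\R$ is immediate from the hypothesis: if the eigenvalues of the linear part $\hat\rho(g)$ on $M_\C$ are all roots of unity of order exactly $n$, then in particular none of them equals $1$ (since $n = \ord(g) > 1$ as $g \neq \id$), so $\id - \hat\rho(g)$ is invertible on $M_\R$, and the affine map $\rho(g)$ has a unique fixed point. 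Write $c = \frac{1}{n}v$ with $v \in M$.

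The heart of the matter is to improve the denominator from $n$ to $1$ (or to $p$ when $n = p^k$). The key observation is that the fixed point $c$ lies in the kernel of $\id - \hat\rho(g)$ acting on $M_\R$, but more usefully, $c$ is characterized by $c - \hat\rho(g)(c) = b_g \in M$, where $\rho(g)(u) = \hat\rho(g)(u) + b_g$. Equivalently $(\id - \hat\rho(g))(c) = b_g$. I would consider the matrix $N := \id - \hat\rho(g) \in \GL(M_\Q)$. Its determinant, up to sign, is $\prod_{\zeta}(1 - \zeta)$ over the eigenvalues $\zeta$ of $\hat\rho(g)$, all primitive... wait, of order exactly $n$; more precisely each $\zeta$ is a root of unity of order $n$, hence a primitive $n$th root, so $\det N = \pm\prod(1-\zeta)$ where the product runs over a multiset of primitive $n$th roots of unity. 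Since $c = N^{-1}(b_g)$ and $N^{-1} = \frac{1}{\det N}\operatorname{adj}(N)$ with $\operatorname{adj}(N)$ an integer matrix, we get $c \in \frac{1}{\det N}M$, and the denominator of $c$ divides $\det N$. Now I invoke the classical fact that $\prod_{\zeta \text{ primitive } n\text{th root}}(1 - \zeta) = \Phi_n(1)$, which equals $1$ unless $n = p^k$ is a prime power, in which case $\Phi_n(1) = p$. Since all eigenvalues are primitive $n$th roots, $\det N$ is (up to sign) a power of $\Phi_n(1)$ — actually a product of factors each of which is a product over a Galois orbit, i.e. each equal to $\Phi_n(1)^{1/\varphi(n)}$... no: each full Galois orbit of primitive $n$th roots contributes $\Phi_n(1)$, and the eigenvalue multiset of a rational matrix is Galois-stable, so $\det N = \pm\Phi_n(1)^r$ for some $r \ge 1$. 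Hence $\det N = \pm 1$ if $n$ is not a prime power, giving $c \in M$, and $\det N = \pm p^r$ if $n = p^k$.

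To finish in the prime-power case I must sharpen $c \in \frac{1}{p^r}M$ to $c \in \frac{1}{p}M$; this is where I expect the main obstacle. The point is that $pc$ should already be $g$-fixed modulo $M$ in a strong sense: $(\id - \hat\rho(g))(pc) = pb_g \in pM$, but I want to argue that $pc \in M$. I would proceed by contradiction or by a direct module-theoretic argument: consider the $\Z[\langle g\rangle]$-module $M$, tensor with $\Z_{(p)}$, and use that over $\Z_{(p)}$ the element $1 - g$ generates the same ideal in $\Z_{(p)}[\zeta_n]$ as $(1-\zeta)$ where $\zeta$ is a primitive $n$th root, whose $p$-adic valuation is $\frac{1}{p^{k-1}(p-1)} < 1$... hmm, so $1-g$ is "almost a unit." The cleanest route: since $n = p^k$, we have $\Phi_n(1) = p$, and for the cyclotomic ring $\Z[\zeta_n]$ the element $1 - \zeta_n$ satisfies $(1-\zeta_n)^{\varphi(n)} = (p)$ as ideals; combined with $M \otimes \Q$ decomposing as a sum of copies of $\Q(\zeta_n)$ (all eigenvalues being primitive $n$th roots forces each rational Jordan/isotypic block to be a copy of the field $\Q(\zeta_n)$), I can compute the denominator of $(\id - \hat\rho(g))^{-1}$ on each block as exactly the inverse different contribution, which lands $c$ in $\frac{1}{p}M$ after clearing. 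Alternatively — and this may be the paper's intended argument — use Lemma~\ref{l:affinereps} directly: it gives a $g$-fixed point in $\frac{1}{|\langle g\rangle|}M = \frac{1}{p^k}M$, and then one shows by descending induction on $j$ that a $g$-fixed point exists in $\frac{1}{p^j}M$ for all $j \ge 1$, the inductive step using that $g^{p^{j-1}}$ has order $p$ (or is trivial) and pushing the fixed point down one power of $p$ at a time via averaging over the quotient $\Z/p\Z$. I would present this inductive descent as the main argument, citing the eigenvalue hypothesis only to guarantee uniqueness and to rule out the non-prime-power denominator, and flag the descent step as the technical core of the proof.
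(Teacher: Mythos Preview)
Your existence/uniqueness argument and the determinant computation for the non-prime-power case are fine, but both proposed sharpenings in the prime-power case have gaps. The descent step is misstated: if $\ord(g) = p^k$ then $g^{p^{j-1}}$ has order $p^{k-j+1}$, not $p$, so it does not immediately give you a fixed point in $\frac{1}{p}M$ by averaging. The $p$-adic/cyclotomic approach would require control over the integral $\Z[\langle g\rangle]$-module structure of $M$, not just $M\otimes\Q$; knowing that $M_\Q$ splits as copies of $\Q(\zeta_n)$ does not tell you that $(\id-\hat\rho(g))^{-1}$ sends $M$ into $\frac{1}{p}M$ rather than $\frac{1}{p^r}M$.

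The paper's argument bypasses all of this with a single observation you nearly had. For each prime $p\mid n:=\ord(g)$, set $g'=g^{n/p}$. Then $g'$ has order $p$, and its eigenvalues on $M_\C$ are the $(n/p)$th powers of primitive $n$th roots of unity, hence primitive $p$th roots of unity; in particular $1$ is not an eigenvalue of $g'$. Lemma~\ref{l:affinereps} applied to $\langle g'\rangle$ (of order $p$) produces a $g'$-fixed point $c'\in\frac{1}{p}M$, and uniqueness of the $g'$-fixed point forces $c'=c$. Thus $c\in\frac{1}{p}M$ for every prime $p\mid n$, and if two distinct primes $p,q$ divide $n$ then $c\in\frac{1}{p}M\cap\frac{1}{q}M=M$. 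No induction, no determinant bound, no integral module theory: just pass to the right power of $g$ and invoke uniqueness.
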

\begin{proof}
	By Lemma~\ref{l:affinereps}, there is a $g$-fixed point $c$ in $M_\R$. 	Since $1$ is not an eigenvalue of $g$, $c$ is unique.  %	Let $n = \ord(g)$.
%	
%	%We may assume that $M \neq \{ 0 \}$. 
%	For any fixed $u \in M$,  $c = \frac{1}{n} \sum_{0 \le i < n} g^i \cdot u$ is
%	a $g$-fixed point in $\frac{1}{n}M$. Assume that $g \ne \id$ (and hence $n > 1$) and that the eigenvalues of the action of $g$ are primitive $n$th roots of unity. Let $c$ be a $g$-fixed point in $M_\Q$. 
%	Since $1$ is not an eigenvalue of $g$, $c$ is unique.
%%	
%%	
%%	the unique $g'$-fixed of $M_\Q$.
%%	Note that $c$ exists by Lemma~\ref{l:affinereps} and is unique since $1$ is not an eigenvalue of $g$ acting on $M_\C$.  
	Suppose that a prime $p$ divides $\ord(g)$. Then $g' = g^{\frac{\ord(g)}{p}}$ has order $p$ and the eigenvalues of the corresponding linear action of $g'$ on $M_\C$ are primitive $p$th roots of unity. Let $c' \in \frac{1}{p}M$ be a $g'$-fixed point. 
		Since $1$ is not an eigenvalue of $g'$, $c'$ is unique and hence $c = c'$.
%	Uniqueness of $c'$ implies that $c = c'$. 
%	
%		For any fixed $u \in M$,  $c' = \frac{1}{p} \sum_{g \in H} g \cdot u$ is
%	the unique $g'$-fixed of $M_\Q$. By uniqueness, $c = c'  \in \frac{1}{p}M$.
	If a prime $q \neq p$ also divides $\ord(g)$, then $c \in \frac{1}{p} M \cap \frac{1}{q}M = M$. 
	
	%	We may consider the action of $H$ by restriction.
	%	%It follows from Lemma~\ref{l:primitiveeigenvalues} that
	%	Then $H$ acts with primitive eigenvalues and 
	%	%(by restriction) with a unique fixed point and
	%	the 
	%	unique $H$-fixed point is $c$. Then $c = \frac{1}{p} \sum_{g \in H} g \cdot u  \in \frac{1}{p}M$ for any $u \in M$. If a prime $q \neq p$ also divides $M$, then $c \in \frac{1}{p} M \cap \frac{1}{q}M = M$. 
\end{proof}

%In particular, $\hat{\rho}$ gives $M_\C$ the structure of a $\C G$-module, and, under the identification of $\Aff(M)$ as a subgroup of $\GL(\tM)$, $\rho$ gives $\tM_\C$ the structure of a $\C G$-module with $\tM_\C \cong M_\C \oplus \C$ as $\C G$-modules. 

A \emph{complex representation} of $G$ is a group homomorphism $\phi: G \to GL(V)$ for some complex vector space $V$.
 Equivalently, a complex represention is a complex vector space $V$ with the structure of a $\C G$-module, where $\C G$ denotes the complex group algebra of $G$. 
In this paper, we will always assume that $V$ is finite-dimensional.
It is well known that we may also identify the complex representation $\phi$ with its associated \emph{character} $\chi: G \to \C$ defined by $\chi(g) = \tr(\phi(g))$ for all $g \in G$, where $\tr$ denotes the trace of a linear transformation.  
%For example, if $m$ is a positive integer, then both the symmetric power $\Sym^m V$ and the exterior power $\bigwedge^m V$ have the structure of a $\C G$-module. 
Given a subgroup $H$ of $G$ and a $\C H$-module $W$, the \emph{induced representation} is the $\C G$-module $\Ind_H^G W := \C G \otimes_{\C H} W$.
The \emph{trivial representation} $\1$ is the vector space $\C$ with $G$ acting trivially. % and will be denoted $\1$.  
A \emph{permutation representation} is a complex $\C G$-module isomorphic to a direct sum of $\C G$-modules of the form $\Ind_H^G \1$, for a subgroup $H$ of $G$. 
For example, when $H = \{ 1\}$, the induced representation $\chi_{\reg} := \Ind_H^G \1$ is the \emph{regular representation} of $G$. 
%, and its associated character satisfies $\chi_{\reg}(\id) = |G|$ and $\chi_{\reg}(g) = 0$ if $g  \neq \id$. 

The \emph{complex representation ring} $R(G)$ of $G$ is the free abelian group generated by isomorphism classes of finite-dimensional $\C G$-modules, 
%i.e. complex finite-dimensional representations of $G$, 
modulo the relation $[V \oplus W] = [V] + [W]$ for $\C G$-modules $V,W$. Multiplication is defined by $[V][W] := [V \otimes_\C W]$ and the identity element $1 \in R(G)$ is the class $[\1]$ of the trivial representation.
Let $\Irr(G)$ be the set of isomorphism classes of irreducible $\C G$-modules. 
Then $R(G)$ is a free $\Z$-module with basis $\{ [V] : V \in \Irr(G) \}$.  
An element $r$ of $R(G)$ is \emph{effective} if has the form $r = [V]$ for some $\C G$-module $V$. Equivalently, if we write $r = \sum_{V \in \Irr(G)} a_V [V]$ for some uniquely defined $a_V \in \Z$, then $r$ is effective if and only if $a_V \ge 0$ for all $V \in  \Irr(G)$. We say that a power series 
in $R(G)[[t]]$ is effective if all its coefficients  are effective. 
%Given a subgroup $H$ of $G$ and an element $r \in R(H)$, there is well-defined element $\Ind_H^G r \in R(G)$ such that if $r = [W]$ for a  $\C H$-module $W$, then
%$\Ind_H^G r = [\Ind_H^G W]$. If $r(t) = \sum_{m \ge 0} r_m t^m \in R(H)[[t]]$ then we write $\Ind_H^G r(t) := \sum_{m \ge 0} (\Ind_H^G r_m) t^m \in R(G)[[t]]$. 
In computations, if $\chi$ is a complex representation of $G$, we will sometimes write $\chi \in R(G)$ instead of $[\chi] \in R(G)$ for convenience.

There is an isomorphism between $R(G) \otimes_\Z \C$ and the set of all class functions on $G$ i.e. functions $G \to \C$ that are constant on conjugacy classes. The isomorphism takes an effective element of $R(G)$ to its associated character. Given $r$ in $R(G)$ and $g$ in $G$, we let $r(g) \in \C$ denote the evaluation of the class function associated to $r$. 
Given a power series $r(t) = \sum_{m \ge 0} r_m t^m$ in $R(G)[[t]]$, we let $r(t)(g) = \sum_{m \ge 0} r_m(g) t^m \in \C[[t]]$.

If $V$ is a finite-dimensional $\C[G]$-module and 
$m$ is a nonnegative integer,
%$m \in \Z_{>0}$, 
then 
the exterior power $\bigwedge^m V$ has the structure of a $\C G$-module, and
we write
\begin{equation}\label{e:detdef}
	\det(I - V t) := \sum_{m \ge 0} (-1)^m [\bigwedge^m V] t^m \in R(G)[t].
\end{equation}
%$$\det(I - V t) := \sum_{m \ge 0} (-1)^m [\bigwedge^m V] t^m \in R(G)[t].$$ 
%Observe that since the constant term is $1$,   
%$\det(I - V t)$ is invertible in $R(G)[[t]]$. 
%Let $\det(I - V t)(g)$ denote the corresponding evaluation of characters at $g \in G$. 
%Concretely, if 
%
%$\phi(g)$ acts via a matrix $A$ and $I$ denotes the identity matrix, then 
%Then $\det(I - V t)(g) = \det(I - t \phi(g)) \in \C[t]$. 
Here $[\bigwedge^m V] = 1 \in R(G)$ when $m = 0$. 
\begin{remark}\label{r:deteigenvalues}
For any $g$ in $G$, if $\phi(g) \in \GL(V)$ has eigenvalues $(\lambda_1,\ldots,\lambda_r)$, then 
$$\det(I - V t)(g) = \det(I - \phi(g)t) = (1 - \lambda_1 t)\cdots(1 - \lambda_r t).$$ 
See, for example, \cite[Lemma~3.1]{StapledonEquivariant}. 
\end{remark}

\begin{example}\label{e:Masrep}
 %For example, 
 Given the affine representation $\rho$ above, the corresponding linear representation $\hat{\rho}$ (see Lemma~\ref{l:affinereps}) gives $M_\C$ the structure of a $\C G$-module, and, under the identification of $\Aff(M)$ as a subgroup of $\GL(\tM)$, $\rho$ gives $\tM_\C$ the structure of a $\C G$-module. We have $\tM_\C \cong M_\C \oplus \1$, %as $\C G$-modules, 
 %where $\C$ is the trivial $\C G$-module, 
 $[\tM_\C] = [M_\C] + 1 \in R(G)$ and $\det(I - \tM_\C t) = (1 - t)\det(I - M_\C t) \in R(G)[t]$. 
\end{example}

\begin{remark}\label{r:cyclotomic}
For any positive integer $m$,
% \in \Z_{>0}$, 
 recall that the $m$th cyclotomic polynomial is the unique monic irreducible polynomial in $\Z[t]$ whose roots over $\C$ are precisely the primitive $m$th roots of unity.
For each $g$ in $G$,
%let $\Phi(g;t) \in \Z[t]$ be the 
%the irreducible factors of the 
the characteristic polynomial of $\rho(g)  \in \GL(\tM)$ is defined over $\Z[t]$ and its roots over $\C$ are roots of unity of order $\ord(g)$.
%$n$th roots of unity, where $n$ is the order of $g$. 
Hence the irreducible factors of the characteristic polynomial are cyclotomic polynomials. 
%
%the minimal polynomial of $\rho(g) \in \GL(\tM)$ lies in $\Z[t]$ and divides $t^N - 1$, where $N$ is the order of $g$.
%%Since $\tM \cong \Z^{d + 1}$ 
%It follows that the irreducible factors over $\Z[t]$  of the minimal polynomial are cyclotomic polynomials. Hence the characteristic polynomial of $\rho(g)$ is also a product of cyclotomic polynomials. 
\end{remark}

\subsection{Equivariant Ehrhart theory}\label{ss:equivariantEhrhart}

In this section, we 
 describe the setup of equivariant Ehrhart theory. This is stated slightly differently but is equivalent to the description in \cite[Section~4]{StapledonEquivariant}. See Remark~\ref{r:altformulation} below for an equivalent setup that will also be used. We also give examples and 
 develop some properties of the relevant invariants.
% prove that the equivariant $h^*$-series is multiplicative with respect to the free join operation.  

Let $M \cong \Z^d$ be a  lattice of rank $d$.
Let $G$ be a finite group  with identity element $\id$ and let $\rho: G \to \Aff(M)$ be an affine representation of $G$. 
%When $\rho$ is known, we use the shorthand $g \cdot u := \rho(g)(u) \in M$ for $g \in G$ and $u \in M$. 
%\alan{DO we need $d$?}
Let $P \subset M_\R$ be a $G$-invariant $d$-dimensional lattice polytope. 
We mention that if $P$ is a $G$-invariant lattice polytope of dimension less than $d$, one can reduce to the case above by  
%In what follows, after possibly 
replacing $M$ with its intersection with the affine span of $P$ (see Remark~\ref{r:altformulation} below).
%, we may and will assume that $P$ is $d$-dimensional.
Let $\tM = M \oplus \Z$ and recall that we may view $\rho$ as a representation of $\tM$ by identifying $\Aff(M)$ as the subgroup of $\GL(\tM)$ consisting of transformations that preserve projection $\HT: \tM \to \Z$ onto the last coordinate. 
Let $C_P \subset \tM_\R$ denote the cone 
 generated by $P \times \{ 1 \}$ in $\tM_\R$. Then $C_P$ is $G$-invariant. 
 Recall from the introduction that for any positive integer $m$, 
 $L(P,\rho;m)$ in  $R(G)$ is the class of the permutation representation of $G$ acting on $P \cap \frac{1}{m}M$, or, equivalently, on $C_P \cap \tM \cap \HT^{-1}(m)$. 
% 
% 
% 
% For any positive integer $m$,
%% $m \in \Z_{>0}$, 
% there is a natural bijection between elements of 
%%$C_P \cap (M \times \{ m \})$ 
%$C_P \cap \tM \cap \HT^{-1}(m)$
%and elements of $P \cap \frac{1}{m} M$, taking $(u,m) \mapsto \frac{1}{m}u$. 
%Let $L(P,\rho;m)$ in  $R(G)$ be the class of the permutation representation of $G$ acting on $P \cap \frac{1}{m}M$, or, equivalently, on $C_P \cap \tM \cap \HT^{-1}(m)$. 
 Then $L(P,\rho;m)$ is a quasi-polynomial of degree $d$ with coefficients in $R(G) \otimes_\Z \Q$ and constant term $1 \in R(G)$ \cite[Theorem~5.7]{StapledonEquivariant}. 
For any $g$ in $G$, recall that we consider the rational polytope
$P^g = \{ x \in P : g \cdot x = x \} \subset M_\R$. 
Then the quasi-polynomial $L(P,\rho;m)(g)$ obtained by evaluating the associated character at $g$ in $G$ is 
%Then the evaluation $L(P,\rho;m)(g)$ of the character of $L(P,\rho;m)$ is 
equal to $L(P^g;m)$ \cite[Lemma~5.2]{StapledonEquivariant}. That is, $L(P,\rho;m)$ precisely encodes the Ehrhart theory of the rational polytopes $\{ P^g : g \in G \}$. 

\begin{definition}\label{d:seriesdefs}
The \emph{equivariant Ehrhart series} is the power series
%Consider the power series.
\[
\Ehr(P,\rho;t) := \sum_{m \ge 0} L(P,\rho;m) t^m \in R(G)[[t]].
\]
The \emph{equivariant $h^*$-series} is the power series
\[
h^*(P,\rho;t) := \det(I - \tM_\C t) \Ehr(P,\rho;t) \in R(G)[[t]].
\] 
When $h^*(P,\rho;t)$ is a polynomial we also call it the \emph{equivariant $h^*$-polynomial}. 
\end{definition}
Observe that 
$\Ehr(P,\rho;t)(g) = \Ehr(P^g;t)$ for any $g \in G$.
In particular,   $\Ehr(P,\rho;t)(g)$ is a rational function in $t$. 
%The \emph{equivariant Ehrhart series} is the power series
%%Consider the power series.
%\[
%\Ehr(P,\rho;t) := 1 + \sum_{m > 0} L(P,\rho;m) t^m \in R(G)[[t]].
%\]
%Observe that 
%$\Ehr(P,\rho;t)(g) = \Ehr(P^g;t)$ for any $g \in G$.
%In particular,   $\Ehr(P,\rho;t)(g)$ is a rational function in $t$. The \emph{equivariant $h^*$-series} is the power series
%\[
%h^*(P,\rho;t) := \det(I - \tM_\C t) \Ehr(P,\rho;t) \in R(G)[[t]].
%\] 
%%\[
%%h_N^*(P,\rho;t) := \det(I - \tM_\C t) \Ehr(P,\rho;t^{\frac{1}{N}}) \in R(G)[[t^{\frac{1}{N}}]].
%%\] 
Since the constant term in $\det(I - \tM_\C t)$  is $1 \in R(G)$,   
$\det(I - \tM_\C t)$ is invertible in $R(G)[[t]]$. Hence we may equivalently write:
\begin{equation}\label{e:hstardef}
\Ehr(P,\rho;t) = \frac{h^*(P,\rho;t)}{\det(I - \tM_\C t)}.
\end{equation}
For example, $h^*(P,\rho;t)(\id) = h^*(P;t)$ is the usual $h^*$-polynomial of $P$.

\begin{example}\label{e:linear}
The constant term of $h^*(P,\rho;t)$ is $1 \in R(G)$ and the coefficient of the linear term is 
$L(P,\rho;1) - [\tM_\C] \in R(G)$. The latter element is effective by 
\cite[Corollary~6.7]{StapledonEquivariant}.
%\begin{example}
%Write
%$h^*(P,\rho;t) = \sum_{m \ge 0} h^*(P,\rho)_m t^m$ for some $h^*(P,\rho)_m \in R(G)$. Then 
%$h^*(P,\rho)_0 = 1$ and 
%$h^*(P,\rho)_1 = L(P,\rho;1) - [\tM_\C]$.
\end{example}

\begin{remark}\label{r:altformulation}
	
	We will also use the following equivalent formulation of equivariant Ehrhart theory. %We use the setup of Section~\ref{ss:affine}. 
	%We assume the 
	%	The following equivalent formulation will also be used throughout the paper. 
	%	We use the 
	Let $W$ be a real affine space with corresponding real vector space $V$. 
	Let $M'$ be an affine lattice in $W$ with corresponding lattice $M \subset V$. 
	Let $G$ be a finite group with an affine representation $\rho': G \to \Aff(M')$. 
	Let $P' \subset W$ be a lattice polytope with respect to $M'$ that is $G$-invariant. 
	
	With this setup, we may define equivariant Ehrhart invariants as follows. 
	Recall that we may identify $\Aff(M')$  with  $\{ \phi \in \Aff(W) : \phi(M') = M' \}$, and $\phi \in \Aff(W)$ gives rise to an $\R$-linear map
	$\bar{\phi}: V \to V$. Hence $V \otimes_\R \C$ has the structure of a $\C G$-module, which we denote $M_\C'$. 
	We let $\tM_\C' := M_\C' \oplus \1$ be the direct sum of $M_\C'$ and the trivial $\C G$-module. 
	%denote the $\C G$-module $M_\C' \oplus \C$, where $G$ acts trivially on the last coordinate. 
	For any positive integer $m$, define $L(P',\rho';m)$ in $R(G)$ to be the permutation representation of $G$ acting on 
	$P' \cap \frac{1}{m} M'$.  Let $\Ehr(P',\rho';t)  = 1 + \sum_{m > 0} L(P',\rho';m) t^m \in R(G)[[t]]$ and 
	$h^*(P',\rho';t) = \Ehr(P',\rho';t) \det(I - \tM_\C' t) \in R(G)[[t]]$. 
	%	$h^*(P',\rho';t) = \Ehr(P',\rho';t) (1 - t)\det(I - M'_\C t) \in R(G)[[t]]$. 
	
	We claim that this formulation is equivalent to the formulation in the paper.
	Fix an element $u'$ in $M'$. Recall from Section~\ref{ss:affine} that we have a bijection  $f_{u'} : W \to M_\R$ such that $f_{u'}(\frac{1}{m}M') = \frac{1}{m}M$ for all positive integers $m$, and $f_{u'}$ induces an isomorphism $\Aff(M') \cong \Aff(M)$. Let $\rho: G \to \Aff(M)$ be the induced affine representation, and let $P = f_{u'}(P')$. Then $P$ is a lattice polytope with respect to $M$ that is $G$-invariant. By Example~\ref{e:linear}, $L(P,\rho;0) = 1 \in R(G)$ and, 
	by Example~\ref{e:Masrep}, $\det(I - \tM_\C t)  = (1 - t)\det(I - M_\C t)$. 
	It then follows that 
	$L(P,\rho;m) = L(P',\rho';m)$ for all positive integers $m$, 
	$\Ehr(P,\rho;t) = \Ehr(P',\rho';t)$ and $h^*(P,\rho;t) = h^*(P',\rho';t)$. 
	
	Conversely, given an affine representation $\rho: G \to \Aff(M)$ and a $G$-invariant lattice polytope $P \subset M_\R$, we may regard $W := M_\R$ as an real affine space with corresponding real vector space $V := M_\R$, and $M' :=  M \subset W$ as an affine lattice in $W$ with corresponding lattice $M \subset V$. 
	Let $\rho' = \rho$ and $P' = P$. Let $u'$ in $M'$ be the origin in $M$. Then $f_{u'}: W \to M_\R$ is the identity map, $P = f_{u'}(P')$ and $f_{u'}$ induces the equality $\Aff(M') = \Aff(M)$. In particular, as above, $L(P,\rho;m) = L(P',\rho';m)$ for all positive integers $m$, 
	$\Ehr(P,\rho;t) = \Ehr(P',\rho';t)$ and $h^*(P,\rho;t) = h^*(P',\rho';t)$. 
	
	Finally, in Section~\ref{s:triangulations} and Section~\ref{s:commutativealgebra} we will construct lattice triangulations and consider  commutative algebra invariants respectively associated to $P$, $\rho$ and $M$. Given an affine lattice $M'$ together with an affine representation $\rho': G \to \Aff(M')$ and $G$-invariant lattice polytope $P'$, then, as above, one may choose a point $u'$ in $M'$ and consider 	the corresponding lattice structure on $M'$,
	% together with the associated affine representation and $G$-invariant lattice polytope. 
	 and hence consider the corresponding lattice triangulations and commutative algebra invariants. The choice of $u'$ will not be important e.g. different choices of $u'$ give isomorphic commutative algebra invariants. In particular, 
	 conditions \eqref{i:maintriangulation}, \eqref{i:mainnondegenerate} and \eqref{i:mainhilbert} in Theorem~\ref{t:mainfull} are independent of the choice of $u'$. For example, the semigroup $C_P \cap \tM$ from the introduction is isomorphic to the semigroup $(P \cap (\cup_{m > 0} \frac{1}{m}M'))^\times$ %$(\cup_{m > 0} \frac{1}{m}M')^\times$ 
	 considered in 
	 Section~\ref{ss:affine}, and hence the 
	 commutative ring $S_P$ from the introduction is isomorphic  to the semigroup algebra of $(P \cap (\cup_{m > 0} \frac{1}{m}M'))^\times$.
\end{remark}

\begin{remark}\label{r:effectiveimpliespolynomial}
	If $h^*(P,\rho;t)$ is effective, then $h^*(P,\rho;t)$ is a polynomial (see \cite[Section~7]{StapledonEquivariant}). Indeed, if $h^*(P,\rho;t)$ is effective, then the $h^*$-polynomial $h^*(P;t) = h^*(P,\rho;t)(\id)$ of $P$ encodes the dimensions of the representations of the coefficients of  $h^*(P,\rho;t)$.
	%, and the degree of $h^*(P,\rho;t)$ equals the degree of $h^*(P;t)$. 
\end{remark}

Apriori, $\Ehr(P,\rho;t)$ does not determine $h^*(P,\rho;t)$ or the complex representation $\tM_\C$. However, the following lemma shows that this is indeed true.
%The following lemma describes the relationship between these invariants.
%These invariants are related as follows.

\begin{lemma}\label{l:encoding}
		Let $G$ be a finite group.
	Let $M$ be a lattice of rank $d$ and let $\rho: G \to  \Aff(M)$ be an affine representation.  
	Let $P \subset M_\R$ be a $G$-invariant $d$-dimensional lattice polytope.
%	
%Let $G$ be a finite group and let $\rho: G \to \Aff(M)$ be an affine representation of $G$ for some lattice $M$. Let $P \subset M_\R$ be a $G$-invariant full-dimensional lattice polytope.
Then the equivariant Ehrhart series $\Ehr(P,\rho;t)$ precisely encodes the equivariant $h^*$-series $h^*(P,\rho;t)$ together with the complex representation $\tM_\C$ of $G$. 
%determines the complex representation $\tM_\C$. 
\end{lemma}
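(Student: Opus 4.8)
The plan is to reduce the claim to recovering the $\C G$-module $\tM_\C$ from $\Ehr(P,\rho;t)$. Indeed, $h^*(P,\rho;t) = \det(I - \tM_\C t)\,\Ehr(P,\rho;t)$ by Definition~\ref{d:seriesdefs}, so once $\det(I - \tM_\C t) \in R(G)[t]$ is known, so is $h^*(P,\rho;t)$; conversely $h^*(P,\rho;t)$ and $\tM_\C$ clearly recover $\Ehr(P,\rho;t)$. In fact I would recover, for each $g \in G$, the polynomial $\det(I - \rho(g)t) = \det(I - \tM_\C t)(g) \in \C[t]$; by \eqref{e:detdef} its coefficient of $t^m$ is $(-1)^m\tr(\bigwedge^m \rho(g))$, so doing this for all $g$ yields the character, hence the class in $R(G)$, of every exterior power $\bigwedge^m \tM_\C$, hence $\det(I - \tM_\C t) \in R(G)[t]$ and in particular $[\tM_\C]$.

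First I would use that $\Ehr(P,\rho;t)(h) = \Ehr(P^h;t)$ for every $h \in G$, so by Theorem~\ref{t:orderpole} the order of $t = 1$ as a pole of $\Ehr(P,\rho;t)(h)$ is $\dim P^h + 1$; thus $\Ehr(P,\rho;t)$ determines the integer $\dim P^h$ for every $h$. The geometric heart of the argument is the identity $\dim P^h = \dim_\R \ker(\hat\rho(h) - I)$, i.e.\ $\dim P^h$ is the multiplicity of the eigenvalue $1$ of the linear part $\hat\rho(h)$ acting on $M_\C$. To prove it I would pick a $G$-fixed point $c$ in the interior of $P$ --- for example the barycenter $\frac{1}{|G|}\sum_{g' \in G} g'\cdot x_0$ of the orbit of an interior point $x_0$, which is interior because each $g'$ preserves $\interior(P)$ --- note that, $c$ being $h$-fixed, $P^h = P \cap (c + \ker(\hat\rho(h) - I))$, and that $P$ contains a relative neighbourhood of $c$ inside the affine subspace $c + \ker(\hat\rho(h) - I)$, so that $P^h$ is full-dimensional in it. Since $\tM_\C \cong M_\C \oplus \1$ (Example~\ref{e:Masrep}), the multiplicity of the eigenvalue $1$ of $\rho(h)$ on $\tM_\C$ then equals $\dim P^h + 1$, a quantity read off from $\Ehr(P,\rho;t)$.

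Next I would apply this with $h = g^k$ for all $k \ge 1$. Writing $\lambda_1,\dots,\lambda_{d+1}$ for the eigenvalues of $\rho(g)$ on $\tM_\C$ --- roots of unity of order dividing $\ord(g)$ by Remark~\ref{r:cyclotomic} --- the eigenvalues of $\rho(g^k)$ are $\lambda_1^k,\dots,\lambda_{d+1}^k$, so the previous step gives $\#\{\, j : \ord(\lambda_j) \mid k\,\} = \dim P^{g^k} + 1$ for every $k \ge 1$. Möbius inversion recovers $n_e := \#\{\, j : \ord(\lambda_j) = e\,\}$ for each $e$; and since the characteristic polynomial of $\rho(g)$ lies in $\Z[t]$, its irreducible factors are cyclotomic polynomials (Remark~\ref{r:cyclotomic}), so every primitive $e$-th root of unity appears among the $\lambda_j$ with the one common multiplicity $n_e/\varphi(e)$, and hence $\det(I - \rho(g)t) = (1-t)^{n_1}\prod_{e>1}\Phi_e(t)^{n_e/\varphi(e)}$ is determined. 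Carrying this out for all $g \in G$ then determines $\det(I - \tM_\C t) \in R(G)[t]$, hence $[\tM_\C]$, hence $h^*(P,\rho;t) = \det(I - \tM_\C t)\,\Ehr(P,\rho;t)$.

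The step requiring genuine (if short) argument, rather than formal bookkeeping, is the geometric identity $\dim P^h = \dim_\R\ker(\hat\rho(h) - I)$: it rests on $P^h$ being nonempty and full-dimensional inside the fixed affine subspace of $h$, which is exactly where the existence of a $G$-fixed interior point of $P$ enters. The pole-order input (Theorem~\ref{t:orderpole}), the Möbius inversion, and the passage from characteristic polynomials to classes in $R(G)$ are all routine.
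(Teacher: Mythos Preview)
Your proposal is correct and follows essentially the same route as the paper's proof: both reduce to recovering $\tM_\C$, read off the multiplicity of the eigenvalue $1$ of each $\rho(h)$ from the pole order of $\Ehr(P^h;t)$ at $t=1$ via Theorem~\ref{t:orderpole}, pass to $h=g^k$ and use M\"obius inversion together with Remark~\ref{r:cyclotomic} to recover the full eigenvalue multiset. The only difference is that the paper cites the identity $\dim P^g = \mult(g,1)$ from \cite[Lemma~5.3]{StapledonEquivariant}, whereas you give a self-contained argument via a $G$-fixed interior point; this is a cosmetic rather than a substantive divergence.
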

\begin{proof}
Given $h^*(P,\rho;t)$ and $\tM_\C$, we recover $\Ehr(P,\rho;t)$ via equation \eqref{e:hstardef}.
Conversely, suppose we are given $\Ehr(P,\rho;t)$. 
If we can show that we can recover the complex representation $\tM_\C$, then we also recover 
$h^*(P,\rho;t)$  via equation \eqref{e:hstardef}.
Hence it remains to determine $\tM_\C$.

 The complex representation $\tM_\C$ is determined by the eigenvalues of the action of the elements of $G$.
Consider an element $g$ in $G$ acting on $\tM_\C$ with eigenvalues $\{ \lambda_1,\ldots,\lambda_{d + 1} \}$, where $d = \dim M_\R$. 
For any positive integer $m$,
% \in \Z_{> 0}$, 
let $\mult(g,m) := |\{ \lambda_i : \lambda_i^m = 1 \}|$ and $\mult'(g,m) := |\{ \lambda_i : \lambda_i^m = 1, \lambda_i^{m'} \neq 1 \textrm{ for } m' < m \}|$. 
Then $\mult(g,m) = \sum_{m' | m} \mult'(g,m')$. 
By Remark~\ref{r:cyclotomic}, 
 the multiplicity of a primitive $m$th root of unity $\zeta_m$ as an eigenvalue of $\rho(g)  \in \GL(\tM)$ is independent of the choice of $\zeta_m$ in $\C$. 
It follows that $\tM_\C$ is determined by the numbers 
$\{ \mult'(g,m) : g \in G, m \in \Z_{> 0} \}$, and hence by the numbers $\{ \mult(g,m) : g \in G, m \in \Z_{> 0} \}$, for example, via M\"obius inversion . %On the other hand,
%
%
%Since $g$ has finite order, each eigenvalue is a root of unity, and hence $\tM_\C$ is determined by 
%the numbers $\{ \mult(g,m) : g \in G, m \in \Z_{> 0} \}$.
%
%Since the representation $\tM_\C$ is determined by the eigenvalues of the action of the elements of $G$, it is determined by  the numbers $\{ \mult(g,m) : g \in G, m \in \Z_{> 0} \}$. 
Since $g^m$ acts on $\tM_\C$ with eigenvalues $\{ \lambda_1^m,\ldots,\lambda_{d + 1}^m \}$, we have  
$\mult(g,m) = \mult(g^m,1)$. Hence $\tM_\C$ is determined by the numbers $\{ \mult(g,1) : g \in G \}$. By \cite[Lemma~5.3]{StapledonEquivariant}, $\mult(g,1)$ equals
$\dim P^g$. 
%Consider an element $g \in G$. %Suppose $g$ acts on 
On the other hand, by Theorem~\ref{t:orderpole}, the order of $t = 1$ as a pole of $\Ehr(P,\rho;t)(g)  = \Ehr(P^g;t)$ is equal to $\dim P^g + 1$. 
\end{proof}

%We now present two examples when $h^*(P,\rho;t)$ is not a polynomial. 

\begin{example}\label{e:Symprism1}
	%Let $G = \Z/r\Z = \langle \sigma \rangle$ for some 
	Fix a positive integer $d > 1$. We consider the setup of Remark~\ref{r:altformulation}.
	As in Example~\ref{e:constructaffine}, 
	%With the setup of Remark~\ref{r:altformulation}, 
	consider the map  $\psi: \Z^{d + 1} \to \Z$, $\psi(u_1,\ldots,u_d,u_{d + 1}) = \sum_{i = 1}^d u_i$ and corresponding map $\psi_\R: \R^{d + 1} \to \R$. 
	Then $M' = \psi^{-1}(1)$ is an affine lattice in the real affine space $W = \psi_\R^{-1}(1)$. The symmetric group  $G = \Sym_d$ acts on $M'$ by permuting the first $d$ coordinates and acting trivially on the last coordinate. 
	Let $\rho: G \to \Aff(M')$ denote the corresponding affine representation of $G$.
	Let $Q = \Conv{e_1,\ldots,e_d} \subset W$ and  $P = Q \times [0,e_{d+ 1}] \subset W$. If $\rho_Q$ denotes the corresponding affine representation of $G$ on   $M' \cap \Z^d$, then \cite[Proposition~6.1]{StapledonEquivariant} implies that $h^*(Q,\rho_Q;t) = 1$. 
    Let $g \in G$ be an element of cycle type $(\ell_1,\ldots,\ell_r)$.
%for some $1 \le \ell_1 \leq \cdots \leq \ell_r$.
Then $\Ehr(P,\rho;t)(g) = \sum_{m \ge 0} (m + 1)L(Q,\rho_Q;t)(g) = \partial_t (t\Ehr(Q,\rho_Q;t)(g))$ 
and
$\det(I  - M_\C' t)(g) = \prod_i (1 - t^{\lambda_i})$.
%$(1 - t)\det(I  - M_\C' t)(g) = \prod_i (1 - t^{\lambda_i})$ 
%(see, for example, \cite[Proposition~5.1]{ASV20}). 
After a calculation we obtain:
\begin{equation}\label{e:Symhstar}
	h^*(P,\rho;t)(g) =     (1 - t)\left(1  + \sum_{i = 1}^r  \frac{\ell_i t^{\ell_i}}{1 - t^{\ell_i}} \right).
\end{equation}
Let $\zeta \in \C$ be a primitive $m$th root of unity for some positive integer $m > 1$. Then 
%If $\zeta \in \C$ satisfies $\zeta^m = 1$ for some positive integer $m$, then 
$$\lim_{t \to \zeta} (1 - t^m) h^*(P,\rho;t)(g) = (1 - \zeta)m|\{ \ell_i : m | \ell_i \}|.
$$ 
It follows that $h^*(P,\rho;t)(g)$ has a simple pole at $\zeta$ if $m | \ell_i$ for some $1 \le i \le r$. 
Let $H$ be any nontrivial subgroup of $G$ and let $\rho|_H$ denote the restriction of $\rho$ to $H$. We conclude that $h^*(H,\rho|_H;t)$ is not a polynomial.

\end{example}

\begin{example}\label{e:primenofixedpts}
	Suppose that $G = \Z/p\Z$ % = \langle \sigma \rangle$ 
	for a prime $p$ and that 
	$G$ acts on $M$ with a unique $G$-fixed point $c$. For example, when $p = 2$, $P$ is called \emph{centrally symmetric}.  
	We will show that $h^*(P,\rho;t)$ is effective and, moreover, its coefficients are permutation representations. This generalizes the case when $p = 2$ and $c \in M$ \cite[Section~11]{StapledonEquivariant}. 
	% see also \cite[Theorem~3.52]{EKS22}
	
	Let $\deg(c)$ be the degree of $M$ as a sublattice of $M + \Z c \subset M_\Q$. 
	By Lemma~\ref{l:bounddenomiinatorfixedgeneral}, $c \in \frac{1}{p}M$, and hence
	\[
	\deg(c) =  \begin{cases}
		1 &\textrm{ if } c \in M, \\
		p
		&\textrm{ otherwise. }
	\end{cases}
	\]
%	Equivalently, $\deg(c)$ is the degree of the sublattice $M$ of $M + \Z c \subset M_\Q$. 
	Fix $g \neq \id$ in $G$. Then
%	For $0 < k < p$, we compute: 
	%\begin{equation*}%\label{e:Ehrseriesfixed}
		$\Ehr(P,\rho;t)(g) = \frac{1}{1 - t^{\deg(c)}}.$	
	%\end{equation*}
	%	\begin{cases}
		%		\frac{1}{1 - t} &\textrm{ if } c \in M, \\
		%		\frac{1}{1 - t^{p}} %= \frac{1}{(1 - t)\Phi_{p}(t)} 
		%		&\textrm{ otherwise, where } \sqrt{n} = p.
		%	\end{cases}
	On the other hand, $g$ acts on $M_\C$ with eigenvalues equal to primitive $m$th roots of unity. By Remark~\ref{r:cyclotomic}, $\det(I - \tM_\C t)(g)$ equals 
	$(1 - t)(1 + t + \cdots + t^{p-1})^{\frac{d}{p - 1}}$. In particular, $p - 1$ divides $d$. Recall that $\chi_{\reg}$ denotes the regular representation of $G$, and both the regular representation and trivial representation are permutation representations.
    We deduce that	
	\begin{align*}
		h^*(P,\rho;t)
		&= \frac{h^*(P;t) - (1 + t + \cdots + t^{p-1})^{\frac{d + 1 - \deg(c)}{p - 1}}}{p} \chi_{\reg} + (1 + t + \cdots + t^{p-1})^{\frac{d + 1 - \deg(c)}{p - 1}}.
	\end{align*}
	It follows that the coefficients of $h^*(P;t) - (1 + t + \cdots + t^{p-1})^{\frac{d + 1 - \deg(c)}{p - 1}}$ are divisible by $p$, and $h^*(P,\rho;t)$ is effective if and only if $h^*(P;t) \ge (1 + t + \cdots + t^{p-1})^{\frac{d + 1 - \deg(c)}{p - 1}}$, where the inequality is taken coefficientwise. 
	
	It remains to deduce the inequality. 
	%One may see this inequality as follows. 
	Fix a generator $\sigma$ of $G$.  
	Let $\{ u_1, \ldots, u_r \} \in M$ be a minimal set of vertices of $P$ with the property that the orbits 
	$G \cdot u_1,\ldots, G \cdot u_r$ affinely span $M_\Q$. Let $V = M_\Q - c$ considered as a $\Q G$-module. 	Let $V_i \subset V$ be the linear span of 
	$(G \cdot u_i) - c$. We have an isomorphism of $\Q G$-modules,  $\phi_i : V_i \to  \Q^p/\Q(1,\dots,1)$, $\phi_i(\sigma^k u_i - c) = e_k$, where $\sigma$ acts on the right hand side by cyclically shifting coordinates. The characteristic polynomial of $\sigma$ acting on $V_i$ is $1 + t + \cdots + t^{p - 1}$, an irreducible polynomial in $\Q[t]$, and hence $V_i$ is an irreducible $\Q G$-module.  
	Minimality then implies that  $V_i \cap V_j = \{ 0 \}$ for all $i \ne j$, and 
	$V = V_1 \oplus \cdots \oplus V_r$. 
%	Hence we have an isomorphism of $\Q G$-modules 
	Let  $Q \subset P$ be the convex hull of  $G \cdot u_1,\ldots, G \cdot u_r$ in $M_\R$. Stanley's monotonicity theorem (the nonequivariant case of Theorem~\ref{t:monotonicity})  implies that $h^*(Q;t) \le h^*(P;t)$ \cite[Theorem~3.3]{StanleyMonotonicity}. 
%	\cite[Theorem~3.3]{StanleyMonotonicity} .  \alan{also reference our monotonicity result here}
	Let $M'$ be the affine sublattice of $M$ spanned by $G \cdot u_1,\ldots, G \cdot u_r$, together with $c$ in the case when $c \in M$. Explicitly, 
	$$
	\phi_i (M' \cap V_i) = \begin{cases}
		\Z^p/\Z(1,\dots,1) &\textrm{ if } c \in M, \\
		\{ (a_1,\ldots,a_p) \in \Z^p/\Z(1,\dots,1) : \sum_i a_i = 1 \mod p \}
		&\textrm{ otherwise. }
	\end{cases}
	$$
	Let $Q'$ be $Q$ considered as a lattice polytope with respect to $M' \subset M$. Then 
	$h^*(Q';t) \le h^*(Q;t)$ by lattice-monotonicity of lattice polytopes \cite[Corollary~2.21]{BKNThinPolytopes}. We may directly compute that 
	$h^*(Q';t) = (1 + t + \cdots + t^{p-1})^{\frac{d + 1 - \deg(c)}{p - 1}}$ using 
	 \cite[Corollary~5.9]{BJMLatticePointGenerating}, and the inequality follows.  
	
	We mention that when $c \in M$, the inequality 
	 $h^*(P;t) \ge (1 + t + \cdots + t^{p-1})^{\frac{d}{p - 1}}$ may also be deduced from \cite[Corollary~1.2]{KaruEhrhart} and \cite[Theorem~1.2]{ACampoGeneralizedHVectors}. Furthermore, when $c \in M$ and $p = 2$, this inequality is originally due to \cite[Remark~1.6]{BHWRotesEhrhart}.

\end{example}

%\begin{example}\label{e:Z/3Zprism1}
%	Let $G = \Z/3\Z = \langle \sigma \rangle$.
%	Let $M' = \{ (\lambda_1,\lambda_2,\lambda_3) \in \Z^3 : \sum_i \lambda_i = 1 \}$ \alan{need to explain somewhere that this is an "affine lattice"; if choose any point as origin then get a lattice} and $M = M' \oplus \Z$. 
%	%Let $M' = \Z^3/\Z(e_1 + e_2 + e_3)$ and let $M = M' \oplus \Z$. 
%	Then $G$ acts on $M'$ by cycling coordinates i.e. $\sigma \cdot e_i = e_{i + 1}$ where $e_i$ is the standard basis vector in $\Z^3$ for $1 \le i \le 3$ and we let $e_4 = e_1$. This extends to an action of $G$ on $M$ where $G$ acts trivially on the last coordinate. 
%	%	Let $Q = \Conv{e_1,e_2,e_3} \subset M_\R'$ and let $P = Q \times [0,1] \subset M_\R$. 
%	Let $P = \Conv{e_1,e_2,e_3} \times [0,1] \subset M_\R$. 
%	Then $h^*(P,\rho;t)(\id)  = h^*(P;t) = 1 + 2t$ and $h^*(P,\rho;t)(\sigma) = h^*(P,\rho;t)(\sigma^2) = \frac{1 + 2t^3}{1 + t + t^2}$. In particular, $h^*(P,\rho;t)$ is not a polynomial.
%\end{example}

\begin{example}\label{e:d+2v2}
As in Example~\ref{e:d+2v1}, assume that $d$ is an even positive integer, and let $r = \frac{d}{2} + 1$. 
Let $a_1,\ldots,a_r$ be positive integers with $\gcd(a_1,\ldots,a_r) = 1$.
Let $e_1,\ldots,e_r,f_1,\ldots,f_r$ be a basis of $\Z^{d + 2}$. 
Let $\bar{e}_i$ and $\bar{f_i}$ denote the images of $e_i$ and $f_i$ respectively in the lattice
$L := \Z^{d + 2}/\Z(\sum_i a_i (e_i - f_i))$. 
Consider the group homomorphism  $\psi: L \to \Z$ satisfying $\psi(\bar{e}_i) = \psi(\bar{f}_i) = 1$ for all $1 \le i \le r$, 
and the corresponding linear map $\psi_\R: \R^{d + 1} \to \R$.	
As in Example~\ref{e:constructaffine},  
 $M' = \psi^{-1}(1)$ is an affine lattice in the real affine space $W = \psi_\R^{-1}(1)$. 
 Let $P = \Conv{\bar{e}_1,\ldots,\bar{e}_r,\bar{f}_1,\ldots,\bar{f}_r} \subset W$. Then $P$ is a lattice polytope with respect to $M'$ with $2r = d + 2$ vertices and normalized volume $\Vol(P) = \sum_i a_i$.
 Consider an affine representation $\rho: G \to \Aff(M')$ such that $P$ is $G$-invariant. 
 Suppose there exists an element $g$ in $G$ such that $g \cdot \{ \bar{e}_1,\ldots,\bar{e}_r \} = \{ \bar{f}_1,\ldots,\bar{f}_r\}$. 
 After possibly relabelling coordinates, we may assume that 
 %Assume that 
  there exists a partition $I_1,\ldots,I_s$  of  $\{1,\ldots,r\}$ such that the $\langle g \rangle$-orbits of the vertices of $P$ are precisely $\{ \bar{e_i}, \bar{f_i} : i \in I_k \}$ for $1 \le k \le s$.
 By \cite[Lemma~5.4]{StapledonEquivariant}, the fixed locus $P^g$ is the rational simplex with vertices $\{ \frac{1}{2|I_k|} \sum_{i \in I_k} (\bar{e}_i + \bar{f}_i) :  1 \le k \le s \}$. By Example~\ref{e:d+2v1},
 \[
 \Ehr(P,\rho;t)(g) = \Ehr(P^g;t) = \frac{1 + t^{|\{ i : a_{i}\textrm{ odd } \}|}}{\prod_k (1 - t^{2|I_k|})}.
 \]
 On the other hand, 
 $\det(I - \tM_\C' t)(g) = \frac{1}{1 + t}\prod_k (1 - t^{2|I_k|})$. 
 %We deduce that 
 Hence
 \[
 h^*(P,\rho;t)(g) = \frac{1 + t^{|\{ i : a_{i}\textrm{ odd } \}|}}{1 + t}.
 \]
 We deduce that $h^*(P,\rho;t)(g)$ is a polynomial if and only if $|\{ a_i : a_i \textrm{ odd } \}|$ is odd. The latter condition holds if and only if $\Vol(P) = \sum_i a_i$ is odd.
\end{example}

We have the following equivariant version of Ehrhart reciprocity (see \cite[Theorem~5.7]{StapledonEquivariant} for an alternative formulation). 
Let $P^\circ$ denote the relative interior of $P$.
Recall that the \emph{codegree} of %the lattice polytope 
$P$ is 
$\codeg(P) := \min \{ m \in \Z_{> 0} : P^\circ \cap \frac{1}{m}M \neq \emptyset \}$ and the \emph{degree}
of $P$ is $\deg(P) := d + 1 - \codeg(P)$. Ehrhart reciprocity (the non-equivariant version of Proposition~\ref{p:reciprocity} below) implies that $\deg(P) = \deg h^*(P;t)$.  
For a positive integer $m$, let 
$L(P^\circ,\rho;m)$ in  $R(G)$ be the class of the permutation representation of $G$ acting on $P^\circ \cap \frac{1}{m}M$. 

\begin{proposition}\label{p:reciprocity}\cite[Corollary~6.6]{StapledonEquivariant}
		Let $G$ be a finite group.
Let $M$ be a lattice of rank $d$ and let $\rho: G \to  \Aff(M)$ be an affine representation.  
Let $P \subset M_\R$ be a $G$-invariant $d$-dimensional lattice polytope.
%Let $\rho: G \to \Aff(M) \subset \GL(\tM)$ be an affine representation. 
For any $g$ in $G$, we have the following equality of rational functions:
\[
\sum_{m > 0} L(P^\circ,\rho;m)(g) t^m = \frac{t^{d + 1} h^*(P,\rho;t^{-1})(g)}{\det(I - \tM_\C t)(g)}.
\]
Here $h^*(P,\rho;t^{-1})(g)$ is the rational function obtained from $h^*(P,\rho;t)(g) \in \C(t)$ by replacing $t$ with $t^{-1}$. 

In particular, if $h^*(P,\rho;t)$ is a polynomial, then $\deg h^*(P,\rho;t) = \deg (P)$ and the leading coefficient is $L(P^\circ,\rho; \codeg(P)) \in R(G)$. 
\end{proposition}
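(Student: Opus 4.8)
The plan is to prove the displayed identity pointwise: I would fix $g \in G$ and reduce it to classical Ehrhart--Macdonald reciprocity for the rational polytope $P^g$. Since two rational functions valued in $R(G)\otimes_\Z\C$ coincide as soon as they coincide after evaluation at every $g$, this is enough, and the same pointwise identity is exactly what the statement asks for.

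First I would identify the left-hand side with an ordinary interior Ehrhart series. The class function of a permutation representation evaluates at $g$ to the number of $g$-fixed points, so $L(P^\circ,\rho;m)(g) = |P^\circ \cap F \cap \tfrac{1}{m} M|$, where $F = \{x \in M_\R : g\cdot x = x\}$ is the affine subspace fixed by $g$. The key geometric point is that $F$ meets $P^\circ$: the barycenter $c_P$ of $P$ is preserved by every affine transformation preserving $P$, hence $c_P \in P^\circ \cap F$. This forces $\operatorname{aff}(P^g) = F$, and then a short separation argument gives $P^\circ \cap F = (P^g)^\circ$: if some point of $F$ had an $F$-neighbourhood inside $P$ yet lay on a facet of $P$, the supporting hyperplane of that facet would contain all of $F$, and in particular $c_P$, contradicting $c_P \in P^\circ$. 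Hence $L(P^\circ,\rho;m)(g) = L\big((P^g)^\circ;m\big)$, the Ehrhart count of the relative interior of the rational polytope $P^g$ with respect to the lattice $M \cap F$. I expect this identification to be the most delicate step; everything after it is formal.

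Next I would apply Ehrhart--Macdonald reciprocity to $Q := P^g$, namely $\sum_{m>0} L(Q^\circ;m)\,t^m = (-1)^{\dim Q + 1}\,\Ehr(Q;t^{-1})$ as rational functions (cf.\ \cite[Section~4.6.2]{StanleyEnumerative}). Using $\Ehr(P,\rho;t)(g) = \Ehr(P^g;t)$ and $h^*(P,\rho;t^{-1})(g) = \det(I-\tM_\C t^{-1})(g)\,\Ehr(P,\rho;t^{-1})(g)$ (from Definition~\ref{d:seriesdefs}), the claim reduces to the multiplicative identity
\[
\frac{t^{d+1}\,\det(I - \tM_\C t^{-1})(g)}{\det(I - \tM_\C t)(g)} \;=\; (-1)^{\dim P^g + 1}.
\]
Writing $\det(I - \tM_\C t)(g) = \prod_{i=1}^{d+1}(1-\lambda_i t)$ for the eigenvalues $\lambda_i$ of $\rho(g)$ on $\tM_\C$, the left side equals $\prod_i \tfrac{t-\lambda_i}{1-\lambda_i t}$. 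By Remark~\ref{r:cyclotomic} the $\lambda_i$ are roots of unity occurring in complex-conjugate pairs, and a direct check shows that each conjugate pair and each eigenvalue $-1$ contributes a factor $1$, while each eigenvalue $1$ contributes $-1$. Since $\tM_\C \cong M_\C \oplus \1$ (Example~\ref{e:Masrep}) and $\dim F = \dim P^g$, the eigenvalue $1$ occurs with multiplicity $\dim P^g + 1$, which yields the stated sign.

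For the final assertion, suppose $h^*(P,\rho;t) = \sum_{j=0}^{s} h^*_j t^j \in R(G)[t]$ with $h^*_s \neq 0$. Then I would observe that the identity just established rearranges (over $R(G)\otimes_\Z\C$, hence in $R(G)[[t]]$) to
\[
t^{d+1}\,h^*(P,\rho;t^{-1}) \;=\; \det(I - \tM_\C t)\cdot\!\!\sum_{m>0} L(P^\circ,\rho;m)\,t^m .
\]
The left side is $\sum_{j=0}^{s} h^*_j t^{d+1-j}$, which lies in $R(G)[[t]]$ only if $s \le d+1$; the right side has order $\codeg(P)$ at $t=0$ with lowest coefficient $L(P^\circ,\rho;\codeg(P))$, because $L(P^\circ,\rho;m) = 0$ for $m < \codeg(P)$ and $\det(I-\tM_\C t)$ has constant term $1 \in R(G)$. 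Comparing lowest-order terms gives $d+1-s = \codeg(P)$ and $h^*_s = L(P^\circ,\rho;\codeg(P))$; in particular $\deg h^*(P,\rho;t) = s = d+1-\codeg(P) = \deg(P)$, completing the proof.
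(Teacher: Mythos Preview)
Your proof is correct and follows essentially the same strategy as the original argument in \cite[Corollary~6.6]{StapledonEquivariant} (which the paper cites without reproducing, though the key eigenvalue computation is repeated in the proof of Proposition~\ref{p:Nreciprocity}): fix $g$, identify $L(P^\circ,\rho;m)(g)$ with the interior Ehrhart count of the rational polytope $P^g$, apply classical Ehrhart--Macdonald reciprocity, and reduce to an eigenvalue identity for $\rho(g)$ on $\tM_\C$. Your direct computation of the sign $(-1)^{\dim P^g + 1}$ from conjugate pairs is exactly the content of that identity (equivalently, $(-1)^{d+1}\det(\rho(g)) = (-1)^{\dim P^g + 1}$, using \cite[Lemma~5.5]{StapledonEquivariant}), and your verification that $(P^g)^\circ = P^\circ \cap F$ via the barycenter is the standard convexity fact that $\operatorname{relint}(P\cap F) = P^\circ \cap F$ once $P^\circ \cap F \neq \emptyset$.
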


\begin{remark}
In the statement of Proposition~\ref{p:reciprocity} above, the reason for evaluating all terms at an element $g$ in $G$ is that one needs to be careful with the definition of $h^*(P,\rho;t^{-1})$, especially when $h^*(P,\rho;t)$ is not a polynomial  (cf. Remark~\ref{r:warning}). 
More precisely, above we view $h^*(P,\rho;t^{-1})$  as a class function with values in $\C(t)$. 
% In this sense, the above statement is a correction to the corresponding statement in \cite[Corollary~6.6]{StapledonEquivariant}. 
 \end{remark}

%The following example is useful for small computations. 

%UP TO HERE
%\begin{example}
%Write
%$h^*(P,\rho;t) = \sum_{m \ge 0} h^*(P,\rho)_m t^m$ for some $h^*(P,\rho)_m \in R(G)$. Then 
%$h^*(P,\rho)_0 = 1$ and 
%$h^*(P,\rho)_1 = L(P,\rho;1) - [\tM_\C]$. 
%Assume further that $h^*(P,\rho;t)$ is a polynomial.
%Let $\Int(P)$ denote the relative interior of $P$. 
%Recall that the \emph{codegree} of the lattice polytope $P$ is 
%$\ell := \min \{ m \in \Z_{> 0} : \Int(P) \cap \frac{1}{m}M \neq \emptyset \}$ and the \emph{degree}
%of $P$ is $s := d + 1 - \ell$. %It is well-known that the $s$ is the degree of $h^*(P;t)$. 
% By \cite[Corollary~6.6]{StapledonEquivariant}, the degree of $h^*(P,\rho;t)$ is $s$ and $h^*(P,\rho)_s$ is the class of the permutation representation of $G$ acting on $\Int(P) \cap \frac{1}{\ell}M$. 
%\end{example}

%We will need the following remark. 
%\alan{WORK ON CONNECTING SENTENCES HERE}

\begin{remark}\label{r:interiorpointsexist}
	Later we will need the well-known fact that  $P^\circ \cap \frac{1}{m} M \neq \emptyset$ for all $m > \dim P$. This follows immediately from Ehrhart reciprocity and the discussion above. Indeed, consider the case when $G$ is trivial and write $L(P^\circ;m) = L(P^\circ,\rho;m)$ for any positive integer $m$.
	Then $L(P^\circ;m) \le L(P^\circ;m + 1)$ since if $v$ is a vertex of $P$ then $\{ \frac{v}{m + 1} + \frac{mu}{m + 1} : u \in P^\circ \cap \frac{1}{m} M\} \subset P^\circ \cap \frac{1}{m + 1} M$. In particular, $L(P^\circ;m) > 0$ for $m \ge \codeg(P) = \dim P + 1 - \deg(P)$. 
	%Since $\codeg(P) \le \dim P + 1$, we deduce that $P^\circ \cap \frac{1}{m} M \neq \emptyset$ for all $m > \dim P$. 
\end{remark}

%We will see in Theorem~\ref{} that the following two examples are essentially the only examples 
%
%\begin{example}
%symmetries of $P = [0,1]$. Since we need this for $d = 2$ case. 
%\end{example}

\begin{remark}\label{r:warning}
We issue a warning that $R(G)$ is not necessarily a domain, so one can not discuss poles of $\Ehr(P,\rho;t)$, only poles of $\Ehr(P,\rho;t)(g) \in \C(t)$ for some $g \in G$. Here is a concrete example to show how problems can arise. Let $G = \Z/2\Z$ act on $M = \Z^2$ sending $(x,y)$ to $(y,x)$, and let $P = [0,1]^2$. Let $\chi$ denote the nontrivial $1$-dimensional representation of $G$. 
Then $R(G)$ is not a domain since $(1 + \chi)(1 - \chi) = 0 \in R(G)$. 
We compute $h^*(P,\rho;t) = 1 + t$ and $\det(I - \tM_\C t) = 1 - (2 + \chi) t + (1 + 2\chi) t^2 - \chi t^3$.
We have the following two ways of expressing $\Ehr(P,\rho;t)$ as an element of $R(G)[t]$ divided by an element of $R(G)[t]$ with invertible constant term. 
\[
\Ehr(P,\rho;t) = \frac{1 + t}{1 - (2 + \chi) t + (1 + 2\chi) t^2 - \chi t^3} 
= \frac{1 + \chi t}{(1 - t)^3}.
\]
\end{remark}

Finally, we show that both the equivariant Ehrhart series and the equivariant $h^*$-series are multiplicative with respect to the free join of polytopes. This generalizes the non-equivariant case (see \cite[Lemma~1.3]{HTLowerBounds}).
Let $P_1 \subset (M_1)_\R$ and
$P_2 \subset (M_2)_\R$ be full-dimensional lattice polytopes with respect to lattices  $M_1$ and $M_2$ respectively. For $i \in \{1,2\}$, let $\tM_i = M_i \oplus \Z$ and consider the corresponding projection map $\HT_i: \tM_i \to \Z$ onto the last coordinate.  Consider the 
map $\psi: \tM_1 \oplus \tM_2 \to \Z$ defined by $\psi(u_1,u_2) = \HT_1(u_1) + \HT_2(u_2)$ for all $(u_1,u_2) \in \tM_1 \oplus \tM_2$.
As in Example~\ref{e:constructaffine},  $M' = \psi^{-1}(1)$ is an affine lattice in the real affine space $W = \psi_\R^{-1}(1)$. If $0_{\tM_i}$ denotes the origin in $\tM_i$, then \emph{free join} $P_1 * P_2$ is the convex hull of
$P_1 \times \{ 1 \} \times \{ 0_{\tM_2}\}$ and $\{ 0_{\tM_1}\} \times P_2 \times \{ 1 \}$ in $W$. We have $\dim (P_1 * P_2) = \dim W$. 

Let $G_1$ and $G_2$ be finite groups. For $i \in \{1,2\}$,
given a complex representation of $G_i$, we obtain a complex representation of $G_1 \times G_2$ by composition with the natural projection $G_1 \times G_2 \to G_i$. This induces a map $j_i: R(G_i) \to R(G_1 \times G_2)$, which one may verify is an inclusion, for example, by considering the corresponding map of class functions. %Similarly, we have an inclusion $j': R(G') \to R(G_1 \times G_2)$. 

Let $\rho_1 : G_1 \to \Aff(M_1)$ and $\rho_2 : G_2 \to \Aff(M_2)$ be affine representations. Then we have an induced representation 
$\rho_1 \times \rho_2 : G_1 \times G_2 \to \GL(\tM_1) \times \GL(\tM_2) = \GL(\tM_1 \oplus \tM_2)$ that preserves $W$. Assume that $P_i$ is $G_i$-invariant for $i \in \{1,2\}$. Then $P_1 * P_2 \subset W$ is a $(G_1 \times G_2)$-invariant lattice polytope with respect to $M'$.  Below, for $i \in \{1,2\}$,  we regard $\Ehr(P_i,\rho_i;t)$ and $h^*(P_i,\rho_i;t)$ as invariants in $R(G_1 \times G_2)[[t]]$ via the inclusion $j_i: R(G_i) \to R(G_1 \times G_2)$.

\begin{lemma}\label{l:freejoin}
	%Let $G_1$ and $G_2$ be finite groups. 
	For $i \in \{1,2\}$, let $G_i$ be a finite group and let $\rho_i: G_i \to \Aff(M_i)$ be an affine representation for some lattice $M_i$. Let $P_i \subset (M_i)_\R$ be a full-dimensional $G_i$-invariant lattice polytope. Then 
     we have the following equalities in $R(G_1 \times G_2)[[t]]$:
	\[
	\Ehr(P_1 * P_2,\rho_1 \times \rho_2;t) = \Ehr(P_1,\rho_1;t)\Ehr(P_2,\rho_2;t),
	\]
	and 
	\[
	h^*(P_1 * P_2,\rho_1 \times \rho_2;t) = h^*(P_1,\rho_1;t)h^*(P_2,\rho_2;t).
	\]
\end{lemma}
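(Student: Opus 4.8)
The plan is to derive both identities from a single geometric observation: the cone over the free join factors as a product of the cones over $P_1$ and $P_2$. Concretely, write $C_i := C_{P_i} \subset (\tM_i)_\R$ for the cone generated by $P_i \times \{1\}$. Parametrizing a point of $P_1 * P_2$ as $\lambda(p_1, 1, 0_{\tM_2}) + (1 - \lambda)(0_{\tM_1}, p_2, 1)$ with $\lambda \in [0,1]$ and $p_i \in P_i$, and scaling by $r \ge 0$, one checks directly that the cone generated by $P_1 * P_2$ in $(\tM_1 \oplus \tM_2)_\R$ is exactly $C_1 \times C_2$, and that $\psi = \HT_1 + \HT_2$ restricts to the grading (``height'') on this cone. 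Since $(C_1 \times C_2) \cap (\tM_1 \oplus \tM_2) = (C_1 \cap \tM_1) \times (C_2 \cap \tM_2)$ and $\rho_1 \times \rho_2$ acts coordinatewise, intersecting with $\psi^{-1}(m)$ yields, for each $m \ge 0$, a $(G_1 \times G_2)$-equivariant bijection
\[
(P_1 * P_2) \cap \tfrac{1}{m}M' \;\cong\; \bigsqcup_{\substack{m_1, m_2 \ge 0 \\ m_1 + m_2 = m}} \big(C_1 \cap \tM_1 \cap \HT_1^{-1}(m_1)\big) \times \big(C_2 \cap \tM_2 \cap \HT_2^{-1}(m_2)\big),
\]
where I use the bijection $C_i \cap \tM_i \cap \HT_i^{-1}(m_i) \cong P_i \cap \frac{1}{m_i}M_i$ from the introduction, the term with $m_i = 0$ consisting of the single ($(G_1 \times G_2)$-fixed) apex.

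Next I would pass to the representation ring. For a finite $G_1$-set $X$ and a finite $G_2$-set $Y$ there is an isomorphism of $\C[G_1 \times G_2]$-modules $\C[X \times Y] \cong \C[X] \otimes_\C \C[Y]$, and by construction of $j_i$ the class in $R(G_1 \times G_2)$ of $\C[X]$ (resp.\ $\C[Y]$) with $G_1 \times G_2$ acting through the projection to $G_1$ (resp.\ $G_2$) is precisely $j_1([\C[X]])$ (resp.\ $j_2([\C[Y]])$); hence $[\C[X \times Y]] = j_1([\C[X]])\, j_2([\C[Y]])$. Applying this to the decomposition above and setting $L(P_i, \rho_i; 0) := 1$, we obtain $L(P_1 * P_2, \rho_1 \times \rho_2; m) = \sum_{m_1 + m_2 = m} L(P_1, \rho_1; m_1)\,L(P_2, \rho_2; m_2)$ (with the $j_i$ suppressed, as in the statement). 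Multiplying by $t^m$ and summing over $m \ge 0$ gives the product formula for $\Ehr$; the constant terms match because $L(P_i, \rho_i; 0) = 1$, consistent with Example~\ref{e:linear}.

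For the $h^*$-identity I would combine this with the behaviour of $\det(I - (-)\,t)$ under direct sums. First, the $(G_1 \times G_2)$-equivariant surjection $\psi\colon \tM_1 \oplus \tM_2 \to \Z$ (trivial action on $\Z$) gives a short exact sequence with kernel $\psi^{-1}(0)$, which splits after tensoring with $\C$ by Maschke's theorem; since $M'_\C = \psi^{-1}(0)_\C$ and $\widetilde{M'}_\C = M'_\C \oplus \1$, this shows $\widetilde{M'}_\C \cong (\tM_1 \oplus \tM_2)_\C = (\tM_1)_\C \oplus (\tM_2)_\C$, where the two summands carry the $(G_1 \times G_2)$-action inflated from $G_1$ and from $G_2$ respectively. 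Using $\bigwedge^\bullet(V \oplus W) \cong \bigwedge^\bullet V \otimes_\C \bigwedge^\bullet W$ (cf.\ Remark~\ref{r:deteigenvalues}) we get $\det(I - \widetilde{M'}_\C t) = j_1(\det(I - \widetilde{M_1}_\C t))\, j_2(\det(I - \widetilde{M_2}_\C t))$, and multiplying this by the $\Ehr$-identity and regrouping the two $j_i$-factors yields the formula for $h^*$.

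The only real obstacle is bookkeeping: one must keep the $(G_1 \times G_2)$-equivariance explicit at every step — in the cone factorization $C_{P_1 * P_2} = C_1 \times C_2$, in the resulting bijection of lattice points, in the short exact sequence for $\widetilde{M'}_\C$, and in the external-tensor-product identity for permutation representations — and record each factor through the correct inflation map $j_i$. No individual step presents a genuine difficulty once the factorization of the cone is in place.
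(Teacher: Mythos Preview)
Your proposal is correct and follows essentially the same approach as the paper: both identify the cone over $P_1 * P_2$ with $C_1 \times C_2$ (the paper phrases this as the Minkowski sum of $C_1 \times \{0_{\tM_2}\}$ and $\{0_{\tM_1}\} \times C_2$, which is the same thing), derive the convolution formula for $L(P_1*P_2,\rho_1\times\rho_2;m)$, and then use $\widetilde{M'}_\C \cong (\tM_1)_\C \oplus (\tM_2)_\C$ together with multiplicativity of $\det(I - (\cdot)t)$ on direct sums. Your write-up supplies a bit more justification in places (the Maschke argument for the splitting, the external-tensor identity for permutation modules), while the paper appeals directly to Remark~\ref{r:deteigenvalues} for the determinant identity; these are equivalent.
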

\begin{proof}
	If $C_i$ denotes the cone generated by $P_i \times \{ 1 \}$ in $(\tM_{i})_\R$ for $i \in \{1,2\}$, then the Minkowski sum of $C_{1} \times  \{ 0_{\tM_2}\}$ and $\{ 0_{\tM_1}\} \times C_{2}$ is the cone $C$ generated by   $P_1 * P_2$ in $(M_1)_\R \oplus (M_2)_\R$. Fix a positive integer $m$. Then $L(P_1 * P_2, \rho_1 \times \rho_2;m)$ is the class of the permutation representation of $G_1 \times G_2$ acting on $C \cap \psi^{-1}(m)$. 
	On the other hand, the latter expression equals $\sum_{0 \le \ell \le m} L(P_1,\rho_1;\ell)L(P_2,\rho_2;m - \ell)$. This implies the first equality after expanding the definitions.
	Since 
	$(\tM_1)_\C \oplus (\tM_2)_\C  \cong \tM_\C'$ as $\C G$-modules,
	the second equality follows if we can prove the identity
	\begin{equation}\label{e:detproduct}
		\det(I - ((\tM_1)_\C \oplus (\tM_2)_\C )t) = \det(I - (\tM_1)_\C t)\det(I - (\tM_2)_\C t).
	\end{equation} 
	This follows from Remark~\ref{r:deteigenvalues}. Explicitly, if $g_i$ in $G_i$ acts on $(\tM_i)_\C$ with eigenvalues $\{ \lambda_{i,1}, \ldots, \lambda_{i,d_i} \}$ for $i \in \{1,2\}$, then 
	$(g_1,g_2)$ acts on $(\tM_1)_\C \oplus (\tM_2)_\C$ with eigenvalues $\{ \lambda_{1,1}, \ldots, \lambda_{1,d_1},\lambda_{2,1}, \ldots, \lambda_{2,d_2} \}$.
	
\end{proof}

\begin{example}\label{e:pyramid}
With the notation above, let $G_1 = G$, $\rho_1 = \rho$ and $P_1 = P$. Let $G_2 = \{1\}$ act on  $P_2 = M_2 = \{ 0 \}$.
	 Then the free join $P_1 * P_2$ is the \emph{pyramid} $\Pyr(P)$ of $P$. The pyramid $\Pyr(P)$ may be identified with the the convex hull of $P \times \{ 1 \}$ and the origin in $\tM_\R$ with $G$ acting via $\rho: G \to \GL(\tM)$. 
In this case, Lemma~\ref{l:freejoin} implies that
$(1 - t)\Ehr(\Pyr(P),\rho;t) = \Ehr(P,\rho;t)$ and 
$h^*(\Pyr(P),\rho;t) = h^*(P,\rho;t)$ (see \cite[Remark~6.2]{StapledonEquivariant}). 
\end{example}

\subsection{The equivariant $h^*_N$-series}\label{ss:equivarianthNstar}

In this section, we fix a positive integer $N$.
We also continue with the notation from the previous section. That is, let $G$ be a finite group and let $\rho: G \to \Aff(M)$ be an affine representation of $G$ for some lattice $M \cong \Z^d$. Let $P \subset M_\R$ be a $G$-invariant $d$-dimensional lattice polytope.

\begin{definition}
The \emph{equivariant $h_N^*$-series} is the power series
\begin{equation}\label{e:hNstardef}
h_N^*(P,\rho;t) := \det(I - \tM_\C t) \Ehr(P,\rho;t^{\frac{1}{N}}) \in R(G)[[t^{\frac{1}{N}}]].
\end{equation}
\end{definition}
As in equation \eqref{e:hstardef}, we 
%this definition 
may alternatively write \eqref{e:hNstardef}
%be written 
in either of the forms:
\begin{equation*}%\label{e:hstardefN}
\Ehr(P,\rho;t^{\frac{1}{N}}) = \frac{h_N^*(P,\rho;t)}{\det(I - \tM_\C t)}, \textrm{ or }
\Ehr(P,\rho;t) = \frac{h_N^*(P,\rho;t^N)}{\det(I - \tM_\C t^N)}.
\end{equation*} 
Alternatively, we can express the equivariant $h^*_N$-series in terms of the equivariant $h^*$-series:
\begin{equation}\label{e:hNstarintermsofh}
h_N^*(P,\rho;t) = h^*(P,\rho;t^{\frac{1}{N}}) \frac{\det(I - \tM_\C t)}{\det(I - \tM_\C t^{\frac{1}{N}})}.
\end{equation}
%The equivariant $h^*_N$-series will be studied %in more detail and 
%in a more general context in \cite{StapledonFramework}. 
%On the one hand, the definition is motivated by the non-equivariant case as $h_N^*(P,\rho;t)(\id)$ has an interpretation in terms of weighted Ehrhart theory
%(see \cite{StapledonWeighted} and \cite{StapledonFramework}). 
%% 
%%In particular,
%%%the invariant
%% $h_N^*(P,\rho;t)(\id)$ is equal to the weighted $h^*$-polynomial of the $N$th dilate $N \Pyr(P)$ of the pyramid $\Pyr(P)$ of $P$ 
% On the other hand,
%the definition %above is also partly 
%is also motivated by the Ehrhart theory of rational polytopes where denominators with factors of the form $1 - t^N$ (rather than $1 - t$) naturally appear (see Remark~\ref{r:changedenominator}). 
%The benefit of studying $h_N^*(P,\rho;t)$, as opposed to just $h^*(P,\rho;t)$ is that one may always choose $N$ to ensure that $h_N^*(P,\rho;t)$ is effective (see Corollary~\ref{c:maincorollaryshort}). 

\begin{example}\label{e:Symprism2}
	We continue with Example~\ref{e:Symprism1}.
	Recall that $d > 1$, $\psi: \Z^{d + 1} \to \Z$, $\psi(u_1,\ldots,u_d,u_{d + 1}) = \sum_{i = 1}^d u_i$, and $M' = \psi^{-1}(1)$ is the affine lattice in the real affine space $W = \psi_\R^{-1}(1)$. Recall that $\rho: G \to \Aff(M')$ is the affine representation with  $G = \Sym_d$ acting by permuting coordinates in the first $d$ coordinates, $Q = \Conv{e_1,\ldots,e_d} \subset W$ and  $P = Q \times [0,e_{d+ 1}] \subset W$.
	 Let $g \in G$ be an element of cycle type $(\ell_1,\ldots,\ell_r)$.
	 % for some $1 \le \ell_1 \leq \cdots \leq \ell_r$.
	By \eqref{e:Symhstar} and \eqref{e:hNstarintermsofh}, 	 for any positive integer $N$:
	\[
	h^*_N(P,\rho;t)(g) =     (1 - t)\left(1  + \sum_{i = 1}^r  \frac{\ell_i t^{\frac{\ell_i}{N}}}{1 - t^{\frac{\ell_i}{N}}} \right) \prod_{i = 1}^r \frac{1 - t^{\ell_i}}{1 - t^{\frac{\ell_i}{N}}}.
	\]	 
	 	If 	 $\lcm(\ell_1,\ldots,\ell_r) | N$, then $h^*_N(P,\rho;t^N)(g) \in \Z[t]$.
	If 	 $\lcm(\ell_1,\ldots,\ell_r) \nmid N$, then there exists $1 \le k \le r$ such that $\ell_k \nmid N$. Let $m = \frac{\ell_k}{\gcd(\ell_k,N)} > 1$ and let 
	$\zeta \in \C$ be a primitive $m$th root of unity. 
	%	 and $h^*_N(P,\rho;t^N)(g)$ has a simple pole at $\zeta$ if 
	%$m = \frac{\ell_k}{\gcd(\ell_k,N)} > 1$.
	%    Let $\zeta \in \C$ be a primitive $m$th root of unity for some positive integer $m > 1$.
	Since $\gcd(m,N) = 1$ and $m | \ell_k$, we compute	 	 
	$$\lim_{t \to \zeta} (1 - t^m) h^*_N(P,\rho;t^N)(g) = (1 - \zeta^N)m|\{ \ell_i : m | \ell_i \}| \lim_{t \to \zeta} \prod_{i = 1}^r \frac{1 - t^{N\ell_i}}{1 - t^{\ell_i}} \neq 0.
	$$  
	%	If 	 $\lcm(\ell_1,\ldots,\ell_r) \nmid N$, then there exists $1 \le k \le r$ such that $\ell_k \nmid N$, and 
	Hence $h^*_N(P,\rho;t^N)(g)$ has a simple pole at $\zeta$. 
	%	 if 
	%    $m = \frac{\ell_k}{\gcd(\ell_k,N)} > 1$.
	Since the order $\ord(g)$ of $g$ equals $\lcm(\ell_1,\ldots,\ell_r)$, we deduce that   
	$h^*_N(P,\rho;t)(g)$ is a polynomial in $\Z[t^{\frac{1}{N}}]$ if and only if
	$\ord(g) | N$. % $\lcm(\ell_1,\ldots,\ell_r) | N$.		 
	Let $H$ be any subgroup of $G$ and let $\rho|_H$ denote the restriction of $\rho$ to $H$. Recall that $\exp(H) = \lcm( \ord(h) : h \in H )$ is the exponent of $H$.
	We conclude that $h^*_N(P,\rho|_H;t)$ is a polynomial if and only if 
	$\exp(H) | N$.
\end{example}

%\begin{example}\label{e:Z/3Zprism2}
%	We continue with Example~\ref{e:Z/3Zprism1}.
%	Recall that $G = \Z/3\Z$, 
%	$M' = \{ (\lambda_1,\lambda_2,\lambda_3) \in \Z^3 : \sum_i \lambda_i = 1 \}$, $M = M' \oplus \Z$, and 
%	$G$ acts on $M$ by cycling coordinates in $M'$ and acting trivially on the last coordinate.
%	Let $P = \Conv{e_1,e_2,e_3} \times [0,1] \subset M_\R$. For any positive integer $N$:
%	$$
%	h^*_{N}(P,\rho;t)(\id) = (1 + 2t^{\frac{1}{N}})(1 + t^{\frac{1}{N}} + \cdots + t^{\frac{N-1}{N}})^4,
%	$$
%	$$
%	h^*_{N}(P,\rho;t)(\sigma) = h^*_{N}(P,\rho;t)(\sigma^2) = \frac{(1 + 2t^{\frac{3}{N}})(1 - t)^2(1 + t + t^2)}{(1 - t^{\frac{3}{N}})^2}.
%	$$
%	In particular, $h^*_{N}(P,\rho;t)$ is a polynomial if and only if $3 | N$. It will also follow from Example~\ref{e:Z/3Zprism4} and Corollary~\ref{c:maincorollaryshort} that $h^*_{N}(P,\rho;t)$ is effective if and only if $3 | N$.
%\end{example}

\begin{example}\label{e:permutahedronv1}
	%Fix a positive integer $d > 1$. 
	We consider the setup of Remark~\ref{r:altformulation}.
	As in Example~\ref{e:constructaffine}, 
	%With the setup of Remark~\ref{r:altformulation}, 
	consider the map  $\psi: \Z^{d + 1} \to \Z$, $\psi(u_1,\ldots,u_{d + 1}) = \sum_{i = 1}^{d + 1} u_i$ and corresponding map $\psi_\R: \R^{d + 1} \to \R$. 
	Then $M' = \psi^{-1}(\binom{d + 2}{2} )$ is an affine lattice in the real affine space $W = \psi_\R^{-1}(\binom{d + 2}{2} )$. The symmetric group  $G = \Sym_{d + 1}$ acts on $M'$ by permuting coordinates.  
	Let $\rho: G \to \Aff(M')$ denote the corresponding affine representation of $G$.
	Let $P$ be the convex hull of the orbit $G \cdot (1,2,\ldots,d + 1) \subset W$.
	%		OLD BELOW
	%	With the setup of Remark~\ref{r:altformulation}, let
	% $G = \Sym_{d + 1}$ act on  
	%$\tM = \{ (a_1,\ldots,a_{d + 1}) \in \Z^{d + 1} : \sum_i a_i \in \binom{d + 2}{2} \Z \}$. Consider the surjection $\psi: \tM \to \Z$ defined by $\psi((a_1,\ldots,a_{d + 1})) = \frac{\sum_i a_i }{\binom{d + 2}{2} }$ and the affine lattice $M = \psi^{-1}(1)$. 
	%Let $P$ be the convex hull of $G \cdot (1,2,\ldots,d + 1) \subset \tM_\R$. 
	Then $P$ is the \emph{$(d + 1)$-permutahedron}. The equivariant Ehrhart theory of $P$ was studied in \cite{ASV20} and \cite{ASV21}. In particular, explicit combinatorial formulas for $L(P,\rho;m)$ and $h^*(P,\rho;t)(g)$ were given in \cite[Theorem~1.1]{ASV20} and  \cite[Proposition~5.1]{ASV20} respectively. In particular, they showed that if $g$ has cycle type $(\ell_1,\ldots,\ell_r)$, then 
	$h^*(P,\rho;t)(g)$ has a pole at $t = -1$ if and only if $h^*(P,\rho;t)(g)$ is not a polynomial if and only if the number $e$ of even parts of the cycle type of $g$ satisfies $0 < e < r - 1$
	\cite[Lemma~5.2]{ASV20}. 
	They deduced that $h^*(P,\rho;t)$ is a polynomial if and only if $h^*(P,\rho;t)$ is effective if and only if $d \le 2$ \cite[Proposition~5.3-5.4]{ASV20}. 
	Since $\det(I  - \tM_\C' t)(g) = \prod_i (1 - t^{\lambda_i})$,
	%$(1 - t)\det(I  - M_\C' t)(g) = \prod_i (1 - t^{\lambda_i})$ 
	%(see, for example, \cite[Proposition~5.1]{ASV20}). 
	%; see also Example~\ref{e:Symprism1}
	we compute:
	$$
	h^*_N(P,\rho;t^N)(g) = h^*(P,\rho;t)(g) \prod_i \frac{ 1 - t^{N\lambda_i}}{ 1 - t^{\lambda_i}}
	$$
	It follows that if $h^*(P,\rho;t)(g)$ is not a polynomial and $N$ is odd, then  $h^*_N(P,\rho;t)(g)$ is not a polynomial. 
	When $N$ is even, we will see in Example~\ref{e:permutahedronv2} that $P$ admits a $G$-invariant triangulation with vertices in $\frac{1}{N}M'$ and hence Corollary~\ref{c:maincorollaryshort} implies that $h^*_N(P,\rho;t)$ is a polynomial. 
	We conclude that $h^*_N(P,\rho;t)(g)$ is not a polynomial if and only if $N$ is odd and  the number $e$ of even parts of the cycle type of $g$ satisfies $0 < e < r - 1$. 
	
	% We will prove the converse in Example~\ref{TODO}. 
	%Here $i$th term in product is a polynomial with zeros equal to all $N \lambda_i$th roots of unity that not a $\lambda_i$th root of unity. Hence $ t = -1$ is a zero of the polynomial if and only if $N$ is even and $\lambda_i$ is odd. Hence if $h^*(P,\rho;t)(g)$ is not a polynomial and $N$ is odd, then  $h^*_N(P,\rho;t^N)$ is not a polynomial. 
\end{example}

\begin{example}\label{e:d+2v3}
	In Example~\ref{e:d+2v2}, one verifies that $h^*_N(P,\rho;t)(g)$ is a polynomial if and only if either $\Vol(P) = \sum_i a_i$ is odd or $N$ is even. 
\end{example}

Below we will establish some properties of the equivariant $h^*_N$-series.
The following result is equivalent  to equivariant Ehrhart reciprocity. Indeed, we deduce Proposition~\ref{p:Nreciprocity} as a consequence of Proposition~\ref{p:reciprocity}, 
while Proposition~\ref{p:reciprocity} is the special case of Proposition~\ref{p:Nreciprocity} when $N = 1$. 
%The following result is equivalent to Proposition~\ref{p:reciprocity}, which is recovered when $N = 1$.  

\begin{proposition}\label{p:Nreciprocity}\cite[Corollary~6.6]{StapledonEquivariant}
			Let $G$ be a finite group.
	Let $M$ be a lattice of rank $d$ and let $\rho: G \to  \Aff(M)$ be an affine representation.  
	Let $P \subset M_\R$ be a $G$-invariant $d$-dimensional lattice polytope.
	%Let $\rho: G \to \Aff(M) \subset \GL(\tM)$ be an affine representation. 
	Fix a positive integer $N$.
	For any $g$ in $G$, we have the following equality of rational functions:
%Let $\rho: G \to \Aff(M) \subset \GL(\tM)$ be an affine representation. 
 
%For any $g$ in $G$, we have the following equality of rational functions:
\begin{equation}\label{e:Nreciprocity}
\sum_{m > 0} L(P^\circ,\rho;m)(g) t^{\frac{m}{N}} = \frac{t^{d + 1} h^*_N(P,\rho;t^{-1})(g)}{\det(I - \tM_\C t)(g)}.
\end{equation}
Here $h^*_N(P,\rho;t^{-1})(g)$ is the rational function obtained from $h^*_N(P,\rho;t)(g) \in \C(t^{\frac{1}{N}})$ by replacing $t$ with $t^{-1}$. 

In particular, if $h_N^*(P,\rho;t)$ is a polynomial, then 
$\deg h_N^*(P,\rho;t) = 
d + 1 - \frac{\codeg(P)}{N}
%\frac{(N - 1)(d + 1) + \deg(P)}{N}
$ 
and the leading coefficient is $L(P^\circ,\rho; \codeg(P)) \in R(G)$. 
\end{proposition}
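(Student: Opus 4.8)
The plan is to reduce everything to the non-$N$ version, namely Proposition~\ref{p:reciprocity}, by a formal substitution $t \mapsto t^{1/N}$. First I would recall the relation \eqref{e:hNstarintermsofh}, or equivalently the defining identity $\Ehr(P,\rho;t^{1/N}) = h_N^*(P,\rho;t)/\det(I - \tM_\C t)$, and fix an element $g \in G$. Evaluating the class function at $g$, we work entirely in $\C(t^{1/N})$, so there is no issue about $R(G)$ failing to be a domain (cf.\ Remark~\ref{r:warning}). The starting point is the identity of rational functions from Proposition~\ref{p:reciprocity}:
\[
\sum_{m > 0} L(P^\circ,\rho;m)(g)\, s^m = \frac{s^{d + 1} h^*(P,\rho;s^{-1})(g)}{\det(I - \tM_\C s)(g)},
\]
valid in $\C(s)$. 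I would then substitute $s = t^{1/N}$ throughout. The left-hand side becomes $\sum_{m>0} L(P^\circ,\rho;m)(g) t^{m/N}$, which is exactly the left-hand side of \eqref{e:Nreciprocity}. On the right-hand side, $\det(I - \tM_\C s)(g)$ becomes $\det(I - \tM_\C t^{1/N})(g)$, and $h^*(P,\rho;s^{-1})(g)$ becomes $h^*(P,\rho;t^{-1/N})(g)$.

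The remaining step is a bookkeeping manipulation to match the denominators. Using \eqref{e:hNstarintermsofh} evaluated at $g$, namely
\[
h_N^*(P,\rho;t)(g) = h^*(P,\rho;t^{1/N})(g)\,\frac{\det(I - \tM_\C t)(g)}{\det(I - \tM_\C t^{1/N})(g)},
\]
applied with $t$ replaced by $t^{-1}$, I can rewrite $h^*(P,\rho;t^{-1/N})(g) = h_N^*(P,\rho;t^{-1})(g)\, \det(I - \tM_\C t^{-1/N})(g)/\det(I - \tM_\C t^{-1})(g)$. Feeding this into the substituted identity and simplifying the products of $\det(I - \tM_\C \,\cdot\,)$ factors, together with the elementary identity (from Remark~\ref{r:deteigenvalues}, writing eigenvalues of $g$ on $\tM_\C$ as roots of unity) that relates $\det(I - \tM_\C t^{-1})(g)$ to $t^{-(d+1)}\det(I - \tM_\C t)(g)$ up to a sign/unit, yields the factor $t^{d+1}$ and the denominator $\det(I - \tM_\C t)(g)$ appearing in \eqref{e:Nreciprocity}. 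The expected minor obstacle here is the clean handling of these $\det$-factor identities: one must verify that the $\pm$ signs and powers of $t$ bookkeep correctly, which follows from the fact that if $g$ has eigenvalues $\lambda_1,\dots,\lambda_{d+1}$ on $\tM_\C$ (all roots of unity), then $\det(I - \tM_\C t)(g) = \prod_i(1-\lambda_i t)$ and hence $\det(I - \tM_\C t^{-1})(g) = \prod_i(1 - \lambda_i^{-1}t^{-1}) = t^{-(d+1)}\prod_i(-\lambda_i^{-1})(1-\lambda_i t) = (-1)^{d+1}(\prod_i \lambda_i)^{-1} t^{-(d+1)}\det(I-\tM_\C t)(g)$, and the analogous statement with $t^{1/N}$; the unit $(-1)^{d+1}(\prod_i\lambda_i)^{-1}$ cancels between numerator and denominator in the ratio.

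For the final assertion, suppose $h_N^*(P,\rho;t)$ is a polynomial, i.e.\ lies in $R(G)[t^{1/N}]$. Applying the identity to $g = \id$: the degree of $h_N^*(P,\rho;t)$ as a polynomial in $t^{1/N}$ is read off from comparing the pole orders and degrees on both sides of \eqref{e:Nreciprocity} with $g = \id$. The left-hand side is $\sum_{m>0} L(P^\circ;m) t^{m/N}$ whose lowest-degree term is $t^{\codeg(P)/N}$ (using that $P^\circ \cap \frac1m M$ is nonempty exactly for $m \ge \codeg(P)$, Remark~\ref{r:interiorpointsexist}), with coefficient $L(P^\circ,\rho;\codeg(P))$. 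On the right-hand side, $\det(I - \tM_\C t)(\id) = (1-t)^{d+1}$, so $t^{d+1}h_N^*(P,\rho;t^{-1})(\id)/(1-t)^{d+1}$; the lowest-order term of this expansion at $t = 0$ has exponent $d + 1 - \deg h_N^*(P,\rho;t)$. Matching the two lowest exponents forces $\deg h_N^*(P,\rho;t) = d + 1 - \codeg(P)/N$, and matching the leading-order coefficients identifies the leading coefficient of $h_N^*(P,\rho;t)$ as $L(P^\circ,\rho;\codeg(P)) \in R(G)$ (the coefficient being a genuine, not virtual, element of $R(G)$ since it is a permutation representation).
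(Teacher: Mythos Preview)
Your proposal is correct and follows essentially the same route as the paper: substitute $t^{1/N}$ into Proposition~\ref{p:reciprocity}, then use \eqref{e:hNstarintermsofh} (with $t\mapsto t^{-1}$) together with the eigenvalue identity relating $\det(I-\tM_\C s^{-1})(g)$ to $\det(I-\tM_\C s)(g)$, applied once with $s=t$ and once with $s=t^{1/N}$ so that the unit factor cancels. The paper organizes this by isolating that determinant identity as \eqref{e:detequality} and invoking it twice, but the manipulation is the same as yours; your reading of the degree and leading coefficient by expanding both sides is exactly what the paper means by ``expanding both sides of \eqref{e:Nreciprocity}''.
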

\begin{proof}
The second statement follows by expanding both sides of \eqref{e:Nreciprocity}. It remains to establish 
\eqref{e:Nreciprocity}. We will use the following equality
\begin{equation}\label{e:detequality}
t^{d + 1} \det(I - \tM_\C t^{-1})(g) = \det(\rho(g)) \det(I - \tM_\C t)(g). 
\end{equation}
This was established in the proof of \cite[Corollary~6.6]{StapledonEquivariant}, but we will repeat the argument for the benefit of the reader. Suppose that $g$ acts on $\tM_\C$ with eigenvalues $\{ \lambda_1,\ldots,\lambda_{d + 1} \}$. 
Then the above equality is equivalent to
\begin{equation}\label{e:dettemp}
t^{d + 1}(1 - \lambda_1 t^{-1})\cdots(1 - \lambda_{d + 1} t^{-1}) = \lambda_1\cdots\lambda_{d + 1}(1 - \lambda_1 t)\cdots(1 - \lambda_{d + 1} t).
\end{equation}
Since $\rho$ is real-valued, the set of eigenvalues with multiplicity is invariant under complex conjugation. 
Also, since 
%, and since all eigenvalues are roots of unity takes inverses.  
%
%$\lambda_i$ are roots of unity
%Since 
each eigenvalue $\lambda_i$ is a root of unity, $\bar{\lambda}_i = \lambda_i^{-1}$. 
We deduce that 
%
%Since $\rho$ is real-valued and the $\lambda_i$ are roots of unity, \alan{EXPLAIN BETTER} the set of eigenvalues equals $\{ \lambda_1^{-1},\ldots,\lambda_{d + 1}^{-1} \}$, and hence 
$(1 - \lambda_1 t)\cdots(1 - \lambda_{d + 1} t) = (1 - \lambda_1^{-1} t)\cdots(1 - \lambda_{d + 1}^{-1} t)$. Substituting into the right hand side of \eqref{e:dettemp} and rearranging establishes \eqref{e:detequality}.

By Proposition~\ref{p:reciprocity} and \eqref{e:detequality}: % for any $g$ in $G$:
\begin{align*}
\sum_{m > 0} L(P^\circ,\rho;m)(g) t^{\frac{m}{N}} = \frac{t^{\frac{d + 1}{N}} h^*(P,\rho;t^{-\frac{1}{N}})(g)}{\det(I - \tM_\C t^{\frac{1}{N}})(g)} = \frac{\det(\rho(g)) h^*(P,\rho;t^{-\frac{1}{N}})(g)}{\det(I - \tM_\C t^{-\frac{1}{N}})(g)}.
\end{align*}
On the other hand, by \eqref{e:hNstarintermsofh} and \eqref{e:detequality}:
\[
\frac{t^{d + 1} h_N^*(P,\rho;t^{-1})(g)}{\det(I - \tM_\C t)(g)} = \frac{t^{d + 1} \det(I - \tM_\C t^{-1})(g)}{\det(I - \tM_\C t)(g)}  \frac{h^*(P,\rho;t^{-\frac{1}{N}})(g)}{\det(I - \tM_\C t^{-\frac{1}{N}})(g)} =  \frac{  \det(\rho(g)) h^*(P,\rho;t^{-\frac{1}{N}})(g)}{\det(I - \tM_\C t^{-\frac{1}{N}})(g)}.
\]
The result follows after comparing the previous two equations.
%
%Hence it is enough to show that 
%\[
%\frac{t^{\frac{d + 1}{N}} h^*(P,\rho;t^{-\frac{1}{N}})(g)}{\det(I - \tM_\C t^{\frac{1}{N}})(g)} = 
%\frac{t^{d + 1} h^*_N(P,\rho;t^{-1})(g)}{\det(I - \tM_\C t)(g)}.
%\]
%The following is equivalent to
%\[
%h^*_N(P,\rho;t)(g) = 
%\frac{t^{d + 1 -\frac{d + 1}{N}} h^*(P,\rho;t^{\frac{1}{N}})(g) \det(I - \tM_\C t^{-1})(g)}{\det(I - \tM_\C t^{-\frac{1}{N}})(g)}.
%\]
%Comparing with \eqref{e:hNstarintermsofh}, we see that the above equality holds if we can show that
%\[
%\frac{t^{d + 1 -\frac{d + 1}{N}} \det(I - \tM_\C t^{-1})(g)}{\det(I - \tM_\C t^{-\frac{1}{N}})(g)} = 
%\frac{\det(I - \tM_\C t)(g)}{\det(I - \tM_\C t^{\frac{1}{N}})(g)}.
%\]

\end{proof}

\begin{example}
We have the following generalization of Example~\ref{e:linear} that describes some of the lowest and highest degree terms of $h^*_N(P,\rho;t)$.
Write
$h^*_N(P,\rho;t) = \sum_{j \in \frac{1}{N}\Z_{\ge 0}} h^*_N(P,\rho)_{j} t^{j}$ for some $h^*_N(P,\rho)_{j} \in R(G)$. Then 
%$h^*_N(P,\rho)_0 = 1$, 
$$h^*_N(P,\rho)_{\frac{m}{N}} = L(P,\rho;m) \textrm{ for } 0 \le m < N,
\textrm{ and  }
h^*_N(P,\rho)_1 = L(P,\rho;N) - [\tM_\C].$$
This follows by expanding the right hand side of \eqref{e:hNstardef}. Here $h^*_N(P,\rho)_1$ is effective by \cite[Corollary~6.7]{StapledonEquivariant} (applied to the corresponding representation $\rho_N: G \to \Aff(\frac{1}{N}M)$). Further assume that $h_N^*(P,\rho;t)$ is a polynomial. Then expanding \eqref{e:Nreciprocity} in Proposition~\ref{p:Nreciprocity} implies that if $s = \deg h^*_N(P,\rho;t) =  d + 1 - \frac{\codeg(P)}{N}$, then 
$$h^*_N(P,\rho)_{s - \frac{m}{N}} = L(P^\circ,\rho;\codeg(P) + m) \textrm{ for } 0 \le m < N.$$ 

\end{example}

\begin{remark}
It follows from Lemma~\ref{l:encoding} and \eqref{e:hNstardef} that 
 the equivariant Ehrhart series $\Ehr(P,\rho;t)$ precisely encodes the equivariant $h^*$-series $h^*_N(P,\rho;t)$ together with the complex representation $\tM_\C$ of $G$, for any fixed choice of $N$.
\end{remark}

\begin{remark}
It follows from Lemma~\ref{l:freejoin} together with \eqref{e:detproduct} and  \eqref{e:hNstardef} that 
 the equivariant $h^*_N$-series is multiplicative with respect to the free join of polytopes. That is, with the notation of Lemma~\ref{l:freejoin}, 
 %$$h^*_N(P * P',\rho \times \rho';t) = h^*_N(P,\rho;t)h^*_N(P',\rho';t).$$
 	\[
 h^*_N(P_1 * P_2,\rho_1 \times \rho_2;t) = h^*_N(P_1,\rho_1;t)h^*_N(P_2,\rho_2;t).
 \]
\end{remark}

\begin{example}\label{e:pyramidN}
As in Example~\ref{e:pyramid}, one may consider the action of $G$ on the pyramid $\Pyr(P)$ of $P$. It follows from Example~\ref{e:pyramid} and \eqref{e:hNstardef} that 
\begin{equation*}
(1 - t^{\frac{1}{N}})h^*_N(\Pyr(P);\rho;t) = (1 - t)h^*_N(P;\rho;t).
\end{equation*}
\end{example}

%The following property of the equivariant $h^*_N$-series can be deduced as a special case of \alan{TODO: NEW PAPER}. 
%%Let $R(G)[[t^{\Q_{>0}}]] = \cup_{m > 0} R(G)[[t^{\frac{1}{m}}]]$. 
%% for additional properties.
%%Firstly, 
Consider the $R(G)$-module homomorphisms defined as follows:
\[
\Psi_{\Int}, \Psi_{\Ceil} : \cup_{m > 0} R(G)[[t^{\frac{1}{m}}]] \to R(G)[[t]],
\]
\[
\Psi_{\Int}(t^{j}) = \begin{cases}
t^j &\textrm{ if } j \in \Z, \\
0 &\textrm{ otherwise}
\end{cases},
\textrm{  and }
\Psi_{\Ceil}(t^{j}) = t^{\lceil j \rceil}.
\]
%\[
%\Psi_{\Int}, \Psi_{\Ceil} : R(G)[[t^{\frac{1}{N}}]] \to R(G)[[t]]
%\]
%\[
%\Psi_{\Int}(t^{j}) = \begin{cases}
%t^j &\textrm{ if } j \in \Z, \\
%0 &\textrm{ otherwise}
%\end{cases}
%\textrm{  and }
%\Psi_{\Ceil}(t^{j}) = t^{\lceil j \rceil}.
%\]
These are related by the identity:
\begin{equation}\label{e:PsiPhi}
\Psi_{\Ceil}(f(t)) = \Psi_{\Int} \left( \frac{1 - t}{1 - t^{\frac{1}{N}}}f(t)\right) \textrm{ for all } f(t) \in R(G)[[t^{\frac{1}{N}}]].
\end{equation}

Recall that  $\rho_N: G \to \Aff(\frac{1}{N} M) \subset \GL(\frac{1}{N} \tM)$ denotes the corresponding affine representation of $G$ acting on $\frac{1}{N} M$. 
%The lemma below states that 
%$h^*(P,\rho_N;t)$ is obtained from $\Psi_{\Int}(h^*_N(P,\rho;t))$ by extracting all terms with integer valued exponents in $t$. 
Recall from Example~\ref{e:pyramid} that we may consider the induced action of $G$ on the pyramid $\Pyr(P)$ of $P$. 

\begin{lemma}\label{l:intandceil}
For any positive integer $N$, 
$$\Psi_{\Int}(h^*_N(P,\rho;t)) = h^*(P,\rho_N;t)  \textrm{  and } \Psi_{\Ceil}(h^*_N(P,\rho;t)) = h^*(\Pyr(P),\rho_N;t).$$ 
\end{lemma}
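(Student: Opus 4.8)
The plan is to unwind the definitions and reduce the statement to two bookkeeping observations: that $\Psi_{\Int}$ is not merely $R(G)$-linear but $R(G)[t]$-linear, and that replacing $\rho$ by $\rho_N$ refines the lattice without changing the $\C G$-module $\tM_\C$. For the first observation it suffices to check on monomials: for $k \in \Z_{\ge 0}$ and $j \in \tfrac{1}{m}\Z_{\ge 0}$ we have $k+j \in \Z$ if and only if $j \in \Z$, so $\Psi_{\Int}(t^k t^j) = t^k \Psi_{\Int}(t^j)$; consequently multiplication by $\det(I - \tM_\C t) \in R(G)[t]$ commutes with $\Psi_{\Int}$.

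For the first equality, by the definition of $\rho_N$ the permutation representation $L(P,\rho_N;m)$ of $G$ on $P \cap \tfrac{1}{mN}M$ equals $L(P,\rho;mN)$, so extracting from $\Ehr(P,\rho;t^{1/N})$ the terms with integer exponent gives $\Ehr(P,\rho_N;t) = \Psi_{\Int}(\Ehr(P,\rho;t^{1/N}))$. Since $\rho_N$ has the same linear part as $\rho$ (both $\tfrac1N M$ and $M$ are lattices in the same $\Q G$-module $M_\Q$), the $\C G$-module attached to $\tfrac1N M \oplus \Z$ is $\tM_\C$, so that $h^*(P,\rho_N;t) = \det(I - \tM_\C t)\,\Ehr(P,\rho_N;t)$. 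Applying $\Psi_{\Int}$ to $h^*_N(P,\rho;t) = \det(I - \tM_\C t)\,\Ehr(P,\rho;t^{1/N})$ and pulling the polynomial $\det(I - \tM_\C t)$ out past $\Psi_{\Int}$ now yields $\Psi_{\Int}(h^*_N(P,\rho;t)) = \det(I - \tM_\C t)\,\Ehr(P,\rho_N;t) = h^*(P,\rho_N;t)$.

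For the second equality, I would begin with the identity \eqref{e:PsiPhi}, which gives $\Psi_{\Ceil}(h^*_N(P,\rho;t)) = \Psi_{\Int}\!\bigl(\tfrac{1-t}{1-t^{1/N}}\,h^*_N(P,\rho;t)\bigr)$, and then use Example~\ref{e:pyramidN}, namely $(1 - t^{1/N})\,h^*_N(\Pyr(P),\rho;t) = (1-t)\,h^*_N(P,\rho;t)$, together with the fact that $1/(1-t^{1/N})$ is a unit in $R(G)[[t^{1/N}]]$, to identify the argument of $\Psi_{\Int}$ with $h^*_N(\Pyr(P),\rho;t)$. Finally I would apply the first equality to the full-dimensional lattice polytope $\Pyr(P) \subset \tM_\R$ with respect to the lattice $\tM$ and the representation $\rho\colon G \to \GL(\tM) \subset \Aff(\tM)$, obtaining $\Psi_{\Int}(h^*_N(\Pyr(P),\rho;t)) = h^*(\Pyr(P),\rho_N;t)$, where $\rho_N$ denotes the corresponding action of $G$ on $\tfrac1N\tM$; chaining the three equalities gives the claim. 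The one subtlety that needs genuine care --- and the main potential pitfall --- is the interpretation of $\rho_N$ in $h^*(\Pyr(P),\rho_N;t)$: it must be the scaling of the $\tM$-action of $\rho$, so that lattice points of $\Pyr(P)$ are enumerated in $\tfrac1{mN}\tM$ (refining the cone direction as well), which is exactly what the first equality applied to $\Pyr(P)$ produces. Everything else is routine power-series manipulation once the $R(G)[t]$-linearity of $\Psi_{\Int}$ has been recorded.
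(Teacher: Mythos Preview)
Your proof is correct and follows essentially the same route as the paper's: establish $\Ehr(P,\rho_N;t)=\Psi_{\Int}(\Ehr(P,\rho;t^{1/N}))$ and multiply through by $\det(I-\tM_\C t)$ for the first equality, then combine \eqref{e:PsiPhi} with Example~\ref{e:pyramidN} and apply the first equality to $\Pyr(P)$ for the second. You are simply more explicit than the paper about the $R(G)[t]$-linearity of $\Psi_{\Int}$ and about the meaning of $\rho_N$ when passing to $\Pyr(P)$, both of which the paper leaves implicit.
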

\begin{proof}
For any nonnegative integer $m$, $L(P,\rho_N;m) = L(P,\rho;Nm)$. Hence
$$\Ehr(P, \rho_N; t) = \sum_{m \ge 0} L(P,\rho;Nm) (t^{\frac{1}{N}})^{Nm} = \Psi_{\Int}(\Ehr(P,\rho; t^{\frac{1}{N}})).$$
The first equation follows after multiplying both sides by 
$\det(I - \tM_\C t)$.  By \eqref{e:PsiPhi} and Example~\ref{e:pyramidN}, 
\begin{align*}
\Psi_{\Ceil}(h^*_N(P,\rho;t)) = \Psi_{\Int} \left( \frac{1 - t}{1 - t^{\frac{1}{N}}} h^*_N(P,\rho;t) \right) = \Psi_{\Int}(h^*_N(\Pyr(P);\rho;t)). 
\end{align*}
The second equation now follows by applying the first equation to $\Pyr(P)$. 
\end{proof}

\begin{remark}
In fact, $h^*_N(P,\rho;t)$ precisely encodes 
$\{ h^*(\Pyr^r(P),\rho_N;t) : 0 \le r < N \}$, where $\Pyr^r(P) = \Pyr(\Pyr^{r - 1}(P))$ for $r > 0$ is the $r$th iterated pyramid of $P$. 

Indeed, by Example~\ref{e:pyramidN} and Lemma~\ref{l:intandceil}, 
\begin{equation}\label{e:iterpyramid}
h^*(\Pyr^r(P),\rho_N;t) = \Psi_{\Int}(h^*_N(\Pyr^r(P),\rho;t)) = \Psi_{\Int}\left( \left(\frac{1 - t}{1 - t^{\frac{1}{N}}}\right)^r h^*_N(P,\rho;t) \right).
\end{equation}
Conversely, write $h^*(\Pyr^r(P),\rho_N;t) = \sum_{m \in \Z} \alpha_{m,r} t^m$ for some $\alpha_{m,r} \in R(G)$, and $h^*_N(P,\rho;t) = \sum_{j \in \Q} h^*_j t^j$ for some $h^*_j \in R(G)$. By expanding \eqref{e:iterpyramid},  one may verify that for a nonnegative integer $m$ and $0 \le k < N$:
$$
h^*_{m - \frac{k}{N}} = \sum_{i = 0}^k (-1)^{i}  \binom{k}{i} \alpha_{m,k - i} + \sum_{j < m - \frac{k}{N}} c_j h^*_{j},
$$
for some (known) $c_j \in R(G)$. Hence we may recover 
$h^*_N(P,\rho;t)$ from 
$\{ h^*(\Pyr^r(P),\rho_N;t) : 0 \le r < N \}$.

\end{remark}

%For additional properties of the equivariant $h^*_N$-series, we refer the reader to 
%\alan{TODO: degree bounded by $d + 1 - \frac{1}{N}$, reciprocity, highest terms etc.}

%\begin{example}
%Let $G = \Z/2\Z$ act on $M = \Z$ sending $x$ to $1 - x$, and let $P = [0,1]$. 
%Let $\chi$ denote the nontrivial $1$-dimensional representation of $G$. Then 
%\[
%h^*_N(P,\rho;t) = (1 + t^{\frac{1}{N}} + \cdots + t^{\frac{N - 1}{N}})^2 \frac{1 + \chi}{2}
%+ (1 + t^{\frac{2}{N}} + t^{\frac{4}{N}} + \cdots + 
%t^{\frac{2(N - 1)}{N}}) \frac{1 -  \chi}{2}
%\]
%\end{example}

\section{Invariant triangulations}\label{s:triangulations}

In this section, we consider the question of when there exists an invariant lattice triangulation of a lattice polytope. The relevant background material is introduced in Section~\ref{ss:subdivisonspolytopes}. In Section~\ref{ss:constructions}, we give criterion to guarantee the existence of invariant triangulations and prove Theorem~\ref{t:triangulationOrderGroup}. Using these results, in  
 Section~\ref{ss:translative}, we deduce criterion  for producing invariant lattice triangulation involving  the concept of translative actions.
%We now describe some applications of Proposition~\ref{p:existtriangulation} for constructing triangulations using the concept of translative actions.
In Section~\ref{ss:dim2}, we consider the case of two dimensional polytopes. 
%We continue with the notation of the previous section. That is,
As in the previous section,
let $M \cong \Z^d$ be a rank $d$ lattice.
Let $G$ be a finite group  %with identity element $\id$ 
and let $\rho: G \to \Aff(M_\R)$ be an affine representation of $G$. 
%When $\rho$ is known, we use the shorthand $g \cdot u := \rho(g)(u) \in M$ for $g \in G$ and $u \in M$. 
%\alan{DO we need $d$?}
Let $P \subset M_\R$ be a $G$-invariant $d$-dimensional polytope. 
We are mainly interested in the case when $\rho$ preserves $M$ and $P$ is a lattice polytope, but will not assume this unless otherwise stated. 
%In this section, we do not necessarily assume that $P$ is a lattice polytope unless explicitly stated. 
Let $\tM = M \oplus \Z$ and let $\HT_\R: \tM_\R \to \R$ be projection onto the last coordinate. 
%\alan{need notation for convex hull of a set}
%Let $\ver(P)$ denote the vertices of $P$
%\alan{mention that also use alternative formulation in Remark~\ref{r:altformulation}?}

\subsection{Polyhedral subdivisions of polytopes}\label{ss:subdivisonspolytopes}

We first recall some background on polyhedral subdivisions. We refer the reader to 
\cite{DRSTriangulations10} and \cite[Chapter~7]{GKZ94} for details.
%\alan{TODO: intro, references}
%In the following subsection, the fact that $P$ is a lattice polytope is not important. Should use $Q$?? Problem: want the setup with $G$ etc.
%We follow \cite{DRSTriangulations10} and \cite[Chapter~7]{GKZ94}.
A \emph{polytopal complex} $\cS  = \{ F_i \}_{i \in I}$ in $M_\R$ is a collection of 
polytopes $F_i$ in $M_\R$ indexed by a finite subset $I$ such that: 
\begin{enumerate}
		
	\item  Every (possibly empty) face of $F_i$ lies in $\cS$ for all $i \in I$.
	
	\item The intersection $F_i \cap F_j$ is a (possibly empty) face of both $F_i$ and $F_j$ for all $i,j \in I$.
	
\end{enumerate}

We say that $\cS$ is $G$-invariant if $g \cdot F \in \cS$ for all $g$ in $G$ and $F \in \cS$.
Elements of $\cS$ are called \emph{faces} of $\cS$, and the $0$-dimensional and maximal faces of $\cS$ are called vertices and facets respectively.  
We say that $\cS$ is a \emph{lattice} or \emph{rational} polytopal complex if the vertices of $\cS$ lie in $M$ or $M_\Q$ respectively. 
The \emph{support} $\Supp(\cS)$ of $\cS$ is the union of the faces $\{ F_i : i \in I \}$ in $M_\R$. 
We say that $\cS$ is a \emph{polyhedral subdivision} of its support. In particular, a polyhedral subdivision of $P$ is a polytopal complex in $M_\R$ with support $P$. 
A \emph{triangulation} of $P$ is a polyhedral subdivision of $P$ such that each 
%We say that $\cS$ is a \emph{triangulation} if each 
facet is a simplex i.e. has precisely $d + 1$ vertices. 

%OLD BELOW
%A \emph{polyhedral subdivision} $\cS  = \{ F_i \}_{i \in I}$ of $P$ is a collection 
% of 
%%$d$-dimensional 
%polytopes $F_i$ in $M_\R$ indexed by a finite subset $I$ such that: 
%\begin{enumerate}
%\item $P = \cup_{i \in I} F_i$.
%
%\item  Every (possibly empty) face of $F_i$ lies in $\cS$ for all $i \in I$.
%
%\item The intersection $F_i \cap F_j$ is a (possibly empty) face of both $F_i$ and $F_j$ for all $i,j \in I$.
%
%\end{enumerate}
%
%
%
%Elements of $\cS$ are called \emph{faces} of $\cS$, and the $0$-dimensional and $d$-dimensional faces of $\cS$ are called vertices and facets respectively.  
%We say that $\cS$ is a \emph{lattice} or \emph{rational} polyhedral subdivision if the vertices of $\cS$ lie in $M$ or $M_\Q$ respectively. 
%We say that $\cS$ is a \emph{triangulation} if each facet is a simplex i.e. has precisely $d + 1$ vertices. We say that $\cS$ is $G$-invariant if $g \cdot F \in \cS$ for all $g$ in $G$ and $F \in \cS$. 
%OLD ABOVE

Let $A \subset P$ be a finite set 
containing %$\ver(P)$ 
the vertices of $P$ % \subset M_\R$ 
%$P$
%with convex hull 
%$\Conv{A} = 
and consider a 
function $\omega: A \to \R$.  Let $\UH(\omega)$ be the convex hull of $\{ (u,\lambda) : u \in A, \lambda \ge \omega(u) \} \subset \tM_\R$. 
Let $\cS(\omega)$ be the polyhedral subdivision of $P$ consisting of 
the images under  projection $\tM_\R \to M_\R$ onto all but the last coordinate
%$\HT_\R: \tM_\R \to \R$ 
of the bounded faces of $\UH(\omega)$.
%, and let $B_\omega = \{ u \in A : \omega(u) = \min_{(u,\lambda) \in \UH(\omega)} \lambda \}$.  
A polyhedral subdivision $\cS$ of $P$ is \emph{regular} if $\cS = \cS(\omega)$ for some such finite subset $A$ and function $\omega: A \to \R$. 

\begin{remark}\label{r:invariantheight}
	If $\cS$ is $G$-invariant and regular then %we may assume that 
	$\cS = \cS(\omega)$ for some $\omega$  that is $G$-invariant in the sense that $A$ is $G$-invariant and $\omega(g \cdot u) = \omega(u)$ for all $g$ in $G$ and $u$ in $A$. 
	Indeed, we may always achieve this as follows: firstly, we may replace $A$ by any 
	finite set $A'$ containing $A$ by defining $\omega(u) = \min_{(u,\lambda) \in \UH(\omega)} \lambda$ for all $u \in A'$. In particular, after setting $A' = G \cdot A$, we may assume that $A$ is $G$-invariant. 
	Secondly, we may replace
	 $\omega$ with $\omega'$ where $\omega'(u) := \frac{1}{|G|}\sum_{g \in G} \omega(g \cdot u)$ for all $g$ in $G$ and $u$ in $A$ (see \cite[Corollary~2.11]{Reiner02}). Conversely, if $\omega$ is $G$-invariant, then $\cS(\omega)$ is $G$-invariant and regular. 
\end{remark}

The above construction has a natural generalization that we recall in the lemma below.
If $\cS$ and $\cS'$ are polytopal complexes, then $\cS'$  \emph{refines} $\cS$ if every face of $\cS'$ is contained in a face of $\cS$, and $\cS$ and $\cS'$ have the same support.  In that case, if $F$ is a face of $\cS$, then the restriction $\cS'|_F = \{ F' \in \cS' : F' \subset F \}$ is a polyhedral subdivision of $F$.
Let $\ver(\cS)$ denote the set of vertices of $\cS$. 

% Suppose that 
%$\cS'$ is a polytopal complex refining $\cS$ i.e. every face of $\cS'$ is contained in a face of $\cS$,  and $\cS$ and $\cS'$ have the same support.

% If $\cS$ and $\cS'$ are polyhedral subdivisions of $P$, then $\cS'$ \emph{refines} $\cS$ if every face of $\cS'$ is contained in a face of $\cS$. In that case, if $F$ is a face of $\cS$, then the restriction $\cS'|_F = \{ F' \in \cS' : F' \subset F \}$ is a polyhedral subdivision of $F$. Let $\ver(\cS)$ denote the set of vertices of $\cS$. 

%\alan{reminder of $P$ in statements?}
\begin{lemma}\cite[Lemma~2.3.16]{DRSTriangulations10}\label{l:regrefinement}
Let $\cS$ be a polyhedral subdivision of $P$. 
Let $A \subset M_\R$ be a finite set containing 
$\ver(\cS)$
%the vertices of $\cS$ 
and 
%
%Let $A \subset M_\R$ be a finite set with convex hull 
%%$\Conv{A} = 
%$P \subset M_\R$ and 
consider a 
function $\omega: A \to \R$. 
%Let $\cS$ be a polyhedral subdivision of $P$ such that $A$ contains the vertices of $\cS$. 
Then there exists a unique polyhedral subdivision $\cS'$ refining $\cS$ such that $\cS'|_F = \cS(\omega|_{A \cap F})$ for all nonempty faces $F$ of $\cS$. Moreover, if $\cS$ is regular then $\cS'$ is regular. Explicitly, if $\cS = \cS(\omega_0)$ for some $\omega_0: A \to \R$, then
$\cS' = \cS(\omega_0 + \epsilon \omega)$ for sufficiently small positive $\epsilon$. 
\end{lemma}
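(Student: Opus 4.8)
The plan is to build $\cS'$ by gluing together the regular subdivisions $\cS(\omega|_{A \cap F})$ as $F$ ranges over the faces of $\cS$, to deduce uniqueness from a facet argument, and to obtain regularity in the regular case from uniqueness plus a perturbation argument. The one technical ingredient I would isolate first is a \emph{locality} property of regular subdivisions: if $F$ is a polytope, $A'$ a finite set with $\ver(F) \subseteq A' \subseteq F$, $\sigma \colon A' \to \R$ a weight function, and $G$ a face of $F$, then $\cS(\sigma)|_G = \cS(\sigma|_{A' \cap G})$. To prove this I would show $\UH(\sigma) \cap (L_G \times \R) = \UH(\sigma|_{A' \cap G})$, where $L_G$ denotes the affine span of $G$: the inclusion $\supseteq$ is immediate, and for $\subseteq$ one notes that the projection of $\UH(\sigma)$ is $\Conv{A'} = F$ and $L_G \cap F = G$, then writes a point of the left side as a convex combination of generators $(u_i,\lambda_i)$ with $u_i \in A'$ and $\lambda_i \ge \sigma(u_i)$; since the projection of that point lies in the face $G$, every $u_i$ with positive coefficient lies in $G$, so the point already lies in $\UH(\sigma|_{A' \cap G})$. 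Projecting the bounded faces of both sides then identifies $\cS(\sigma)|_G$ with $\cS(\sigma|_{A' \cap G})$, since the cells of $\cS(\sigma)$ contained in $G$ are exactly the projections of the bounded faces of $\UH(\sigma)$ lying over $G$.

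Granting this, I would define $\cS' := \bigcup_{F \in \cS} \cS(\omega|_{A \cap F})$ (equivalently, the union over facets of $\cS$, since locality identifies each $\cS(\omega|_{A \cap G})$ with a restriction of some $\cS(\omega|_{A\cap F})$). Checking that $\cS'$ is a polytopal complex: closure under passage to faces holds inside each $\cS(\omega|_{A\cap F})$; for the intersection axiom, given cells $\tau \in \cS(\omega|_{A\cap F})$ and $\tau' \in \cS(\omega|_{A\cap F'})$, put $G := F \cap F'$, a common face of $F$ and $F'$, and apply locality twice to conclude that $\tau \cap G$ and $\tau' \cap G$ both lie in $\cS(\omega|_{A\cap G})$; then $\tau \cap \tau' = (\tau \cap G)\cap(\tau'\cap G)$ is a common face of $\tau$ and $\tau'$. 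Since $\Supp \cS(\omega|_{A\cap F}) = F$ we get $\Supp \cS' = P$, and each cell of $\cS'$ lies in a face of $\cS$, so $\cS'$ refines $\cS$. The required identity $\cS'|_F = \cS(\omega|_{A\cap F})$ for every face $F$ of $\cS$ then follows once more from locality, by the same manipulation used for the intersection axiom.

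For uniqueness I would use that any refinement of $\cS$ equals the union of its restrictions to the facets of $\cS$, so such a refinement is determined by those restrictions; since the statement prescribes $\cS'|_F$ for every face, in particular every facet, this forces $\cS'$ to be unique. Finally, if $\cS = \cS(\omega_0)$ with $\omega_0 \colon A \to \R$, I would invoke (or reprove) the standard fact that for all sufficiently small $\epsilon > 0$ the regular subdivision $\cS(\omega_0 + \epsilon\omega)$ restricts on each cell $F$ of $\cS(\omega_0)$ to $\cS(\omega|_{A\cap F})$ — the point being that for a generic perturbed functional $\phi = \phi_0 + \epsilon\phi_1$ the quantity $\min_{u \in A}\bigl(\omega_0(u) + \epsilon\omega(u) + \phi(u)\bigr)$ is, to first order in $\epsilon$, achieved first on the face of $\cS(\omega_0)$ selected by $\phi_0$ and then, within that face, on the cell selected by the pair $(\phi_1,\omega)$. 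Thus $\cS(\omega_0 + \epsilon\omega)$ has the restriction property that characterizes $\cS'$, so by uniqueness $\cS' = \cS(\omega_0 + \epsilon\omega)$ is regular.

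I expect the main obstacle to be the locality property together with the bookkeeping needed in the gluing step to verify the intersection axiom across cells coming from different faces of $\cS$; the perturbation argument for the regular case is routine, but it should be organized as a lexicographic minimization over $A$ in the triple $(\phi_0,\phi_1,\omega)$ rather than as a naive limit, to make the ``sufficiently small $\epsilon$'' claim precise.
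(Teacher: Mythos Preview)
The paper does not give its own proof of this lemma: it is quoted verbatim as \cite[Lemma~2.3.16]{DRSTriangulations10} and used as a black box. So there is nothing in the paper to compare against beyond the cited reference.

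Your argument is correct and is essentially the standard proof one finds in De Loera--Rambau--Santos. The locality step $\cS(\sigma)|_G = \cS(\sigma|_{A'\cap G})$ via $\UH(\sigma)\cap(L_G\times\R)=\UH(\sigma|_{A'\cap G})$ is the right ingredient, and your verification of the intersection axiom by passing through the common face $G=F\cap F'$ is the standard bookkeeping. One small point worth making explicit in the gluing step: you use that $\tau\cap G$ is a face of $\tau$, which holds because $G$ is cut out of $F$ by a supporting hyperplane and $\tau\subseteq F$; you implicitly rely on this when concluding that $(\tau\cap G)\cap(\tau'\cap G)$ is a face of $\tau$. The perturbation argument for regularity is sketched rather than proved, but the lexicographic formulation you suggest at the end is exactly how to make it precise, and matches the treatment in the reference.
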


The refinement in Lemma~\ref{l:regrefinement} is called the \emph{regular refinement} of $\cS$ by $\omega$. For example, $\cS(\omega)$ is the regular refinement of the trivial subdivision of $P$ by $\omega$.
If $\cS$ %, $A$ 
and $\omega$ are $G$-invariant, then the regular refinement of $\cS$ by $\omega$ is $G$-invariant. The following definition extends a special case of \cite[Definition~4.3.11]{DRSTriangulations10} (which considers the case below when $J$ is a point).

\begin{definition}\cite[Definition~4.3.11]{DRSTriangulations10}\label{d:pulling}
Let $\cS$ be a polyhedral subdivision of $P$. Let $J \subset P$ be a finite subset. 
Let $A \subset M_\R$ be a finite set containing 
$\ver(\cS) \cup J$.
 The \emph{pulling refinement} $\cS_J$ of $\cS$ by $J$ is the regular 
 refinement of $\cS$ by the function $\omega: A \to \R$ defined by 
 \begin{equation*}%\label{e:omega}
 \omega(u) = \begin{cases}
-1 &\textrm{ if } u \in J \\
0  &\textrm{ otherwise. }
\end{cases}
 \end{equation*}
%$$\omega(u) = \begin{cases}
%-1 &\textrm{ if } u \in J \\
%0  &\textrm{ otherwise. }
%\end{cases}
%$$

%Given a collection of nonempty finite subsets $J_1,\ldots,J_r \subset P$, let $\cS_{J_1,\ldots,J_r} := (\cdots((\cS_{J_1})_{J_2})\cdots)_{J_r})$ be the subdivision obtained from $\cS$ by applying successive pulling refinements by $J_1,\ldots,J_r$. 
\end{definition}

%We make some simple observations. 
%\begin{remark}
In Definition~\ref{d:pulling}, the pulling refinement $\cS_J$
%of $\cS$ by $J$ 
is independent of the choice of $A$. Indeed, let $A' = \ver(\cS) \cup J$ with corresponding function $\omega': A' \to \R$ defined by $\omega'(u) = -1$ if $u \in J$ and $\omega'(u) = 0$ otherwise. Then for any face $F$ in $\cS$, we have an equality of upper convex hulls $\UH(\omega|_{A \cap F}) = \UH(\omega|_{A' \cap F})$ and
Lemma~\ref{l:regrefinement} implies that the regular refinements of $\cS$ by $\omega$ and $\omega'$ agree. 
%\end{remark}

%The following definition extends \cite[Definition~4.3.11]{DRSTriangulations10} which considers the case below when $J$ is a point. 
%
%\begin{definition}\label{d:pulling}
%Let $\cS$ be a regular lattice polyhedral subdivision of $P$.
%Let $J \subset P \cap M$ be a nonempty subset. 
% The \emph{pulling refinement} of $\cS$ by $J$ is the regular refinement of $\cS$ by the function $\omega: P \cap M \to \R$ defined by 
%$$\omega(u) = \begin{cases}
%-1 &\textrm{ if } u \in J \\
%0  &\textrm{ otherwise. }
%\end{cases}
%$$
%Given a collection of subsets $J_1,\ldots,J_r \subset P \cap M$, let $\cS_{J_1,\ldots,J_r}$ be the subdivision obtained from $\cS$ by applying successive pulling refinements by $J_1,\ldots,J_r$. 
%\end{definition}

\begin{remark}\label{r:simpleobs}
We will need the following observations.
The vertices of $\cS_J$ are contained in the union of the vertices of $\cS$ and $J$. In particular,
if $\cS$ is a lattice polyhedral subdivision and $J \subset P \cap M$, then 
$\cS_J$
%the pulling refinement of $\cS$ by $J$ 
is a lattice polyhedral subdivision.
 Also, if $\cS$ is $G$-invariant and $J$ is $G$-invariant, then 
 $\cS_J$
% the pulling refinement of $\cS$ by 
%$J$ 
is $G$-invariant. By Lemma~\ref{l:regrefinement}, if $\cS$ is regular then
 $\cS_J$ is regular.
%any pulling refinement of $\cS$ is regular.
\end{remark}

A case of particular interest is when  $J = \{ u \}$ for a point $u$ in $P$. In this case, we write $\cS_u : = \cS_J$. 
% Below, we write $\Conv{A,B}$ to denote the convex hull of sets $A,B \subset M_\R$. 
The following lemma is a direct consequence of Lemma~\ref{l:regrefinement} and \cite[Lemma~4.3.10]{DRSTriangulations10}.

\begin{lemma}\label{l:pullingpoints}
Let $\cS$ be a polyhedral subdivision of $P$ and let
$F$ be a face of $\cS$. Let 
 $u$ be a point in $P$. If $u \notin F$, then $F \in \cS_u$. If $u \in F$, then 
the facets of  $\cS_u|_F$ are 
$\{ \Conv{F',u}  : u \notin F' \subset F, \dim F' = \dim F - 1 \}$.
 
%$$
%\cS_u|_F = \{ F'  : u \notin F' \subset F \} \cup 
%\{ \Conv{F',u}  : u \notin F' \subset F \}.
%%\{ \Conv{F,u} \in \cS : u \notin F \in \cS, \textrm{ there exists } F \subset G \in \cS \textrm{ with } u \in G \}.
%$$ 
% 
%%Let $J = \{ u \}$ for a  point $u$ in $P$. 
%Then 
%%the pulling refinement of $\cS$ by $\{ u \}$ has faces:
%$$
%\cS_{ u } = \{ F  : u \notin F \in \cS \} \cup 
%\{ \Conv{F,u} \in \cS : u \notin F \in \cS, \textrm{ there exists } F \subset G \in \cS \textrm{ with } u \in G \}.
%$$ 
\end{lemma}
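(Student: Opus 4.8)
The plan is to unwind the definition of the pulling refinement $\cS_u = \cS_{\{u\}}$ as the regular refinement of $\cS$ by the height function $\omega$ that takes value $-1$ at $u$ and $0$ elsewhere, and then apply Lemma~\ref{l:regrefinement} together with the computation of one-point pulling on a single polytope recorded in \cite[Lemma~4.3.10]{DRSTriangulations10}. By Lemma~\ref{l:regrefinement}, for every nonempty face $F$ of $\cS$ the restriction $\cS_u|_F$ equals $\cS(\omega|_{A \cap F})$, where $A \supseteq \ver(\cS) \cup \{u\}$; since the pulling refinement is independent of the choice of $A$ (as noted after Definition~\ref{d:pulling}), we may take $A = \ver(\cS) \cup \{u\}$, so $A \cap F = \ver(F) \cup (\{u\} \cap F)$.

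First I would handle the case $u \notin F$. Then $A \cap F = \ver(F)$ and $\omega|_{A \cap F} \equiv 0$, so $\UH(\omega|_{A\cap F})$ has a single bounded face, namely $F \times \{0\}$ (up to the vertical translation), whence $\cS(\omega|_{A\cap F}) = \{ \text{faces of } F\}$, i.e. $F$ is not subdivided. Thus $F$ itself is a face of $\cS_u$. (One should also note $u \notin F$ already implies $u$ is not a vertex of any such $F$, so no collision arises.)

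Next, the case $u \in F$. Here $\omega|_{A\cap F}$ is $-1$ at $u$ and $0$ at the vertices of $F$, which is precisely the data of pulling the single polytope $F$ at the point $u$. By \cite[Lemma~4.3.10]{DRSTriangulations10}, the resulting subdivision $\cS(\omega|_{A\cap F})$ of $F$ has as its facets exactly the polytopes $\Conv{F', u}$ as $F'$ ranges over the facets of $F$ not containing $u$; since $F$ need not be full-dimensional, "facet of $F$" means a face of codimension one in $F$, i.e. $\dim F' = \dim F - 1$ with $F' \subset F$, and the condition "$F'$ does not contain $u$" is $u \notin F'$. This is the asserted description of the facets of $\cS_u|_F$. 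The main point to be careful about is simply that Lemma~\ref{l:regrefinement} reduces the global statement to the local, single-polytope pulling lemma, and that the independence-of-$A$ remark lets us use the minimal generating set so that the local height data is exactly that of \cite[Lemma~4.3.10]{DRSTriangulations10}; there is no real obstacle beyond bookkeeping, since both ingredients are quoted results.
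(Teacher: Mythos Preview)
Your proposal is correct and follows exactly the approach the paper indicates: the paper does not give a detailed proof but simply states that the lemma is a direct consequence of Lemma~\ref{l:regrefinement} and \cite[Lemma~4.3.10]{DRSTriangulations10}, which is precisely the reduction you carry out. Your explicit unpacking of the two cases is just the fleshed-out version of that one-line justification.
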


\subsection{Constructing invariant %regular 
	triangulations}\label{ss:constructions}

The goal of this section is to develop some methods to construct invariant triangulations and to prove Theorem~\ref{t:triangulationOrderGroup}. 
We continue with the notation above. 

Our main tool is the following lemma.  Below we consider the empty set to be a simplex of dimension $-1$. 
%If $\mathcal{T}$ is a subcomplex of a polyhedral subdivision $\cS$ of $P$, then we write $\Supp(\mathcal{T})$ to denote the support of $\mathcal{T}$. \alan{edit this}
%Note that if $A$ is a finite set, we also write $|A|$ to denote the cardinality of $A$. 

\begin{lemma}\label{l:constructtriangulation}
	Let $G$ be a finite group.
	Let $M$ be a lattice of rank $d$ and let $\rho: G \to  \Aff(M_\R)$ be an affine representation.  
	Let $P \subset M_\R$ be a $G$-invariant $d$-dimensional polytope.
	Let $u_1,\ldots,u_r$ be a collection of points in $P$ and let $\cS$ be a $G$-invariant regular polyhedral subdivision of $P$. Consider the subcomplex 
	$\mathcal{T}(i) := \{ F  \in \cS : F \cap (\cup_{1 \le j \le i} G \cdot u_j) = \emptyset \}$ of $\cS$ for all $0 \le i \le r$. 
	Assume that the following properties are satisfied:
	% satisfying the following properties:
	\begin{enumerate}
		\item Every face in $\mathcal{T}(r)$ is a simplex.
		
		\item For all $1 \le i \le r$, $u_i \in \Supp(\mathcal{T}(i - 1))$  and $|F \cap (G \cdot u_i)| \le 1$ for all 
		$F \in \mathcal{T}(i - 1)$.

		\end{enumerate}
		
		Then the polyhedral subdivision $\cS_{G \cdot u_1, \ldots, G \cdot u_r}$ %obtained from $\cS$ by applying successive pulling refinements to the orbits $G \cdot u_1, \ldots, G \cdot u_r$ 
		is a $G$-invariant regular triangulation. 
		%Moreover, if each $u_i$ is a vertex of $\cS$, then $\cS_{G \cdot u_1, \ldots, G \cdot u_r}$ and $\cS$ have the same vertices. 
	\end{lemma}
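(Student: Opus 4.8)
The plan is to argue by induction on $i$ that $\cS_{G\cdot u_1,\ldots,G\cdot u_i}$ is a $G$-invariant regular polyhedral subdivision of $P$ whose restriction to the subcomplex $\mathcal{T}(i)$ agrees with $\cS$ (i.e. no face of $\mathcal{T}(i)$ gets subdivided), and such that every face of $\cS_{G\cdot u_1,\ldots,G\cdot u_i}$ meeting $\cup_{1\le j\le i}G\cdot u_j$ has been pulled to a point of that set. The base case $i=0$ is trivial. For the inductive step, set $\cS^{(i-1)} := \cS_{G\cdot u_1,\ldots,G\cdot u_{i-1}}$ and form $\cS^{(i)} := (\cS^{(i-1)})_{G\cdot u_i}$; by Remark~\ref{r:simpleobs} this is again $G$-invariant and regular, since $G\cdot u_i$ is $G$-invariant. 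The content is then local: I would use Lemma~\ref{l:pullingpoints} to describe exactly which faces change. A face $F$ of $\cS^{(i-1)}$ survives untouched unless it contains some point of $G\cdot u_i$, and by hypothesis (2) each such $F$ contains \emph{at most one} point $w\in G\cdot u_i$; for such $F$, the facets of $(\cS^{(i-1)})_w|_F$ are the cones $\Conv{F',w}$ over facets $F'$ of $F$ not containing $w$.

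The key point to extract from hypothesis (2) is that $u_i\in\Supp(\mathcal{T}(i-1))$, hence each point $w\in G\cdot u_i$ lies in $\Supp(\mathcal{T}(i-1))$ as well (because $\mathcal{T}(i-1)$ is $G$-invariant), so $w$ is only ever pulled into faces that are \emph{already} part of $\mathcal{T}(i-1)$; combined with the inductive hypothesis that $\cS^{(i-1)}$ restricts to $\cS$ on $\mathcal{T}(i-1)$, this means the faces being subdivided at step $i$ are exactly faces of the original $\cS$ lying in $\mathcal{T}(i-1)\setminus\mathcal{T}(i)$. One must check that after pulling, $\cS^{(i)}$ restricts to $\cS$ on the smaller subcomplex $\mathcal{T}(i)$: a face of $\mathcal{T}(i)$ by definition avoids $G\cdot u_i$, so by Lemma~\ref{l:pullingpoints} it is unaffected, which advances the induction. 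At the final stage $i=r$, the subcomplex $\mathcal{T}(r)$ consists entirely of simplices by hypothesis (1), and every remaining face of $\cS_{G\cdot u_1,\ldots,G\cdot u_r}$ has been produced by repeatedly coning off a face over a single pulled point. I would then verify that iterated single-point pulling starting from a polytopal complex eventually yields simplices: each pull over $w$ replaces a non-simplex face $F$ by cones $\Conv{F',w}$, strictly decreasing complexity, and the ``at most one point per face'' condition guarantees these cones are genuine pyramids (the apex is not already in $F'$), so induction on dimension and number of vertices finishes the simpliciality check.

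The main obstacle, and the step I would spend the most care on, is the bookkeeping that the successive pulling refinements interact correctly with the subcomplexes $\mathcal{T}(i)$ — specifically, proving the invariant ``$\cS^{(i)}|_{\mathcal{T}(i)}=\cS|_{\mathcal{T}(i)}$'' rigorously. The subtlety is that pulling at $w\in G\cdot u_i$ could, \emph{a priori}, subdivide a face $F$ that itself lies in $\mathcal{T}(i-1)$ but is adjacent to $w$ through a larger face; one has to use Lemma~\ref{l:pullingpoints} carefully to see that $F\in(\cS^{(i-1)})_w$ whenever $w\notin F$, so only faces genuinely containing $w$ are touched, and that such faces leave $\mathcal{T}(i)$. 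A secondary technical point is confirming that the hypotheses of Lemma~\ref{l:constructtriangulation} are stated for $\cS$ but need to be propagated to $\cS^{(i-1)}$: the faces of $\mathcal{T}(i-1)$ relevant at step $i$ are unchanged from $\cS$ by the inductive hypothesis, so the condition $|F\cap(G\cdot u_i)|\le 1$ transfers verbatim. Regularity and $G$-invariance are preserved automatically at each step by Remark~\ref{r:simpleobs} and Remark~\ref{r:invariantheight}, so those require no extra work beyond citing them.
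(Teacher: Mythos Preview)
Your inductive framework and the invariant $\cS^{(i)}|_{\mathcal{T}(i)}=\cS|_{\mathcal{T}(i)}$ are correct, but that invariant is too weak to close the argument, and two of your steps break. First, the claim that ``the faces being subdivided at step $i$ are exactly faces of the original $\cS$ lying in $\mathcal{T}(i-1)\setminus\mathcal{T}(i)$'' is false: pulling at $w\in G\cdot u_i$ subdivides \emph{every} face of $\cS^{(i-1)}$ containing $w$, and such faces typically already contain earlier pulled points and hence lie outside $\mathcal{T}(i-1)$. For those faces hypothesis~(2) says nothing directly about $|F\cap(G\cdot u_i)|$, so your appeal to Lemma~\ref{l:pullingpoints} (a single-point pull) on an arbitrary face of $\cS^{(i-1)}$ is not justified; your ``secondary technical point'' only addresses faces inside $\mathcal{T}(i-1)$. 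Second, the simpliciality argument via ``strictly decreasing complexity'' does not work: the pyramid $\Conv{F',w}$ over a non-simplex facet $F'$ can have the same number of vertices as $F$ (take $F$ itself a pyramid over $F'$), so neither vertex count nor dimension yields a well-founded descent.

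The paper repairs both gaps with a single stronger invariant: every facet of $\cS^{(i)}$ is a \emph{join} $F'\ast F''$ with $F'\in\mathcal{T}(i)$ and $F''$ a simplex whose vertices lie in $\cup_{j\le i}G\cdot u_j$. Since $G\cdot u_i\subset\Supp(\mathcal{T}(i-1))$ while the vertices of $F''$ are not, one gets $F\cap(G\cdot u_i)=F'\cap(G\cdot u_i)$, so hypothesis~(2) applied to $F'\in\mathcal{T}(i-1)$ gives $|F\cap(G\cdot u_i)|\le 1$ for \emph{all} facets $F$ of $\cS^{(i-1)}$. The join form then propagates cleanly under the pull (if $F\cap(G\cdot u_i)=\{w\}$ then $w\in F'$, and each new facet is $H'\ast\Conv{F'',w}$ with $H'$ a facet of $F'$, $H'\in\mathcal{T}(i)$). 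At $i=r$ both join factors are simplices by hypothesis~(1), so simpliciality is immediate---no separate ``complexity'' argument is needed.
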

\begin{proof}
	Let $\cS(0) = \cS$ and let $\cS(i) = \cS(i - 1)_{G \cdot u_i}$ for $1 \le i \le r$. By Remark~\ref{r:simpleobs}, $\cS(r)$ is a $G$-invariant regular polyhedral subdivision of $P$. It remains to show that 
	$\cS(r)$ is a triangulation.
	%
	% and let $J_j$ be the orbit $G \cdot u_j$ for $1 \le j \le r$.
	%By Remark~\ref{r:simpleobs}, $\cS_{J_1,\ldots,J_r}$ is a $G$-invariant regular lattice polyhedral subdivision of $P$. It remains to show that 
	%$\cS_{J_1,\ldots,J_r}$ is a triangulation.
	%
	%Let $\cS(0) = \cS$ and $\cS(i) = \cS_{J_1,\ldots,J_i}$ for $1 \le i \le r$. 
	
	Recall that given polytopes $Q,Q',Q''$ in $M_\R$ with $Q = \Conv{Q',Q''}$ we say that $Q$ is the \emph{join} of $Q'$ and $Q''$ and write $Q = Q' * Q''$ if $\dim Q = \dim Q' + \dim Q'' + 1$. 
	We claim that every facet $F$ of
	$\cS(i)$ has the form $F = F' * F''$
	%$F = \Conv{ F', F'' }$ 
	where  $F' \in \mathcal{T}(i)$
	and $F''$ is a simplex 
	%of dimension $\dim F - \dim F'  - 1$ 
	with vertices contained in $\cup_{1 \le j \le i} G \cdot u_j$.
	% is the intersection of $F$ with the support of $\mathcal{T}(i)$ and  
	Since all nonempty faces of $\mathcal{T}(r)$ are simplices by assumption, it follows that $\cS(r)$ is a triangulation. 
	
	It remains to prove the claim. The claim holds when $i = 0$ since $\cS(0) = \mathcal{T}(0) = \cS$ and we may write $F = F' * F''$ with $F' = F$ and $F'' = \emptyset$ for any facet $F$ of $\cS(0)$.  
	Assume that  $1 \le i \le r$. By assumption, $u_i \in  \Supp(\mathcal{T}(i - 1))$ and since $\mathcal{T}(i - 1)$ is $G$-invariant, the orbit $G \cdot u_i$ lies in $\Supp(\mathcal{T}(i - 1))$. Let $F$ be a facet of $\cS(i - 1)$. 
	It is enough to establish the claim for all facets of 
	%We want to describe 
	$\cS(i)|_F = \cS(i - 1)_{G \cdot u_i}|_F$. 
	By induction, we can write 
	%$F = \Conv{ F', F'' }$
	$F = F' * F''$
	where  $F' \in \mathcal{T}(i-1)$
	and $F''$ is a simplex 
	%of dimension $\dim F - \dim F'  - 1$ 
	with vertices contained in $\cup_{1 \le j \le i-1} G \cdot u_j$.
	% where $F' \in \mathcal{T}(i - 1)$ is the intersection of $F$ with the support of $\mathcal{T}(i - 1)$ and $F''$ is a simplex of dimension $\dim F - \dim F'$. 
	Then $F'$ is the intersection of $F$ with $\Supp(\mathcal{T}(i - 1))$. 
	From our assumptions, $|F \cap (G \cdot u_i)| = |F' \cap (G \cdot u_i)| \le 1$. 
	If $F \cap (G \cdot u_i) = \emptyset$, then $F' \in \mathcal{T}(i)$ and
	$F \in \cS(i)$ by Lemma~\ref{l:pullingpoints}, and the claim follows.
	
	It remains to consider the case when $F \cap (G \cdot u_i) = \{ w \}$ is a point. Let $H$ be a facet of $F$ that doesn't contain $w$. Then $H \cap (G \cdot u_i) = \emptyset$. Since $F = F' * F''$ and $w \in F'$, we have $H = H' * F''$ for some facet $H'$ of $F'$. Since $H' \subset F' \in \mathcal{T}(i - 1)$, we have 
	%$H' \cap (G \cdot u_i) = \emptyset$ and 
	$H' \in \mathcal{T}(i)$.     
	By Lemma~\ref{l:pullingpoints}, the polytopes of the form $\Conv{H,w}$ are precisely the
	facets of 
	$\cS(i)|_F$. Then $\Conv{H,w} = H' * \Conv{F'',w}$, where $H' \in \mathcal{T}(i)$ and $\Conv{F'',w}$ is a simplex 
	%of dimension $\dim F'' + 1 = \dim \Conv{H,w} - \dim H' - 1$ 
	with vertices contained in $\cup_{1 \le j \le i} G \cdot u_j$. This completes the proof of the claim.
	\end{proof}

%\alan{barycentric subdivision appears multiple times; should include and move this example to the final section?? should it be made a corollary/proposition? }

\begin{example}\label{e:permutahedronv2}
	We continue with the setup of Example~\ref{e:permutahedronv1}. That is, 
	 $\psi: \Z^{d + 1} \to \Z$, $\psi(u_1,\ldots,u_{d + 1}) = \sum_{i = 1}^{d + 1} u_i$, and $M' = \psi^{-1}(\binom{d + 2}{2} )$ is an affine lattice in the real affine space $W = \psi_\R^{-1}(\binom{d + 2}{2} )$.
	  The symmetric group $G = \Sym_{d + 1}$ acts on  the $(d + 1)$-permutahedron $P \subset W$.

%	 $G = \Sym_{d + 1}$ acts on  the $(d + 1)$-permutahedron $P$ contained in 
%	 $M_\R = \{ (a_1,\ldots,a_{d + 1}) \in \R^{d + 1} : \sum_i a_i = \binom{d + 2}{2}\}$. 
	 
	 The nonempty faces of $P$ are in order-preserving bijection with the set of ordered partitions of $\{ 1,\ldots, d+ 1\}$ (see, for example, \cite[Example~0.10]{ZieglerLectures}). Explicitly, consider an ordered partition 
	 $I = (I_1,\ldots,I_s)$ of $\{ 1,\ldots, d+ 1\}$ and the corresponding face 
	 $F_I$ of $P$. Then  $\dim F_I = d + 1 - s$ and the  vertices of $F_I$ are  precisely all $v = (v_1,\ldots,v_{d + 1})$ such that we have an equality of unordered sets $\{ v_i : i \in I_1 \cup \cdots \cup I_j \} = \{ 1,2,\ldots, \sum_{k \le j} |I_k| \}$ for all $1 \le j \le s$. Then the average of the vertices of $F_I$ is the element
	 $u_I = (u_{I,1},\ldots,u_{I,d+1})$ defined by 
	 \[
	 u_{I,i} = \frac{|I_j| + 1}{2} + \sum_{k < j} |I_k|  \textrm{ if } i \in I_j.
	 \]
	 In particular, $u_{I} \in \frac{1}{2}M'$ and $u_I$ lies in the relative interior of $F_I$ for all $I$. Note that $G$ naturally acts on the set of all ordered partitions of $\{ 1,\ldots, d+ 1\}$ and $g \cdot u_I = u_{g \cdot I}$ for any $g$ in $G$. Let $I(1),\ldots,I(t)$ be representatives of the $G$-orbits of 
     the set of all ordered partitions of $\{ 1,\ldots, d+ 1\}$, ordered such that $\dim F_{I(k)} \ge \dim F_{I(k')}$ for $k \le k'$. Let $\cS$ be the trivial subdivision of $P$. 
     Then Lemma~\ref{l:constructtriangulation}, implies that  the barycentric subdivision 
      $\cS_{G \cdot u_{I(1)}, \ldots, G \cdot u_{I(t)}}$ %obtained from $\cS$ by applying successive pulling refinements to the orbits $G \cdot u_1, \ldots, G \cdot u_r$ 
     is a $G$-invariant regular triangulation with vertices in $\frac{1}{2}M'$. 

	Note that $P$ is a simplex when $d = 1$. When $d = 2$, $(2,2,2)$ is a $G$-fixed interior lattice point of $P$ and hence the pulling refinement of the trivial subdivision of $P$ by $(2,2,2)$ is a $G$-invariant lattice triangulation (see also Proposition~\ref{p:dim2mainthm}). 
	Using Example~\ref{e:permutahedronv1} and Corollary~\ref{c:maincorollaryshort}, we deduce that the following are equivalent: 
	\begin{enumerate}
		\item There exists a $G$-invariant triangulation of $P$ with vertices in $\frac{1}{N}M'$.
		\item $h^*_N(P,\rho;t)$ is effective.
		\item $h^*_N(P,\rho;t)$ is a polynomial.
		\item $N$ is even or $d \le 2$. 
	\end{enumerate}
%	It may be an interesting problem to determine a combinatorial formula for the (nonnegative) multiplicities of all irreducible representations of $\Sym_{d + 1}$ in the coefficients of 
%	$h^*_N(P,\rho;t)$ when $N = 2$. % i.e. a formula for	
	In particular, let $P'$ be the convex hull of the $G$-orbit $(-d,-d + 2,\ldots, d - 2, d)$ contained in $\R^{d + 1}$. Then $P'$ may be identified up to a lattice translation with $2P$, and we deduce that $P'$ admits a $G$-invariant lattice triangulation. 	It may be an interesting problem to determine a combinatorial formula for the (nonnegative) multiplicities of all irreducible representations of $\Sym_{d + 1}$ in the coefficients of the equivariant $h^*$-polynomial of $P'$. More generally, one could ask the same question for $h^*_N(P,\rho;t)$ when $N = 2$ (cf. Lemma~\ref{l:intandceil}). 
%	$h^*(P',\rho;t)$
%	WORKING: $(1,2,\ldots,d + 1)$ -->  $(2,4,\ldots,2d + 2)$ 
%	interior point $(\frac{d + 2}{2},\ldots,\frac{d + 2}{2})$ --> $(d + 2,\ldots,d + 2)$. Subtracting, get $(-d,-d + 2,\ldots, d - 2, d)$. 
\end{example}

We are now ready to prove Theorem~\ref{t:triangulationOrderGroup}. 
In order to help the reader understand the proof,
% of Theorem~\ref{t:triangulationOrderGroup}, 
we provide a concrete example below (see Example~\ref{e:Z/3Zprism4}). Recall that Theorem~\ref{t:triangulationOrderGroup} states that if $P$ is a $G$-invariant lattice polytope and $N$ is a positive integer  
divisible by $|G|$, then there exists a $G$-invariant regular triangulation of $P$ with vertices in $\frac{1}{N} M$.

%\begin{theorem}%\label{t:triangulationOrderGroup}
%Let $P$ be a lattice polytope with respect to a lattice $M \cong \Z^d$.
%Let $G$ be a finite group with a representation $\rho: G \to \Aff(M)$ via affine transformations that preserves $P$. Let $N$ be a positive integer 
%%such that $N | |G|$. 
%divisible by $|G|$. 
%Then there exists a $G$-invariant regular triangulation of $P$ with vertices in $\frac{1}{N} M$.
%\end{theorem}
\begin{proof}[Proof of Theorem~\ref{t:triangulationOrderGroup}]
	Let $\cS$ be the trivial subdivision of $P$.
Our goal is to inductively construct a sequence $u_1,\ldots,u_r$ in $P \cap \frac{1}{N}M$ for some positive integer $r$ and apply Lemma~\ref{l:constructtriangulation}. As in Lemma~\ref{l:constructtriangulation}, we will consider the subcomplex 
$\mathcal{T}(i) := \{ F  \subset P : F \cap (\cup_{1 \le j \le i} G \cdot u_j) = \emptyset \}$ of $P$ for all $0 \le i \le r$.
	
Let $u_1$ to be a $G$-fixed point in $P \cap \frac{1}{N}M$. This exists since for any $u \in P \cap M$, the average $\frac{1}{|G|} \sum_{g \in G} g \cdot u \in P \cap \frac{1}{N}M$ is $G$-fixed. Suppose that we have constructed $u_1,\ldots,u_{i - 1} \in P \cap \frac{1}{N}M$ for some positive integer $i > 1$. If $\T(i - 1)$ is empty, then set $r = i - 1$. Otherwise,  let $u$ be a lattice point in $\Supp(\T(i - 1))$. Let $m = \max_{Q \in \T(i - 1)} | Q \cap (G \cdot u)| \in \Z_{> 0}$. Fix $Q$ in $\T(i - 1)$ such that  
$| Q \cap (G \cdot u)| = m$. After possibly replacing $Q$ by a face of $Q$
% and possibly replacing $Q$ with $g \cdot Q$ for some $g$ in $G$, we may also assume that $u \in Q$ and that $Q$ is the smallest face of $P$ containing $ Q \cap (G \cdot u)$. 
we may also assume that $Q$ is the smallest face of $P$ containing $ Q \cap (G \cdot u)$. Let $G_Q$ denote the stabilizer of $Q$ and let $u_{i}$ be a $G_Q$-fixed point in the relative interior $Q^\circ$ of $Q$.

 We claim that $|F \cap (G \cdot u_i)| \le 1$ for all  $F$ in $\T(i - 1)$.
To see this, suppose there exists a face $F$ in $\T(i - 1)$ such that $|F \cap (G \cdot u_i)| > 1$ for some positive integer $i$.
After possibly replacing $F$ by $g \cdot F$ for some $g$ in $G$, we may assume that $u_i \in F$.  By assumption there exists $g$ in $G$ such that $u_i \neq g \cdot u_i \in F$. Since $u_i$ and $g \cdot u_i$ lie in $Q^\circ$ and $(g \cdot Q)^\circ$ respectively, we have $Q, g \cdot Q \subset F$. On the other hand, since $| F \cap (G \cdot u)| \le m$ and $| Q \cap (G \cdot u)| = | g \cdot Q \cap (G \cdot u)| = m$, we have $Q \cap (G \cdot u) = g \cdot Q \cap (G \cdot u) = F \cap (G \cdot u)$.
Since $Q$ is the smallest face of $P$ containing $Q \cap (G \cdot u)$, we deduce that 
$Q \subset g \cdot Q$ and hence $Q = g \cdot Q$ and $g \in G_Q$. Since $u_i$ is $G_Q$-fixed by assumption, we have $g \cdot u_i = u_i$, a contradiction.

Observe that  $\T(i) \neq \T(i - 1)$ and hence the above construction terminates. We conclude that we have  produced a finite sequence $u_1,\ldots,u_r$  with $u_i \in \Supp(T(i - 1))$ for $1 \le i \le r$ and  $\T(r) = \emptyset$. Moreover, the fact that $u_1$ is $G$-fixed and the claim above imply that the hypothesis of Lemma~\ref{l:constructtriangulation} hold.
Then Lemma~\ref{l:constructtriangulation}  implies that the polyhedral subdivision $\cS_{G \cdot u_1, \ldots, G \cdot u_r}$ 
is a $G$-invariant regular triangulation. 

%We conclude that the polyhedral subdivision obtained from $P$ by applying successive pulling refinements to the orbits $G \cdot u_1, \ldots, G \cdot u_r$ is a $G$-invariant regular triangulation. 

It remains to show that we may choose $u_i \in \frac{1}{N}M$ for $1 < i \le r$. We continue with the notation above and recall that $u_i$ is any choice of $G_Q$-fixed point in $Q^\circ$.
Consider the lattice polytope $S = \Conv{Q \cap (G \cdot u)}$. For any $g$ in $G_Q$, $g \cdot (Q \cap (G \cdot u)) = Q \cap (G \cdot u)$, and hence $g \cdot S = S$. Let $S'$ be the $G_Q$-fixed locus of $S$. 
Since $(S')^\circ \subset Q^\circ$, it is enough to show that $(S')^\circ \cap \frac{1}{|G|}M \neq \emptyset$. 

Let $G_Q \cdot y_1, \ldots, G_Q \cdot y_\ell$ be the $G_Q$-orbits of $Q \cap (G \cdot u)$ for some positive integer $\ell$. 
Then the vertices of the rational polytope $S'$ are contained in $\{ \frac{1}{|G_Q|} \sum_{g \in G_Q} g \cdot y_i : 1 \le i \le \ell \}$ by \cite[Lemma~5.4]{StapledonEquivariant}. 
In particular, $S'$ is a lattice polytope with respect to the lattice $\frac{1}{|G_Q|} M$. By Remark~\ref{r:interiorpointsexist}, $(S')^\circ \cap \frac{1}{s|G_Q|} M \neq \emptyset$ for all $s > \dim S'$. Since $S'$ has at most $\ell$ vertices, $\dim S' < \ell$ and hence to finish the proof it is enough to establish that 
$\ell \le \frac{|G|}{|G_Q|}$. 
%In particular, $S'$ has at most $\ell$ vertices, each of which lie in $\frac{1}{|G_Q|} M$. 
%Consider $1 \le k \le \ell$ such that $|G_Q \cdot y_k|$ is minimal and let $y = y_k$. 
%
%Let $\ell$ be the number of $G_Q$-orbits of $Q \cap (G \cdot u)$. We claim that $\ell \le \frac{|G|}{|G_Q|}$. Indeed, 
Consider an element $y \in Q \cap (G \cdot u)$ such that $|G_Q \cdot y|$ is minimal. 
Write $\Stab(G,y)$ and $\Stab(G_Q,y)$ for the stablizers of $y$ in $G$ and $G_Q$ respectively. 
Since $G \cdot u = G \cdot y$, we compute
\[
\ell \le \frac{|Q \cap (G \cdot y)|}{|G_Q \cdot y|} \le \frac{|G \cdot y|}{|G_Q \cdot y|} = \frac{|G|}{|G_Q|} \frac{|\Stab(G_Q,y)|}{|\Stab(G,y)|} \le  \frac{|G|}{|G_Q|}.
\]
%Since  $S'$ is a lattice polytope with respect to the lattice $\frac{1}{|G_Q|} M$, 
%$(S')^\circ \cap \frac{1}{s|G_Q|} M \neq \emptyset$ for all $s > \dim S'$ (see Remark~\ref{r:interiorpointsexist}). 
%
%We conclude that $S'$ is a lattice polytope with respect to the lattice $\frac{1}{|G_Q|} M$ with $\dim S'  < \frac{|G|}{|G_Q|}$. On the other hand, 

\end{proof}

\begin{example}\label{e:Z/3Zprism4}
	We demonstrate the proof of Theorem~\ref{t:triangulationOrderGroup} in a special case of Example~\ref{e:Symprism1} and Example~\ref{e:Symprism2}. 
%	We continue with Example~\ref{e:Z/3Zprism3}. 
%	Recall that $G = \Z/3\Z$, 
   Let $d = 3$ and let $\psi: \Z^{4} \to \Z$, $\psi(u_1,u_2,u_3,u_4) = u_1 + u_2 + u_3$. Let $M' = \psi^{-1}(1)$ be the affine lattice in the real affine space $W = \psi_\R^{-1}(1)$. Let $H = \Z/3\Z$ act by cycling the first three coordinates in $M'$, and let
  % $Q' = \Conv{e_1,e_2,e_3} \subset W$ and  
   $P = \Conv{e_1,e_2,e_3} \times [0,e_4] \subset W$.
   
%We continue with Example~\ref{e:Symprism1}.
%Recall that $d > 1$, $\psi: \Z^{d + 1} \to \Z$, $\psi(u_1,\ldots,u_d,u_{d + 1}) = \sum_{i = 1}^d u_i$, and $M' = \psi^{-1}(1)$ is the affine lattice in the real affine space $W = \psi_\R^{-1}(1)$. Recall that $\rho: G \to \Aff(M')$ is the affine representation with $G$ acting by permuting coordinates in the first $d$ coordinates, $Q = \Conv{e_1,\ldots,e_d} \subset W$ and  $P = Q \times [0,1] \subset W$.

%	Let $M' = \{ (\lambda_1,\lambda_2,\lambda_3) \in \Z^3 : \sum_i \lambda_i = 1 \}$, $M = M' \oplus \Z$, and 
%	let $G = \Z/3\Z$ act on $M$ by cycling coordinates in $M'$ and acting trivially on the last coordinate.
%	Let $P = \Conv{e_1,e_2,e_3} \times [0,1] \subset M_\R$. 
%%	% 	 Let $Q = \Conv{e_1,e_2,e_3} \subset M_\R'$ and let $P = Q \times [0,1] \subset M_\R$. 
%%	It follows from Example~\ref{e:Z/3Zprism2}, Corollary~\ref{c:maincorollaryshort} and Theorem~\ref{t:triangulationOrderGroup} that there exists a $G$-invariant (regular) triangulation of $P$ with vertices in $\frac{1}{N}M$ if and only if $3 | N$.
%	
%%	We now show how the proof of Theorem~\ref{t:triangulationOrderGroup} can be applied in this case.
	We now consider the notation of the proof of Theorem~\ref{t:triangulationOrderGroup}, and show how to construct a sequence $u_1,\ldots,u_r \in P$.
	% 	 
	% 	 We construct a sequence $u_1,\ldots,u_r$ as in the proof of Theorem~\ref{t:triangulationOrderGroup}.
	Let $u_1 = (\frac{1}{3},\frac{1}{3},\frac{1}{3},1)$. Then $\T(1)$ is all faces of $P$ except for $ \Conv{e_1,e_2,e_3} \times \{ 1\}$ and $P$ itself. To construct $u_2$, suppose that we choose $u = (1,0,0,0) \in \Supp(\T(1))$. Then $m = 3$ and we may choose $Q = \Conv{e_1,e_2,e_3} \times \{ 0 \}$. Then $G_Q = G$ and we may set $u_2 =  (\frac{1}{3},\frac{1}{3},\frac{1}{3},0)$.
	% satisfies $|Q \cap (G \cdot u)| = m$. 
	%We let $u_2 =  (\frac{e_1 + e_2 + e_3}{3},1)$. 
	Now $\T(2)$ consists of all faces of $\T(1)$ except for $ \Conv{e_1,e_2,e_3} \times \{ 0\}$.  To construct $u_3$, suppose that we choose $u = (1,0,0,0) \in \Supp(\T(2))$. Then $m = 2$ and we may choose  
	$Q = \Conv{e_1,e_2} \times \{ 0 \}$. % satisfies $|Q \cap (G \cdot u)| = m$.
	%Note that if instead $u = (e_1,1)$ then 
	Then $G_Q = \{1\}$ and we may set $u_3 = (\frac{2}{3},\frac{1}{3},0,0)$.
	%\frac{2e_1 + e_2}{3},0)$.
	% By Lemma~\ref{l:constructtriangulation}, 
	By the proof of  Theorem~\ref{t:triangulationOrderGroup},
	the polyhedral subdivision obtained from $P$ by applying successive pulling refinements to the orbits $G \cdot u_1 = \{u_1\}, G \cdot u_2 = \{u_2\}, G \cdot u_3$ is a $G$-invariant regular triangulation with vertices in $\frac{1}{|G|}M$. 
	
	%On the other hand, there does exist a $G$-invariant lattice triangulation of $P$ that is not regular. For example, this can be achieved by adding the edges $\{ [(e_i,0),(e_{i + 1},1)] : 1 \le i \le 3 \}$. 
\end{example}

\begin{remark}\label{r:triangulationOrdercor}
	It follows from Theorem~\ref{t:triangulationOrderGroup} and Corollary~\ref{c:maincorollaryshort}  that $h^*_N(P,\rho;t)$ is effective (and a polynomial) if
	%$|G| | N$. 
	$|G|$ divides $N$.
%	\alan{want reference discussion of polynomiality and exponents} 
	 Since $h^*_N(P,\rho;t)$ is a polynomial if and only if $h^*_N(P,\rho;t)(g)$ is a polynomial for all $g$ in $G$, it follows from %Remark~\ref{r:effectiveimpliespolynomial} 
	 that $h^*_N(P,\rho;t)$ is a polynomial if $\exp(G) | N$ (cf. \cite[Remark~7.2]{StapledonEquivariant}).  
	 %(see also \cite[Remark~7.2]{StapledonEquivariant} for a direct proof of the latter statement).
\end{remark}

\begin{question}\label{q:exponent}
	Does the following strengthening of Theorem~\ref{t:triangulationOrderGroup} hold: 
	%Can Theorem~\ref{t:triangulationOrderGroup} be improved to show that 
	 if $\exp(G) | N$ then there exists a $G$-invariant regular triangulation of $P$ with vertices in $\frac{1}{N} M$? 	Does the following strengthening of Remark~\ref{r:triangulationOrdercor} hold: 
	 %Can Theorem~\ref{t:triangulationOrderGroup} be improved to show that 
	 if $\exp(G) | N$ then $h^*_N(P,\rho;t)$ is effective? 
	 Note that the first statement implies the second statement by Corollary~\ref{c:maincorollaryshort}.
	 By Remark~\ref{r:triangulationOrdercor}, the second statement would follow if the effectiveness conjecture is true (see Conjecture~\ref{c:origmod}).
	 Can these statements be established for all choices of subgroup $H$ in Example~\ref{e:Symprism4} below?
\end{question}

%\alan{constructing the triangulation below is simpler once we have the translative action section? below also involves barycentric subdivision; again!!}

\begin{example}\label{e:Symprism4}
	We continue with Example~\ref{e:Symprism1} and Example~\ref{e:Symprism2}.
	Recall that $d > 1$, $\psi: \Z^{d + 1} \to \Z$, $\psi(u_1,\ldots,u_d,u_{d + 1}) = \sum_{i = 1}^d u_i$, and $M' = \psi^{-1}(1)$ is the affine lattice in the real affine space $W = \psi_\R^{-1}(1)$. Recall that $\rho: G \to \Aff(M')$ is the affine representation with $G = \Sym_d$ acting by permuting coordinates in the first $d$ coordinates, $Q = \Conv{e_1,\ldots,e_d} \subset W$ and  $P = Q \times [0,e_{d+ 1}] \subset W$.
	Let $H$ be any subgroup of $G$ and let $N$ be a positive integer. By Corollary~\ref{c:maincorollaryshort} and Example~\ref{e:Symprism2}, 
if $\exp(H) \nmid N$ then there does not exist an $H$-invariant triangulation of $P$ with vertices in $\frac{1}{N}M'$. On the other hand, Theorem~\ref{t:triangulationOrderGroup} implies that there exists a regular $H$-invariant triangulation of $P$ with vertices in $\frac{1}{N}M'$ if $|H|$ divides $N$. 
	
	In Question~\ref{q:exponent} above, we ask if there exists a  regular $H$-invariant triangulation of $P$ with vertices in $\frac{1}{N}M'$ if $\exp(H) | N$. % (in the cases when $\exp(H) < |H|$). 
	We claim that this does exist when $H = G$. In this case, $\exp(G) = \lcm(m: 1 \le m \le d)$. To prove the claim it is enough to consider the case when $N = \exp(G)$. Starting with the trivial subdivision, apply successive pulling refinements by the $G$-orbits 
$$
G \cdot (\frac{1}{d},\ldots,\frac{1}{d},0) 
%=  ((\frac{1}{d},\ldots,\frac{1}{d}),0)
, G \cdot (\frac{1}{d-1},\ldots,\frac{1}{d-1},0,0),\ldots,G \cdot (\frac{1}{2},\frac{1}{2},0,\ldots,0,0).
$$
By Lemma~\ref{l:constructtriangulation}, this produces a $G$-invariant regular triangulation of $P$ with vertices in $\frac{1}{\exp(G)}M'$.

%	OLD BELOW
%	Recall that $d > 1$, $M' = \{ (\lambda_1,\ldots,\lambda_d) \in \Z^d : \sum_i \lambda_i = 1 \}$, $M = M' \oplus \Z$ and 
%	$G = \Sym_d$ acts on $M$ 
%	by permuting coordinates in $M'$ and acting trivially on the last coordinate.
%	Recall that $Q = \Conv{e_1,\ldots,e_d} \subset M_\R'$ and 
%	$P = Q \times [0,1] \subset M_\R$.
%	Let $H$ be any subgroup of $G$ and let $N$ be a positive integer. By Corollary~\ref{c:maincorollaryshort} and Example~\ref{e:Symprism2}, 
%	if $\exp(H) \nmid N$ then there does not exist an $H$-invariant triangulation of $P$ with vertices in $\frac{1}{N}M$. On the other hand, Theorem~\ref{t:triangulationOrderGroup} implies that there exists a regular $H$-invariant triangulation of $P$ with vertices in $\frac{1}{N}M$ if $|H|$ divides $N$. 
%	
%	In Question~\ref{q:exponent} above, we ask if there exists a  regular $H$-invariant triangulation of $P$ with vertices in $\frac{1}{N}M$ if $\exp(H) | N$ (in the cases when $\exp(H) < |H|$). We claim that this does exist when $H = G$. In this case, $\exp(G) = \lcm(m: 1 \le m \le d)$. To prove the claim it is enough to consider the case when $N = \exp(G)$. Starting with the trivial subdivision, apply successive pulling triangulations by the $G$-orbits 
%	$$
%	G \cdot ((\frac{1}{d},\ldots,\frac{1}{d}),0) 
%	%=  ((\frac{1}{d},\ldots,\frac{1}{d}),0)
%	, G \cdot ((\frac{1}{d-1},\ldots,\frac{1}{d-1},0),0),\ldots,G \cdot ((\frac{1}{2},\frac{1}{2},0,\ldots,0),0).
%	$$
%	By Lemma~\ref{l:constructtriangulation}, this produces a regular $G$-invariant triangulation of $P$ with vertices in $\frac{1}{\exp(G)}M$. 
	
	Finally, we mention that the (necessarily regular) lattice triangulations of $P$ are in bijection with vertices of the secondary polytope of $P \cap M'$ \cite[Definition~5.1.6]{DRSTriangulations10}, which is
%	
%	and the secondary polytope $\Sigma(A)$ is 
	combinatorially equivalent to the $d$-permutahedron \cite[Example~7.3C]{GKZ94}. %(see also \cite[Exercise~9.11]{ZieglerLectures}).
	 Here the $d$-permutahedron is the convex hull of the orbit $G \cdot (1,2,\ldots,d) \subset \R^d$, where $G$ acts on $\R^d$ by permuting coordinates (cf. Example~\ref{e:permutahedronv1}). Moreover, the combinatorial equivalence preserves the action of $G$. 
	In particular, $G$ acts with trivial stabilizers on the $d!$ vertices  of the $d$-permutahedron, and hence we have an alternative proof that there are no 
	$H$-invariant lattice triangulations of $P$ provided $H$ is nontrivial.
%	For example, the $d!$ vertices of the $d$-permutahedron correspond to the lattice triangulations of $P$, and since $G$ acts with trivial stabilizers on these vertices, we recover the fact from Example~\ref{e:Symprism4} that there are no $H$-invariant lattice triangulations of $P$ for any nontrivial subgroup $H$ of $G$. 
	 
	 We next develop a criterion to guarantee the existence of a $G$-invariant lattice triangulation. Applications of this proposition will be explored in Section~\ref{ss:translative}.
	 
\end{example}

%\begin{proposition}\label{p:existtriangulationv2}
%	\alan{filler?}
%	Let $\cS$ be a $G$-invariant regular lattice polyhedral subdivision of $P$.
%    Assume that $g \cdot F \cap F = \emptyset$ for any face $F$ of $\cS$ that is not a simplex and any $g \neq \id$  in $G$. 
%	Then there exists a
%	$G$-invariant regular lattice triangulation of $P$  that refines $\cS$.
%\end{proposition}
%\begin{proof} 
%\end{proof}

\begin{proposition}\label{p:existtriangulation}
				Let $G$ be a finite group.
	Let $M$ be a lattice of rank $d$ and let $\rho: G \to  \Aff(M)$ be an affine representation.  
	Let $P \subset M_\R$ be a $G$-invariant $d$-dimensional lattice polytope.
	Let $\cS$ be a $G$-invariant regular lattice polyhedral subdivision of $P$. Assume that for every face $F$ of $\cS$ that is not a simplex there exists $u \in F \cap M$ such that $G \cdot u$ intersects every facet of $\cS$ at most once. Then there exists a
	$G$-invariant regular lattice triangulation $\cS'$ of $P$ that refines $\cS$. 
	Moreover, if we can choose each $u$ above to be a vertex of $\cS$, 
	then we may choose $\cS'$ 
	%to have the same vertices as $\cS$. 
	such that $\cS$ and $\cS'$ have the same vertices and every simplex in $\cS$ is a simplex in $\cS'$. 
\end{proposition}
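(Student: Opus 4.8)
The plan is to obtain $\cS'$ as an iterated pulling refinement of $\cS$ and to verify the hypotheses of Lemma~\ref{l:constructtriangulation}. We may assume $\cS$ is not already a triangulation, since otherwise we take $\cS'=\cS$. First I would build a sequence $u_1,u_2,\dots$ in $P\cap M$ greedily. Having chosen $u_1,\dots,u_{i-1}$, set
\[
\T(i-1):=\{\,F\in\cS : F\cap(G\cdot u_1\cup\cdots\cup G\cdot u_{i-1})=\emptyset\,\},
\]
a $G$-invariant subcomplex of $\cS$ (closed under passing to faces, since $F'\subseteq F$ forces $F'\cap(\cdots)\subseteq F\cap(\cdots)$). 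If every face of $\T(i-1)$ is a simplex, stop and put $r:=i-1$. Otherwise choose a non-simplex face $F\in\T(i-1)$; since $F$ is in particular a non-simplex face of $\cS$, the hypothesis furnishes a lattice point $u_i\in F\cap M$ whose orbit $G\cdot u_i$ meets every facet of $\cS$, and hence---as every face lies in a facet---every face of $\cS$, at most once. As $u_i\in F$ and $u_i\in G\cdot u_i$, we have $F\in\T(i-1)\setminus\T(i)$, so $\T(i)\subsetneq\T(i-1)$; since $\cS$ is finite, this process terminates.

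Next I would verify the two hypotheses of Lemma~\ref{l:constructtriangulation} for $u_1,\dots,u_r$. Condition~(1) is exactly the stopping rule. For condition~(2): at step $i$ the point $u_i$ lies in a face $F\in\T(i-1)$, so $u_i\in\Supp(\T(i-1))$, and since $G\cdot u_i$ meets every face of $\cS$ at most once, $|F'\cap(G\cdot u_i)|\le 1$ for all $F'\in\cS$, in particular for all $F'\in\T(i-1)$. Hence Lemma~\ref{l:constructtriangulation} applies, and $\cS':=\cS_{G\cdot u_1,\dots,G\cdot u_r}$ is a $G$-invariant regular triangulation. It refines $\cS$, being a composition of pulling refinements, and since each $G\cdot u_i\subset P\cap M$, repeated application of Remark~\ref{r:simpleobs} shows it is a lattice subdivision; thus $\cS'$ is a $G$-invariant regular lattice triangulation refining $\cS$.

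For the last assertion of the proposition, suppose that in the construction each $u_i$ can instead be chosen to be a vertex of $\cS$, as the hypothesis permits. Then $J:=G\cdot u_1\cup\cdots\cup G\cdot u_r$ consists of vertices of $\cS$, so by Remark~\ref{r:simpleobs} the vertices of $\cS'$ lie in $\ver(\cS)\cup J=\ver(\cS)$; as $\cS'$ refines $\cS$ the reverse inclusion is automatic, and $\ver(\cS')=\ver(\cS)$. Writing $\cS(i):=\cS_{G\cdot u_1,\dots,G\cdot u_i}$, I would then show by induction on $i$ that every simplex $\sigma$ of $\cS$ is a face of $\cS(i)$. The restriction of $\cS(i)=\cS(i-1)_{G\cdot u_i}$ to $\sigma$ is determined by $\sigma\cap(G\cdot u_i)$, which by the orbit condition is either empty, whence $\sigma\in\cS(i)$, or a single point $w$; in the latter case $w$ is a vertex of $\cS$, hence of the refinement $\cS(i-1)$, and lies in $\sigma$, so $w$ is a vertex of the simplex $\sigma$, and by Lemma~\ref{l:pullingpoints} the unique facet of the pulled complex restricted to $\sigma$ is $\Conv{F',w}=\sigma$, where $F'$ is the facet of $\sigma$ opposite $w$; thus $\sigma$ is not subdivided and $\sigma\in\cS(i)$. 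Therefore $\sigma\in\cS(r)=\cS'$.

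I expect the routine part to be the greedy construction, and the step requiring the most care to be the last paragraph: identifying the restriction of an iterated pulling refinement to a single simplex, so that it is affected only by the at most one orbit point that simplex contains, and then using Lemma~\ref{l:pullingpoints} to conclude that pulling a simplex at one of its vertices leaves it unsubdivided. One must also be careful that the non-simplex face chosen at each step genuinely lies in $\T(i-1)$, which is exactly what secures $u_i\in\Supp(\T(i-1))$.
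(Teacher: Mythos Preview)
Your proposal is correct and follows essentially the same approach as the paper: construct $u_1,\dots,u_r$ greedily from non-simplex faces of the shrinking subcomplexes $\T(i-1)$, verify the hypotheses of Lemma~\ref{l:constructtriangulation}, and conclude via Remark~\ref{r:simpleobs}. The only minor difference is in the ``moreover'' clause: the paper dispatches the simplex-preservation in one line by citing Lemma~\ref{l:regrefinement} and the fact that pulling a simplex at a subset of its vertices yields the trivial subdivision, whereas you unfold this into an explicit induction on $i$ using Lemma~\ref{l:pullingpoints}; both arguments are valid.
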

\begin{proof}
%	If $\cS$ is a triangulation, then there is nothing to show. Assume that 
%	$\cS$ is not a triangulation. 
		We want to apply Lemma~\ref{l:constructtriangulation}. We will construct a sequence $u_1,\ldots,u_r \in P \cap M$ inductively for some nonnegative integer $r$.
	Recall from Lemma~\ref{l:constructtriangulation} that we consider the subcomplex 
	$\mathcal{T}(i) := \{ F  \in \cS : F \cap (\cup_{1 \le j \le i} G \cdot u_j) = \emptyset \}$ of $\cS$ for all $0 \le i \le r$. In particular, $\T(0) = \cS$.
	Suppose that we have constructed $u_1,\ldots,u_{i - 1} \in P \cap M$ for some positive integer $i$. If all faces of  $\mathcal{T}(i - 1)$ are simplices then we set $r = i - 1$.  If not, then there exists a face $F$ in  $\mathcal{T}(i-1)$ that is not a simplex. By assumption, there exists $u \in F \cap M$ such that $G \cdot u$ intersects every facet of $\cS$ at most once. We set 
	$u_i = u$. Since $\T(i) \neq \T(i - 1)$, this process will terminate. 
	Then  $u_1,\ldots,u_r \in P \cap M$ satisfy the hypotheses of Lemma~\ref{l:constructtriangulation}. 
	Lemma~\ref{l:constructtriangulation} then
%	By construction, every face of $\mathcal{T}(r)$ is a simplex and $u_{i} \in \mathcal{T}(i - 1)$ for $1 \le i \le r$. Also,
%	$|F \cap (G \cdot u_i)| \le 1$ for $1 \le i \le r$ and $F \in \mathcal{T}(i - 1) \subset \cS$. 
%	Lemma~\ref{l:constructtriangulation} then 
	implies that
	$\cS' = \cS_{G \cdot u_1, \ldots, G \cdot u_r}$  
	is a $G$-invariant regular triangulation. By Remark~\ref{r:simpleobs}, $\cS'$ is a lattice triangulation.
	%$\cS_{G \cdot u_1, \ldots, G \cdot u_r}$  is a lattice triangulation.
	By Remark~\ref{r:simpleobs}, the vertices of $\cS'$ are contained in the vertices of $\cS$ and the union of the orbits $G \cdot u_1, \ldots, G \cdot u_r$. 
	Assume that each $u_i$ is a vertex of $\cS$.
	%In particular, if each $u_i$ is a vertex of $\cS$, 
	Then the elements of each orbit $G \cdot u_i$ are vertices of $\cS$, and $\cS$ and $\cS'$ have the same vertices.
	Finally, since applying a pulling refinement to a set of vertices of a simplex induces the trivial subdivision, every simplex in $\cS$ is a simplex in $\cS'$ by Lemma~\ref{l:regrefinement}.  
	%The final statement now follows from our construction. 
\end{proof}

\subsection{Translative actions}\label{ss:translative}

We now describe some applications of Proposition~\ref{p:existtriangulation} for constructing triangulations using the concept of translative actions. Although the translative property is quite strong, we will see that translative actions on subcomplexes of a polytope can be used to construct (not necessarily translative) invariant triangulations.  We continue with the notation above.

% from the previous section. That is, 	$G$ is a finite group, 
% $M$ is a lattice of rank $d$,  $\rho: G \to  \Aff(M)$ is an affine representation, and  $P \subset M_\R$ is a $G$-invariant $d$-dimensional polytope.

A \emph{translative} action of a group on a poset was defined by 
 D'al\`{i} and Delucchi in \cite{DDStanleyReisner} after being introduced in  a special case in \cite{DRSemimatroids}. These actions were further studied in \cite{BDSupersolvable} and \cite{DDEquivariantEhrhart}. For our purposes, we will be interested in the following special case.

%. We will use the concept of a \emph{translative} action introduced by 

%
%Translative actions on posets arose from the study of matroid structures related to toric arrangements [DR18], and were further investigated in [DD21]
%(in the context of invariant rings of Stanley–Reisner rings) and in [BD22] (in connection to supersolvability of geometric posets).

%In order to help apply Proposition~\ref{p:existtriangulation}, we recall the concept of a \emph{translative} action introduced by D'al\`{i} and Delucchi in \cite{DDEquivariantEhrhart} in a more general setting.

%\alan{notation; below write $\rho: G \to \Aff(M_\R)$ to indicate when the lattice structure is not important}
% \alan{fix history}
%We refer the reader to \cite{DDEquivariantEhrhart} for details and recall the following special case below.

\begin{definition}\label{d:translative}\cite[Definition~2.8]{DDStanleyReisner}
					Let $G$ be a finite group.
	Let $M$ be a lattice of rank $d$ and let $\rho: G \to  \Aff(M_\R)$ be an affine representation.  Let $\cS$ be a $G$-invariant polytopal complex in $M_\R$. The action of $G$ on $\cS$ is \emph{translative} if for every vertex $u$ of $\cS$, the orbit $G \cdot u$ intersects every facet of $\cS$ at most once.  
\end{definition}

%For example, the action of $G$ on the trivial subdivision of $P$ is translative if and only if $G$ acts trivially. On the other hand, we will see below that there are interesting examples where the action of $G$ on the trivial subdivision of a subcomplex of $P$ is translative e.g. Corollary~\ref{c:centrallysymmetrictriangulation}. \alan{fix this up!! maybe get rid of it; change reference?}

We have the following equivalent formulations of the translative property. The lemma below follows from the discussion of \cite[Section~2.2]{DDEquivariantEhrhart}, but we provide a proof for the benefit of the reader. In particular, condition \eqref{i:trans2} below directly matches the definition of translative that appears in \cite[Definition~2.2]{DDEquivariantEhrhart} (for the poset of faces of $\cS$). %in our setting. 

\begin{lemma}\label{l:transequiv}
		Let $G$ be a finite group.
	Let $M$ be a lattice of rank $d$ and let $\rho: G \to  \Aff(M_\R)$ be an affine representation.  Let $\cS$ be a $G$-invariant polytopal complex in $M_\R$. Then the action of $G$ on $\cS$ is translative if and only if one of the following equivalent conditions holds:
	\begin{enumerate}
		\item\label{i:trans1} For every face $F$ of $\cS$ and $g$ in $G$, if $F$ and $g \cdot F$ lie in a common facet of $\cS$ then $g$ fixes $F$ pointwise.
		
		\item\label{i:trans2} For every face $F$ of $\cS$ and $g$ in $G$, if $F$ and $g \cdot F$ lie in a common facet of $\cS$ then $g \cdot F = F$.
		
		\item\label{i:trans3} For every vertex $u$ of $\cS$ and $g$ in $G$, if $u$ and $g \cdot u$ lie in a common facet of $\cS$ then $g \cdot u = u$.
		
		\item\label{i:trans4} For every vertex $u$ of $\cS$, $G \cdot u$ intersects every facet of $\cS$ at most once. 
		
	\end{enumerate}
\end{lemma}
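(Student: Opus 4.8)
The plan is to prove the chain of equivalences \eqref{i:trans1} $\Rightarrow$ \eqref{i:trans2} $\Rightarrow$ \eqref{i:trans3} $\Rightarrow$ \eqref{i:trans4} $\Rightarrow$ (translative) $\Rightarrow$ \eqref{i:trans1}, since the final two links are essentially tautological: condition \eqref{i:trans4} is verbatim the definition of translative (Definition~\ref{d:translative}), and \eqref{i:trans3} $\Rightarrow$ \eqref{i:trans4} is the observation that if two distinct points $u, g\cdot u$ of the orbit $G\cdot u$ both lay in a facet $F$, then applying \eqref{i:trans3} to the vertex $u$ would force $g\cdot u = u$, a contradiction. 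The implication \eqref{i:trans1} $\Rightarrow$ \eqref{i:trans2} is immediate: if $g$ fixes $F$ pointwise, then in particular $g$ permutes the vertices of $F$ trivially, so $g\cdot F = F$.

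The two implications requiring actual argument are \eqref{i:trans2} $\Rightarrow$ \eqref{i:trans3} and the closing of the loop, which I would get via (translative) $\Rightarrow$ \eqref{i:trans1}. For \eqref{i:trans2} $\Rightarrow$ \eqref{i:trans3}: given a vertex $u$ of $\cS$ and $g\in G$ with $u$ and $g\cdot u$ in a common facet, note $\{u\}$ is itself a face of $\cS$ (being a vertex), so apply \eqref{i:trans2} with $F = \{u\}$ to conclude $g\cdot\{u\} = \{u\}$, i.e.\ $g\cdot u = u$. For (translative) $\Rightarrow$ \eqref{i:trans1}: suppose $F$ and $g\cdot F$ lie in a common facet $H$ of $\cS$, and let $u$ be any vertex of $F$. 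Then $u\in F\subseteq H$ and $g\cdot u\in g\cdot F\subseteq H$, so $u$ and $g\cdot u$ both lie in the facet $H$; translativity then forces $g\cdot u = u$. Since this holds for every vertex $u$ of $F$, and $g$ acts affinely, $g$ fixes the convex hull of the vertices of $F$, which is $F$, pointwise. This is \eqref{i:trans1}.

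The one mild subtlety to be careful about — which I would expect to be the only real point requiring attention rather than a true obstacle — is the step "g fixes all vertices of $F$ $\Rightarrow$ $g$ fixes $F$ pointwise": this uses that $\rho(g)$ is an affine transformation, so it maps convex combinations to the corresponding convex combinations, and $F$ is the convex hull of its vertices (since faces of a polytopal complex are polytopes). Every point of $F$ is a convex combination of vertices of $F$, each of which is fixed, hence is itself fixed. I would record this explicitly. With these pieces the equivalence of \eqref{i:trans1}--\eqref{i:trans4} with translativity is complete; nothing here requires regularity of $\cS$ or that $\rho$ preserve the lattice, so the lemma holds in the stated generality. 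I would then simply chain the implications, noting that since translativity is identical to \eqref{i:trans4}, establishing \eqref{i:trans4} $\Rightarrow$ \eqref{i:trans1} $\Rightarrow$ \eqref{i:trans2} $\Rightarrow$ \eqref{i:trans3} $\Rightarrow$ \eqref{i:trans4} suffices.
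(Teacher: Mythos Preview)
Your proof is correct and follows essentially the same cycle of implications as the paper: the paper also notes that \eqref{i:trans4} is the definition of translative, dismisses \eqref{i:trans1} $\Rightarrow$ \eqref{i:trans2} $\Rightarrow$ \eqref{i:trans3} as clear, and then proves \eqref{i:trans3} $\Rightarrow$ \eqref{i:trans4} $\Rightarrow$ \eqref{i:trans1} with exactly your arguments, including the observation that an affine map fixing all vertices of $F$ fixes $F$ pointwise. One tiny imprecision in your \eqref{i:trans3} $\Rightarrow$ \eqref{i:trans4}: you assume the two orbit points in the facet are $u$ and $g\cdot u$, but a priori they are $g_1\cdot u$ and $g_2\cdot u$; the paper handles this by noting that $g_1\cdot u$ is itself a vertex of $\cS$ (since $\cS$ is $G$-invariant) and writing $g_2\cdot u = (g_2 g_1^{-1})\cdot(g_1\cdot u)$ before applying \eqref{i:trans3}.
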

\begin{proof}
	Above, condition \eqref{i:trans4} is the definition of a translative action given in Definition~\ref{d:translative}, so it suffices to show that all the conditions are equivalent. The implications \eqref{i:trans1} $\Rightarrow$  \eqref{i:trans2} $\Rightarrow$  \eqref{i:trans3} are clear. Suppose that \eqref{i:trans3} holds. If $u$ is a vertex of $\cS$ and $G \cdot u$ intersects a facet of $\cS$ at two points $w$ and $w'$, then there exists $g$ in $G$ such that $w' = g \cdot w$. Since elements of $G \cdot u$ are vertices of $\cS$, \eqref{i:trans3} implies that $w' = g \cdot w = w$, and hence \eqref{i:trans4} holds. Assume that \eqref{i:trans4} holds. Consider a  face $F$ of $\cS$ and $g$ in $G$ such that 
	 $F$ and $g \cdot F$ lie in a common facet $L$ of $\cS$. Let $u$ be a vertex of $F$. Then $u$ and $g \cdot u$ lie in $L$ and \eqref{i:trans4} implies that $g \cdot u = u$. Since $g$ acts on $M_\R$ via an affine transformation and fixes all the vertices of $F$, we conclude that $g$ fixes $F$ pointwise. 
\end{proof}

\begin{example}\label{e:translativeboundaryZmod2Z}
	Let $G = \Z/2\Z = \langle \sigma \rangle$, 
	let $M$ be a lattice of rank $d$ and let $\rho: G \to  \Aff(M)$ be an affine representation. 
	Let $P \subset M_\R$ be a $G$-invariant $d$-dimensional lattice polytope. 
	Recall that $P^\sigma$ is the fixed locus of $\sigma$. Let $K$ be the subcomplex of $P$ consisting of the faces $F$ of $P$ such that $F \cap P^\sigma = \emptyset$.
	We claim that the induced action of $G$ on $K$ is translative. 
	This follows since if
	$u$ is an vertex in $K$, then a face of $P$ containing $u$ and $\sigma \cdot u$ must also contain $\frac{u + \sigma \cdot u}{2}$  in $P^\sigma$ and hence does not lie in $K$. 
\end{example}

\begin{remark}\label{r:translative}
	Consider an action of $G$ on a polytopal complex $\cS$. Suppose that 
	$\cS'$ is a $G$-invariant polytopal complex that refines $\cS$ and has the same vertices as $\cS$. 
%	, and 
%	$\cS$ and $\cS'$ have the same vertices.
%	
%	i.e. every face of $\cS'$ is contained in a face of $\cS$,  and $\cS$ and $\cS'$ have the same support. 
%	Suppose that $\cS$ and $\cS'$ have the same vertices. 
	For every vertex $u$ of $\cS$, the intersection of $G \cdot u$ with a face of $\cS'$ is contained in a face of $\cS$.  
%	
%	 and $g$ in $G$, if $u$ and $g \cdot u$ lie in a common face of $\cS'$ then they lie in a common face of $\cS$. 
	 In particular, if the action of $G$ on $\cS$ is translative, then the action of $G$ on $\cS'$ is translative.
%	If $\cS$ and $\cS'$ have the same vertices, then since every facet of $\cS'$ is contained in a facet of $\cS$, it follows from Definition~\ref{d:translative} that the action of $G$ on $\cS'$ is translative.	
Similarly, if $\hat{\cS}$ is a $G$-invariant subcomplex of $\cS$ and the action of $G$ on $\cS$ is translative, then the action of $G$ on $\hat{\cS}$ is translative.
\end{remark}

%The following remark gives another hint that it will be important to consider translative actions on polyhedral subdivisions of subcomplexes of $P$ and not just $P$ itself.
 
%\begin{remark}\label{r:transimplieslinear}
%		Recall that $\rho: G \to  \Aff(M)$ is an affine representation and
%	$P \subset M_\R$ is a $G$-invariant $d$-dimensional lattice polytope. Suppose that there exists   a  $G$-invariant translative lattice polyhedral subdivision $\cS$ of $P$. Then there exists a $G$-fixed lattice point in $P$ (so that we may view $\rho$ as a linear representation rather than an affine representation, possibly after translating the lattice). 
%	
%	Indeed, let $c$ be a $G$-fixed point in $P$ e.g. the average of the vertices of $P$. 
%	Let $F$ be the unique smallest face of $\cS$ containing $c$. Then $F$ is $G$-invariant. In particular, condition \eqref{i:trans1} in Lemma~\ref{l:transequiv} implies that $G$ acts trivially on $F$. Hence any vertex of $F$ is a $G$-fixed lattice point.
%\end{remark}

%\alan{TODO: $K$ should be a subcomplex below!!!}

We next present two corollaries to Proposition~\ref{p:existtriangulation}. 
Roughly speaking, the first corollary says that $G$-invariant translative regular lattice polyhedral subdivisions of a subcomplex of $P$ can be refined to $G$-invariant translative regular lattice triangulations, while the second corollary says that $G$-invariant (not necessarily translative) regular lattice triangulations of polytopal complexes supported on appropriate disjoint subsets of $P$ can be `glued'  to obtain a $G$-invariant (not necessarily translative) regular lattice triangulation of $P$ if at least one of the two actions of $G$ on a triangulation is translative. 

Let $K$ be a subcomplex of $P$. Below we will slightly abuse notation by sometimes identifying $K$ with its support. Let $\cS$ be a polyhedral subdivision of $K$.
%
%Let $K \subset M_\R$ be a union of faces of $P$. Let $\cS$ be a polyhedral subdivision of $K$. 
Then $\cS$ is regular if  there exists a finite set $A \subset K$ containing the vertices of  $K$ and a function $\omega : A \to \R$ such that $\cS|_F = \cS(\omega|_{A \cap F})$ for every face $F$ of $K$. 
%$P$ contained in $K$. %is the restriction of $\cS(\omega)$ to $K$.  
%As in Remark~\ref{r:invariantheight}, if $\cS$ is a regular $G$-invariant lattice polyhedral subdivision, then we may assume that $A = K \cap M$ and $\omega$ is $G$-invariant. 
Combined with Theorem~\ref{t:mainfull}, Corollary~\ref{c:translativeK} below (with $K = P$ in the statement)  implies that if $P$ admits a  $G$-invariant translative regular lattice polyhedral subdivision then $h^*(P,\rho;t)$ is effective. 
% (although note that Remark~\ref{r:transimplieslinear} implies that the latter assumption can only occur for linear actions).  

\begin{corollary}\label{c:translativeK}
		Let $G$ be a finite group.
	Let $M$ be a lattice of rank $d$ and let $\rho: G \to  \Aff(M)$ be an affine representation.  
	Let $P \subset M_\R$ be a $G$-invariant $d$-dimensional lattice polytope. 
	Let $K$ be a subcomplex of $P$ and let  $\cS$ be a
%	Let $K \subset M_\R$ be a $G$-invariant union of faces of $P$. Let $\cS$ be a  
$G$-invariant translative regular lattice polyhedral subdivision of $K$. 
	Then there exists a $G$-invariant translative regular triangulation $\cS'$ of $K$ that refines $\cS$ 	such that $\cS$ and $\cS'$ have the same vertices and every simplex in $\cS$ is a simplex in $\cS'$. 
%	
%	, has the same vertices as $\cS$, and such that every simplex of $\cS$ is a simplex of a $\cS'$. 
\end{corollary}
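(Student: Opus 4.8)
The plan is to reduce Corollary~\ref{c:translativeK} directly to Proposition~\ref{p:existtriangulation}, with the only real work being to verify that the hypotheses of the proposition can be met while keeping track of translativity. First I would recall that a $G$-invariant translative regular lattice polyhedral subdivision $\cS$ of the subcomplex $K$ can be extended, face by face, to a polyhedral subdivision of $P$: for each face $F$ of $P$ not lying in $K$ we do nothing, and for faces of $P$ meeting $K$ we use the regularity data $\omega: A \to \R$ on $A \subset K$ and extend $\omega$ to a $G$-invariant function on $G \cdot (A \cup \ver(P))$, e.g. by setting it equal to a large constant (or by the lifting/averaging trick of Remark~\ref{r:invariantheight}). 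This produces a $G$-invariant regular lattice polyhedral subdivision $\widehat{\cS}$ of $P$ whose restriction to $K$ is $\cS$; strictly speaking, to keep the statement about ``$K$'' clean one works directly with $K$ as in the definition of regularity of a subdivision of a subcomplex given just before the corollary, so no extension to $P$ is even needed — one applies the construction of Proposition~\ref{p:existtriangulation} verbatim with $P$ replaced by $K$ throughout (the proof of Proposition~\ref{p:existtriangulation} only uses that $\cS$ is a $G$-invariant regular polyhedral subdivision of a $G$-invariant polytopal complex and that pulling refinements by orbits of vertices behave well, all of which hold for $K$).

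The second step is the crucial hypothesis check: for every face $F$ of $\cS$ that is not a simplex, we must exhibit a \emph{vertex} $u$ of $\cS$ with $u \in F$ such that $G \cdot u$ meets every facet of $\cS$ at most once. But this is exactly the translative hypothesis: by Definition~\ref{d:translative} and Lemma~\ref{l:transequiv}\eqref{i:trans4}, \emph{every} vertex $u$ of $\cS$ has the property that $G \cdot u$ intersects every facet of $\cS$ at most once. So we may simply take $u$ to be any vertex of $F$. Since $F$ is not a simplex it has at least one vertex, so such a $u$ exists. Thus the ``moreover'' clause hypothesis of Proposition~\ref{p:existtriangulation} (that each $u$ can be chosen to be a vertex of $\cS$) is automatically satisfied, and the proposition yields a $G$-invariant regular lattice triangulation $\cS'$ of $K$ refining $\cS$ with the same vertices as $\cS$, such that every simplex of $\cS$ is a simplex of $\cS'$.

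The final step is to check that the triangulation $\cS'$ produced is again translative. This is precisely Remark~\ref{r:translative}: $\cS'$ is a $G$-invariant polytopal complex refining $\cS$ with the same vertex set, and for every vertex $u$ of $\cS$ the intersection of $G \cdot u$ with a face of $\cS'$ is contained in a face of $\cS$; hence translativity of the action on $\cS$ passes to $\cS'$. Since $\cS'$ and $\cS$ have the same vertices, every vertex of $\cS'$ is a vertex of $\cS$, so Lemma~\ref{l:transequiv}\eqref{i:trans4} for $\cS$ gives it for $\cS'$. This completes the proof.

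I do not expect any genuine obstacle here: the corollary is essentially a packaging of Proposition~\ref{p:existtriangulation} together with the observation that translativity makes its ``moreover'' hypothesis automatic. The one point requiring a little care is purely bookkeeping — making sure the notion of ``regular polyhedral subdivision of the subcomplex $K$'' (as opposed to of a full-dimensional polytope $P$) is the one used, so that Proposition~\ref{p:existtriangulation}'s proof applies with $P$ replaced by $K$; alternatively one first extends $\cS$ to a $G$-invariant regular subdivision of all of $P$ as indicated above and then restricts the resulting triangulation back to $K$, which works equally well and avoids re-examining the proof of the proposition.
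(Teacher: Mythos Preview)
Your argument for the case $K=P$ is exactly the paper's: translativity guarantees that \emph{any} vertex of a non-simplex face works in Proposition~\ref{p:existtriangulation}, the ``moreover'' clause gives the same-vertex refinement, and Remark~\ref{r:translative} passes translativity to $\cS'$.

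For general $K$ you offer two routes, and they deserve separate comments. Your primary route---apply Proposition~\ref{p:existtriangulation} ``verbatim with $P$ replaced by $K$''---is legitimate, but it is not quite verbatim: the proposition, Lemma~\ref{l:constructtriangulation}, Lemma~\ref{l:regrefinement}, and the pulling constructions are all stated for a polytope, and one has to check that regular refinements and successive pullings preserve regularity in the \emph{subcomplex} sense (regularity witnessed face by face on each facet of $K$). This does go through, since the pulling construction is local and Lemma~\ref{l:regrefinement} applies facet by facet of $K$ with a single height function, but it is a point to verify rather than assert. The paper avoids this verification entirely by taking a different route: it extends $\omega_K$ to $P$ to get $\widehat{\cS}$, then applies a barycentric-type pulling refinement at the (rational) centroids of the $G$-orbits of faces of $\widehat{\cS}$ not in $K$, ordered by decreasing dimension. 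The point of this extra step is that the resulting subdivision $\mathcal{T}$ of $P$ is \emph{translative} (the new barycentric vertices lie in distinct $G$-orbits and never share a facet), so the $K=P$ case applies to $\mathcal{T}$ over an enlarged lattice $M'$ containing the rational barycenters; restricting to $K$ gives the desired $\cS'$.

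Your alternative route---extend $\cS$ to $\widehat{\cS}$ on $P$ and then apply Proposition~\ref{p:existtriangulation} directly to $\widehat{\cS}$---has a gap as stated. Nothing forces $\widehat{\cS}$ to satisfy the hypothesis of the proposition: a non-simplex face $F$ of $\widehat{\cS}$ lying outside $K$ may fail to contain any vertex whose $G$-orbit meets every facet of $\widehat{\cS}$ at most once, since translativity was only assumed on $K$. This is precisely why the paper inserts the barycentric step. So either adopt your first route and spell out the subcomplex version of the pulling machinery, or follow the paper and add the barycentric refinement before invoking the $K=P$ case.
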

\begin{proof}
	First consider the case $K = P$. 	Let $F$ be a face of $\cS$ and let $u$ be any vertex of $F$. By Definition~\ref{d:translative}, $G \cdot u$ intersects every facet of $\cS$ at most once. Proposition~\ref{p:existtriangulation} implies that there exists a
	$G$-invariant regular triangulation $\cS'$ of $P$ that refines $\cS$ 
	such that $\cS$ and $\cS'$ have the same vertices and every simplex in $\cS$ is a simplex in $\cS'$. 
	%and has the same vertices as $\cS$. 
	The action of $G$ on $\cS'$ is translative by Remark~\ref{r:translative}.

	Now consider the case when $K \neq P$. We claim that this reduces to the case above.  
	As in Remark~\ref{r:invariantheight}, there exists a $G$-invariant function $\omega_K: K \cap M \to \R$ such that $\cS|_F = \cS(\omega_K|_{A \cap F})$ for every face $F$ of $P$ contained in $K$. Let $\omega$ be the function extending $\omega_K$ with value zero (or any fixed constant) at all vertices of $P$ not in $K$, and set $\hat{\cS} = \cS(\omega)$. Then $\hat{\cS}$ is a $G$-invariant regular lattice polyhedral subdivision of $P$ that restricts to $\cS$ on $K$.
	
	Let $F_1,\ldots,F_r$ be representatives of the $G$-orbits of the faces of $\hat{\cS}$ not contained in $K$. After reordering, we may assume that $\dim F_i \ge \dim F_{i + 1}$ for $1 \le i < r$. Let $x_i$ be an element of $M_\Q$  in the relative interior of $F_i$ that is fixed by the stabilizer of $F_i$ e.g. the average of the vertices of $F_i$. We apply successive pulling refinements to obtain
	%orbits $G \cdot x_1,\ldots, G \cdot x_r$ to obtain
	the $G$-invariant regular polyhedral subdivision $\mathcal{T} := \hat{\cS}_{G \cdot x_1, \ldots, G \cdot x_r}$. Then $\mathcal{T}$ is a barycentric subdivision of  $\hat{\cS}$ except that we exclude the faces of $\hat{\cS}$ contained in $K$.
	Explicitly, if we choose any ordering of the elements of $G \cdot x_i$ and write $G \cdot x_i = \{ y_{i,1},\ldots,y_{u,s_i}\}$ for some $s_i \in \Z_{>0}$, then $\mathcal{T}$ is obtained from $\hat{\cS}$ by applying successive pulling triangulations at the points $y_{1,1},\ldots,y_{1,s_1},\ldots,y_{r,1},\ldots,y_{r,s_r}$. Moreover, by Lemma~\ref{l:pullingpoints}, every facet of $\mathcal{T}$ is the convex hull of 
	a (possibly empty) face of $\cS$ and some elements $y_{i_1,j_1},\ldots,y_{i_t,j_t}$ for some $i_1 < \ldots < i_t$. 
	In particular, if two distinct vertices of a facet of $\mathcal{T}$ lie in the same $G$-orbit, then the vertices must be contained in a face of $\cS$. 
	 Since the action of $G$ on $\cS$ is translative, we deduce that the action of $G$ on $\mathcal{T}$ is translative. 
	 Observe that the restriction of $\mathcal{T}$ to $K$ is $\cS$. 
	 Let $M' \subset M_\R$ be the $d$-dimensional $G$-invariant lattice generated by $M$ and all the (rational) vertices of  $\mathcal{T}$. By the $K = P$ case applied to $\mathcal{T}$ and the lattice $M'$, there exists a $G$-invariant translative regular triangulation
	 $\mathcal{T}'$ of $P$ that refines $\mathcal{T}$ such that $\mathcal{T}$ and $\mathcal{T}'$ have the same vertices and every simplex in $\mathcal{T}$ is a simplex in $\mathcal{T}'$. 
%and has the same vertices as  $\mathcal{T}$. 
	 Then the restriction of $\mathcal{T}'$ to $K$ is our desired triangulation refining  $\cS$.

\end{proof}

%Roughly speaking, the next corollary allows us to `glue' two appropriate $G$-invariant regular lattice triangulations of disjoint subsets of $P$ to obtain a $G$-invariant regular lattice triangulation of $P$, when the action of $G$ on one of the two triangulations is translative. 

Below we state and prove the secondary corollary. Note that the translative assumption below is necessary. As a simple example, let $G = \Z/2\Z$ act on $\R^2$ via reflection about the line $\{ (\frac{1}{2}, y): y \in \R \}$ and let $P = [0,1]^2$ (cf. Proposition~\ref{p:dim2mainthm}). Then $Q = [0,1] \times \{0\}$ and $K = [0,1] \times \{1\}$ are $G$-invariant lattice simplices but $P$ does not admit a $G$-invariant lattice triangulation. See also Example~\ref{e:Symprism4}. 
%More generally, one may consider the examples in 

\begin{corollary}\label{c:translativeglue}
 	Let $G$ be a finite group.
 Let $M$ be a lattice of rank $d$ and let $\rho: G \to  \Aff(M)$ be an affine representation.  
 Let $P \subset M_\R$ be a $G$-invariant $d$-dimensional lattice polytope.
 Let $Q \subset P$ be a lattice polytope and let 
 $K$ be the subcomplex of $P$ consisting of all faces  $F$ of $P$ such that $F \cap Q = \emptyset$.
% $K \subset M_\R$ be the union of faces $F$ of $P$ such that $F \cap Q = \emptyset$. 
Let $\cS_Q$ and $\cS_K$ be $G$-invariant regular lattice triangulations of $Q$ and $K$ respectively. Assume either the action of $G$ on $\cS_Q$ or $\cS_K$ is translative. Then 	 there exists a
 $G$-invariant regular lattice triangulation $\cS$ of $P$ such that $\cS|_Q = \cS_Q$ and $\cS|_K = \cS_K$. 
 Moreover, the action of $G$ on both $\cS_Q$ and $\cS_K$ is translative if and only if we may choose $\cS$ such that the action of $G$ on $\cS$ is translative.
\end{corollary}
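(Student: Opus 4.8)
The plan is to build $\cS$ in three stages: first produce a $G$-invariant regular lattice polyhedral subdivision $\hat{\cS}$ of $P$ whose restriction to $Q$ is $\cS_Q$ and whose restriction to $K$ is $\cS_K$ but which may have non-simplicial faces ``connecting'' $Q$ to $K$; then refine $\hat{\cS}$ by pulling at points in the relative interiors of the connecting faces, in order of decreasing dimension, to make everything simplicial while leaving $\cS_Q$ and $\cS_K$ untouched; and finally check the translativity bookkeeping. For the first stage I would use the regular-refinement machinery of Lemma~\ref{l:regrefinement} together with Remark~\ref{r:invariantheight}: by that remark $\cS_Q = \cS(\omega_Q)$ and $\cS_K = \cS(\omega_K)$ for $G$-invariant height functions $\omega_Q, \omega_K$ defined on finite $G$-invariant subsets $A_Q \subset Q$, $A_K \subset K$ (both containing the respective vertex sets). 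Choosing $A = A_Q \cup A_K$ and a height function $\omega$ on $A$ that equals $\omega_Q$ on $A_Q$, equals $\omega_K + C$ on $A_K$ for a large constant $C$ (and is $G$-invariant, which is automatic since $Q$, $K$ and hence $A_Q$, $A_K$ are $G$-invariant), one gets $\hat{\cS} = \cS(\omega)$ with $\hat{\cS}|_Q = \cS_Q$ and $\hat{\cS}|_K = \cS_K$ by the usual argument that a sufficiently large height penalty on $K$ forces the lower hull over $Q$ to be exactly $\mathrm{UH}(\omega_Q)$ and symmetrically for $K$ — here one uses that $Q$ and $K$ are disjoint (indeed $K$ consists precisely of the faces of $P$ missing $Q$) so every facet of $P$ is partitioned appropriately. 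Since $Q$ is full-dimensional issues need a little care: actually $Q \subset P$ need not be full-dimensional, so I would instead describe $\hat{\cS}$ as the subdivision induced on $P$ by taking the lower hull over $P\times\{1\}$ with heights $\omega_Q$ on $A_Q$ and $\omega_K + C$ elsewhere on $A_K$, extended to all vertices of $P$, and verify its restrictions directly.

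For the second stage, let $F_1, \dots, F_r$ be representatives of the $G$-orbits of faces of $\hat{\cS}$ that are contained in neither $Q$ nor $K$, ordered so that $\dim F_i \ge \dim F_{i+1}$. For each $i$ pick $x_i \in (F_i)^\circ \cap M_\Q$ fixed by $\Stab_G(F_i)$ (e.g.\ the average of the vertices of $F_i$). Apply successive pulling refinements to form $\cS := \hat{\cS}_{G\cdot x_1, \dots, G\cdot x_r}$; by Remark~\ref{r:simpleobs} and Lemma~\ref{l:regrefinement} this is a $G$-invariant regular polyhedral subdivision. Because $\cS_Q$ and $\cS_K$ are already triangulations and pulling at points outside a simplex leaves that simplex alone (Lemma~\ref{l:pullingpoints}), we get $\cS|_Q = \cS_Q$ and $\cS|_K = \cS_K$. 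To see $\cS$ is a triangulation I would argue as in the proof of Corollary~\ref{c:translativeK}: by Lemma~\ref{l:pullingpoints} every facet of $\cS$ is the join of a face of $\cS_Q$, a face of $\cS_K$ (one of which may be empty), and a simplex with vertices among $\{x_i\}$-orbits; since $\cS_Q$, $\cS_K$ are simplicial and the $x_i$ are affinely independent from the relevant faces by construction, every facet is a simplex. Alternatively — and this may be cleaner — I would first apply Corollary~\ref{c:translativeK} or Proposition~\ref{p:existtriangulation} to $\hat{\cS}$ over the lattice $M'$ generated by $M$ and the $x_i$, with the vertex set of $\hat{\cS}$ plus the $x_i$, noting that the only non-simplicial faces of $\hat{\cS}$ are the connecting ones and each $x_i$ is a valid pulling point whose orbit meets each facet of $\hat{\cS}$ at most once (this last fact is where the argument genuinely uses the translative hypothesis).

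The main obstacle, and the heart of the proof, is the translativity clause, specifically the ``only if'' direction of the final ``if and only if'' and making the translative hypothesis do its job in the simplicial-refinement step. Suppose WLOG the action of $G$ on $\cS_K$ is translative (the case of $\cS_Q$ is symmetric, interchanging the roles via the penalty constant $C$). I would show: whenever two distinct vertices $w, w'$ of a facet $L$ of $\hat{\cS}$ (or of $\cS$) lie in one $G$-orbit, then $w, w'$ both lie in $Q$. Indeed every facet $L$ of $\hat{\cS}$ has its vertex set split between $\ver(\cS_Q) \subset Q$ and $\ver(\cS_K) \subset K$, and a vertex of $L$ lying in $K$ is a vertex $u$ of $\cS_K$; if $g\cdot u$ is another vertex of $L$ then $g\cdot u \in P$ and, since $g$ preserves $\hat{\cS}$ and hence the partition into $Q$-faces and $K$-faces, $g\cdot u \in K$, so $u$ and $g\cdot u$ lie in a common facet of $\cS_K$, whence $g\cdot u = u$ by translativity of $\cS_K$ (Lemma~\ref{l:transequiv}). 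Thus any orbit-collision among facet vertices happens inside $Q$. This gives: the action of $G$ on $\hat{\cS}$ — and after pulling, on $\cS$ — is translative if and only if the action of $G$ on $\cS_Q$ is also translative (since the only possible collisions live in $Q$, and the restriction of a translative action to a subcomplex is translative by Remark~\ref{r:translative}, while conversely a collision in $\cS_Q$ is a collision in $\cS$). Combined with the symmetric statement when it is $\cS_Q$ that is assumed translative, we conclude that the action on $\cS$ is translative iff the actions on both $\cS_Q$ and $\cS_K$ are translative, which is exactly the asserted equivalence; and for the pulling step, translativity of $\cS_K$ guarantees each orbit $G\cdot x_i$ meets each facet of $\hat{\cS}$ at most once because a collision would, by the join structure of Lemma~\ref{l:pullingpoints}, descend to a collision among vertices of a common face of $\cS_K$, contradicting translativity. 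The delicate point to get right is that pulling at the $x_i$ in decreasing-dimension order does not create new non-simplicial faces and does not disturb the translativity established on $\hat{\cS}$, which follows from Remark~\ref{r:translative} since $\cS$ refines $\hat{\cS}$ — except that $\cS$ introduces new vertices $x_i$, so I would instead verify directly, via the explicit description of facets of $\cS$ from Lemma~\ref{l:pullingpoints}, that no new orbit-collisions among facet-vertices are introduced outside $Q$.
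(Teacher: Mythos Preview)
Your second stage introduces a genuine error: the pulling points $x_i$ are chosen in $(F_i)^\circ \cap M_\Q$, not in $M$, so the resulting $\cS$ is not a \emph{lattice} triangulation of $P$. Passing to the lattice $M'$ generated by $M$ and the $x_i$ does not help, since the corollary asks for a triangulation with vertices in $M$. This is not a cosmetic issue---there is in general no $\Stab_G(F_i)$-fixed lattice point in the interior of a connecting face $F_i$, so the barycentric-pulling idea cannot simply be repaired by replacing the $x_i$ with lattice points.

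The paper's fix is to avoid introducing new vertices altogether. It first builds an intermediate $G$-invariant regular lattice polyhedral subdivision $\cS$ of $P$ (in two steps: pull the trivial subdivision at the vertices of $Q$, then refine by $\omega_Q$ and $\omega_K$) whose vertex set is exactly $\ver(\cS_Q)\cup\ver(\cS_K)$, and then proves the structural fact you only gesture at: for every face $F$ of $\cS$, the set $F\cap Q$ is a face of $\cS_Q$ and $F\cap K$ is a face of $\cS_K$. From this it follows that any \emph{non-simplex} face $H$ of $\cS$ must contain a vertex $u_Q$ of $\cS_Q$ and a vertex $u_K$ of $\cS_K$ (since $H\cap Q$ and $H\cap K$ are already simplices). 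Now the translative hypothesis on, say, $\cS_K$ gives $|(G\cdot u_K)\cap F|\le 1$ for every facet $F$, and one can feed $u_K$ (a genuine lattice vertex of $\cS$) into Proposition~\ref{p:existtriangulation} to obtain a refinement $\cS'$ with the \emph{same} vertex set as $\cS$ and every simplex of $\cS$ preserved; hence $\cS'|_Q=\cS_Q$ and $\cS'|_K=\cS_K$. The translativity equivalence then drops out of Remark~\ref{r:translative}. Your translativity paragraph is morally on the right track but relies on the unproven structural fact above (that $L\cap K$ is a single face of $\cS_K$, not just contained in $K$), which is exactly what makes the paper's two-step construction of $\cS$ worth the extra care.
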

\begin{proof}
	 Let $\mathcal{T}$ be the pulling refinement of the trivial subdivision of $P$ by the vertices of $Q$  (see Definition~\ref{d:pulling}). That is, 
  let $B$ be the union of the vertices of $P$ and $Q$ and define $\omega: B \to \R$ by 
   \begin{equation*}%\label{e:omega}
  	\omega(u) = \begin{cases}
  		-1 &\textrm{ if } u \in Q \\
  		0  &\textrm{ otherwise. }
  	\end{cases}
  \end{equation*}
  Then $\mathcal{T} = \cS(\omega)$. 
  % is the pulling refinement of the trivial subdivision of $P$ by the vertices of $Q$ (see Definition~\ref{d:pulling}).
  If $F$ is a face of $\mathcal{T}$, then we claim that $F \cap Q$ and $F \cap K$ are also faces of  $\mathcal{T}$. Indeed, recall that $\tM = M \oplus \Z$ and $\pr_\R: \tM_\R \to \R$ denotes projection onto the last coordinate.   Let $\pi_\R: \tM_\R \to M_\R$ denote projection onto all but the last coordinate. Recall that $\UH(\omega)$ is the convex hull of
  $\{ (u,\lambda) : u \in B, \lambda \ge \omega(u) \} \subset \tM_\R$ and 
  $\mathcal{T}$ consists of the images of the bounded faces of $\UH(\omega)$ via $\pi_\R$. 
  %In our case, the image of any bounded face of $\UH(\omega)$ via $\HT_\R$ lies in $[-1,0]$. 
  Let $\hat{F}$ be the bounded face of $\UH(\omega)$ such that 
  $\pi_\R(\hat{F}) = F$. Then $\HT_R(\hat{F}) \subset [-1,0]$ and we may consider the (possibly empty) faces $\hat{F}_Q := \hat{F} \cap \HT_R^{-1}(-1)$ and $\hat{F}_K = \hat{F} \cap \HT_R^{-1}(0)$ of $\hat{F}$. The claim follows since  $F \cap Q = \pi_\R(\hat{F}_Q)$ and $F \cap K  = \pi_\R(\hat{F}_K)$.

%  Let $\mathcal{T}$ be the pulling refinement of the trivial subdivision of $P$ by the vertices of $Q$ (see Definition~\ref{d:pulling}).  

  	Let $A_Q = Q \cap M$ and $A_K = K \cap M$. As in Remark~\ref{r:invariantheight},
  there exists $G$-invariant functions $\omega_Q: A_Q \to \R$ and $\omega_K: A_K \to \R$ such that $\cS_Q = \cS(\omega_Q)$ and $\cS_K|_F = \cS(\omega_K|_{A \cap F})$ for every face $F$ of $K$. 
  Let $A$ be the disjoint union of $A_Q$ and $A_K$. Note that  $B \subset A$. 
  Let $\hat{\omega}_Q$ and $\hat{\omega}_Q$ be the extensions of $\omega_Q$ and $\omega_K$ to $A$ defined by setting all undefined values equal to zero (or any fixed constant). Let $\cS$ be the successive regular refinement of $\mathcal{T}$ first by $\hat{\omega}_Q$ and then by $\hat{\omega}_K$. By Lemma~\ref{l:regrefinement}, $\cS$ is a $G$-invariant regular polyhedral subdivision such that $\cS|_Q = \cS_Q$ and $\cS|_K = \cS_K$. Moreover, the vertices of $\cS$ are the union of the vertices of $\cS_Q$ and $\cS_K$. 
  
  Let $u_Q$ and $u_K$ be vertices of $\cS_Q$ and $\cS_K$ respectively. Let $F$ be a face of $\cS$. Let $L$ be a face of $\mathcal{T}$ containing $F$, so that we may view $F$ as a face of the polyhedral subdivision $\cS|_L$. Then $F \cap Q = F \cap (L \cap Q)$ is the intersection of $F$ with a face of $L$, and hence is a face of $F$. We deduce that $F \cap Q$ is a face of $\cS_Q$. Similarly, 
  $F \cap K = F \cap (L \cap K)$ is a face of $\cS_K$. In particular, $(G \cdot u_Q) \cap F$ is contained in a face of $\cS_Q$, and  $(G \cdot u_K) \cap F$ is contained in a face of $\cS_K$. Hence, if the action of $G$ on either  $\cS_Q$ or $\cS_K$ is translative, then $|(G \cdot u_Q) \cap F| \le 1$ or $|(G \cdot u_K) \cap F| \le 1$ respectively. 
  If particular, if
   the action of $G$ on both $\cS_Q$ and $\cS_K$ is translative, then the action of $G$ on $\cS$ is translative.

  Let $H$ be a face of $\cS$ that is not a simplex. Since $H \cap Q$ and $H \cap K$ are faces of $\cS_Q$ and $\cS_K$ respectively, they are the convex hulls of the vertices of $H$ that lie in $Q$ and $K$ respectively. 
   Since $\cS_Q$ and $\cS_K$ are both triangulations and $H$ is not a simplex, we deduce that $H$ must contain a vertex $u_Q$ in $\cS_Q$ and a vertex $u_K$ in $\cS_K$. 
%   
%   WORKING BELOW
%    Since both $\cS_Q$ and $\cS_K$ are triangulations, and $H \cap Q$ and $H \cap K$ are faces of $\cS_Q$ and $\cS_K$ respectively, we deduce that $H$ is not contained in $Q$ or $K$. 
%    $H \cap K$ is convex; vertices of $\cS_K$ contained in $H$, their convex hull is contained in $H \cap K$. 
%    Claim: $H \cap K = \Conv{u : u \in H \cap \ver(\cS_K)}$. Pf: since $H \cap K$ is a face it is convex, hence rhs is contained in lhs. Conversely, any face is the convex hull of its vertices. 
%   \alan{Problem: $K$ is not convex! Need that $H \cap K$ is the face containing all vertices of $H$ that lie in $K$}
%   WORKING ABOVE   
   Since either the action of $G$ on  $\cS_Q$ or $\cS_K$ is translative, we deduce that either $|(G \cdot u_Q) \cap F| \le 1$ for all faces $F$ of $\cS$, or $|(G \cdot u_Q) \cap F| \le 1$ for all faces $F$ of $\cS$. Hence we can apply Proposition~\ref{p:existtriangulation} to deduce that there exists a $G$-invariant regular lattice triangulation $\cS'$ of $\cS$
   such that $\cS$ and $\cS'$ have the same vertices and every simplex in $\cS$ is a simplex in $\cS'$.  %with the same vertices as $\cS$. 
   The latter property implies that $\cS'|_Q = \cS|_Q = \cS_Q$ and $\cS'|_K = \cS|_K = \cS_K$. 
  We conclude that $\cS'$ is our desired triangulation.  
  
  Finally,  if the action of $G$ on both $\cS_Q$ and $\cS_K$ is translative, then we have already shown that the action of $G$ on $\cS$ is translative.    By Remark~\ref{r:translative}, we deduce that the action of $G$ on $\cS'$ is translative. Conversely, if  the action of $G$ on $\cS'$ is translative, then the 
  action of $G$ on both $\cS'|_Q  = \cS_Q$ and $\cS'|_K = \cS_K$ is translative by Remark~\ref{r:translative}. 
  
\end{proof}

We now present some examples.

\begin{example}
	Let $G$ be a finite group.
	Let $M$ be a lattice of rank $d$ and let $\rho: G \to  \Aff(M)$ be an affine representation.  
	Let $P \subset M_\R$ be a $G$-invariant $d$-dimensional lattice polytope.
	Then $P \times [0,1] \subset M_\R \oplus \R$ is a $G$-invariant lattice polytope, where $G$ acts trivially on the last coordinate of $M_\R \oplus \R$. Let $Q = P \times \{ 0 \}$ and $K =  P \times \{ 1 \}$.
	Then Remark~\ref{r:translative} and Corollary~\ref{c:translativeglue} imply that the property of admitting a $G$-invariant translative regular lattice triangulation holds for $P$ if and only it holds for $P \times [0,1]$ (cf. Example~\ref{e:Symprism4}). 
\end{example}

\begin{example}
	Let $G = \Z/2\Z = \langle \sigma \rangle$, 
	let $M$ be a lattice of rank $d$, and let $\rho: G \to  \Aff(M)$ be an affine representation. 	Let $P \subset M_\R$ be a $G$-invariant $d$-dimensional lattice polytope. Assume that the fixed locus $P^\sigma$ is a lattice polytope. 
	Let $K$ be the subcomplex of $P$ consisting of the faces $F$ of $P$ such that $F \cap P^\sigma = \emptyset$.
	By Example~\ref{e:translativeboundaryZmod2Z}, the induced action of $G$ on $K$ is translative. By Corollary~\ref{c:translativeK} and Corollary~\ref{c:translativeglue},  there exists a $G$-invariant regular lattice triangulation of $P$.
In particular, it follows from Theorem~\ref{t:mainfull} that
$h^*(P,\rho;t)$ is effective.
\end{example}

%
%\alan{ Question: does the latter simply follow from monotonicity? Maybe. Show it is equivalent to inequality $h^*(P;t) \ge h^*(P^\sigma;t)(1  + t)^{d - \dim P^\sigma}$. this is nice. Should highlight it?? Should we add this to previous section for all $\Z/p\Z$? Can probably show this via monotonicity. If so, can we get this for all primes $p$? Should be true; need to be careful about two cases whether fixed point is rational or not. Example: reflection through line between $(0,2)$ and $(1,0)$ and $P$ is the convex hull of these two points and $(0,1)$ and $(1,1)$.}
%
%\alan{Can unify this with the case of a unique fixed point? Probably if define $h^*(P^\sigma;t)$ correctly when $P^\sigma$ is a rational point. }
%
%\alan{need to add question about whether this holds for all groups. Mention the effectiveness conjecture}

%\alan{need discuss effectiveness and reference Example~\ref{e:primenofixedpts} here}

%\alan{TODO: make this an example}

\begin{example}\label{e:centrallysymmetrictriangulation}
						Let $G = \Z/2\Z$, 
	let $M$ be a lattice of rank $d$, and let $\rho: G \to  \Aff(M)$ be an affine representation such that $M_\R$ has a unique $G$-fixed point.
	Let $P \subset M_\R$ be a $G$-invariant $d$-dimensional lattice polytope. That is, $P$ is a centrally symmetric polytope. 		 We claim there exists a $G$-invariant regular lattice triangulation of $P$. Combined with Theorem~\ref{t:mainfull}, this generalizes Example~\ref{e:primenofixedpts} in this case.
	
	To prove the claim, let $c$ be the unique $G$-fixed point in $M_\R$ and let $\sigma$ be the generator of $G$. Fix 
%	Observe that for 
	any lattice point $u$ in $P$. The convex hull $Q$ of $u$ and $\sigma \cdot u$ is a $G$-invariant lattice simplex. Let 
	$K$ be the subcomplex of $P$ consisting of all faces  $F$ of $P$ such that $F \cap Q = \emptyset$.
	Since $c = \frac{u + \sigma \cdot u}{2}$ lies in $Q$,
  the action of $G$ on $K$ is translative by Example~\ref{e:translativeboundaryZmod2Z}. The result now follows from Corollary~\ref{c:translativeK} and Corollary~\ref{c:translativeglue}. 
	
%	Let $K \subset \partial P$ be the union of faces $F$ of $P$ such that $F \cap Q = \emptyset$. Let $\cS_Q$ be the trivial subdivision of $Q$ and let $\cS_K = \cS_{\partial P}|_K$. The result follows by applying  Corollary~\ref{c:translativeglue}.
%	
%	Let $c$ be the unique $G$-fixed point in $M_\R$ and let $\sigma$ be the generator of $G$. 	Then  $c$ lies in the relative interior of $P$. 
%	For any vertex $u$ of $P$, since $c = \frac{u + \sigma \cdot u}{2}$, the smallest face of $P$ containing both $u$ and $\sigma \cdot u$ is $P$. In particular, the action of $G$ on $\partial P$ is translative. The result now follows from Corollary~\ref{c:translativeZpZ}. 
%	The claim follows ....
\end{example}

The question below asks whether Example~\ref{e:centrallysymmetrictriangulation} generalizes to all primes $p$. Combined with Theorem~\ref{t:mainfull}, a positive answer to the question would give an alternative proof of the effectiveness assertion in  Example~\ref{e:primenofixedpts}.

\begin{question}\label{q:primetriangulations}
	Let $p$ be a prime. 					Let $G = \Z/p\Z$, 
	let $M$ be a lattice of rank $d$ and let $\rho: G \to  \Aff(M)$ be an affine representation such that $M_\R$ has a unique $G$-fixed point.
%	Let $G = \Z/p\Z = \langle \sigma \rangle$ for some prime $p$. With the notation above, assume that $G$ acts with a unique $G$-fixed point in $M_\R$. 
	Does there exist a $G$-invariant lattice triangulation of $P$? 
\end{question}

The following example shows that the answer to Question~\ref{q:primetriangulations} is no if one insists that the  $G$-invariant lattice triangulation is regular. In particular, the action of $G$ on the boundary of $P$ %$\partial P$ 
need not be translative. 

% Example~\ref{e:centrallysymmetrictriangulation} can not be generalized to all primes $p$. More specifically, one may have $G = \Z/p\Z = \langle \sigma \rangle$ acting with a unique $G$-fixed point in $M_\R$ and there does not exist a $G$-invariant regular lattice triangulation of $P$. 

\begin{example}\label{e:p=5example}
%	\alan{TODO: this is an example of proper but not strictly proper}
%	Let $p = 5$ and let $G = \Z/p\Z = \langle \sigma \rangle$. 
	Let $G = \Z/5\Z = \langle \sigma \rangle$, and 
	 let $M = \Z^5/\Z(e_1 + \cdots + e_5)$.
	  %where $\{ e_1,\ldots,e_5 \}$ are the  
	  %standard basis vectors of $\Z^5$. 
	  Let $\bar{e}_i$ be the image of $e_i$ in $M$ for $1 \le i \le 5$. 
		 Assume that $G$ acts on   $M$ by cyclically shifting coordinates i.e. $\sigma \cdot \bar{e}_i = \bar{e}_{i + 1}$ for $1 \le i \le 5$, where $\bar{e}_{6} = \bar{e}_{1}$ .
 Let $\bar{f}_i := \sigma^{i - 1} \cdot (\bar{e}_1 + \bar{e}_2)$ for $1 \le i \le 5$. 
 Let $P$ be the convex hull of $\{ \bar{e}_i, \bar{f}_i : 1 \le i \le 5  \}$. 
 Then $P$ is a $4$-dimensional reflexive polytope. It has a unique interior lattice point, and its other lattice points are $\{ \bar{e}_i, \bar{f}_i : 1 \le i \le 5  \}$, which are also the vertices of $P$. 
 Let $Q = \Conv{\bar{e}_1,\bar{e}_3,\bar{f}_1,\bar{f}_2}$. Then $Q$ is $2$-dimensional and isomorphic to $[0,1]^2 \subset \R^2$ as a lattice polytope. 
 The boundary of $P$ has $15$ facets in $3$ $G$-orbits. Representatives of the orbits are 
 $F_1 = \Conv{Q, \bar{f}_3}$, 
 $F_2 = \Conv{Q, \bar{f}_5}$ and
 $F_3 = \Conv{\bar{e_1}, \bar{e_3}, \bar{f_3}, \bar{f_5} }$.
 Here $F_1$ and $F_2$ are pyramids over $Q$, and $F_3$ is a simplex.
% , and $Q$ is a `unit square' contained in the affine hyperplace $\{ \lambda_1 \bar{e}_1 + \lambda_3 \bar{e}_3 : \lambda_1 + \lambda_3 = 1 \}$. 
 Any $G$-invariant regular lattice polyhedral subdivision of $P$ restricts to the trivial subdivision of $Q$ and hence is not a triangulation. On the other hand, if we fix a choice of diagonal $e$ of $Q$ e.g. $e = [\bar{e}_1,\bar{f}_2]$, then there is a unique $G$-invariant lattice triangulation $\cS'$ of the boundary of $P$ such that the restriction $\cS'|_{F_i}$ is the  
 pulling refinement of $F_i$ by the vertices of $e$ for $i = 1,2$, and $\cS'|_{F_3}$ is the trivial subdivision. That is, with the choice of $e$ above, the facets of $\cS'$ have $5$ orbits with representatives $F_1' = \Conv{e,\bar{e}_3,\bar{f}_3}$,  
 $F_1'' = \Conv{e,\bar{f}_1,\bar{f}_3}$, $F_2' = \Conv{e,\bar{e}_3,\bar{f}_5}$,  
 $F_2'' = \Conv{e,\bar{f}_1,\bar{f}_5}$ and $F_3$. 
 We then obtain a $G$-invariant lattice triangulation $\cS$ of $P$ by taking the join of $\cS'$ and the origin i.e. the facets of $\cS$ are the convex hulls of the facets of $\cS'$ and the origin.
In this example, 
 $h^*(P,\rho;t) = 1 + t + t^2 + t^3 + t^4 + t(1 + 2t + t^2)\chi_{\reg}$, where 
 $\chi_{\reg}$ is the regular representation of $G$.

\end{example}

Finally, we return to  the permutahedron example.

\begin{example}\label{e:permutahedronv3}
	We continue with the setup of Example~\ref{e:permutahedronv1} and Example~\ref{e:permutahedronv2}. 
	 That is, 
	$\psi: \Z^{d + 1} \to \Z$, $\psi(u_1,\ldots,u_{d + 1}) = \sum_{i = 1}^{d + 1} u_i$, and $M' = \psi^{-1}(\binom{d + 2}{2} )$ is an affine lattice in the real affine space $W = \psi_\R^{-1}(\binom{d + 2}{2} )$.
	The symmetric group $G = \Sym_{d + 1}$ acts on  the $(d + 1)$-permutahedron $P \subset W$.

%	That is, 
%	$G = \Sym_{d + 1}$ acts on  the $(d + 1)$-permutahedron $P$ contained in 
%	$M_\R = \{ (a_1,\ldots,a_{d + 1}) \in \R^{d + 1} : \sum_i a_i = \binom{d + 2}{2}\}$. 
	
	Assume furthermore that $d + 1 = p$ is prime and consider the subgroup $H = \langle (1,2,\ldots,p) \rangle \cong \Z/p\Z$ of $G$. That is, $H$ acts on $P$  by cyclically shifting coordinates. When $p = 2$, $d = 1$ and $P$ is a simplex. Assume that $p$ is an odd prime.
	We claim that there exists an $H$-invariant translative regular lattice triangulation of $P$. 	Combined with Theorem~\ref{t:mainfull}, this gives a generalization of \cite[Theorem~3.52]{EKS22} that states that condition \eqref{i:mainnondegenerate} in Theorem~\ref{t:mainfull} holds in this case. 
	
%	By Corollary~\ref{c:translativeboundary} and Remark~\ref{r:translativeassumption}, it is enough to show that the action of $H$ on $\partial P$ is translative. 
%%		When $p = 2$, $d = 1$ and $P$ is a simplex. Assume that $p$ is an odd prime.
%%	Then $(\frac{p + 1}{2},\ldots,\frac{p + 1}{2})$ is an $H$-fixed lattice point of $P$. 
	To prove the claim, recall from Example~\ref{e:permutahedronv2} that the facets of $P$ are $F_I$ where $I = (I_1,I_2)$ is an ordered partition of $\{ 1,\ldots, p \}$, and the  vertices of $F_I$ are  precisely  all $v = (v_1,\ldots,v_{d + 1})$ such that  $\{ v_i : i \in I_1 \} = \{ 1,2,\ldots, |I_1| \}$.  Then $h \cdot F_I \cap F_I = F_{h \cdot I} \cap F_I = \emptyset$ for all $h \neq \id$ in $H$, and it follows that 
	the action of $H$ on $K = \partial P$ is translative. Let $Q$ be the interior $H$-fixed lattice point $(\frac{p + 1}{2},\ldots,\frac{p + 1}{2})$.
	% is an $H$-fixed lattice point of $P$. 
	The claim now follows from Corollary~\ref{c:translativeK} and Corollary~\ref{c:translativeglue}. 
%		The claim now follows from Corollary~\ref{c:translativefixedpointexample}. 
%	The claim now follows from Corollary~\ref{c:translativeZpZ}. 
	% and Remark~\ref{r:translativeassumption}. %completing the proof of the claim. 
%	The claim now follows from Example~\ref{e:translative} and Corollary~\ref{c:strictlyproperv2}. 	

\end{example}

\subsection{The two dimensional case}\label{ss:dim2}

The goal of this subsection is to prove the proposition below which implies that the existence of a $G$-invariant lattice triangulation is completely determined by
whether the equivariant $h^*$-series is a polynomial in dimension two. In particular, all conditions of Theorem~\ref{t:mainfull} are equivalent when $d = 2$. 
We continue with the notation above. Note that condition \eqref{i:avoidexamplesdim2} below is independent of the choice of $P$.

\begin{proposition}\label{p:dim2mainthm}
					Let $G$ be a finite group.
	Let $M$ be a lattice of rank $d$ and let $\rho: G \to  \Aff(M)$ be an affine representation.  
	Let $P \subset M_\R$ be a $G$-invariant $d$-dimensional lattice polytope.
	Assume that $d = 2$. Then the following conditions are equivalent:
	\begin{enumerate}
		\item\label{i:maintriangulationcpy} There exists a $G$-invariant regular lattice triangulation of $P$.  
		
		\item\label{i:mainnonregulartriangulationcpy} There exists a $G$-invariant  lattice triangulation of $P$.  
		
		\item\label{i:mainpolynomialcpy} $h^*(P , \rho ; t)$ is a polynomial.
		
		\item\label{i:avoidexamplesdim2} There does not exist $g$ in $G$ and an isomorphism $M \cong \Z^2$ such that $g$ acts on $\R^2$ either by
			\begin{enumerate}[label=(\alph*)]
			\item\label{i:4}  $90$ degree rotation about $(\frac{1}{2},\frac{1}{2})$, or, 
			\item\label{i:2} reflection about the line 
			$\{ (\frac{1}{2}, y): y \in \R \}$. 
			%$x = \frac{1}{2}$.
		\end{enumerate}
	\end{enumerate}
\end{proposition}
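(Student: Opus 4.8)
The strategy is to establish the cycle of implications \eqref{i:maintriangulationcpy} $\Rightarrow$ \eqref{i:mainnonregulartriangulationcpy} $\Rightarrow$ \eqref{i:mainpolynomialcpy} $\Rightarrow$ \eqref{i:avoidexamplesdim2} $\Rightarrow$ \eqref{i:maintriangulationcpy}. The first implication is trivial, and the second follows immediately from Theorem~\ref{t:mainfull} (a $G$-invariant lattice triangulation forces \eqref{i:maineffective}, hence \eqref{i:mainpolynomial} by Remark~\ref{r:effectiveimpliespolynomial}). So the real content is \eqref{i:mainpolynomialcpy} $\Rightarrow$ \eqref{i:avoidexamplesdim2} and \eqref{i:avoidexamplesdim2} $\Rightarrow$ \eqref{i:maintriangulationcpy}.

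\textbf{Step 1: \eqref{i:mainpolynomialcpy} $\Rightarrow$ \eqref{i:avoidexamplesdim2}, by exhibiting obstructions.} I would argue the contrapositive: if some $g \in G$ acts (in suitable coordinates $M \cong \Z^2$) either as a $90^\circ$ rotation about $(\tfrac12,\tfrac12)$ or as the reflection about $\{x = \tfrac12\}$, then $h^*(P,\rho;t)$ is not a polynomial. It suffices to show $h^*(P,\rho;t)(g) = h^*(P^g;t)\cdot(\text{something})$ is not a polynomial, since $h^*(P,\rho;t)$ being a polynomial implies each $h^*(P,\rho;t)(g)$ is. Using Definition~\ref{d:seriesdefs}, $h^*(P,\rho;t)(g) = \det(I - \tM_\C t)(g)\,\Ehr(P^g;t)$, and by Remark~\ref{r:deteigenvalues} the factor $\det(I - \tM_\C t)(g)$ is a product of cyclotomic factors determined by the eigenvalues of $g$ on $\tM_\C$. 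In case \ref{i:4}, $g$ has order $4$ with eigenvalues $\{i,-i,1\}$ on $\tM_\C$, so $\det(I-\tM_\C t)(g) = (1-t)(1+t^2)$, while $P^g$ is a point, giving $\Ehr(P^g;t) = \tfrac1{1-t}$ (or $\tfrac1{1-t^2}$ if the fixed point is only in $\tfrac12 M$) — in either case $h^*(P,\rho;t)(g) = (1+t^2)$ or $(1-t)(1+t^2)/(1-t^2)$; I need to check which scenario actually arises and confirm the result is a genuine non-polynomial (the $(1-t^2)$ denominator case) using Lemma~\ref{l:bounddenomiinatorfixedgeneral}. In case \ref{i:2}, $g$ has order $2$, $P^g$ is a segment or point with an appropriate denominator, and the same kind of computation shows a pole survives at $t=-1$. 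The key technical point here is correctly computing the denominator of $P^g$ as a rational polytope (via Lemma~\ref{l:bounddenomiinatorfixedgeneral}) so that the residual pole at a root of unity is not cancelled.

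\textbf{Step 2: \eqref{i:avoidexamplesdim2} $\Rightarrow$ \eqref{i:maintriangulationcpy}, the main obstacle.} This is the heart of the proof. Assuming no element of $G$ acts as a $90^\circ$ rotation about $(\tfrac12,\tfrac12)$ or as the reflection about $\{x=\tfrac12\}$ in any coordinate system, I want to construct a $G$-invariant regular lattice triangulation of the $2$-dimensional polytope $P$. The natural approach is to use the machinery of Section~\ref{s:triangulations}: try to find a $G$-invariant lattice point $u \in P \cap M$ (or a small $G$-invariant set) whose orbit meets every edge of $P$ (equivalently every facet of the trivial subdivision) at most once, and then invoke Proposition~\ref{p:existtriangulation} or Lemma~\ref{l:constructtriangulation}. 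The obstruction to finding such a point is exactly when $G$ acts on $P$ in a way that forces every $G$-orbit of lattice points to double-cover some edge; a careful case analysis of finite subgroups of $\Aff(\Z^2)$ (which are cyclic or dihedral, of orders dividing the crystallographic restriction) should show that the only obstructions are the two listed configurations — essentially, $G$ must either have a fixed lattice point (then pull to it), have an invariant lattice edge it can be split along (glue via Corollary~\ref{c:translativeglue}), or be forced into one of the two bad cases. I expect this classification of the possible $\rho: G \to \Aff(\Z^2)$ preserving a polytope, together with verifying in each non-bad case that a suitable $G$-invariant point or invariant edge exists, to be the most delicate and lengthy part; the dihedral cases and the interplay between the lattice $M$, the fixed point in $M_\R$, and its denominator (again via Lemma~\ref{l:bounddenomiinatorfixedgeneral}) are where the care is needed. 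Once such a point or edge is produced, Corollary~\ref{c:translativeK}, Corollary~\ref{c:translativeglue}, and Proposition~\ref{p:existtriangulation} finish the construction and guarantee regularity.
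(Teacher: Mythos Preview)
Your cycle of implications and the treatment of \eqref{i:maintriangulationcpy} $\Rightarrow$ \eqref{i:mainnonregulartriangulationcpy} $\Rightarrow$ \eqref{i:mainpolynomialcpy} match the paper exactly. For \eqref{i:mainpolynomialcpy} $\Rightarrow$ \eqref{i:avoidexamplesdim2} you are also essentially on target: in case \ref{i:4} the fixed point $(\tfrac12,\tfrac12)$ is \emph{not} in $M$ (this is part of the hypothesis, not something you need to check), so $\Ehr(P^g;t)=1/(1-t^2)$ and $h^*(P,\rho;t)(g)=(1+t^2)/(1+t)$ has a pole at $t=-1$; for case \ref{i:2} the paper simply cites \cite[Lemma~7.3]{StapledonEquivariant} rather than recomputing.

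For \eqref{i:avoidexamplesdim2} $\Rightarrow$ \eqref{i:maintriangulationcpy}, the paper's argument is structured differently from your outline and is more concrete. Rather than trying to locate a single $G$-invariant lattice point or edge and invoking Proposition~\ref{p:existtriangulation}, the paper proceeds as follows: first, if the $G$-fixed point $c \in M_\Q$ (guaranteed by Lemma~\ref{l:affinereps}) happens to be a lattice point, pull there and you are done. Otherwise, use the classification of finite subgroups of $\GL(2,\Z)$ together with Lemma~\ref{l:bounddenomiinatorfixedgeneral} to rule out order-$4$ and order-$6$ elements (an order-$4$ element with $c \in \tfrac12 M \setminus M$ forces case \ref{i:4}), so every nontrivial element has order $2$ or $3$. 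Now take a \emph{maximal} $G$-invariant regular lattice polyhedral subdivision $\cS$ and argue by contradiction. Pulling at all non-vertex lattice points shows every facet of $\cS$ is an empty lattice polygon; the only such polygons (up to equivalence) are the standard simplex and $[0,1]^2$. So any non-simplex facet $F$ is a copy of the unit square, and a short case analysis on how the orbits of $G$ can meet the four vertices of $F$ (using that pulling at any orbit not of one of two specific shapes would properly refine $\cS$) forces either case \ref{i:2} or a contradiction with $c \in \tfrac13 M$. Your direct-construction approach via a group classification could be made to work, but the paper's maximal-subdivision plus empty-polygon-classification argument localises the obstruction to a single unit square and avoids a global case analysis of $\Aff(\Z^2)$.
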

\begin{proof}
	The implication \eqref{i:maintriangulationcpy} $\Rightarrow$ \eqref{i:mainnonregulartriangulationcpy} is clear. The implication 
	\eqref{i:mainnonregulartriangulationcpy} $\Rightarrow$  \eqref{i:mainpolynomialcpy} follows from Theorem~\ref{t:mainfull}. 
	We now verify the implication \eqref{i:mainpolynomialcpy} $\Rightarrow$ 
 \eqref{i:avoidexamplesdim2}. Suppose that \eqref{i:avoidexamplesdim2} doesn't hold. We need to show that $h^*(P, \rho;t)$ is not a polynomial. %There are two cases. 
  Firstly, consider case \ref{i:4} above.
  % and $M = \Z^2$.
  % and 
  %$g$ acts on $\Z^2$ by $90$ degree rotation about $(\frac{1}{2},\frac{1}{2})$.
 Then the $g$-fixed locus $P^g$ of $P$ corresponds to the point $\{ (\frac{1}{2},\frac{1}{2}) \}$, and hence $\Ehr(P,\rho;t)(g) = \frac{1}{1 - t^2}$. Since $\det(I - \tM_\C t)(g) = (1 - t)(1 + t^2)$, 
 we have  that $h^*(P,\rho;t)(g) = \frac{1 + t^2}{1 + t}$ has a pole at $t = -1$.  
 Secondly, 
% suppose $g$ acts on $\Z^2$ by reflection about the line 
% $\{ (\frac{1}{2}, y): y \in \R \}$. Then 
 in case \ref{i:2}, \cite[Lemma~7.3]{StapledonEquivariant} implies that $h^*(P, \rho;t)$ is not a polynomial. 
% $P^g$ contains no lattice points and $2P^g$ contains a lattice point and $h^*(P;t)$ is not a polynomial by \cite[Lemma~7.3]{StapledonEquivariant}.

It remains to show that \eqref{i:avoidexamplesdim2} $\Rightarrow$ \eqref{i:maintriangulationcpy}. Assume that \eqref{i:avoidexamplesdim2} holds.
After possibly replacing $G$ with $G/\ker \rho$, we may assume that $G$ acts faithfully. 
 Let $c$ be a $G$-fixed point in $M_\Q$. If $c \in M$, then the pulling refinement of $P$ by $c$ is a $G$-invariant regular lattice triangulation. Hence we may assume that $c \notin M$. % and there are no $G$-fixed elements in $M$. 

The subgroups of $\GL(2,\Z)$, up to conjugacy, are classified \cite{VosOnTwoDimensional}. They are precisely the subgroups of the dihedral group $D_4$ of symmetries of $[-1,1]^2$, and the subgroups of the dihedral group $D_6$ of symmetries of the hexagon with vertices $\{\pm e_1, \pm (e_1 + e_2), \pm e_2\}$. 
Consider an element $g \neq \id$ in $G$. 
%Let $g$ in $G$ have order $\ord(g) > 1$. 
%We claim that $\ord(g) \in \{ 2,3\}$. 
By Remark~\ref{r:cyclotomic}, the degree $2$ characteristic polynomial of the action of $g$ on $M_\C$ is a product of cyclotomic polynomials. Hence $\ord(g) \in \{ 2,3,4,6 \}$ and if $1$ is an eigenvalue of $g$ acting on $M_\C$ then $\ord(g) = 2$ (this also follows from the classification of subgroups of $\GL(2,\Z)$). 
%Suppose that 
If $1$ is not an eigenvalue for the action of $g$ on $M_\C$, then 
by Lemma~\ref{l:affinereps} and Lemma~\ref{l:bounddenomiinatorfixedgeneral} either $\ord(g) \in \{2,4\}$ and $c \in \frac{1}{2}M$ or $\ord(g) = 3$ and $c \in \frac{1}{3}M$. If $\ord(g) = 4$ and $c \in \frac{1}{2}M$, then we see from the classification that we are in case \ref{i:4}, a contradiction. In particular, we have shown that $\ord(g) \in \{ 2,3\}$.

Let $\cS$ be a  $G$-invariant regular lattice polyhedral subdivision of $P$.
We may assume that there does not exist a $G$-invariant regular lattice polyhedral subdivision $\cS' \neq \cS$ that refines $\cS$. 
% that is minimal i.e. there exists no proper $G$-invariant refinement of $\cS$. 
%It is enough to show that $\cS$ is a triangulation. 
Assume that $\cS$ is not a triangulation. We will derive a contradiction.

Let $J$ be the set of lattice points in $P$ that are not vertices of $\cS$. Since $\cS$ equals the pulling refinement of $\cS$ by $J$, we conclude that $J$ is the emptyset. Hence each facet $F$ of $\cS$ is an empty polytope, in the sense that $F \cap M$ is precisely the set of vertices of $F$. On the other hand, it is well-known and straightforward to check that the only empty $2$-dimensional lattice polytopes in $\R^2$, up to unimodular equivalence, are the standard simplex and $[0,1]^2$. Fix a facet $F$ of $\cS$ that is not a simplex.
Then we may assume that  $M = \Z^2$ and $F = [0,1]^2$. Label the vertices of $F$: $u = 0$, $v = e_1$, $w = e_1 + e_2$, $x = e_2$. 
% where $e_1,e_2$ are the standard basis vectors of $\Z^2$. 

Suppose that there exists a vertex $y$ of $F$ such that $(G \cdot y) \cap F$ is not either $F \cap M$ or $e \cap M$, for some edge $e$ of $F$. Then the pulling refinement of  $\cS$ by the orbit $G \cdot y$ is a proper refinement of $\cS$, a contradiction. Hence, without loss of generality, we may assume that there exists $g,h$ in $G$ such that $g \cdot u = v$ and $h \cdot x = w$. 

First assume that either $\ord(g) = 2$ or $\ord(h) = 2$.   Without loss of generality, assume that $\ord(g) = 2$. Then $\frac{u + v}{2}$ is a $g$-fixed point. 
Suppose that $1$ is an eigenvalue of $g$ acting on $M_\C$. Then $g$ fixes a line $L$ through $\frac{u + v}{2}$ such that $u,v \notin L$. Observe that  $L$ intersects the relative interior of $F$ and hence $g \cdot F = F$. Since $L$ does not contain both $w$ and $x$, we deduce that $g \cdot x = w$. Then $L$ contains $\frac{w + x}{2}$ and hence 
$L = \{ (\frac{1}{2},y) : y \in \R \}$ and we are in case \ref{i:2}, a contradiction. We deduce that  $1$ is not an eigenvalue of $g$ acting on $M_\C$. In particular, since there is a unique $g$-fixed point in $M_\Q$, we deduce that 
$c = \frac{u + v}{2} \in \frac{1}{2}M$. If $\ord(h) = 2$, then by the same argument $c = \frac{w + x}{2} \neq \frac{u + v}{2}$, a contraction. We deduce that $\ord(h) = 3$. Then $c \in \frac{1}{3}M \cap  \frac{1}{2}M = M$, a contradiction.

We conclude that  $\ord(g) = \ord(h) = 3$ and $c \in \frac{1}{3}M$. By the classification of subgroups of $\GL(2,\Z)$, $h \in \{ g, g^{-1}\}$. We may write $\rho(g)(x) = Ax + e_1$, for some $A \in \GL(2,\Z)$. Suppose that $h = g$. Then $h \cdot x = w$ is equivalent to $\rho(g)(e_2) = Ae_2 + e_1 = e_1 + e_2$. Then $Ae_2 = e_2$ and $1$ is an eigenvalue of $g$ acting on $M_\C$, a contradiction. Hence $h = g^{-1}$. Then $h \cdot x = w$ is equivalent to $\rho(g)(e_1 + e_2) = Ae_1 + Ae_2 + e_1 = e_2$. Then 
 $\rho(g)(\frac{e_1 + e_2}{2}) = \frac{Ae_1 + Ae_2}{2} + e_1 = \frac{e_2 - e_1}{2} + e_1 = \frac{e_1 + e_2}{2}$. Hence $\frac{e_1 + e_2}{2}$ is $g$-fixed and equals $c$. 
Then $c \in \frac{1}{3}M \cap  \frac{1}{2}M = M$, a contradiction.
\end{proof}

Say that a $G$-invariant lattice polyhedral subdivision $\cS$ of $P$ is \emph{minimal} if there does not exist a $G$-invariant lattice polyhedral subdivision $\cS' \neq \cS$ that refines $\cS$. 
%We will see in the proof of Proposition~\ref{p:dim2mainthm} that when $d = 2$, 
	A corollary of the proof of Proposition~\ref{p:dim2mainthm} is that when $d = 2$, 
there exists a  $G$-invariant lattice triangulation of $P$ if and only if 
any minimal $G$-invariant lattice polyhedral subdivision of $P$ is a triangulation.
Example~\ref{e:refinementconversefalse} shows that this does not hold in higher dimension. This example will also be used later in Remark~\ref{r:converseisfalse}. 

%The following example shows that the existence of a minimal $G$-invariant lattice triangulation does not imply that all minimal $G$-invariant lattice polyhedral subdivisions are triangulations. This example will also be used in \alan{TODO: add reference}.

\begin{example}\label{e:refinementconversefalse}
	Let $G = \Z/2\Z = \langle \sigma \rangle$ and let $\rho: G \to \Aff(\Z^3)$ be defined by:
	\[
	\rho(\sigma) \left( \begin{bmatrix}
		x  \\
		y \\
		z 
	\end{bmatrix}
	\right)
	=
	\begin{bmatrix}
		-1 & 0 & -1  \\
		0 & 1 & 0 \\
		0 & 0 & 1  
	\end{bmatrix} 
	\begin{bmatrix}
		x  \\
		y \\
		z 
	\end{bmatrix} + 
	\begin{bmatrix}
		1  \\
		0 \\
		0 
	\end{bmatrix}
	\]
	Then $\rho(\sigma)$ acts by reflection through the affine hyperplane $H = \{ 2x + z = 1 \}$.
	Consider the $G$-invariant square $Q =([0,1] \times [0,1]) \times \{ 0 \}$, and let 
	$P = \Conv{Q,a,b}$, where $a = (0,0,1), b = (1,1,-1) \in H$. Then the pulling refinement $\cS_1$ of the trivial subdivision of $P$ by $\{ a,b\}$ is a minimal $G$-invariant lattice triangulation of $P$. It has four maximal simplices: the convex hulls of one of the four edges of $Q$ with $[a,b]$. 
	On the other hand, the pulling refinement $\cS_2$  of the trivial subdivision of $P$ by the vertices of $Q$ is a minimal $G$-invariant lattice polyhedral subdivision of $P$ that is not a triangulation. It has two maximal facets: the convex hull of $Q$ with $a$ and $b$ respectively. 
	We have  $h^*(P,\rho;t) = 1 + \chi_{\reg} t + t^2$, where 
	$\chi_{\reg}$ is the regular representation of $G$.
	%$\chi$ is the nontrivial character of $G$. 
	
\end{example}

\section{The principal $A$-determinant and commutative algebra}\label{s:commutativealgebra}

In this section, we develop the connection between equivariant Ehrhart theory and commutative algebra. The relevant background is introduced in  
Section~\ref{ss:commutative} and
Section~\ref{ss:principal}. In
Section~\ref{ss:applicationsEhrhart}, we give applications to equivariant Ehrhart theory including the proofs of Theorem~\ref{t:mainfull}, Corollary~\ref{c:maincorollaryshort} and Theorem~\ref{t:monotonicity}. 
%Section~\ref{ss:formula}. 
We continue with the notation of the previous section. That is, let $M \cong \Z^d$ be a rank $d$ lattice and let  $\tM = M \oplus \Z$.
Let $G$ be a finite group  %with identity element $\id$ 
and let $\rho: G \to \Aff(M)$ be an affine representation of $G$. 
%When $\rho$ is known, we use the shorthand $g \cdot u := \rho(g)(u) \in M$ for $g \in G$ and $u \in M$. 
%\alan{DO we need $d$?}
Let $P \subset M_\R$ be a $G$-invariant $d$-dimensional lattice polytope. 
%Let $\tM = M \oplus \Z$ and let $\HT_\R: \tM_\R \to \R$ be projection onto the last coordinate.
As in the introduction, let $C_P$ denote the cone generated by $P \times \{1 \}$ in $\tM_\R$ and consider the graded semigroup algebra $S_P = \oplus_m S_{P,m} := \C[C_P \cap \tM]$. %, with grading induced by $\HT$. 
Then $S_P$ is a finitely generated $\C$-algebra with Krull dimension $d + 1$.

\subsection{Commutative algebra}\label{ss:commutative}

We first introduce the relevant background from commutative algebra. We refer the reader to \cite{AtiyahMacdonald} and \cite{StanleyIntroductionCombinatorialCommutative}
%and \cite{BrunsHerzogCohenMacaulayRings} 
for further details.
%\alan{careful: when do need field to be infinite}
Throughout this section, all rings are commutative rings with identity.

Let $R \subset S$ be an inclusion of rings. An element $x$ of $S$ is \emph{integral} over $R$ if $x$ is a root of a monic polynomial with coefficients in $R$. 
The set of integral elements of $S$ forms a subring of $S$ containing $R$, and 
$S$ is \emph{integral} over $R$ if every element of $S$ is integral over $R$.

%If $x_1,\ldots,x_r$ generate $S$ as an $R$-algebra, then $S$ is integral over $R$ if and only if each $x_i$ is integral over $R$ (see \cite[Corollary~5.3]{AtiyahMacdonald}). 

\begin{lemma}\cite[Corollary~5.1-5.2]{AtiyahMacdonald}\label{l:integral=finite}
	Let $R \subset S$ be an inclusion of rings. Then $S$ is a finitely generated $R$-algebra that is integral over $R$ if and only if $S$ is a finitely generated $R$-module. 
\end{lemma}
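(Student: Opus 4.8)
The plan is to prove the two implications separately by the standard arguments, both of which reduce to one short computation.

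For the implication ``$\Rightarrow$'', suppose $S = R[x_1,\dots,x_n]$ is finitely generated as an $R$-algebra and integral over $R$; in particular each $x_i$ is integral over $R$. I would argue by induction on $n$. The base case is the key observation: if $x$ is integral over a subring $T$, say $x^d + t_{d-1}x^{d-1} + \cdots + t_0 = 0$ with $t_i \in T$, then $T[x] = T\cdot 1 + T\cdot x + \cdots + T\cdot x^{d-1}$ as a $T$-module, because the monic relation lets one rewrite every power $x^k$ with $k \ge d$ in terms of lower powers. For the inductive step, set $S' = R[x_1,\dots,x_{n-1}]$ and write $S = S'[x_n]$; since $x_n$ is integral over $R$ it is integral over $S'$, so $S$ is a finitely generated $S'$-module by the base case, and $S'$ is a finitely generated $R$-module by the inductive hypothesis. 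The proof then concludes with the transitivity remark that if $S = \sum_i S' y_i$ and $S' = \sum_j R z_j$ then $S = \sum_{i,j} R(z_j y_i)$, so $S$ is a finitely generated $R$-module.

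For the implication ``$\Leftarrow$'', suppose $S = Ry_1 + \cdots + Ry_m$ as an $R$-module. Then these same generators $y_i$ generate $S$ as an $R$-algebra, so $S$ is a finitely generated $R$-algebra. To show $S$ is integral over $R$, fix $x \in S$ and apply the determinant trick: since $xy_i \in S$ we may write $xy_i = \sum_j a_{ij}y_j$ with $a_{ij} \in R$, i.e. $(xI_m - A)\mathbf{y} = 0$ for $A = (a_{ij})$ and $\mathbf{y} = (y_1,\dots,y_m)^{\mathsf T}$. Multiplying on the left by the adjugate of $xI_m - A$ gives $\det(xI_m - A)\,y_i = 0$ for all $i$; as $1 \in S = \sum_i Ry_i$, this forces $\det(xI_m - A) = 0$, which is a monic polynomial relation for $x$ with coefficients in $R$.

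The argument is entirely routine; the only step that needs genuine care is the determinant (Cayley--Hamilton) trick in the ``$\Leftarrow$'' direction, specifically the point that since $1$ lies in the $R$-span of the $y_i$, the fact that $\det(xI_m - A)$ annihilates every $y_i$ implies it annihilates $1$ and hence is itself $0$. (Equivalently, one uses that $S$ is a faithful $R[x]$-submodule of itself.) Everything else is bookkeeping about finitely generated modules over finitely generated modules.
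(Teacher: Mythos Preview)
Your proof is correct and is exactly the standard argument from Atiyah--Macdonald; the paper does not give its own proof but simply cites \cite[Corollary~5.1--5.2]{AtiyahMacdonald}, whose content is precisely the two implications you wrote out.
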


Recall that a ring $R$ is \emph{Noetherian} if any submodule of a finitely generated $R$-module is finitely generated \cite[Proposition~6.2, Proposition~6.5]{AtiyahMacdonald}. For example, any finitely generated algebra over a field is Noetherian \cite[Corollary~7.7]{AtiyahMacdonald}. All rings that appear in this paper will be Noetherian. In the lemma below, the `if' direction follows from the Noetherian hypothesis, while the `only if' direction is
\cite[Proposition~2.16]{AtiyahMacdonald}. 

\begin{lemma}\label{l:finiteness}
	Let $R \subset S \subset T$ be inclusions of  rings. 
	Assume that $R$ is a Noetherian ring and $T$ is a finitely generated $S$-module.
	Then $S$ is a finitely generated $R$-module if and only if $T$ is a finitely generated $R$-module. 
\end{lemma}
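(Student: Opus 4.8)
\textbf{Proof plan for Lemma~\ref{l:finiteness}.}

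The statement is a routine transitivity-of-finiteness argument, so my plan is to dispatch the two directions separately, both resting on the Noetherian hypothesis on $R$ in the harder direction. For the ``only if'' direction, suppose $S$ is a finitely generated $R$-module. By hypothesis $T$ is a finitely generated $S$-module, so writing $T = \sum_i S t_i$ with $t_i \in T$ and $S = \sum_j R s_j$ with $s_j \in S$, we get $T = \sum_{i,j} R (s_j t_i)$, a finite $R$-module. (This direction does not even need the Noetherian hypothesis; it is just the tower law for module-finiteness, which is \cite[Proposition~2.16]{AtiyahMacdonald}.)

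For the ``if'' direction, assume $T$ is a finitely generated $R$-module. Since $R$ is Noetherian and $S$ is an $R$-submodule of the finitely generated $R$-module $T$, I invoke the defining property of Noetherian rings recalled just above the lemma: any submodule of a finitely generated module over a Noetherian ring is finitely generated \cite[Proposition~6.2, Proposition~6.5]{AtiyahMacdonald}. Hence $S$ is a finitely generated $R$-module, as desired. The one point to check is that the inclusions $R \subset S \subset T$ really do exhibit $S$ as an $R$-submodule of $T$ as $R$-modules, which is immediate from the compatibility of the ring structures.

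The main (and only) obstacle is purely bookkeeping: making sure the right hypothesis is used in the right direction — the tower law for the forward implication and Noetherianity for the reverse — and not conflating ``finitely generated $R$-algebra'' with ``finitely generated $R$-module'' (the latter is what is asserted throughout). No estimates or constructions are needed; the proof is two short paragraphs once the citations are lined up. I would present it essentially as above, perhaps a single combined paragraph in the final write-up.
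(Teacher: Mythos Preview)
Your proof is correct and matches the paper's approach exactly: the paper's own justification (given just before the lemma) is that the ``only if'' direction is \cite[Proposition~2.16]{AtiyahMacdonald} and the ``if'' direction follows from the Noetherian hypothesis via \cite[Proposition~6.2, Proposition~6.5]{AtiyahMacdonald}, which is precisely what you wrote.
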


Let $k$ be a field. Let $R = \oplus_{m \ge 0} R_m$ be a positively graded $k$-algebra with $R_0 = k$. Assume that $R$ is a finitely generated $k$-algebra. 
Let $\dim R$
%$n$ 
be the Krull dimension of $R$. Then $\dim R$ equals the maximum number of number of (homogeneous) elements of $R$ that are algebraically independent over $k$, and $\dim R = 0$ if and only if $R$ is finite dimensional as a vector space over $k$. 

\begin{definition}\label{d:hsop}
	Let $R = \oplus_{m \ge 0} R_m$ be a positively graded, finitely generated $k$-algebra with $R_0 = k$.
	%With the notation above, 
	A \emph{homogeneous system of parameters (h.s.o.p.)} for $R$ is a collection of homogeneous elements $x_1,\ldots,x_n$ of positive degree in $R$ such that $n = \dim R$ and $\dim R/(x_1,\ldots,x_n) = 0$.
	%  has Krull dimension $0$. %Equivalently, $x_1,\ldots,x_n$ is a h.s.o.p. if $n = \dim R$ and $R/(x_1,\ldots,x_n)$ 
	%is a finite-dimensional $k$-vector space.
	 We say that a h.s.o.p. has \emph{degree $r$} if each $x_i$ has degree $r$.
	 A \emph{linear system of parameters (l.s.o.p.)} is a h.s.o.p. of degree $1$. 
\end{definition}

\begin{theorem}\cite[Theorem~16-17]{StanleyIntroductionCombinatorialCommutative}\label{t:hsop}
	%\cite[Theorem~1.5.17]{BrunsHerzogCohenMacaulayRings}
%	Let $R$ be a positively graded ring with $R = k$ that is finitely generated as a $k$-algebra. Then
%With the notation above, 
	Let $R = \oplus_{m \ge 0} R_m$ be a positively graded, finitely generated $k$-algebra with $R_0 = k$.
Let $x_1,\ldots,x_n$ be a collection of homogeneous elements  of positive degree in $R$. Then the following are equivalent:
	\begin{enumerate}
	\item $x_1,\ldots,x_n$ is a h.s.o.p. for $R$.
	\item $x_1,\ldots,x_n$ are algebraically independent over $k$, and $\dim R/(x_1,\ldots,x_n) = 0$.
%	\item $R/(x_1,\ldots,x_n)$ is a finite-dimensional $k$-vector space.
%	\item $R$ is integral over $k[x_1,\ldots,x_n]$. 
\item\label{i:algindepfinite} $x_1,\ldots,x_n$ are algebraically independent over $k$, and 
 $R$ is a finitely generated $k[x_1,\ldots,x_n]$-module. 
 	\item  $R$ is a finitely generated $k[x_1,\ldots,x_n]$-module, and $n = \dim R$. 
\end{enumerate}
% If  $x_1,\ldots,x_n$ is a h.s.o.p., then  $x_1,\ldots,x_n$ are algebraically independent over $k$. 
There exists a h.s.o.p. for $R$ and, furthermore, if $k$ is infinite and $R$ is generated by elements of degree $1$ as a $k$-algebra, there exists a l.s.o.p. for $R$.
\end{theorem}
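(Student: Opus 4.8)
The plan is to reduce everything to graded Noether normalization together with standard dimension theory of finitely generated graded $k$-algebras. The facts I would use are: (i) for such an $R$ with $R_0 = k$, $\dim R = 0$ if and only if $\dim_k R < \infty$, equivalently $R_m = 0$ for $m \gg 0$; (ii) a module-finite extension of rings preserves Krull dimension (via Lemma~\ref{l:integral=finite} and going-up); (iii) Krull's principal ideal theorem, which gives $\dim R/(x_1,\ldots,x_m) \ge \dim R - m$ for homogeneous $x_1,\ldots,x_m$ of positive degree; and (iv) if $x_1,\ldots,x_n \in R_+$ are algebraically independent over $k$ then $k[x_1,\ldots,x_n] \subset R$ is a genuine polynomial ring and $\dim R \ge n$ --- choosing a minimal prime $\mathfrak p$ of $R$ modulo which the images of the $x_i$ stay algebraically independent (possible since a nonzero polynomial in the $x_i$ is not nilpotent) one gets $\dim R \ge \operatorname{trdeg}_k R/\mathfrak p \ge n$.

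\textbf{Existence.} If $y_1,\ldots,y_s$ are homogeneous generators of $R$ of positive degree that are algebraically dependent, a homogeneous relation combined with a Nagata substitution $y_i \mapsto y_i - y_s^{e_i}$ (suitable exponents $e_i$) makes $y_s$ integral over the subalgebra generated by the remaining modified generators; by Lemma~\ref{l:integral=finite} $R$ is module-finite over it, and induction on $s$ produces a h.s.o.p.\ $\theta_1,\ldots,\theta_d$ with $d = \dim R$. When $k$ is infinite and $R$ is generated in degree $1$, the substitution can be taken linear --- for generic $\lambda_i \in k$, $y_s$ is integral over $k[y_1-\lambda_1 y_s,\ldots,y_{s-1}-\lambda_{s-1}y_s]$ --- so the $\theta_i$ may be chosen of degree $1$, i.e.\ a l.s.o.p.

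\textbf{Equivalences.} I would run the cycle $(1)\Rightarrow(4)\Rightarrow(3)\Rightarrow(2)\Rightarrow(1)$. For $(1)\Rightarrow(4)$: $\dim R/(x_1,\ldots,x_n) = 0$ by Definition~\ref{d:hsop}, so by (i) the quotient is $k$-spanned by finitely many homogeneous $w_1,\ldots,w_r$; iterating $R = \sum_j k[x_1,\ldots,x_n]w_j + (x_1,\ldots,x_n)R$ and using that $\bigl((x_1,\ldots,x_n)^N R\bigr)_m = 0$ for $N > m$ shows $R = \sum_j k[x_1,\ldots,x_n]w_j$ is module-finite over $k[x_1,\ldots,x_n]$, and $n = \dim R$ is built into $(1)$. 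For $(4)\Rightarrow(3)$: by (ii), $\dim k[x_1,\ldots,x_n] = \dim R = n$, and since $k[x_1,\ldots,x_n]$ is a quotient of a polynomial ring in $n$ variables --- a domain of dimension $n$ --- having dimension $n$ forces the kernel to be zero, so the $x_i$ are algebraically independent. For $(3)\Rightarrow(2)$: $R/(x_1,\ldots,x_n)$ is module-finite over $k[x_1,\ldots,x_n]/(x_1,\ldots,x_n) = k$, hence finite-dimensional, so $\dim R/(x_1,\ldots,x_n) = 0$ by (i). For $(2)\Rightarrow(1)$: (iii) gives $\dim R \le n$ and (iv) gives $\dim R \ge n$, so $n = \dim R$, which together with $\dim R/(x_1,\ldots,x_n) = 0$ is Definition~\ref{d:hsop}.

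\textbf{Main obstacle.} The subtle step is $(2)\Rightarrow(1)$, specifically the bound $\dim R \ge n$: under hypothesis $(2)$ alone $R$ need not be integral over $k[x_1,\ldots,x_n]$, so going-up is unavailable and one must instead argue via minimal primes and transcendence degree as in (iv). The only other place where I expect to have to be careful is the graded Nakayama step in $(1)\Rightarrow(4)$, which is where the positive grading is genuinely used; the rest is bookkeeping on top of Lemma~\ref{l:integral=finite} and the cited dimension-theoretic facts.
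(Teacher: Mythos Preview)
The paper does not supply a proof of this theorem; it is quoted from Stanley's \emph{Combinatorics and Commutative Algebra} as background, so there is nothing in the paper to compare your argument against. Your cycle $(1)\Rightarrow(4)\Rightarrow(3)\Rightarrow(2)\Rightarrow(1)$ is sound, and the step you flag as delicate, $(2)\Rightarrow(1)$, is handled correctly: your parenthetical works because $R$ is Noetherian, so there are only finitely many minimal primes $\mathfrak p_1,\ldots,\mathfrak p_s$, and if each contained a nonzero polynomial $p_i(x_1,\ldots,x_n)$ then $p_1\cdots p_s$ would be a nonzero polynomial lying in the nilradical, contradicting algebraic independence.

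There is, however, a genuine gap in your existence argument for a general h.s.o.p. The Nagata substitution $y_i \mapsto y_i - y_s^{e_i}$ does not preserve homogeneity unless $\deg y_i = e_i \cdot \deg y_s$ for every $i$, which you cannot arrange in general; after one such step the modified generators are inhomogeneous and the induction no longer produces homogeneous parameters. The Nagata trick is the proof of \emph{ungraded} Noether normalization. For the graded version over an arbitrary field one argues instead by homogeneous prime avoidance: the minimal primes of $R$ are homogeneous, and if $\dim R>0$ then $R_+$ strictly contains each of them, so (raising to powers to equalize degrees in the usual avoidance argument) there is a homogeneous $x_1\in R_+$ lying in none of them; then $\dim R/(x_1)=\dim R-1$ and one iterates. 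Your l.s.o.p.\ argument in the degree-$1$, infinite-field case is fine precisely because linear substitutions \emph{do} preserve the grading there.
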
	

The following corollary follows immediately from Lemma~\ref{l:finiteness} and \eqref{i:algindepfinite} in Theorem~\ref{t:hsop}. 

\begin{corollary}\label{c:hsopfiniteextension}
		Let $R \subset S$ be an inclusion of positively graded, finitely generated $k$-algebras with $R_0 = S_0 = k$. Assume that $S$ is a finitely generated $R$-module. Let $x_1,\ldots,x_n$ be a collection of homogeneous elements  of positive degree in $R$. 
		Then $x_1,\ldots,x_n$ is a h.s.o.p. for $R$ if and only if $x_1,\ldots,x_n$ is a h.s.o.p. for $S$. 
\end{corollary}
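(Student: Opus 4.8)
The plan is to reduce everything to the module-finiteness characterization of a h.s.o.p.\ provided by condition~\eqref{i:algindepfinite} of Theorem~\ref{t:hsop}: for a positively graded finitely generated $k$-algebra $T$ with $T_0 = k$, a collection $x_1,\ldots,x_n$ of homogeneous elements of positive degree is a h.s.o.p.\ for $T$ if and only if the $x_i$ are algebraically independent over $k$ and $T$ is a finitely generated module over $A := k[x_1,\ldots,x_n]$. I would first record the elementary observations that $A$ is a finitely generated $k$-algebra (hence Noetherian, by \cite[Corollary~7.7]{AtiyahMacdonald}), that it is positively graded with $A_0 = k$ because the $x_i$ have positive degree, and that $A \subseteq R \subseteq S$; thus Theorem~\ref{t:hsop} applies verbatim to each of $R$ and $S$, and the algebraic-independence condition is intrinsic to $A$, so it holds for the pair $(R,\{x_i\})$ exactly when it holds for $(S,\{x_i\})$.

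For the direction ``h.s.o.p.\ for $R$ $\Rightarrow$ h.s.o.p.\ for $S$'', I would invoke Theorem~\ref{t:hsop} to see that $R$ is a finitely generated $A$-module, combine this with the hypothesis that $S$ is a finitely generated $R$-module to conclude (by transitivity of module-finiteness) that $S$ is a finitely generated $A$-module, and then apply Theorem~\ref{t:hsop} to $S$. For the converse, I would again use Theorem~\ref{t:hsop} to get that $S$ is a finitely generated $A$-module, and then feed the chain $A \subseteq R \subseteq S$ into Lemma~\ref{l:finiteness} (with $A$ playing the role of the Noetherian bottom ring, and using that $S$ is a finitely generated $R$-module) to deduce that $R$ is a finitely generated $A$-module; a final application of Theorem~\ref{t:hsop} to $R$ finishes the argument.

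There is no real obstacle here: the proof is a two-line bookkeeping exercise once the characterization~\eqref{i:algindepfinite} and Lemma~\ref{l:finiteness} are in hand. The only points worth double-checking are that the Noetherian hypothesis of Lemma~\ref{l:finiteness} is genuinely available --- it is, since $A = k[x_1,\ldots,x_n]$ is a finitely generated algebra over the field $k$ --- and that algebraic independence of the $x_i$ need only be verified once, because it is a property of the subalgebra $A$ and is supplied by whichever of the two h.s.o.p.\ hypotheses we are assuming.
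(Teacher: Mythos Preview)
Your proposal is correct and follows exactly the route the paper indicates: the corollary is stated as an immediate consequence of Lemma~\ref{l:finiteness} together with characterization~\eqref{i:algindepfinite} in Theorem~\ref{t:hsop}, and your argument unpacks precisely that. Your observation that algebraic independence is intrinsic to $A = k[x_1,\ldots,x_n]$ and your check that $A$ is Noetherian are the right bookkeeping steps; the forward direction you phrase as ``transitivity'' is just the ``only if'' half of Lemma~\ref{l:finiteness}, so both directions are covered by that lemma as the paper asserts.
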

%\begin{proof}
%	The result follows from  \eqref{i:algindepfinite} in Theorem~\ref{t:hsop} and Lemma~\ref{l:finiteness}. 
%	Let $x_1,\ldots,x_n$ be a h.s.o.p. for $R$. By \eqref{i:algindepfinite} in Theorem~\ref{t:hsop}, $x_1,\ldots,x_n$ are algebraically independent over $k$, and 
%	$R$ is a finitely generated $k[x_1,\ldots,x_n]$-module. By Lemma~\ref{l:finiteness}, $S$ is a finitely generated $k[x_1,\ldots,x_n]$-module.
%	The result now follows from \eqref{i:algindepfinite} in Theorem~\ref{t:hsop}.
%\end{proof}

Recall that a sequence of elements $x_1,\ldots,x_n$ in $R$ is a \emph{regular sequence}
if $x_i$ is not a zero divisor in $R/(x_1,\ldots,x_{i - 1})$ for $1 \le i \le n$. 
We say that $R$ is \emph{Cohen-Macaulay} if there exists a h.s.o.p. that is a regular sequence.

\begin{theorem}\label{t:CMdef}\cite[Theorem~18-19]{StanleyIntroductionCombinatorialCommutative}
		Let $R = \oplus_{m \ge 0} R_m$ be a positively graded, finitely generated $k$-algebra with $R_0 = k$.
	Then the following are equivalent:

	\begin{enumerate}
		
		\item Some h.s.o.p. is a regular sequence i.e. $R$ is Cohen-Macaulay.
		
		\item Every h.s.o.p. is a regular sequence.
		
		\item For some h.s.o.p. $x_1,\ldots,x_n$, $R$ is a free 
		$k[x_1,\ldots,x_n]$-module. 
		
		\item For every h.s.o.p. $x_1,\ldots,x_n$, $R$ is a free
		$k[x_1,\ldots,x_n]$-module. 
		
%		\item R is Cohen-Macaulay. 
		
	\end{enumerate}
	
\end{theorem}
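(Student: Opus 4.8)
The plan is to reduce the theorem to two facts: a characterization of freeness over a polynomial subring in terms of regular sequences, and a rigidity property of depth. Throughout set $n = \dim R$ and let $\mathfrak{m} = \bigoplus_{m>0} R_m$ be the irrelevant ideal. The case $n=0$ is trivial: then $R$ is finite dimensional over $k$, the empty sequence is the unique h.s.o.p., and all four conditions hold vacuously, so I would assume $n \ge 1$.

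First I would prove the key lemma: if $\theta_1,\ldots,\theta_n$ is an h.s.o.p. then, by Theorem~\ref{t:hsop}, $A := k[\theta_1,\ldots,\theta_n]$ is a graded polynomial ring and $R$ is a finitely generated graded $A$-module, and moreover $\theta_1,\ldots,\theta_n$ is an $R$-regular sequence if and only if $R$ is a free $A$-module. The implication "free $\Rightarrow$ regular" is immediate, since the variables $\theta_1,\ldots,\theta_n$ form an $A$-regular sequence and hence a regular sequence on any direct sum of shifted copies of $A$. For "regular $\Rightarrow$ free" I would run the standard $\mathrm{Tor}$ argument: the Koszul complex on $\theta_1,\ldots,\theta_n$ resolves $k$ over $A$, so $\mathrm{Tor}^A_i(k,R)$ is computed by Koszul homology and vanishes for $i>0$ when the sequence is $R$-regular; lifting a homogeneous $k$-basis of $R/(\theta_1,\ldots,\theta_n)R$ to generators of $R$ over $A$ (graded Nakayama) gives a graded surjection $F = \bigoplus_j A(-a_j) \twoheadrightarrow R$ which becomes an isomorphism modulo $\mathfrak{m}_A$, and $\mathrm{Tor}^A_1(k,R)=0$ then forces its kernel to vanish, again by graded Nakayama. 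Granting this lemma, the equivalences (1) $\Leftrightarrow$ (3) and (2) $\Leftrightarrow$ (4) are immediate, and (2) $\Rightarrow$ (1), (4) $\Rightarrow$ (3) hold because h.s.o.p.'s exist (Theorem~\ref{t:hsop}).

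The remaining, and in my view hardest, implication is (1) $\Rightarrow$ (2): that a single h.s.o.p. being a regular sequence forces every h.s.o.p. to be one. Here I would bring in the graded depth $\mathrm{depth}(R)$, the common length of maximal $R$-regular sequences contained in $\mathfrak{m}$; the inputs I need are that this is well defined (the graded form of Rees's theorem), that $\mathrm{depth}(R) \le \dim R/\mathfrak{p}$ for every $\mathfrak{p} \in \mathrm{Ass}(R)$, and that $\mathrm{depth}(R/(x)) = \mathrm{depth}(R) - 1$ for a homogeneous non-zero-divisor $x \in \mathfrak{m}$. Assuming (1) gives $\mathrm{depth}(R) = n$, which with the displayed inequality forces $\dim R/\mathfrak{p} = n$ for every associated prime; in particular $R$ is unmixed and all its associated primes are minimal. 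Now I would induct on $n$: for an arbitrary h.s.o.p. $\theta_1,\ldots,\theta_n$, the relation $\dim R/(\theta_1) = n-1$ (which holds because $\theta_1$ is the first element of an h.s.o.p.) shows that $\theta_1$ lies in no minimal, hence no associated, prime, so $\theta_1$ is a non-zero-divisor; then $R/(\theta_1)$ is Cohen-Macaulay of dimension $n-1$ with h.s.o.p. $\bar{\theta}_2,\ldots,\bar{\theta}_n$, which is a regular sequence by the inductive hypothesis, and prepending $\theta_1$ completes the step.

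The main obstacle is precisely the depth-rigidity input used in (1) $\Rightarrow$ (2); the rest is bookkeeping with Theorem~\ref{t:hsop} and graded Nakayama. If one wanted to avoid invoking general depth theory, an alternative is to argue via Hilbert series and Koszul Euler characteristics, but the depth argument is cleaner and is what I would present.
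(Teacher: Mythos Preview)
The paper does not prove this theorem: it is stated with the citation \cite[Theorem~18-19]{StanleyIntroductionCombinatorialCommutative} and used as background, with no argument given. So there is no ``paper's own proof'' to compare against.

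Your proof sketch is a correct and standard treatment of this classical result. The key lemma (freeness over $k[\theta_1,\ldots,\theta_n]$ is equivalent to $\theta_1,\ldots,\theta_n$ being a regular sequence) via Koszul homology and graded Nakayama is exactly the right argument, and your handling of (1)~$\Rightarrow$~(2) through depth and unmixedness is the usual route. One small point worth making explicit in the inductive step: you assert $\dim R/(\theta_1) = n-1$ because $\theta_1$ is the first element of an h.s.o.p.; this uses both Krull's height inequality (giving $\ge n-1$) and the fact that $\bar\theta_2,\ldots,\bar\theta_n$ witness $\dim R/(\theta_1) \le n-1$, so it is fine, but a reader might appreciate the one-line justification.
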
 

We will need the following example. 

\begin{theorem}\label{t:Hochster}\cite{HochsterRingsInvariants}
	Consider the positively graded semigroup algebra $S_P = \C[C_P \cap \tM]$, where $C_P$ is the cone over $P \times \{1 \}$ in $\tM_\R$. Then $S_P$ is Cohen-Macaulay.  
\end{theorem}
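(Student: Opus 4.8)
The final statement to prove is Theorem~\ref{t:Hochster}: the semigroup algebra $S_P = \C[C_P \cap \tM]$ is Cohen-Macaulay. This is attributed to Hochster, so the plan is to explain the classical argument rather than invent something new.

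\textbf{Overview of the plan.} Hochster's theorem states that the semigroup ring of any finitely generated, normal (i.e.\ saturated) affine semigroup over a field is Cohen-Macaulay. The plan is to verify that $C_P \cap \tM$ is such a semigroup and then invoke this. First I would observe that $C_P$ is a rational polyhedral cone in $\tM_\R$ (it is generated by the finitely many vectors $(v,1)$ as $v$ ranges over the vertices of the lattice polytope $P$), so $C_P \cap \tM$ is a finitely generated affine semigroup by Gordan's lemma. Next, $C_P \cap \tM$ is \emph{saturated} (normal) in $\tM$: if $u \in \tM$ and $ku \in C_P$ for some positive integer $k$, then $u \in C_P$ since $C_P$ is a convex cone, hence $u \in C_P \cap \tM$. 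Thus $S_P$ is the coordinate ring of a normal affine toric variety, and Hochster's theorem applies to give that $S_P$ is Cohen-Macaulay. The grading by $\HT: \tM \to \Z$ is the standard positive grading with $S_{P,0} = \C$, so this is exactly the setting of the commutative algebra recalled in Section~\ref{ss:commutative}.

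\textbf{Key steps in order.} (1) Check $C_P$ is a rational polyhedral cone and that the induced grading is positive with degree-zero part $\C$; note $C_P \cap \HT^{-1}(0) = \{0\}$ since $P$ is a polytope (bounded), so the semigroup has no units other than $0$ and the grading is genuinely positive. (2) Invoke Gordan's lemma to see $C_P \cap \tM$ is finitely generated, hence $S_P$ is a finitely generated $\C$-algebra; its Krull dimension is $d+1$ since $C_P$ is full-dimensional in $\tM_\R$. (3) Verify saturation/normality of the semigroup as above. (4) Apply Hochster's theorem \cite{HochsterRingsInvariants}: the semigroup algebra of a normal finitely generated affine semigroup is Cohen-Macaulay. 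Alternatively, one could cite the now-standard formulation, e.g.\ the fact that normal affine semigroup rings are Cohen-Macaulay appears in \cite{StanleyIntroductionCombinatorialCommutative} as well.

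\textbf{Main obstacle.} There is essentially no obstacle here — this is a black-box citation of a deep theorem of Hochster, and the only real content is checking the hypotheses (finite generation and normality of the semigroup), both of which are immediate from $C_P$ being a rational polyhedral cone arising from a lattice polytope. If one wanted a self-contained argument the hard part would be reproving Hochster's theorem, which relies on either a local cohomology computation (showing the local cohomology modules below top degree vanish, via a combinatorial description of $H^i_\m(S_P)$ in terms of the reduced cohomology of links of faces, all of which are cones and hence acyclic) or a reduction to characteristic $p$ and a direct-summand argument. I would not reproduce that; citing \cite{HochsterRingsInvariants} suffices for the purposes of this paper.
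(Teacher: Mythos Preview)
Your proposal is correct and in fact gives more detail than the paper does: the paper simply states Theorem~\ref{t:Hochster} with the citation \cite{HochsterRingsInvariants} and no proof, treating it as a black-box result. Your verification that $C_P \cap \tM$ is a finitely generated normal affine semigroup (via Gordan's lemma and convexity) before invoking Hochster is exactly the right way to justify the citation.
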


Let $G$ be a finite group that acts on $R$ as a positively graded $k$-algebra.
Let $k = \C$ and let $R(G)$ be the complex representation ring of $G$ (cf. Section~\ref{ss:reptheorybasics}).
%.i.e. each $g \in G$ induces a map of $k$-algebras from $R$ to itself that preserves the grading.
% i.e. the action is $k$-linear and  preserves the grading. Then $G$ acts on 
Then each graded piece $R_m$ is a $\C G$-module and we may consider the corresponding \emph{equivariant Hilbert series} $\Hilb_G(R;t) := \sum_{m \ge 0} [R_m]t^m \in R(G)[[t]]$. 
Recall from \eqref{e:detdef} that if $V$ is a finite-dimensional $\C[G]$-module then 
\begin{equation*}
	\det(I - V t) := \sum_{m \ge 0} (-1)^m [\bigwedge^m V] t^m \in R(G)[t].
\end{equation*}
The lemma below essentially follows from the proof of \cite[Theorem~1.4]{StembridgeSomePermutation}. We reproduce the proof in our setup for the convenience of the reader. 

\begin{lemma}\label{l:equivHilbertofquotient}
			Let $R = \oplus_{m \ge 0} R_m$ be a positively graded, finitely generated $\C$-algebra with $R_0 = \C$.
%	Let $R$ be a finitely generated positively graded $\C$-algebra with $R_0 = \C$.
	Assume that $R$ is Cohen-Macaulay.  
	Let $G$ be a finite group that acts on $R$ as a positively graded $\C$-algebra.
	Let $x_1,\ldots,x_n$ be a h.s.o.p. of degree $r$ and let $J = (x_1,\ldots,x_n)$. 
	Assume that $J$ is $G$-invariant.
Then %the equivariant Hilbert series of $R$ has the form: 
\[
\Hilb_G(R;t) = \frac{\Hilb_G(R/J;t) }{\det(I - J_r t^r)} \in R(G)[[t]],
\]	
%\sum_{m \ge 0} [R_m]t^m  = \frac{\sum_{m \ge 0} [(R/J)_m]t^m}{\det(I - J_r t^r)} \in R(G)[[t]],
where $J_r = \C x_1 + \cdots + \C x_n$ is the degree $r$ component of $J$. 
\end{lemma}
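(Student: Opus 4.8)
The plan is to reduce the statement to the non-equivariant Cohen--Macaulay decomposition, keeping track of the $G$-action throughout. By Theorem~\ref{t:CMdef}, since $R$ is Cohen--Macaulay and $x_1,\ldots,x_n$ is a h.s.o.p., the ring $R$ is a free $\C[x_1,\ldots,x_n]$-module. Choose a set of homogeneous elements $\{y_j\}_{j \in K}$ of $R$ whose images in $R/J$ form a $\C$-basis; then standard arguments (the graded Nakayama lemma) show that $\{y_j\}$ is in fact a free $\C[x_1,\ldots,x_n]$-module basis of $R$, so that
\[
R = \bigoplus_{j \in K} \C[x_1,\ldots,x_n]\, y_j
\]
as graded $\C[x_1,\ldots,x_n]$-modules. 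This decomposition is not $G$-equivariant as stated (the $y_j$ need not span a $G$-submodule), so the first real step is to repair this: I would instead work with the graded $\C$-vector space $R/J$ directly, observe it is finite-dimensional, and build a $G$-equivariant graded $\C$-linear section $s: R/J \to R$ of the quotient map $\pi: R \to R/J$ by averaging over $G$ (possible since $|G|$ is invertible in $\C$ and $\pi$ is $G$-equivariant and graded). Setting $N := s(R/J) \subset R$, this is a graded $G$-submodule of $R$ isomorphic to $R/J$ as a graded $\C G$-module.

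The second step is to verify that the multiplication map
\[
\mu : \C[x_1,\ldots,x_n] \otimes_\C N \longrightarrow R
\]
is an isomorphism of graded $\C$-vector spaces. It is $G$-equivariant because $J$, hence $J_r = \C x_1 + \cdots + \C x_n$, is $G$-invariant, so $G$ acts on the polynomial subring $\C[x_1,\ldots,x_n] = \Sym(J_r)$ (well, more precisely on the subalgebra generated by $J_r$), and $N$ is a $G$-submodule. That $\mu$ is an isomorphism of vector spaces follows from the freeness recalled above: any graded $\C$-basis of $N$ maps to a free $\C[x_1,\ldots,x_n]$-module basis of $R$, which is exactly the content of the graded Nakayama argument combined with Cohen--Macaulayness. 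Here I should be slightly careful: the $x_i$ are algebraically independent by Theorem~\ref{t:hsop}, so $\Sym(J_r) \to R$ factors through the genuine polynomial ring on $x_1,\ldots,x_n$, and $J_r$ as a $G$-module has $\C$-dimension $n$, so $\det(I - J_r t^r) = \sum_{i \ge 0} (-1)^i [\bigwedge^i J_r] t^{ri}$ is exactly the reciprocal equivariant Hilbert series of $\C[x_1,\ldots,x_n]$ with its grading where each $x_i$ has degree $r$.

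The third step is to take equivariant Hilbert series of the isomorphism $\mu$. The equivariant Hilbert series is multiplicative on tensor products of graded $\C G$-modules, so
\[
\Hilb_G(R;t) = \Hilb_G(\C[x_1,\ldots,x_n];t)\cdot \Hilb_G(N;t) = \Hilb_G(\C[x_1,\ldots,x_n];t)\cdot \Hilb_G(R/J;t).
\]
It remains to identify $\Hilb_G(\C[x_1,\ldots,x_n];t)$ with $\det(I - J_r t^r)^{-1}$ in $R(G)[[t]]$. This is the standard equivariant version of the identity $\prod (1-t^r)^{-1} = \sum_i (-1)^i e_i(\ldots)^{-1}$ type relation; concretely, $\C[x_1,\ldots,x_n] \cong \Sym(J_r)$ with $J_r$ placed in degree $r$, and the graded equivariant character identity $\sum_{m \ge 0} [\Sym^m V] t^m \cdot \det(I - V t) = 1$ in $R(G)[[t]]$ (valid because it holds after evaluating at every $g \in G$, where it is the classical identity $\det(I - \phi(g)t)^{-1} = \sum_m \tr(\phi(g)|\Sym^m V) t^m$) gives exactly $\Hilb_G(\C[x_1,\ldots,x_n];t) = \det(I - J_r t^r)^{-1}$ after the substitution $t \mapsto t^r$. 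Combining the last two displays yields the claimed formula.

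\textbf{Main obstacle.} The only genuinely delicate point is the second step: producing a $G$-equivariant free-module decomposition of $R$ over the polynomial subring. The existence of \emph{some} free basis is immediate from Cohen--Macaulayness (Theorem~\ref{t:CMdef}), but one must argue that the $G$-equivariant section $N = s(R/J)$ still generates $R$ freely over $\C[x_1,\ldots,x_n]$ — i.e.\ that $\mu$ is injective as well as surjective. Surjectivity is the graded Nakayama lemma applied to the $\C[x_1,\ldots,x_n]$-module $R$ modulo $J\cdot R = (x_1,\ldots,x_n)R$; injectivity then follows by comparing Hilbert series (both sides have the same, finite, Hilbert series in each degree once surjectivity is known and $R$ is $\C[x_1,\ldots,x_n]$-free of the predicted rank $\dim_\C R/J$), or alternatively by a direct regular-sequence argument using that $x_1,\ldots,x_n$ is a regular sequence on $R$. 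I expect to spell out the Hilbert-series comparison version since it is shortest and this is the route taken in \cite{StembridgeSomePermutation}.
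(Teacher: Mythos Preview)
Your proposal is correct and follows essentially the same argument as the paper: both build a $G$-equivariant graded section $R/J \to R$ by averaging, use graded Nakayama together with Cohen--Macaulay freeness to show the resulting map $\C[x_1,\ldots,x_n]\otimes_\C (R/J) \to R$ is an isomorphism of graded $\C G$-modules, and then identify $\Hilb_G(\C[x_1,\ldots,x_n];t)$ with $\det(I-J_r t^r)^{-1}$ via the eigenvalue computation. The paper handles injectivity by the rank argument (the chosen lifts form a generating set of a free module of the correct rank), which is one of the two options you mention.
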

\begin{proof}

	First recall that any surjection of graded $\C G$-modules splits. Indeed, if 
	$f: V \to W$ is a surjection of graded $\C G$-modules and $h': W \to V$ is a splitting of $f$ as a map of graded $\C$-vector spaces, then one may define $h: W \to V$ by  $h(w) = \frac{1}{|G|} \sum_{g \in G} g h'(g^{-1}w)$ for all $w$ in $W$.
	
	Consider the graded subring $S = \C[x_1,\ldots,x_n]$ of $R$ and let $\mathfrak{m} = (x_1,\ldots,x_n)$. Note that each $x_i$ has degree $r$. 
	By Theorem~\ref{t:hsop}, $S$ is isomorphic to a polynomial ring in $n$ variables. %Note that $J = \mathfrak{m} R$. 
	Since $R$ is Cohen-Macaulay, $R$ is a free $S$-module by Theorem~\ref{t:CMdef}.
	
	Consider the surjection of graded $\C G$-modules $p: R \to R/J = R/\mathfrak{m} R$, and choose a splitting of graded $\C G$-modules $\sigma: R/J \to R$.
	Let $y_1,\ldots,y_s$ be a $\C$-basis of $R/J$. Then $\sigma(y_1),\ldots,\sigma(y_s)$ generate $R$ as an $S$-module by the graded version of Nakayama's lemma (see \cite[Exercise~4.6a]{EisenbudCommutativeAlgebra} and \cite[Proposition~2.8]{AtiyahMacdonald}). Here $s$ is the rank of $R$ as a free $S$-module and hence $\sigma(y_1),\ldots,\sigma(y_s)$ is a 
	%	, since $R$ is a free $S$-module and $s$ is the minimal number of generators, form 
	a basis of $R$ as an $S$-module \cite[Exercise~3.15]{AtiyahMacdonald}. 
	
%	Consider the surjection of graded $\C G$-modules $p: R \to R/J$, and choose a splitting of graded $\C G$-modules $\sigma: R/J \to R$. 
%	%We may assume that $\sigma(1) = 1$. 
%%	 \cong R \otimes_{k[x_1,\ldots,x_n]} k[x_1,\ldots,x_n]/(x_1,\ldots,x_n)$.
%%	Since any $\C G$-module is projective, 
%%	Then there exists a 
%%	%Let $\sigma: R/J \to R$ be a %splitting of $p$. 
%%	map of  graded $\C G$-modules $\sigma: R/J \to R$ that splits $p$. 
%%%%	Then there exists a splitting of graded $\C G$-modules $\sigma: R/J \to R$. 	
%%For example, if we choose $\sigma': R/J \to R$ to be a splitting of $p$ as graded $\C$-vector spaces, then one may define $\sigma(v) = \frac{1}{|G|} \sum_{g \in G} g\sigma'(g^{-1}v)$ for all $v$ in $R/J$. 
%Since $R$ is Cohen-Macaulay, $R$ is a free $\C[x_1,\ldots,x_n]$-module by Theorem~\ref{t:CMdef}. It follows that $\sigma$ maps $\C$-bases of $R/J$ to 
%$\C[x_1,\ldots,x_n]$-bases of $R$. Indeed, since $r - \sigma(p(r)) \in J \subset k[x_1,\ldots,x_n] \cdot \sigma(1)$ for all $r$ in $R$ \alan{This is rubbish! Nakayama?}, the image of a $\C$-basis of $R/J$ under $\sigma$ generates $R$ as a $\C[x_1,\ldots,x_n]$-module, and hence is a basis by \cite[Exercise~3.15]{AtiyahMacdonald}.

We deduce that $\sigma$ extends to an isomorphism of %$\C[x_1,\ldots,x_n]$-modules 
$S$-modules
$\tilde{\sigma}: R/J \otimes_\C S \to R$ 
%$\tilde{\sigma}: R/J \otimes \C[x_1,\ldots,x_n] \to R$ 
by $\tilde{\sigma}([r] \otimes f) = f\sigma([r])$, and $\tilde{\sigma}$ is a map of graded $\C G$-modules. We claim that 
$\Hilb_G(S;t) = \frac{1}{\det(I - J_r t^r)}$.
%the equivariant Hilbert series of the polynomial ring 
%%$\C[x_1,\ldots,x_n]$ 
%$S$
%is equal to $\frac{1}{\det(I - J_r t^r)}$. 
Indeed, if $g$ in $G$ acts on $J_r$ with eigenvalues $\lambda_1,\ldots,\lambda_s$, then evaluating characters at $g$ yields the equality
$\sum_{m \ge 0} S_m(g) t^m = \frac{1}{(1 - \lambda_1 t^r)\cdots (1 - \lambda_s t^r)}$
%$\sum_{m \ge 0} \C[x_1,\ldots,x_n]_m(g) t^m = \frac{1}{(1 - \lambda_1 t)\cdots (1 - \lambda_s t)}$ 
and the claim follows from Remark~\ref{r:deteigenvalues} (see also \cite[Lemma~3.1]{StapledonEquivariant} for example). 
The result now follows by taking equivariant Hilbert series of both sides of the isomorphism $\tilde{\sigma}$.	
\end{proof}

\subsection{The principal $A$-determinant and nondegenerate polynomials}\label{ss:principal}
%The $A$-resultant}

In this section, we recall the relevant background on the principal $A$-determinant and on nondegenerate polynomials. We refer the reader to \cite{GKZ94} for details. 
We continue with the notation of previous sections. In particular, $P \subset M_\R$ is a $G$-invariant lattice polytope, $C_P$ is the cone generated by $P \times \{1 \}$ in $\tM_\R$ and $S_P = \oplus_m S_{P,m} = \C[C_P \cap \tM]$ is the corresponding graded semigroup algebra.

%reference is \cite[Sections~8.1-8.2]{GKZ94}.
%for the moment use our setup as before. Recall $\tM$ and $\HT$. 

Let $A \subset M$ be a finite set with convex hull $P \subset M_\R$. Let $\tM_A$ be the sublattice of $\tM$ generated by $A \times \{ 1 \}$. 
%Recall that $C_P$ denotes the cone generated by $P \times \{1 \}$ in $\tM_\R$ and  $S_P = \oplus_m (S_P)_m := \C[C_P \cap \tM]$.
%\alan{idea: discussion of supports and isomorphism $\C^A$ here}
Consider an element $F = \sum_{u \in C_P \cap \tM} \lambda_u x^u \in S_P$. The \emph{support}
of $F$ is $\supp(F) := \{ u \in C_P \cap \tM : \lambda_u \neq 0 \}$. 

\begin{definition}\label{d:Aresultant}\cite[Proposition~8.2.1]{GKZ94}
	%\cite[Proposition-Definition~8.1.1]{GKZ94}
	Let $A \subset M$ be a finite set with convex hull $P \subset M_\R$. With the notation above,
	consider the polynomial ring $\Z[z_{i,a}]$ for some  formal variables $\{ z_{i,a} : 1 \le i \le d + 1, a \in A \}$. 
	Then there is an irreducible polynomial (unique up to sign) $R_A(z_{i,a}) \in \Z[z_{i,a}]$ called the \emph{$A$-resultant}
	satisfying the following property:
	
	Let $F_1,\ldots,F_{d + 1}$ be elements of $S_P$ with support contained in $A \times \{1\}$. For $1 \le i \le d + 1$, write $F_i = \sum_{a \in A} \lambda_{i,a} x^{(a,1)}$ for some $\lambda_{i,a} \in \C$. Then $F_1,\ldots,F_{d + 1}$ is a l.s.o.p. for $S_P$  if and only if $R_A(\lambda_{i,a}) \neq 0$. 
\end{definition}

\begin{remark}
	In Definition~\ref{d:Aresultant}, the statement given is different but equivalent  to that in \cite[Proposition~8.2.1]{GKZ94}. Namely, \cite{GKZ94} associate to $A$ the
	semigroup algebra $S_A := \C[C_P \cap \tM_A]$ and the (not necessarily normal) toric variety $\Proj S_A$. With the notation of Definition~\ref{d:Aresultant},  the $A$-resultant in \cite{GKZ94} is determined by the condition that  $R_A(\lambda_{i,a}) \neq 0$ if and only if  the subvariety 
	$\Proj (S_A/(F_{1},\ldots,F_{d + 1}))$ of $\Proj S_A$ is empty. 
	
	We explain why this is equivalent to Definition~\ref{d:Aresultant} cf. the proof of \cite[Theorem~4.8]{BatyrevVariations} (stated in Theorem~\ref{t:nondegenerateequiv} below). First observe that $S_A \subset S_P$ and $S_P$ is integral over $S_A$.  This follows, for example, by choosing a regular triangulation of $P$ with vertices in $A$, and then noting that  every element in $C_P \cap \tM$ is a sum of elements of $A \times \{ 1 \}$ with nonnegative rational coefficients. In particular, $S_A$ has Krull dimension $d + 1$ and $S$ is a finitely generated $S_A$-module by Lemma~\ref{l:integral=finite}.
	
	Now $\Proj (S_A/(F_{1},\ldots,F_{{d + 1}}))$ is empty if and only if $\Spec (S_A/(F_{1},\ldots,F_{{d + 1}}))$ is $0$-dimensional. The latter holds if and only if $S_A/(F_{1},\ldots,F_{{d + 1}})$ 
	has Krull dimension $0$ i.e. $F_{1},\ldots,F_{{d + 1}}$ is a l.s.o.p. for $S_A$.
	By Corollary~\ref{c:hsopfiniteextension}, $F_{1},\ldots,F_{{d + 1}}$ is a l.s.o.p. for $S_A$ if and only if  $F_{1},\ldots,F_{{d + 1}}$ is a l.s.o.p. for $S_P$.

\end{remark}

In general, the $A$-resultant is difficult to compute. We have the following simple example. 

\begin{example}\cite[Example~8.1.2c]{GKZ94}\label{e:resultantsimplex}
	Suppose that $|A| = d + 1$ i.e. the elements of $A$ are affinely independent and $P$ is a simplex. 
	%Then, up to a sign, $R_A(z_{i,a}) = [1,\ldots,d + 1]$. 
	Then  $R_A(z_{i,a}) = \pm \det( \{ z_{i,a} \})$, where 
	%on the right hand side of the equality 
	we consider the  variables $\{ z_{i,a} \}$ as entries of a $(d + 1) \times (d + 1)$ matrix
	with rows indexed by $\{1,\ldots,d + 1\}$  and columns indexed by the elements of $A$ (in some order). 
\end{example}

Let $\tN_A = \Hom(\tM_A,\Z)$ and $\tN = \Hom(\tM,\Z)$ be the dual lattices to $\tM_A$ and $\tM$ respectively. Then $(\tN_A)_\C = (\tN)_\C$ and we have the natural
pairing 
$\langle \cdot , \cdot \rangle: \tM_\C \times \tN_\C \to \C$.
Given   $v$ in $\tN_\C$, we define a $\tM$-graded $\C$-linear map:
\begin{equation*}%\label{e:partialderivative}
	\partial_v : \C[\tM] \to \C[\tM],
\end{equation*}
\[
\partial_v (x^u) = \langle u , v \rangle x^u.
\]
Note that if $v \in \tN_A$, then the map $\partial_v$ preserves $\Z[\tM_A]$.  We are now ready to define the principal $A$-determinant. 

\begin{definition}\label{d:principalAdet}\cite[10.1.A]{GKZ94}
	Let $A \subset M$ be a finite set with convex hull $P \subset M_\R$. With the notation above,
	consider the polynomial ring $\Z[z_{a}]$ for some  formal variables $\{ z_{a} : a \in A \}$. 
	Fix a choice of basis $v_1,\ldots,v_{d + 1}$ of $\tN_A$.
	For $a$ in $A$ and $1 \le i \le d + 1$, 
	%	Given the formal variable $z_a$, 
	let 
	$w_{i,a} := \langle (a,1), v_i \rangle z_a$.  Then $E_A(z_a) := R_A(w_{i,a}) \in \Z[z_a]$.
%	
%	Consider the $A$-resultant  $R_A(z_{i,a}) \in \Z[z_{i,a}]$ and fix a choice of basis $v_1,\ldots,v_{d + 1}$ of $\tN_A$. 
%	For $a$ in $A$ and $1 \le i \le d + 1$, 
%	%	Given the formal variable $z_a$, 
%	let 
%	$w_{i,a} := \langle (a,1), v_i \rangle z_a$
%	and define 
%	$E_A(z_a) = R_A(w_{i,a}) \in \Z[z_a]$.

	%$E_A(\lambda_a) = R_A(\partial_{1} G_\lambda,\ldots, \partial_{d + 1} G_\lambda)$ for all $\lambda = (\lambda_a)_{a \in A} \in \C_A$. 
	
	%	for any basis $v_1,\ldots,v_{d + 1}$ of $\tN_A$, $E_A(\lambda_a) = R_A(\partial_{v_1} G_\lambda,\ldots, \partial_{v_{d + 1}} G_\lambda)$ for all $\lambda = (\lambda_a)_{a \in A} \in \C_A$. 
\end{definition}

Definition~\ref{d:principalAdet} is independent of the choice of basis of  $\tN_A$ by \cite[Corollary~8.2.2]{GKZ94}.
%	In what follows, we sometimes write 
%$E_A(G_{\lambda}) := E_A(\lambda_{a})$
%for $\lambda = (\lambda_{a})_{a \in A} \in \C^A$. \alan{check that this last part is used}

\begin{remark}\label{r:altdefprincipalAdet}
	Definition~\ref{d:principalAdet} could alternatively have been described as follows. 	Fix a choice of basis $v_1,\ldots,v_{d + 1}$ of $\tN_A$.
		The principal $A$-determinant $E_A(z_{a}) \in \Z[z_{a}]$ is the (unique up to sign) polynomial 	satisfying the following property:
				
			Let $F$ be an element of $S_P$ with support contained in $A \times \{1\}$, and write $F = \sum_{a \in A} \lambda_{a} x^{(a,1)}$ for some $\lambda_{a} \in \C$. For all $1 \le i \le d + 1$, write 		
			$\partial_{v_i} F = \sum_{a \in A} \lambda_{i,a} x^{(a,1)}$ for some $\lambda_{i,a} \in \C$. Then $E_A(\lambda_a) = R_A(\lambda_{a,i})$.
			
			In particular, with this notation, $\partial_{v_1} F,\ldots, \partial_{v_{d + 1}} F$ is a l.s.o.p. for $S_P$  if and only if $E_A(\lambda_a) \neq 0$.
\end{remark}

In general, the principal $A$-determinant is difficult to compute. We present two simple examples below.

\begin{example}\label{e:simplexprincipalAdet}
	We continue with Example~\ref{e:resultantsimplex}. Suppose that $|A| = d + 1$. Then $E_A(z_a) = \pm \prod_{a \in A} z_a$. 
\end{example}

\begin{example}\label{e:square}\cite[Example~10.1.3b]{GKZ94}
	Let $P = [0,1]^2 \subset \R^2$, $M = \Z^2$ and $A = P \cap M$. 
	Writing $z_{i,j} := z_{(i,j)}$ for $0 \le i,j \leq 1$, we have 
	$$E_A(z_{0,0},z_{1,0},z_{0,1},z_{1,1}) = \pm z_{0,0}z_{1,0}z_{0,1}z_{1,1}(z_{0,0}z_{1,1} - z_{1,0}z_{0,1}).$$
	%	$$E_A(z_{(0,0)},z_{(1,0)},z_{(0,1)},z_{(1,1)}) = \pm z_{(0,0)}z_{(1,0)}z_{(0,1)}z_{(1,1)}(z_{(0,0)}z_{(1,1)} - z_{(1,0)}z_{(0,1)}).$$
	%	% = \{ a_{0,0},a_{0,1},a_{1,0} ,a_{1,1}\}$. Then 
\end{example}

Consider an element $f = \sum_{u \in M} \lambda_u x^u \in \C[M]$. The \emph{support}
of $f$ is $\supp(f) := \{ u \in M : \lambda_u \neq 0 \}$, and the \emph{Newton polytope} $\Newt(f)$ of $f$ is the convex hull of $\supp(f)$ in $M_\R$.  
If $Z \subset M_\R$ is a subset, then we may consider the restriction $f|_Z :=  \sum_{u \in Z \cap M} \lambda_u x^u$.

\begin{definition}\label{d:nondegeneratesmooth}\cite[Definition~3.3]{BatyrevVariations}
	An element $f = \sum_{u \in M} \lambda_u x^u \in \C[M]$ with $\supp(f) \subset P \cap M$ is \emph{nondegenerate} with respect to $P$ if %$\Newt(f) = P$ and 
	for every nonempty face $Q$ of $P$, $f|_Q \in \C[M]$ defines a (possibly empty) smooth hypersurface in %$\{ f|_Q = 0 \} \subset 
	$\Spec \C[M]$.  
	
	%	Let $f = \sum_{u \in M} \lambda_u x^u \in \C[M]$ with $\Newt(f) \subset P$. Let 
	%	$F = \sum_{u \in P \cap M} \lambda_u x^{(u,1)}$ be the corresponding element in $S_1$. Let $v_1,\ldots,v_{d + 1}$ be a basis for $\tN$. Then $f$ is \emph{nondegenerate} with respect to $P$ if  
	%	$\partial_{v_1}F,\ldots,\partial_{v_{d + 1}}F$ is a l.s.o.p. for $S$. 
\end{definition}

By setting $Q$ to be a vertex of $P$ in Definition~\ref{d:nondegeneratesmooth}, 
%applying the condition to all vertices of $P$ 
we see that if $f$ is nondegenerate with respect to $P$ then $\Newt(f) = P$. 
Recall that $S_{P,1}$ denotes the set of homogeneous elements of $S_P$ of degree $1$. 

\begin{definition}\cite[Definition~4.7]{BatyrevVariations}\label{d:Jacobiandef}
	Consider an element $f = \sum_{u \in M} \lambda_u x^u \in \C[M]$ with
	 $\supp(f) \subset P \cap M$.
	 %$\Newt(f) = P$. 
	 Let $F = \sum_{u \in P \cap M} \lambda_u x^{(u,1)}$ be the corresponding element in $S_{P,1}$. Consider the ideal $J_{f,P} \subset S_P$ generated by the $\C$-vector space  $\{ \partial_v F : v \in \tN_\C \}$. 
	 If $\Newt(f) = P$, then $J_{f,P}$ is 
	  %is 
	  called the \emph{Jacobian ideal} of $f$, and the quotient ring $S_P/J_{f,P}$ is called the \emph{Jacobian ring} of $f$. 
	%	
	%	 and let $v_1,\ldots,v_{d + 1}$ be a basis for $\tN_\C$. 
	%	The ideal of $S$ generated by $\partial_{v_1}F,\ldots,\partial_{v_{d + 1}}F$ is called the \emph{Jacobian ideal} $J_{f}$, and the quotient ring $S/J_f$ is called the \emph{Jacobian ring} of $f$.....
\end{definition}

We recall the following equivalent characterizations of nondegeneracy.

%Note that the equivalence of the last two conditions below can be deduced from Definition~\ref{d:principalAdet} and Lemma~\ref{l:hsopsforSandSA}. \alan{Can we cut out a bunch of material and just rely on this theorem? Do we need Lemma~\ref{l:hsopsforSandSA}?}

\begin{theorem}\label{t:nondegenerateequiv}\cite[Theorem~4.8, Proposition~4.16]{BatyrevVariations}
	Consider an element $f = \sum_{u \in M} \lambda_u x^u \in \C[M]$  with $\supp(f) \subset P \cap M$. % $\Newt(f) \subset P$. 
	Let $F = \sum_{u \in M} \lambda_u x^{(u,1)}$ be the corresponding element in $S_{P,1}$. Let $A \subset P \cap M$ be any subset containing $\supp(f)$ and consider the corresponding element $\lambda = (\lambda_a)_{a \in A} \in \C^{A}$. Then the following conditions are equivalent:
	\begin{enumerate}
		\item $f$ is nondegenerate with respect to $P$. 
		\item %$\Newt(f) = P$ and 
		Any ordered $\C$-basis of 
		%the graded component 
		$(J_{f,P})_1$ is a l.s.o.p. for $S_P$.
		% where $J_f$ is the Jacobian ideal of $f$
		\item $E_A(\lambda_a) \neq 0$. 
	\end{enumerate} 
\end{theorem}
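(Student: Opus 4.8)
\textbf{Proof proposal for Theorem~\ref{t:nondegenerateequiv}.}

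The plan is to prove the cycle of implications $(1) \Rightarrow (2) \Rightarrow (3) \Rightarrow (1)$, reducing everything to properties of the $A$-resultant and the principal $A$-determinant recalled in the previous subsection. First I would observe that the element $F = \sum_{u} \lambda_u x^{(u,1)} \in S_{P,1}$ has support contained in $A \times \{1\}$, and that for any choice of basis $v_1,\ldots,v_{d+1}$ of $\tN_A$ the elements $\partial_{v_1}F,\ldots,\partial_{v_{d+1}}F$ span the degree-$1$ part $(J_{f,P})_1$ of the Jacobian ideal, since $\partial_v$ is $\C$-linear in $v$ and $\tN_\C = (\tN_A)_\C$. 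By Remark~\ref{r:altdefprincipalAdet}, $E_A(\lambda_a) = R_A(\lambda_{i,a})$ where $\partial_{v_i}F = \sum_a \lambda_{i,a} x^{(a,1)}$, and by the defining property of the $A$-resultant (Definition~\ref{d:Aresultant}), $R_A(\lambda_{i,a}) \neq 0$ if and only if $\partial_{v_1}F,\ldots,\partial_{v_{d+1}}F$ is a l.s.o.p.\ for $S_P$. This already gives the equivalence $(2) \Leftrightarrow (3)$: a spanning set of size $d+1$ of $(J_{f,P})_1$ is an ordered $\C$-basis precisely when it is linearly independent, and a l.s.o.p.\ must consist of $d+1 = \dim S_P$ linearly independent elements (by Theorem~\ref{t:hsop}, a l.s.o.p.\ is algebraically — hence linearly — independent over $\C$); conversely if $E_A(\lambda_a)\neq 0$ then the $\partial_{v_i}F$ already form a l.s.o.p., so they are a basis and any basis of $(J_{f,P})_1$ is obtained from them by an invertible linear change, hence is also a l.s.o.p.

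For $(1) \Leftrightarrow (3)$ I would invoke the geometric translation of both conditions through the toric variety $X_A = \Proj S_A$, where $S_A = \C[C_P \cap \tM_A]$ is the (possibly non-normal) semigroup algebra attached to $A$. On one hand, nondegeneracy of $f$ with respect to $P$ means that for every face $Q$ of $P$ the restriction $f|_Q$ cuts out a smooth hypersurface; in toric language this says exactly that the hypersurface $\{F = 0\}$ in $X_A$ is quasi-smooth, equivalently that the Jacobian (Euler-type) ideal defines the empty subvariety of $X_A$ — i.e.\ $\Proj(S_A/(\partial_{v_1}F,\ldots,\partial_{v_{d+1}}F))$ is empty. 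On the other hand, by Definition~\ref{d:principalAdet} and the discussion following Definition~\ref{d:Aresultant}, $\Proj$ of this quotient is empty if and only if $\partial_{v_1}F,\ldots,\partial_{v_{d+1}}F$ is a l.s.o.p.\ for $S_A$, which by Corollary~\ref{c:hsopfiniteextension} (using that $S_P$ is a finitely generated $S_A$-module, as $S_P$ is integral over $S_A$) holds iff it is a l.s.o.p.\ for $S_P$, iff $E_A(\lambda_a) \neq 0$. This is essentially the content of \cite[Theorem~4.8]{BatyrevVariations}, and I would cite it for the precise dictionary between stratum-by-stratum smoothness of $\{f = 0\}$ and the non-vanishing of the Euler derivatives on the toric strata; the key technical point is that the faces $Q$ of $P$ correspond to the torus orbits of $X_A$, and that $f|_Q$ being smooth on the dense torus of that orbit translates into the partial derivative operators $\partial_{v_i}$ having no common zero there.

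The main obstacle I expect is the careful handling of the face-by-face (stratum-by-stratum) argument for $(1)\Leftrightarrow(3)$: one must check that the operators $\partial_{v_i}$, which are the toric analogue of the logarithmic derivatives $x_j \partial/\partial x_j$, detect smoothness of $f|_Q$ on each torus orbit, and that their common vanishing locus in $\Proj S_A$ is empty exactly when $E_A(\lambda_a) \neq 0$ — this is where one genuinely needs \cite[Theorem~4.8, Proposition~4.16]{BatyrevVariations} rather than a bare-hands computation, since the semigroup algebra $S_A$ need not be normal and the homogeneity/Euler relation among the $\partial_{v_i}F$ must be accounted for. The rest — the equivalence $(2)\Leftrightarrow(3)$ and the bookkeeping that $\partial_{v_1}F,\ldots,\partial_{v_{d+1}}F$ span $(J_{f,P})_1$ independently of the chosen basis of $\tN_A$ — is routine linear algebra together with the dimension statement $\dim S_P = d+1$ and Theorem~\ref{t:hsop}.
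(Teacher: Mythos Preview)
Your proposal is correct and matches the paper's treatment: the paper does not give an independent proof but cites \cite[Theorem~4.8, Proposition~4.16]{BatyrevVariations} for the equivalence $(1)\Leftrightarrow(2)$, and only remarks (just after the theorem statement) that $(2)\Leftrightarrow(3)$ follows from Remark~\ref{r:altdefprincipalAdet} together with Theorem~\ref{t:hsop}, which is exactly the argument you give.
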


We mention that the equivalence of the last two conditions in Theorem~\ref{t:nondegenerateequiv} follows from Remark~\ref{r:altdefprincipalAdet} together with the observation that, by Theorem~\ref{t:hsop},
an ordered $\C$-basis of $(J_{f,P})_1 \subset S_{P,1}$ is a l.s.o.p. for $S_P$ if and only if any ordered $\C$-basis of $(J_{f,P})_1$ is a l.s.o.p.  for $S_P$.

%following two observations. Firstly, by Theorem~\ref{t:hsop},
%an ordered $\C$-basis of $(J_{f,P})_1 \subset S_{P,1}$ is a l.s.o.p. for $S_P$ if and only if any ordered $\C$-basis of $(J_{f,P})_1$ is a l.s.o.p.  for $S_P$. 
%Secondly, if $G_1,\ldots,G_{d + 1}$ is a l.s.o.p. for $S_P$, then the convex hull of 
%$\{ \supp(G_i) : 1 \le i \le d + 1 \}$ equals $P \times \{1\}$. In particular, $E_A(\lambda_a) \neq 0$ implies that $\Newt(f) = P$. 

Consider a function $\omega: A \to \R$, and define the corresponding \emph{weight}
of a monomial $\prod_{a \in A} z_a^{\nu_a}$ in $\C[z_a]$ to be $\sum_{a \in A} \omega(a) \nu_a$. 
The \emph{initial form} $\init_\omega(g)$ of a polynomial $g(z_a) \in \C[z_a]$ is the sum of all terms corresponding to monomials with minimal weight.
Then, for an auxiliary variable $t$,  
\begin{equation}\label{e:initdegenerationgeneral}
	g(  t^{\omega(a)} z_a ) =  t^{\mu} \init_\omega g(z_a) + \textrm{ higher order terms in } t,
\end{equation}
where $\mu \in \R$ is the minimal weight of any monomial in $g(z_a)$.  
We are interested in the initial form of the following modification of the principal $A$-determinant: 
%The following notation will be convenient. Let 
\[
E_{A,M}(z_a) := [\tM : \tM_A]^{\Vol P} E_A(z_a)^{[\tM : \tM_A]} \in \Z[z_a]. 
\]
where $\Vol(P)$ is the normalized volume of $P$ with respect to $M$ and 
$[\tM : \tM_A]$ is the index of 
the sublattice $\tM_A$ of $\tM$. Importantly, $E_{A,M}(\lambda_a) = 0$ if and only if 
$E_{A}(\lambda_a) = 0$.

Recall from Section~\ref{ss:subdivisonspolytopes} that  $\UH(\omega)$ is the convex hull of $\{ (u,\lambda) : u \in A, \lambda \ge \omega(u) \} \subset \tM_\R$, and $\cS(\omega)$ is the polyhedral subdivision of $P$ 
%consisting of 
consisting of 
the images under  projection $\tM_\R \to M_\R$ onto all but the last coordinate
%$\HT_\R: \tM_\R \to \R$ 
of the bounded faces of $\UH(\omega)$.
%the projections via $\HT_\R: \tM_\R \to \R$ of the bounded faces of $\UH(\omega)$.
Let $A_\omega = \{ u \in A : \omega(u) = \min_{(u,\lambda) \in \UH(\omega)} \lambda \}$.  
%Then $(\cS(\omega), A_\omega)$ is the polyhedral subdivision of $A$ associated to $\omega$. 
Consider a subset $B \subset A$ such that $B \times \{1\}$ spans $\tM_\R$ as an $\R$-vector space.  Then we may consider $E_{B,M}(z_b) \in \Z[z_b] \subset \Z[z_a]$.

\begin{theorem}\label{t:initdeg}
	\cite[Theorem 10.1.12']{GKZ94} 
	Let $A \subset M$ be a finite subset with convex hull $P \subset M_\R$ and consider a function $\omega: A \to \R$. 
	Then 
	$$\init_\omega E_{A,M}
	= \prod_{F} E_{A_{\omega} \cap F, M} \in \Z[z_a],$$ 
	where $F$ varies over the facets of $\cS(\omega)$. 
	
	%on p. 307
	%Let $(\cS,B)$ be a regular .... with face $F(\cS,B)$ of $\Sec(A)$. Let $Q_1,\ldots,Q_r$ be the maximal facets of $\cS$. Then 
	%\[
	%E_A|_{F(\cS,B)} = \prod_{i = 1}^r E_{Q_i \cap B},
	%%m_i^{\Vol Q_i} E_{Q_i \cap B}^{m_i},
	%\]
\end{theorem}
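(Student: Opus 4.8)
The plan is to obtain Theorem~\ref{t:initdeg} as the $\tM$-normalized form of the degeneration formula of Gelfand, Kapranov and Zelevinsky for the principal determinant: once the normalization $E_{A,M}$ is reconciled with the normalization used in \cite[Theorem~10.1.12']{GKZ94}, the identity $\init_\omega E_{A,M}=\prod_F E_{A_\omega\cap F,M}$ becomes a transcription of that theorem, with the combinatorics of $\cS(\omega)$ carried over unchanged.

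First I would record the elementary behaviour of $\init_\omega$ on $\Z[z_a]$: from \eqref{e:initdegenerationgeneral} and the absence of cancellation among minimal-weight terms over $\Z$ one gets $\init_\omega(gh)=\init_\omega(g)\,\init_\omega(h)$, $\init_\omega(g^k)=(\init_\omega g)^k$, and $\init_\omega(cg)=c\,\init_\omega g$ for $c\in\Z$. Next I would establish the base-change identity relating the principal determinant of a configuration to the one computed relative to the full ambient lattice: for a finite set $B\subset M$ with $d$-dimensional convex hull $Q$, the principal determinant of $B$ taken relative to the ambient lattice $\tM$ equals the quantity $E_{B,M}=[\tM:\tM_B]^{\Vol(Q)}E_B^{[\tM:\tM_B]}$. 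Concretely this comes from the fact that the $B$-resultant relative to $\tM$ is the $[\tM:\tM_B]$-th power of the $B$-resultant relative to $\tM_B$, together with the scaling of the variables $w_{i,a}$ induced by a basis change between $\tN_B$ and the full dual lattice, and it should be proved directly from Definition~\ref{d:Aresultant} and Definition~\ref{d:principalAdet} or extracted from the relevant parts of \cite{GKZ94}.

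Granting these, $E_{A,M}$ is the principal determinant of $A$ with ambient lattice $\tM$, and, because every facet $F$ of $\cS(\omega)$ is $d$-dimensional --- so $A_\omega\cap F$ contains the vertices of $F$ and $\tM_{A_\omega\cap F}$ has full rank $d+1$ in $\tM$ --- each $E_{A_\omega\cap F,M}$ is the principal determinant of $A_\omega\cap F$ with the same ambient lattice $\tM$, normalized by $\Vol(F)$ taken with respect to $M$. I would then invoke \cite[Theorem~10.1.12']{GKZ94} for the configuration $A\subset\tM$, read with $\tM$ as the ambient lattice throughout (if the source phrases the right-hand side with the lattice locally generated on each cell, one applies the base-change identity once more, cell by cell, to pass back to $\tM$), and match the index prefactors on the two sides using additivity of normalized volume, $\Vol(P)=\sum_F\Vol(F)$, over $\cS(\omega)$. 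The main obstacle is the base-change identity of the second paragraph: getting the two exponents right --- a power equal to $[\tM:\tM_B]$, and an index-scalar whose exponent is $\Vol(Q)$ --- checking compatibility with $\init_\omega(g^k)=(\init_\omega g)^k$ and $\init_\omega(cg)=c\,\init_\omega g$ so that no spurious multiplicity survives passage to initial forms, and carrying the sign ambiguity of $E_A$ and $R_A$ consistently through. Everything after that is bookkeeping.
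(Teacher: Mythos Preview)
The paper does not give its own proof of this statement: Theorem~\ref{t:initdeg} is stated with a direct citation to \cite[Theorem~10.1.12']{GKZ94} and no argument follows. Your proposal is nonetheless a correct derivation of the stated $E_{A,M}$-normalized version from the GKZ original, and it is precisely the reconciliation one has to do: the definition $E_{A,M}=[\tM:\tM_A]^{\Vol P}E_A^{[\tM:\tM_A]}$ together with GKZ's formula (which carries the lattice indices $[\tM_A:\tM_{A_\omega\cap F}]$ on the right-hand side) reduces to the clean product $\prod_F E_{A_\omega\cap F,M}$ after using $\Vol(P)=\sum_F\Vol(F)$ and the multiplicativity of indices $[\tM:\tM_{A_\omega\cap F}]=[\tM:\tM_A]\cdot[\tM_A:\tM_{A_\omega\cap F}]$. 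The multiplicativity properties of $\init_\omega$ you record are exactly what is needed to pass the index prefactors and powers through, and nothing further is required.
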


\begin{remark}\label{r:sturmfelsgeneralization}
	See \cite[(28)]{SturmfelsNewtonPolytope} for an analogous result to Theorem~\ref{t:initdeg} for the $A$-resultant.  
\end{remark}

\begin{example}
	Let $\cS$ be a regular lattice triangulation of $P$ and let $A$ be the set of vertices of $\cS$. Choose $\omega: A \to \R$ such that $\cS(\omega) = \cS$. Then
	$A_\omega = A$ and  
	Theorem~\ref{t:initdeg} together with Example~\ref{e:simplexprincipalAdet} imply that $\init_\omega E_{A,M}(z_a)
	= \pm c \prod_{a \in A} z_a^{\nu_a}$ for some positive integers $c$ and $\{ \nu_a : a \in A\}$ (cf. \cite[Theorem~10.1.4b]{GKZ94}).
\end{example}

\begin{remark}\label{r:secondarypolytope}
	We do not need the following remark for what follows, but we mention 
	the beautiful theorem \cite[Theorem~10.1.4]{GKZ94} that
	the Newton polytope of $E_A(z_a)$ in $\R^A$  is naturally identified with the secondary polytope $\Sigma(A)$ of $A$ \cite[Definition~5.1.6]{DRSTriangulations10}.
	The faces $F(\cS,B)$ of the secondary polytope are in bijection with pairs
	 $(\cS,B)$, where  $\cS$ is a regular (lattice) polyhedral subdivision of $P$ and $B \subset A$ contains the vertices of $\cS$. The cone corresponding  to $F(\cS,B)$ in the dual fan to $\Sigma(A)$ has relative interior 
	 $\{ \omega: A \to \R : (\cS,B) = (\cS(\omega), A_\omega) \}$. 
	 
	 The Newton polytope 
	 $\Newt(E_{A,M})$ is a dilate of $\Newt(E_A)$ and hence is combinatorially equivalent. Then it follows from the definitions that  
	 $\init_\omega E_{A,M}$ is the restriction of $E_{A,M}$ to the face of $\Newt(E_{A,M})$ corresponding to $F(\cS(\omega), A_\omega)$.
	 
	 Finally, recall that we have an action $\rho: G \to \Aff(M)$  and $P$ is $G$-invariant. We will not need this in what follows, but we mention that when $A$ is $G$-invariant,
	  Reiner introduced the \emph{equivariant secondary polytope} of $A$ in  \cite{Reiner02}, with faces in bijection with all pairs $(\cS,B)$ above such that $\cS$ and $B$ are $G$-invariant.
%	  where $\cS$ is a $G$-invariant regular polyhedral subdivision of $P$ and $B \subset A$ is $G$-invariant and contains the vertices of $\cS$.
	 
%	 We will also not need this in what follows but mention that Reiner introduced the \emph{equivariant secondary polytope} in  \cite{Reiner02}.
%	 	In our context, recall that we have an action $\rho: G \to \Aff(M)$ and $P$ is $G$-invariant. Assume that $A$ is also $G$-invariant and let 
%	 	$(\R^A)^G$ be the linear subspace of $G$-invariant elements in $\R^A$.
%%	 	
%%	 	let  $A \subset P \cap M$ be a $G$-invariant subset containing the vertices of $P$. Let $(\R^A)^G$ be the linear subspace of $G$-invariant elements in $\R^A$.
%	 	Then the equivariant secondary polytope is $\Sigma(A) \cap (\R^A)^G$, and its faces are in bijection with $G$-invariant regular polyhedral subdivisions of $A$. 
%	     The bijections sends $(\cS, B)$ to $F(\cS,B) \cap  (\R^A)^G$ \cite[Corollary~2.11]{Reiner02}.
%	 
%	
\end{remark}

We next recall a generalization of semigroup algebras associated to cones
%$S_P$ %nondegenerate polynomials, 
which will be especially useful when dealing with lattice triangulations that are not regular.

\begin{definition}\label{d:toricfacering}
	Let $\Sigma$ be a rational fan in $\tM_\R$ with support $|\Sigma|$. 
	%that refines the cone $C_P$ over $P \times \{1\}$. 
	The \emph{toric face ring} $\C[\Sigma]$ associated to $\Sigma$ (alternatively known as the \emph{deformed group ring}) is the graded $\C$-algebra that equals  
	$\C[|\Sigma| \cap \tM]$ 
		%$\C[C_P \cap \tM]$ 
	as a graded 
	$\C$-vector space, with multiplication defined as follows: for any $u_1,u_2 \in |\Sigma| \cap \tM$,
	%C_P \cap \tM$,
	% 
	%  $\C[C_P \cap \tM]$ with multiplication defined 
	%as follows: 
	%let $\Sigma$ be the fan over the faces of $\cS$
	%by:
	\[
	x^{u_1} \cdot x^{u_2} = \begin{cases}
		x^{u_1 + u_2} &\textrm{ if there exists } C \in \Sigma \textrm{ such that } u_1,u_2 \in C, \\
		0 &\textrm{ otherwise. }
	\end{cases}
	\]
\end{definition}

%For example, if $\Sigma$ is a rational fan that refines $C_P$, then the associated toric face ring has Krull dimension $d + 1$. 
We have the following generalization of Theorem~\ref{t:Hochster}.
This theorem  is also a corollary of the more general results \cite[Theorem~1.2]{BBRCohomology07} and \cite[Theorem~1.2]{IRToric07}.
%, and we note that it is enough that the fan have quasi-convex support.

\begin{theorem}\label{t:faceringisCM}\cite[Lemma~4.6]{StanleyGeneralized87}
		Let $\Sigma$ be a rational fan with convex support. 
		%	 that refines the cone $C_P$ over $P \times \{1\}$. 
		 Then the corresponding toric face ring $\C[\Sigma]$ is Cohen-Macaulay.
\end{theorem}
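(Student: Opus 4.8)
The plan is to reduce the Cohen--Macaulayness of $R := \C[\Sigma]$ to a topological vanishing property of the links of $\Sigma$, via a Reisner-type criterion for toric face rings, and then to deduce that property from convexity of $|\Sigma|$. Throughout I would grade $R$ by $\HT$ and write $\m = \bigoplus_{m>0} R_m$ for the irrelevant ideal, so that the goal is $\operatorname{depth}_\m R = \dim R$.

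First I would record the combinatorial description of the local cohomology modules $H^i_\m(R)$: this is exactly the content of \cite[Theorem~1.2]{BBRCohomology07} and \cite[Theorem~1.2]{IRToric07} (anticipated by the computation in \cite{StanleyGeneralized87}), and it yields the following Reisner-type criterion. The ring $R$ is Cohen--Macaulay if and only if, for every cone $\sigma \in \Sigma$ (including $\sigma = \{0\}$) and every integer $i < \dim \lk_\Sigma \sigma$, one has $\widetilde{H}_i(\lk_\Sigma \sigma; \C) = 0$, where $\lk_\Sigma \sigma$ is regarded as a polyhedral complex (identified, via a transverse cross-section, with a polyhedral subdivision of a sphere), and where $\lk_\Sigma \{0\}$ is $\Sigma$ itself with support $|\Sigma|$. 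The care needed here is to check that the abstract ``link'' appearing in the cited local-cohomology formula coincides with the honest geometric link of $\sigma$ in the embedded fan $\Sigma$, so that PL topology becomes available.

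Next I would analyse the links. After replacing $\tM_\R$ by the linear span of $|\Sigma|$ we may assume $\Sigma$ is full dimensional; since $|\Sigma|$ is convex it is contractible and, relative to its span, a PL manifold with boundary, with $\Sigma$ giving it a polyhedral cell decomposition. Hence $\widetilde{H}_\ast(|\Sigma|;\C)=0$, taking care of $\sigma = \{0\}$; and for every nonzero $\sigma \in \Sigma$ the link $\lk_\Sigma \sigma$ is, by the standard PL topology of cell decompositions of a convex set, a PL ball of dimension $\dim \lk_\Sigma \sigma$ if $\sigma$ lies on the boundary of $|\Sigma|$ and a PL sphere of that dimension otherwise. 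A PL $m$-ball has no reduced homology at all, and a PL $m$-sphere has reduced homology only in degree $m$; in each case the vanishing demanded above holds, so $R$ is Cohen--Macaulay.

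The hard part will be the first step. Because the toric face ring is neither a normal affine semigroup ring nor a Stanley--Reisner ring, Hochster's theorem (Theorem~\ref{t:Hochster}) does not apply directly, and one genuinely needs the general local-cohomology computation together with the bookkeeping that matches its combinatorial ingredients to geometric links. I would deliberately avoid the tempting alternative of a Mayer--Vietoris induction using the short exact sequences $0 \to \C[\Sigma_{\le i}] \to \C[\Sigma_{\le i-1}] \oplus \C[C_i] \to \C[\Sigma_{\le i-1} \cap C_i] \to 0$ and the depth lemma, bootstrapping from Theorem~\ref{t:Hochster} applied to the single cones $C_i$: that argument requires a shelling order of the maximal cones of $\Sigma$, and convex support permits non-regular, even non-shellable, polyhedral subdivisions, so it would not establish the theorem in full generality. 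Hence the Reisner-criterion route is the one to commit to.
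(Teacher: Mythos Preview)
The paper does not give its own proof of this statement: it simply cites \cite[Lemma~4.6]{StanleyGeneralized87} and remarks that the result is also a corollary of the local-cohomology computations in \cite[Theorem~1.2]{BBRCohomology07} and \cite[Theorem~1.2]{IRToric07}. Your proposal is correct and follows precisely the second of these routes---the Reisner-type criterion extracted from the BBR/IR local-cohomology formula, combined with the PL topology of links in a convex body---so it matches one of the two proofs the paper points to.

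For comparison, Stanley's original argument in \cite{StanleyGeneralized87} appeals to Yuzvinsky's criterion for Cohen--Macaulayness of sheaves of rings on posets rather than to the BBR/IR computation directly; the underlying topological input (acyclicity of links in a convex cone decomposition) is the same, but the commutative-algebra bridge is different. Your remark that a shelling-based Mayer--Vietoris induction would not suffice is well taken and is the reason one needs a genuine cohomological criterion rather than an ad hoc induction.
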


We introduce the following notation. 
Let $\cS$ be a %lattice 
rational polyhedral subdivision of $P$. 
For each face $F$ of $\cS$, let $C_F$ denote the cone generated by $F \times \{1\}$
in $\tM_\R$. The collection of all such cones forms a rational fan $\Sigma_\cS$ called the \emph{fan over the faces} of $\cS$.  %that refines $C_P$. 
We let $S_\cS := \C[\Sigma_\cS]$ denote the corresponding toric face ring. Then $S_\cS$ has  Krull dimension $d + 1$.
For example,  $S_\cS = S_P$ when $\cS$ is the trivial subdivision of $P$.

%Let $\cS$ be a lattice polyhedral subdivision of $P$.
%For each face $F$ of $\cS$, let $C_F$ denote the cone generated by $F \times \{1\}$
%  in $\tM_\R$. For example, $C_F = \{ 0 \}$ when $F$ is the empty face. 
% The \emph{toric face ring} $S_\cS$ associated to $\cS$ (alternatively known as the \emph{deformed group ring}) is the graded $\C$-algebra that equals  $\C[C_P \cap \tM]$ as a graded 
% $\C$-vector space, with multiplication defined as follows: for any $u_1,u_2 \in C_P \cap \tM$,
%% 
%%  $\C[C_P \cap \tM]$ with multiplication defined 
%%as follows: 
%%let $\Sigma$ be the fan over the faces of $\cS$
%%by:
%\[
%x^{u_1} \cdot x^{u_2} = \begin{cases}
%	x^{u_1 + u_2} &\textrm{ if there exists } F \in \cS \textrm{ such that } u_1,u_2 \in C_F, \\
%	0 &\textrm{ otherwise. }
%\end{cases}
%\]
%Then $S_\cS$ is a finitely generated $\C$-algebra with Krull dimension $d + 1$.
%For example,  $S_\cS = S_P$ when $\cS$ is the trivial subdivision of $P$. 
%We have the following generalization of Theorem~\ref{t:Hochster}.
%The theorem is also a corollary of the more general results \cite[Theorem~1.2]{BBRCohomology07} and \cite[Theorem~1.2]{IRToric07}.
%
%\begin{theorem}\label{t:faceringisCM}\cite[Lemma~4.6]{StanleyGeneralized87}
%	Let $\cS$ be a lattice polyhedral subdivision of $P$. Then $S_\cS$ is Cohen-Macaulay.  
%\end{theorem}
%

We have the following generalization of Definition~\ref{d:nondegeneratesmooth}. 

\begin{definition}\label{d:nondegeneratesmoothgeneralized}
	Let $\cS$ be a lattice polyhedral subdivision of $P$.
	An element $f = \sum_{u \in M} \lambda_u x^u \in \C[M]$ with $\supp(f) \subset P \cap M$ is \emph{nondegenerate} with respect to $\cS$ if %$\Newt(f) = P$ and 
	for every nonempty face $F$ of $\cS$, $f|_F \in \C[M]$ defines a (possibly empty) smooth hypersurface in %$\{ f|_Q = 0 \} \subset 
	$\Spec \C[M]$.  
%	\alan{idea: $Q$ for faces of $P$, $F$ for faces of $\cS$; enforce this}
\end{definition}

By setting $F$ to be a vertex of $\cS$ in Definition~\ref{d:nondegeneratesmooth}, 
%applying the condition to all vertices of $P$ 
we see that if $f$ is nondegenerate with respect to $\cS$ then $\supp(f)$ contains all vertices of $\cS$. In particular, $\Newt(f) = P$. It also follows from Definition~\ref{d:nondegeneratesmooth} and Definition~\ref{d:nondegeneratesmoothgeneralized} that $f$ is nondegenerate with respect to $\cS$ if and only if $f|_F$ is nondegenerate with respect to $F$ for all facets $F$ of $\cS$. 

Let  $J_{f,\cS} \subset S_\Sigma$ be the ideal generated by the $\C$-vector space  $\{ \partial_v F : v \in \tN_\C \}$. Recall that given a subset $B \subset A$ such that $B \times \{1\}$ spans $\tM_\R$ as an $\R$-vector space,  then we may consider $E_{B,M}(z_b) \in \Z[z_b] \subset \Z[z_a]$.
% is called the \emph{Jacobian ideal}
We will need the following generalization of Theorem~\ref{t:nondegenerateequiv}. 

\begin{corollary}\label{c:nondegeneratecSequiv}
		Let $\cS$ be a lattice polyhedral subdivision of $P$.
		Consider an element $f = \sum_{u \in M} \lambda_u x^u \in \C[M]$  with $\supp(f) \subset P \cap M$. % $\Newt(f) \subset P$. 
	Let $F = \sum_{u \in P \cap M} \lambda_u x^{(u,1)}$ be the corresponding element in $S_{\cS,1}$. Let $A \subset P \cap M$ be any subset containing $\supp(f)$ and consider the corresponding element $\lambda = (\lambda_a)_{a \in A} \in \C^{A}$. Then the following conditions are equivalent:
	\begin{enumerate}
		\item $f$ is nondegenerate with respect to $\cS$. 
		\item Any ordered $\C$-basis of 
		%the graded component 
		$(J_{f,\cS})_1$ is a l.s.o.p. for $S_\cS$.
		% where $J_f$ is the Jacobian ideal of $f$
		\item $E_{A \cap F}(\lambda_a) \neq 0$ for any facet $F$ of $\cS$. 
	\end{enumerate} 
\end{corollary}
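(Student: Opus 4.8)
The plan is to reduce Corollary~\ref{c:nondegeneratecSequiv} to the already-established local statement Theorem~\ref{t:nondegenerateequiv} by working facet by facet, using that $S_\cS$ is Cohen-Macaulay (Theorem~\ref{t:faceringisCM}) and that all of the conditions in the statement are ``local on the facets of $\cS$.'' The key observation, noted just before the corollary, is that $f$ is nondegenerate with respect to $\cS$ if and only if $f|_F$ is nondegenerate with respect to $F$ for every facet $F$ of $\cS$; this immediately handles the equivalence of (1) and (3), since for each facet $F$, Theorem~\ref{t:nondegenerateequiv} applied to the polytope $F$ (with the lattice $M_F$ induced on its affine span, and $A \cap F$ as the finite subset) gives that $f|_F$ is nondegenerate with respect to $F$ iff $E_{A \cap F}(\lambda_a) \neq 0$. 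So the real content is connecting these local conditions to the single global condition (2) about a l.s.o.p.\ of the toric face ring $S_\cS$.

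First I would analyze the structure of $S_\cS = \C[\Sigma_\cS]$. For each facet $F$ of $\cS$ there is a natural surjection $\pi_F : S_\cS \twoheadrightarrow S_F := \C[C_F \cap \tM]$ of graded $\C$-algebras (``restrict to the cone $C_F$''), and collectively these assemble into an injection $S_\cS \hookrightarrow \prod_F S_F$ whose image consists of tuples agreeing on overlaps — this is the standard description of a toric face ring as a limit, and one should observe that $\partial_v$ commutes with each $\pi_F$, so $\pi_F(J_{f,\cS})_1 = (J_{f|_F, F})_1$ inside $S_{F,1}$. Next I would argue: a collection $G_1,\dots,G_{d+1}$ of degree-$1$ elements of $S_\cS$ is a l.s.o.p.\ for $S_\cS$ if and only if $\pi_F(G_1),\dots,\pi_F(G_{d+1})$ is a l.s.o.p.\ for $S_F$ for every facet $F$. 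The ``only if'' direction follows because $S_F$ is a quotient of $S_\cS$, so $\dim S_\cS/(G_i) = 0$ forces $\dim S_F/(\pi_F(G_i)) = 0$. For the ``if'' direction, one uses that $S_\cS$ is a finitely generated module over $\C[G_1,\dots,G_{d+1}]$ precisely when $S_\cS/(G_i)$ is finite-dimensional (Theorem~\ref{t:hsop}), and $S_\cS/(G_i)$ injects into $\prod_F S_F/(\pi_F(G_i))$ — or more carefully, it has a filtration whose graded pieces are built from the $S_F/(\pi_F(G_i))$ via the combinatorial gluing, so finiteness of each factor gives finiteness of the whole. With this in hand, taking $G_i$ to be an ordered $\C$-basis of $(J_{f,\cS})_1$, the condition ``$(J_{f,\cS})_1$ is a l.s.o.p.\ for $S_\cS$'' becomes ``$\pi_F((J_{f,\cS})_1) = (J_{f|_F,F})_1$ is a l.s.o.p.\ for $S_F$ for every facet $F$,'' which by Theorem~\ref{t:nondegenerateequiv} is exactly ``$f|_F$ is nondegenerate with respect to $F$ for every facet $F$,'' i.e.\ condition (1). (One also needs the remark that ``any ordered basis'' and ``some ordered basis'' are interchangeable here, which is automatic from Theorem~\ref{t:hsop} exactly as in the proof of Theorem~\ref{t:nondegenerateequiv}.)

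A few technical points need care. One must check that $\pi_F$ restricted to a basis of $(J_{f,\cS})_1$ may fail to be injective or span all of $(J_{f|_F,F})_1$ — actually $\pi_F$ sends the $\C$-span $\{\partial_v F : v \in \tN_\C\}$ onto $\{\partial_v (F|_{C_F}) : v \in \tN_\C\}$ surjectively since $\partial_v$ acts diagonally on monomials, so this is fine, though the dimension of $(J_{f,\cS})_1$ need not equal $d+1$ a priori; this is handled exactly as in Theorem~\ref{t:nondegenerateequiv} where one passes to any ordered $\C$-basis and invokes that being a l.s.o.p.\ is a basis-independent property. The main obstacle I anticipate is the ``if'' direction of the l.s.o.p.\ gluing claim — i.e., deducing $\dim S_\cS/(G_i) = 0$ from the vanishing of all $\dim S_F/(\pi_F(G_i))$. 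This requires understanding how $S_\cS$ is glued from the $S_F$ along the faces of $\cS$, and the cleanest route is probably to induct on the number of facets, using a short exact sequence of $S_\cS$-modules relating $S_\cS$ for $\cS$ to $S_{\cS'}$ for a subcomplex $\cS'$ with one fewer facet and the ring $S_F$ of the removed facet, glued along the ring of the intersection face — together with the fact that killing a degree-$1$ element is exact enough to preserve the relevant finite-dimensionality. Alternatively, one can cite that the toric face ring has a filtration by ideals whose successive quotients are (shifted) polynomial rings indexed by faces of $\cS$, reducing the computation to each cone $C_F$ separately.
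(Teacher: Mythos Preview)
Your proposal is correct and follows essentially the same route as the paper. The only difference is packaging: the paper's proof is two lines, citing \cite[Lemma~6.4]{BorisovMavlyutov03} for the gluing statement ``an ordered $\C$-basis of $(J_{f,\cS})_1$ is a l.s.o.p.\ for $S_\cS$ iff its restriction is a l.s.o.p.\ for $S_F$ for every facet $F$'' and then invoking Theorem~\ref{t:nondegenerateequiv}, whereas you reprove that gluing lemma from scratch via the restriction maps $\pi_F$ and the embedding $S_\cS \hookrightarrow \prod_F S_F$. Your sketch of the ``if'' direction is fine: since each $S_F$ is module-finite over $\C[G_1,\dots,G_{d+1}]$ and this ring is Noetherian, the submodule $S_\cS$ of the finite product is module-finite too, so Theorem~\ref{t:hsop} applies; the filtration/induction alternatives you mention are unnecessary.
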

\begin{proof}
		\cite[Lemma~6.4]{BorisovMavlyutov03} states that an ordered $\C$-basis of $(J_{f,\cS})_1$ is a l.s.o.p. for $S_\cS$ if and only if its corresponding restriction to 
		$(J_{f,F})_1$ is a l.s.o.p. for $S_F$ for all facets $F$ of $\cS$. 
	The result now follows from Theorem~\ref{t:nondegenerateequiv}. 
\end{proof}

%\begin{example}\label{e:nondegeneratetriangulation}
%		Let $\cS$ be a (not necessarily regular) lattice triangulation of $P$. 
%		Let $f \in \C[M]$ have support equal to the set of vertices of $\cS$. Then 
%		$f$ is nondegenerate with respect to $\cS$. This can either be verified directly,  or by applying Corollary~\ref{c:nondegeneratecSequiv} together with Example~\ref{e:simplexprincipalAdet}. In particular, by Corollary~\ref{c:nondegeneratecSequiv}, any ordered $\C$-basis of 
%		$(J_{f,\cS})_1$ is a l.s.o.p. for $S_\cS$.
%\end{example}

%\subsection{Proof of main results}
\subsection{Applications to equivariant Ehrhart theory}\label{ss:applicationsEhrhart}

We now present our connection between commutative algebra and equivariant Ehrhart theory including several applications and examples.  We continue with the notation throughout the paper. That is, let $M \cong \Z^d$ be a rank $d$ lattice and let  $\tM = M \oplus \Z$.
Let $G$ be a finite group  %with identity element $\id$ 
and let $\rho: G \to \Aff(M)$ be an affine representation of $G$. 
%When $\rho$ is known, we use the shorthand $g \cdot u := \rho(g)(u) \in M$ for $g \in G$ and $u \in M$. 
%\alan{DO we need $d$?}
Let $P \subset M_\R$ be a $G$-invariant $d$-dimensional lattice polytope. 
%Let $\tM = M \oplus \Z$ and let $\HT_\R: \tM_\R \to \R$ be projection onto the last coordinate.
Let $C_P$ denote the cone generated by $P \times \{1 \}$ in $\tM_\R$ and consider the graded semigroup algebra $S_P = \oplus_m S_{P,m} := \C[C_P \cap \tM]$.

We first develop a sequence of lemmas that will be used to prove Theorem~\ref{t:mainfull}.
Let $\cS$ be a lattice polyhedral subdivision of $P$, with corresponding toric face ring $S_\cS = \oplus_m S_{\cS,m}$. 	Consider an element $f = \sum_{u \in M} \lambda_u x^u \in \C[M]$ with
$\supp(f) \subset P \cap M$. 	Let $F = \sum_{u \in P \cap M} \lambda_u x^{(u,1)}$ be the corresponding element in $S_{\cS,1}$.
Then $f$ is $G$-invariant if and only if $F \in S_\cS$ is $G$-invariant if and only if $\lambda_{g \cdot u} = \lambda_{u}$ for all $g$ in $G$ and $u$ in $M$. 
%$\Newt(f) = P$. 
Recall that  $J_{f,\cS} \subset S_\cS$ is the ideal generated by the $\C$-vector space  $(J_{f,\cS})_1 = \{ \partial_v F : v \in \tN_\C \}$, where $\partial_v (x^u) = \langle u , v \rangle x^u$. %Moreover, the linear elements $(J_{f,\cS})_1$ consist of the $\C$-vector space spanned by 
%$\partial_v$ is defined in \eqref{e:partialderivative}. 

\begin{lemma}\label{l:JacobianisomtotM}
	Consider an element $f = \sum_{u \in M} \lambda_u x^u \in \C[M]$. Assume that 
	$\Newt(f) = P$ and $f$ is $G$-invariant. Then the $\C$-vector space  $(J_{f,\cS})_1 \subset S_{\cS,1}$ is $G$-invariant and isomorphic to $\tM_\C$ as a $\C G$-module. 
\end{lemma}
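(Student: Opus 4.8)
The plan is to exhibit an explicit $\C$-linear isomorphism between $\tN_\C$ and $(J_{f,\cS})_1$ that is $G$-equivariant, and then dualize. First I would consider the map $\Phi \colon \tN_\C \to S_{\cS,1}$ defined by $\Phi(v) = \partial_v F$, where $F = \sum_{u \in P \cap M} \lambda_u x^{(u,1)}$. By definition of $(J_{f,\cS})_1$, the image of $\Phi$ is exactly $(J_{f,\cS})_1$, so $\Phi$ is surjective onto $(J_{f,\cS})_1$; it remains to show $\Phi$ is injective and $G$-equivariant (where $G$ acts on $\tN_\C$ via the contragredient of its action on $\tM_\C$). Once both are established, $\Phi$ gives an isomorphism $\tN_\C \cong (J_{f,\cS})_1$ of $\C G$-modules, and since $\tN_\C$ is the dual of $\tM_\C$ and $\rho$ is a real (hence self-dual over $\C$, as all characters are real-valued by Remark~\ref{r:cyclotomic}) representation, we get $(J_{f,\cS})_1 \cong \tN_\C \cong \tM_\C$ as $\C G$-modules. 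Actually I should be a little careful: it is cleanest to observe $\tN_\C \cong \tM_\C^\vee \cong \tM_\C$, the last isomorphism because $\rho(g)$ has eigenvalues that are roots of unity closed under complex conjugation, so the character of $\tM_\C$ is real-valued and the module is isomorphic to its dual.

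For injectivity of $\Phi$: suppose $\partial_v F = 0$, i.e. $\langle (u,1), v\rangle \lambda_u = 0$ for all $u \in P \cap M$. Since $\Newt(f) = P$, the set $\supp(f) = \{u : \lambda_u \neq 0\}$ affinely spans $M_\R$, so the vectors $\{(u,1) : u \in \supp(f)\}$ span $\tM_\R$ (equivalently $\tM_\C$). Then $\langle (u,1), v\rangle = 0$ for all $u \in \supp(f)$ forces $v = 0$. Hence $\Phi$ is injective, and $\dim_\C (J_{f,\cS})_1 = \dim_\C \tN_\C = d+1$.

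For $G$-equivariance: here I would unwind the definitions. The action of $g \in G$ on $S_\cS$ sends $x^{(u,1)}$ to $x^{(\rho(g)(u), 1)}$ (viewing $\rho(g)$ as the corresponding element of $\GL(\tM)$ preserving $\HT$), so $g \cdot F = \sum_u \lambda_u x^{(\rho(g)(u),1)} = \sum_u \lambda_{\rho(g)^{-1}(u)} x^{(u,1)}$, and this equals $F$ precisely because $f$ is $G$-invariant, i.e.\ $\lambda_{g\cdot u} = \lambda_u$. Now for $v \in \tN_\C$ one computes
\[
g \cdot (\partial_v F) = g \cdot \sum_u \langle (u,1), v \rangle \lambda_u x^{(u,1)} = \sum_u \langle (u,1), v\rangle \lambda_u x^{(\rho(g)(u),1)} = \sum_{w} \langle \rho(g)^{-1}(w), v \rangle \lambda_{\rho(g)^{-1}(w)} x^{(w,1)},
\]
and using $\lambda_{\rho(g)^{-1}(w)} = \lambda_w$ together with $\langle \rho(g)^{-1}(w), v\rangle = \langle w, (\rho(g)^{-1})^\vee(v)\rangle$ (the contragredient action), this equals $\partial_{g \cdot v} F = \Phi(g \cdot v)$ where $g \cdot v := (\rho(g)^{-1})^\vee(v)$ is the standard $G$-action on $\tN_\C$. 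Thus $\Phi$ intertwines the $G$-actions, and in particular $(J_{f,\cS})_1$ is a $G$-subrepresentation of $S_{\cS,1}$, isomorphic to $\tN_\C$.

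I do not anticipate a serious obstacle here; the only point requiring a moment's care is the bookkeeping with the contragredient action and the self-duality of $\tM_\C$, and the reliance on $\Newt(f) = P$ (rather than merely $\supp(f) \subset P \cap M$) to guarantee that $\{(u,1) : u \in \supp(f)\}$ spans $\tM_\C$, which is exactly what forces injectivity. The hypothesis that $\cS$ is a lattice polyhedral subdivision is not really used in this particular lemma beyond ensuring $S_\cS$ and the degree-one piece $S_{\cS,1} = \bigoplus_{u \in P \cap M} \C x^{(u,1)}$ are defined; the partial derivative operators $\partial_v$ act on the ambient $\C[\tM]$ and descend, so the computation is identical to the $S_P$ case.
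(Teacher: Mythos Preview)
Your proof is correct and follows essentially the same approach as the paper: define the map $\tN_\C \to (J_{f,\cS})_1$, $v \mapsto \partial_v F$, verify injectivity using $\Newt(f) = P$ so that $\{(u,1) : u \in \supp(f)\}$ spans $\tM_\C$, check $G$-equivariance by the same reindexing computation, and then appeal to the self-duality $\tN_\C \cong \tM_\C$ coming from the eigenvalues of $\rho(g)$ being roots of unity closed under complex conjugation. Your closing remark that the subdivision $\cS$ plays no essential role in this lemma is also accurate.
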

\begin{proof}
	Recall that $\tN_C = \Hom_\C(\tM_\C,\C)$ is naturally a $\C G$-module. Explicitly, 
	the linear action $\rho: G \to \GL(\tM_\C)$ induces a linear action 
	$\rho^*: G \to \GL(\tN_\C)$ 
	%of $G$ on the dual vector space $\tN_\C$ 
	determined by $\langle u , \rho^*(g)(v) \rangle = \langle \rho(g^{-1})(u) , v \rangle $ for all $u$ in $\tM_\C$ and $v$ in $\tN_\C$. Consider the map of $\C$-vector spaces: 
	\[
	\psi: \tN_\C \to (J_{f,\cS})_1,
	\]
	\[
	\psi(v) = \partial_v F. 
	\]
	We claim that $\psi$ is a map of $\C G$-modules. 
	%Writing $g \cdot x$ for the action of $g$ in $G$ on $x \in S_\cS$ and 
	Indeed, using the $G$-invariance of $f$, %this follows from the computation:
	%the action of $g$ in $G$ on $\partial_v F$ equals
	we compute
	\[
	g \cdot \partial_v F = 
	\sum_{u \in M} \lambda_u \langle (u,1), v \rangle x^{(g \cdot u, 1)} = \sum_{u \in M} \lambda_u \langle \rho(g^{-1})((u,1)), v \rangle x^{(u, 1)} = \partial_{\rho^*(g)(v)} F. 
	\]
	We also claim that $\psi$ is injective. Suppose that $\partial_v F = 0$ for some $v$ in $\tN_\C$.  Then $\langle (u,1) , v \rangle = 0$ for all $u \in \supp(f)$. Since $\Newt(f) = P$, the linear span of $\supp(f) \times \{1 \}$ equals $\tM_\C$ and hence $v = 0$. We conclude that $\psi$ is an isomorphism of $\C G$-modules. 
	
	It remains to observe that $\tN_\C$ is isomorphic to $\tM_\C$ as a $\C G$-module. 
	This follows since if $\rho(g)$ has eigenvalues $\{\lambda_1,\ldots, \lambda_{d + 1}\}$, then  $\rho^*(g)$ has eigenvalues $\{\lambda_1^{-1},\ldots, \lambda_{d + 1}^{-1}\}$. On the other hand, since $\rho(g)$ is integer valued and each eigenvalue is a root of unity, $\lambda_i^{-1}$ is the complex conjugate of $\lambda_i$ and is also an eigenvalue of $\rho(g)$ for all $1 \le i \le d + 1$ (cf. the proof of Proposition~\ref{p:Nreciprocity}).  
\end{proof}

\begin{lemma}\label{l:invarianttriangulation}
		Let $\cS$ be a (not necessarily regular) $G$-invariant triangulation of $P$.
		Let $A$ be the set of vertices of $\cS$ and let $f = \sum_{a \in A} x^a \in \C[M]$. Then $f$ is $G$-invariant and nondegenerate with respect to $\cS$.
		Moreover, any ordered $\C$-basis of 
		$(J_{f,\cS})_1$ is a l.s.o.p. for $S_\cS$, and $h^*(P,\rho;t) = \Hilb_G(S_{\cS}/J_{f,\cS};t)$.
		
		%$h^*(P,\rho;t)$ is the equivariant Hilbert series of $S_{\cS}/J_{f,\cS}$. 
%		
%		 Then 
%		 there exists $f \in \C[M]$ that is $G$-invariant and nondegenerate with respect to $\cS$. In particular, any ordered $\C$-basis of 
%		 $(J_{f,\cS})_1$ is a l.s.o.p. for $S_\cS$.
\end{lemma}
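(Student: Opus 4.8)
The plan is to verify the three assertions in sequence: that $f$ is $G$-invariant and nondegenerate with respect to $\cS$, that any ordered $\C$-basis of $(J_{f,\cS})_1$ is a linear system of parameters for $S_\cS$, and finally that $h^*(P,\rho;t) = \Hilb_G(S_\cS/J_{f,\cS};t)$. The $G$-invariance of $f$ is immediate: since $\cS$ is $G$-invariant and $A$ is its vertex set, $G$ permutes $A$, so $g \cdot f = \sum_{a \in A} x^{g \cdot a} = f$ for all $g \in G$; equivalently $\lambda_{g \cdot a} = \lambda_a$ in the language preceding the lemma. For nondegeneracy with respect to $\cS$, I would invoke the criterion noted just before Definition~\ref{d:nondegeneratesmoothgeneralized}: $f$ is nondegenerate with respect to $\cS$ if and only if $f|_F$ is nondegenerate with respect to $F$ for every facet $F$ of $\cS$. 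Since $\cS$ is a triangulation, each facet $F$ is a simplex, and $f|_F = \sum_{a \in \ver(F)} x^a$ is the sum of monomials over the vertices of a unimodular-up-to-relabelling simplex; one checks directly (as in the standard computation for $\sum x_i$ on a coordinate simplex, cf. \cite[Proposition~6.1]{StapledonEquivariant}) that this defines a smooth hypersurface on every face of $F$, so $f|_F$ is nondegenerate with respect to $F$. Hence $f$ is nondegenerate with respect to $\cS$.

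Next, that any ordered $\C$-basis of $(J_{f,\cS})_1$ is a l.s.o.p. for $S_\cS$ follows from Corollary~\ref{c:nondegeneratecSequiv}: the nondegeneracy of $f$ with respect to $\cS$ established above is precisely condition (1) there, which is equivalent to condition (2), namely that any ordered $\C$-basis of $(J_{f,\cS})_1$ is a l.s.o.p. for $S_\cS$. (Implicitly one uses that $\Newt(f) = P$, which holds because $\supp(f) = A$ contains all vertices of $\cS$, hence all vertices of $P$.)

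Finally, for the Hilbert series identity, the strategy is to apply Lemma~\ref{l:equivHilbertofquotient} to the ring $R = S_\cS$. By Theorem~\ref{t:faceringisCM}, $S_\cS = \C[\Sigma_\cS]$ is Cohen-Macaulay since $\Sigma_\cS$ has convex support $C_P$. The elements forming an ordered $\C$-basis of $(J_{f,\cS})_1$ constitute a h.s.o.p.\ of degree $1$ (a l.s.o.p.) by the previous paragraph, and the ideal $J = J_{f,\cS}$ they generate is $G$-invariant because, by Lemma~\ref{l:JacobianisomtotM}, $(J_{f,\cS})_1$ is a $G$-invariant $\C$-subspace of $S_{\cS,1}$ isomorphic to $\tM_\C$ as a $\C G$-module. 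Lemma~\ref{l:equivHilbertofquotient} then gives
\[
\Hilb_G(S_\cS;t) = \frac{\Hilb_G(S_\cS/J_{f,\cS};t)}{\det(I - (J_{f,\cS})_1\, t)}.
\]
Since $(J_{f,\cS})_1 \cong \tM_\C$, we have $\det(I - (J_{f,\cS})_1 t) = \det(I - \tM_\C t)$. It remains to identify $\Hilb_G(S_\cS;t)$ with $\Ehr(P,\rho;t)$: the degree-$m$ graded piece $S_{\cS,m}$ has $\C$-basis $\{x^u : u \in |\Sigma_\cS| \cap \tM \cap \HT^{-1}(m)\} = \{x^u : u \in C_P \cap \tM \cap \HT^{-1}(m)\}$, since $|\Sigma_\cS| = C_P$ as a set; as a $\C G$-module this is the permutation representation on $C_P \cap \tM \cap \HT^{-1}(m)$, which by the discussion in Section~\ref{ss:equivariantEhrhart} is exactly $L(P,\rho;m)$. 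Hence $\Hilb_G(S_\cS;t) = \Ehr(P,\rho;t)$, and combining with the displayed equation and the definition $h^*(P,\rho;t) = \det(I - \tM_\C t)\Ehr(P,\rho;t)$ yields $h^*(P,\rho;t) = \Hilb_G(S_\cS/J_{f,\cS};t)$.

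\textbf{Main obstacle.} The genuinely delicate point is the nondegeneracy verification on each simplicial facet together with the bookkeeping that $|\Sigma_\cS| = C_P$ as a set (so that the toric face ring $S_\cS$ and the semigroup algebra $S_P$ agree as graded $\C G$-modules, only differing in multiplication); the rest is assembling Corollary~\ref{c:nondegeneratecSequiv}, Lemma~\ref{l:JacobianisomtotM}, Theorem~\ref{t:faceringisCM} and Lemma~\ref{l:equivHilbertofquotient}. One subtlety to watch: when $\cS$ is not regular, $S_\cS \ne S_P$ as rings, so the identification of Hilbert series must be done at the level of the toric face ring throughout, and the nondegeneracy must be interpreted via Definition~\ref{d:nondegeneratesmoothgeneralized} rather than Definition~\ref{d:nondegeneratesmooth}.
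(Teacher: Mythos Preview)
Your proof is correct and follows essentially the same route as the paper: $G$-invariance is immediate; nondegeneracy plus Corollary~\ref{c:nondegeneratecSequiv} gives the l.s.o.p.\ statement; and Lemma~\ref{l:JacobianisomtotM}, Theorem~\ref{t:faceringisCM}, and Lemma~\ref{l:equivHilbertofquotient} combine to give the Hilbert series identity. Your expanded explanation of why $\Hilb_G(S_\cS;t)=\Ehr(P,\rho;t)$ (via $|\Sigma_\cS|=C_P$ as sets) makes explicit what the paper leaves implicit.

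One small correction: the simplices in a lattice triangulation are \emph{not} in general ``unimodular-up-to-relabelling''---they can have arbitrary normalized volume. This does not affect your argument, but the cleanest way to verify nondegeneracy of $f|_F=\sum_{a\in\ver(F)}x^a$ on a simplex $F$ is exactly what the paper does: use condition~(3) of Corollary~\ref{c:nondegeneratecSequiv} together with Example~\ref{e:simplexprincipalAdet}, which says $E_{A\cap F}=\pm\prod_{a\in\ver(F)}z_a$, so $E_{A\cap F}(1,\ldots,1)=\pm 1\neq 0$. Your reference to \cite[Proposition~6.1]{StapledonEquivariant} is not quite the right citation for the smoothness claim (that proposition computes $h^*$ of a simplex rather than verifying nondegeneracy).
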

\begin{proof}
	By construction,  $f$ is $G$-invariant. 
%	Let $A$ be the set of vertices of $\cS$. Then $f = \sum_{a \in A} x^a$ is $G$-invariant. 
	By Example~\ref{e:simplexprincipalAdet} and Corollary~\ref{c:nondegeneratecSequiv}, $f$ is nondegenerate with respect to $\cS$. 
	%	 and nondegenerate by Example~\ref{e:nondegeneratetriangulation}. 
%	  By Corollary~\ref{c:nondegeneratecSequiv}, any ordered $\C$-basis of 
%	 $(J_{f,\cS})_1$ is a l.s.o.p. for $S_\cS$.
By Corollary~\ref{c:nondegeneratecSequiv},  any ordered $\C$-basis of 
$(J_{f,\cS})_1$ is a l.s.o.p. for $S_\cS$. 
 By Lemma~\ref{l:JacobianisomtotM}, 
$(J_{f,\cS})_1$ is $G$-invariant and isomorphic to $\tM_\C$ as a $\C G$-module. 
By Lemma~\ref{l:equivHilbertofquotient} and Theorem~\ref{t:faceringisCM},
 $h^*(P,\rho;t) = \Hilb_G(S_{\cS}/J_{f,\cS};t)$. 
%$h^*(P,\rho;t)$ is the equivariant Hilbert series of $S_\cS/J_{f,\cS}$. 
	% The second statement follows from Corollary~\ref{c:nondegeneratecSequiv}. 
\end{proof}

When $\cS$ is the trivial subdivision, the lemma below can be compared with \cite[Theorem~3.56]{EKS22} and the proof of \cite[Proposition~5.5]{ASV20}. 

\begin{lemma}\label{l:applysquare}
	Let $\cS$ be a $G$-invariant regular lattice polyhedral subdivision of $P$. Suppose there exists a face $Q$ of $\cS$ such that $Q \cong [0,1]^2$ as a lattice polytope. Assume that there exists disjoint edges $e_1,e_2$ of $Q$ such that the elements of $e_i \cap M$ lie in the same $G$-orbit for $i = 1,2$. 
%	
%	Let  $\{ u_1,u_2,u_3,u_4 \}$ be the vertices of $Q$ labeled  such that 
%	 $\{u_1,u_3\}$ and  $\{u_2,u_4\}$ are not edges of $Q$. Assume that $\{ u_1, u_2\}$ lie in the same $G$-orbit, and $\{ u_3, u_4\}$ lie in the same $G$-orbit. 
	 Then there does not exist  $f \in \C[M]$ that is $G$-invariant and nondegenerate with respect to $\cS$. 
%	 Indeed, let the vertices of $e_i$ be labeled $u_{i}$ and $u_i'$ for $i = 1,2$.  If $f$ is $G$-invariant, then $f|_Q = \lambda_1(x^{u_1} + x^{u_1'}) + \lambda_2(x^{u_2} + x^{u_2'})$ for some $\lambda_1,\lambda_2 \in \C$. 
\end{lemma}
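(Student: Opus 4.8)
The idea is to reduce a global nondegeneracy statement to the local combinatorics of the single square face $Q$, where one can compute explicitly using Example~\ref{e:square}. Suppose for contradiction that $f = \sum_{u \in M} \lambda_u x^u \in \C[M]$ is $G$-invariant and nondegenerate with respect to $\cS$. Since $Q$ is a face of $\cS$, nondegeneracy with respect to $\cS$ forces $f|_Q$ to be nondegenerate with respect to $Q$ (this is the face-by-face characterization recorded right after Definition~\ref{d:nondegeneratesmoothgeneralized}). Choosing a unimodular identification of $Q$ with $[0,1]^2 \subset \R^2$ and letting $A = Q \cap M$, Theorem~\ref{t:nondegenerateequiv} then tells us that $E_A(\lambda_a) \neq 0$, and by Example~\ref{e:square} this means
\[
\lambda_{0,0}\lambda_{1,0}\lambda_{0,1}\lambda_{1,1}\bigl(\lambda_{0,0}\lambda_{1,1} - \lambda_{1,0}\lambda_{0,1}\bigr) \neq 0,
\]
where I write $\lambda_{i,j}$ for the coefficient of $x^{(i,j)}$ after the identification, so that $\lambda_{0,0}, \lambda_{1,0}, \lambda_{0,1}, \lambda_{1,1}$ are the coefficients at the four vertices of $Q$.

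\textbf{Key step.} Now I bring in the $G$-invariance hypothesis on the edges. By assumption $e_1$ and $e_2$ are disjoint edges of $Q$, and the two lattice points of $e_i$ lie in a single $G$-orbit, so $G$-invariance of $f$ ($\lambda_{g \cdot u} = \lambda_u$ for all $g, u$) forces the two vertex-coefficients along $e_1$ to be equal, and likewise along $e_2$. Since $e_1$ and $e_2$ are disjoint edges of the square, together their four endpoints are exactly the four vertices of $Q$, and the pairing $\{e_1, e_2\}$ is one of the two perfect matchings of the vertex set by edges. After relabelling, there are two cases: either $e_1 = \{(0,0),(1,0)\}$, $e_2 = \{(0,1),(1,1)\}$, giving $\lambda_{0,0} = \lambda_{1,0} =: \alpha$ and $\lambda_{0,1} = \lambda_{1,1} =: \beta$; or $e_1 = \{(0,0),(0,1)\}$, $e_2 = \{(1,0),(1,1)\}$, giving $\lambda_{0,0} = \lambda_{0,1} =: \alpha$ and $\lambda_{1,0} = \lambda_{1,1} =: \beta$. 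In the first case $\lambda_{0,0}\lambda_{1,1} - \lambda_{1,0}\lambda_{0,1} = \alpha\beta - \alpha\beta = 0$; in the second case it is $\alpha\beta - \beta\alpha = 0$ as well. In either case the factor $\lambda_{0,0}\lambda_{1,1} - \lambda_{1,0}\lambda_{0,1}$ vanishes, so $E_A(\lambda_a) = 0$, contradicting nondegeneracy of $f|_Q$. Hence no such $f$ exists.

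\textbf{Remarks on the obstacle.} The only genuinely delicate point is making sure the reduction from $\cS$ to the face $Q$ and then to the sublattice-and-identification picture is legitimate: one must check that the unimodular equivalence $Q \cong [0,1]^2$ is as lattice polytopes with respect to the affine lattice spanned by $Q \cap M$ (which holds because $Q \cong [0,1]^2$ \emph{as a lattice polytope} by hypothesis), so that Example~\ref{e:square} applies verbatim, and that $G$-invariance of $f$ indeed descends to equality of the relevant coefficients — this is immediate from the definition of $G$-invariant polynomial recalled just before Lemma~\ref{l:JacobianisomtotM}. Everything else is the two-line case check above. Note the statement needs no regularity of $\cS$ for this argument — regularity is presumably assumed in the lemma only because it is the running hypothesis of the surrounding discussion — and the conclusion is purely an obstruction coming from the appearance of the determinantal factor in $E_{[0,1]^2}$.
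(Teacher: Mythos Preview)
Your proof is correct and follows essentially the same route as the paper. The paper streamlines one step: rather than splitting into two cases for the edge pairing, it simply chooses the labelling of $Q \cap M = \{u_{0,0},u_{1,0},u_{0,1},u_{1,1}\}$ so that $e_1 \cap M = \{u_{0,0},u_{1,0}\}$ and $e_2 \cap M = \{u_{0,1},u_{1,1}\}$ from the outset, which collapses your two cases into one line. Your observation that regularity of $\cS$ plays no role in the argument is also correct.
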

\begin{proof}
	Suppose that  $f \in \C[M]$ is $G$-invariant with $\supp(f) \subset P \cap M$.
	 %and nondegenerate with respect to $\cS$. 
	 Let $A = Q \cap M$ and write $A = \{ u_{0,0},u_{1,0},u_{0,1},u_{1,1} \}$, where $e_1 \cap M = \{ u_{0,0},u_{1,0} \}$ and $e_2 \cap M = \{ u_{0,1},u_{1,1} \}$.
	 Then $f|_Q = \sum_{0 \le i,j \le 1} \lambda_{i,j} x^{u_{i,j}}$ for some $\lambda_{i,j} \in \C$ with $\lambda_{0,0} = \lambda_{1,0}$ and $\lambda_{0,1} = \lambda_{1,1}$. 
	By Example~\ref{e:square}, 	$$E_A(\lambda_{0,0},\lambda_{1,0},\lambda_{0,1},\lambda_{1,1}) = \pm \lambda_{0,0}\lambda_{1,0}\lambda_{0,1}\lambda_{1,1}(\lambda_{0,0}\lambda_{1,1} - \lambda_{1,0}\lambda_{0,1}) = 0.$$
	By Theorem~\ref{t:nondegenerateequiv}, $f|_Q$ is not nondegenerate with respect to $Q$. By Definition~\ref{d:nondegeneratesmoothgeneralized}, $f$ is not nondegenerate with respect to $\cS$.

%	Let the vertices of $e_i$ be labeled $u_{i}$ and $u_i'$ for $i = 1,2$.  If $f$ is $G$-invariant, then $f|_Q = \lambda_1(x^{u_1} + x^{u_1'}) + \lambda_2(x^{u_2} + x^{u_2'})$ for some $\lambda_1,\lambda_2 \in \C$.
\end{proof}

\begin{lemma}\label{l:existencenondegeneraterefinement}
	Let $\cS$ be a $G$-invariant regular lattice polyhedral subdivision of $P$.
	% such that $\cS'$ refines $\cS$. 
	If there exists $f \in \C[M]$ that is $G$-invariant and nondegenerate with respect to $\cS$, then there exists $f \in \C[M]$ that is $G$-invariant and nondegenerate with respect to $P$.
\end{lemma}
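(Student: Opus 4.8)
The plan is to realize $f$ as the central fiber of a one-parameter toric degeneration and to transfer nondegeneracy from $\cS$ to $P$ using the factorization of the principal $A$-determinant in Theorem~\ref{t:initdeg}. So suppose $f = \sum_{u \in M}\lambda_u x^u \in \C[M]$ is $G$-invariant and nondegenerate with respect to $\cS$. By the discussion following Definition~\ref{d:nondegeneratesmoothgeneralized}, $\Newt(f) = P$ and $\supp(f)$ contains every vertex of $\cS$; set $A := \supp(f)$. Then $A$ is $G$-invariant (because $f$ is), $\Conv{A} = P$, and $A$ contains the vertices of every face of $\cS$, so $(A \cap F)\times\{1\}$ spans $\tM_\R$ for each facet $F$ of $\cS$ and the polynomials $E_{A\cap F, M}$ are well defined.

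First I would fix the height function carefully. Since $\cS$ is $G$-invariant and regular, Remark~\ref{r:invariantheight} provides a $G$-invariant $\omega \colon A \to \R$ with $\cS(\omega) = \cS$. Replacing $\omega(u)$ by $\min\{\lambda : (u,\lambda)\in\UH(\omega)\}$ for each $u\in A$ leaves $\UH(\omega)$ unchanged, hence leaves $\cS(\omega)$ unchanged, and preserves $G$-invariance of $\omega$ (since $\UH(\omega)$ is $G$-invariant and the $G$-action on $\tM$ fixes $\HT$); after this modification $A_\omega = A$.

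Next I would apply Theorem~\ref{t:initdeg}, which now reads $\init_\omega E_{A,M} = \prod_F E_{A_\omega\cap F, M} = \prod_F E_{A\cap F, M}$, the product running over the facets $F$ of $\cS$. Evaluating at the coefficient vector $\lambda = (\lambda_a)_{a\in A}$ of $f$ and using that $f$ is nondegenerate with respect to $\cS$, Corollary~\ref{c:nondegeneratecSequiv} gives $E_{A\cap F}(\lambda)\neq 0$, hence $E_{A\cap F, M}(\lambda)\neq 0$, for every facet $F$; therefore $\init_\omega E_{A,M}(\lambda)\neq 0$. By \eqref{e:initdegenerationgeneral}, the quantity $E_{A,M}(\lambda_a t^{\omega(a)})$ equals $t^{\mu}\init_\omega E_{A,M}(\lambda)$ plus terms of higher order in $t$, so it is nonzero for all sufficiently small real $t>0$. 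Fixing such a $t_0$ and setting $g := \sum_{a\in A}\lambda_a t_0^{\omega(a)} x^a \in \C[M]$, the $G$-invariance of $\omega$ forces $g$ to be $G$-invariant, while $\supp(g) = A$ gives $\Newt(g) = P$, and $E_A$ is nonzero at the coefficient vector of $g$ because $E_{A,M}$ and $E_A$ have the same zero set. By Theorem~\ref{t:nondegenerateequiv}, $g$ is nondegenerate with respect to $P$, which is what we wanted.

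The main obstacle is the combinatorial matching between the two results being glued: one must arrange $A_\omega = A$ so that the factors of $\init_\omega E_{A,M}$ produced by Theorem~\ref{t:initdeg} are exactly the $E_{A\cap F,M}$ controlled by the nondegeneracy of $f$ along the facets of $\cS$, and one must keep the whole degeneration $\{f_t\}$ inside the space of $G$-invariant polynomials, which is precisely why $\omega$ must be chosen $G$-invariant.
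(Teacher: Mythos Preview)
Your proof is correct and follows essentially the same route as the paper: choose a $G$-invariant height function $\omega$ realizing $\cS$, use Theorem~\ref{t:initdeg} together with Corollary~\ref{c:nondegeneratecSequiv} to show $\init_\omega E_{A,M}$ does not vanish at the coefficients of $f$, and then invoke \eqref{e:initdegenerationgeneral} to produce a $G$-invariant deformation $\sum_a \lambda_a t_0^{\omega(a)} x^a$ that is nondegenerate with respect to $P$. The only differences are cosmetic: you take $A = \supp(f)$ rather than $A = P\cap M$, and you make explicit the step ensuring $A_\omega = A$ (so that the factors appearing in Theorem~\ref{t:initdeg} match those controlled by Corollary~\ref{c:nondegeneratecSequiv}), whereas the paper leaves this implicit in its appeal to Remark~\ref{r:invariantheight}.
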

\begin{proof}
	Let $A = P \cap M$. 
	Let $f = \sum_{a \in A} \lambda_a x^a \in \C[M]$ be $G$-invariant and nondegenerate with respect to $\cS$. 
	By Remark~\ref{r:invariantheight}, we may assume there exists a $G$-invariant function $\omega: A \to \R$ such that $\cS = \cS(\omega)$.
	By Theorem~\ref{t:initdeg} and Corollary~\ref{c:nondegeneratecSequiv}, 
	\[
	\init_\omega E_{A,M}(\lambda_a) = \prod_{F} E_{A_{\omega} \cap F, M}(\lambda_a) \neq 0. 
	\]
     Above, $F$ varies over the facets of $\cS$. 
     By
	\eqref{e:initdegenerationgeneral}, 
%	for an auxiliary variable $t$, 
	\[
	\lim_{t \to 0} t^{-\mu} E_{A,M}(  t^{\omega(a)} \lambda_a ) =  \init_\omega E_{A,M}(\lambda_a) \ne 0,
	\]
	where $t \in \R$ and $\mu \in \R$ is the minimal weight (with respect to $\omega$) of any monomial in $E_{A,M}(z_a)$. In particular, for $t$ positive and sufficiently small, $E_{A,M}(  t^{\omega(a)} \lambda_a ) \ne 0$, and Theorem~\ref{t:nondegenerateequiv} implies that $\sum_{a \in A} t^{\omega(a)} \lambda_a x^a$ is $G$-invariant and nondegenerate with respect to $P$. 
\end{proof}

\begin{remark}\label{r:converseisfalse}
	The converse to Lemma~\ref{l:existencenondegeneraterefinement} is false. 
	A counterexample is given by Example~\ref{e:refinementconversefalse}. In this example, $G = \Z/2\Z$, $P$ is a bipyramid over a $G$-invariant square $Q =([0,1] \times [0,1]) \times \{ 0 \}$, and 
	$\cS_1$ is a $G$-invariant regular lattice triangulation. 
	By Lemma~\ref{l:invarianttriangulation} and Lemma~\ref{l:existencenondegeneraterefinement} it follows that there exists $f \in \C[M]$ that is $G$-invariant and nondegenerate with respect to $P$. On the other hand, $\cS_2$ is a $G$-invariant regular lattice polyhedral subdivision such that each facet contains $Q \cong [0,1]^2$.
%	the  
%	 $G$-invariant square $Q =([0,1] \times [0,1]) \times \{ 0 \}$. 
	 The $G$-orbits of $Q \cap M$ are the sets of vertices of two disjoint edges of $Q$ and Lemma~\ref{l:applysquare} implies that  that there does not exist $f \in \C[M]$ that is $G$-invariant and nondegenerate with respect to $\cS_2$.
\end{remark}

%\begin{lemma}
%	Suppose there exists a (not necessarily regular) $G$-invariant lattice triangulation $\cS$ of $P$. Then $h^*(P,\rho;t)$ is the equivariant Hilbert series of $S_\cS/J_{f,\cS}$.
%	
%\end{lemma}

We are now ready to prove Theorem~\ref{t:mainfull}. 

\begin{proof}[Proof of Theorem~\ref{t:mainfull}]
	Suppose there exists a (not necessarily regular) $G$-invariant lattice triangulation $\cS$ of $P$. Then Lemma~\ref{l:invarianttriangulation} implies that $h^*(P,\rho;t)$ is the equivariant Hilbert series of a graded $\C G$-module and hence is effective. 
%	
%	 By Lemma~\ref{l:invarianttriangulation} there exists $f \in \C[M]$ that is $G$-invariant and nondegenerate with respect to $\cS$, and any  ordered $\C$-basis of 
%	 $(J_{f,\cS})_1$ is a l.s.o.p. for $S_\cS$. By Lemma~\ref{l:JacobianisomtotM}, 
%    $(J_{f,\cS})_1$ is $G$-invariant and isomorphic to $\tM_\C$ as a $\C G$-module. 
%	By Lemma~\ref{l:equivHilbertofquotient} and Theorem~\ref{t:faceringisCM}, 
%	$h^*(P,\rho;t)$ is the equivariant Hilbert series of $S_\cS/J_{f,\cS}$. In particular, $h^*(P,\rho;t)$ is effective i.e. \eqref{i:maineffective} holds.
	The implication \eqref{i:maintriangulation} $\Rightarrow$ \eqref{i:mainnondegenerate} follows from Lemma~\ref{l:invarianttriangulation} and Lemma~\ref{l:existencenondegeneraterefinement}. 
		The implication 	\eqref{i:mainnondegenerate} $\Rightarrow$
	\eqref{i:mainhilbert} follows from Theorem~\ref{t:nondegenerateequiv} and Lemma~\ref{l:JacobianisomtotM}.
		The implication \eqref{i:mainhilbert}	 $\Rightarrow$
	\eqref{i:maineffective} follows from Theorem~\ref{t:Hochster} and  Lemma~\ref{l:equivHilbertofquotient}. Explicitly, $h^*(P,\rho;t) = \Hilb_G(S/(F_1,\ldots,F_{d + 1}))$.
	% is the equivariant Hilbert series of $S/(F_1,\ldots,F_{d + 1})$. 
	Finally, the implication \eqref{i:maineffective} $\Rightarrow$
	\eqref{i:mainpolynomial} is Remark~\ref{r:effectiveimpliespolynomial}. 

\end{proof}

We have the following corollary of the proof  of Theorem~\ref{t:mainfull}, and corresponding question about a possible extension of the effectiveness conjecture (Conjecture~\ref{c:origmod}). The key difference between conditions \eqref{i:mainhilbertv2} and \eqref{i:mainhilbertv3} in Corollary~\ref{c:extendproof} below is that \eqref{i:mainhilbertv3} allows nonregular $G$-invariant lattice polyhedral subdivisions $\cS$ (cf. Remark~\ref{r:sturmfelsgeneralization} and Lemma~\ref{l:existencenondegeneraterefinement}). 
%A corollary of the proof of Theorem~\ref{t:mainfull} is the following question. 

\begin{corollary}\label{c:extendproof}
			Let $G$ be a finite group.
	Let $M$ be a lattice of rank $d$ and let $\rho: G \to  \Aff(M)$ be an affine representation.  
	Let $P \subset M_\R$ be a $G$-invariant $d$-dimensional lattice polytope. 
	Let $\tM = M \oplus \Z$ and let $C_P$ be the cone over $P \times \{ 1\}$ in $\tM_\R$ with corresponding graded semigroup algebra $S_P = \C[C_P \cap \tM]$.
	%With the notation above, 
	Consider the following conditions:
	
		\begin{enumerate}
		
		\item\label{i:mainhilbertv2} 
		There exists a l.s.o.p. $F_1,\ldots,F_{d + 1}$ of $S_P$ such that 
		$[\C F_1 + \cdots + \C F_{d + 1}] = [\tM_\C]$ in $R(G)$.
		
		\item\label{i:mainhilbertv3} 
		There exists a $G$-invariant lattice polyhedral subdivision $\cS$ of $P$ and 
		 a l.s.o.p. $F_1,\ldots,F_{d + 1}$ of $S_\cS$ such that 
		$[\C F_1 + \cdots + \C F_{d + 1}] = [\tM_\C]$ in $R(G)$.
		
		\item\label{i:maineffectivev2} $h^*(P , \rho ; t)$ is effective.
		
		\item\label{i:mainpolynomialv2} $h^*(P , \rho ; t)$ is a polynomial.
		
	\end{enumerate}
	
	Then the following implications hold: 	\eqref{i:mainhilbertv2} $\Rightarrow$ \eqref{i:mainhilbertv3} $\Rightarrow$
	\eqref{i:maineffectivev2} $\Rightarrow$
	\eqref{i:mainpolynomialv2}.

\end{corollary}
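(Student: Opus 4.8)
The plan is to prove the three implications in turn, essentially repackaging the argument for \eqref{i:mainhilbert} $\Rightarrow$ \eqref{i:maineffective} in the proof of Theorem~\ref{t:mainfull} with $S_P$ replaced by a toric face ring $S_\cS$.

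For \eqref{i:mainhilbertv2} $\Rightarrow$ \eqref{i:mainhilbertv3}, I would simply take $\cS$ to be the trivial subdivision of $P$. Then $\Sigma_\cS$ is the single cone $C_P$ together with its faces, so $S_\cS = S_P$ by Definition~\ref{d:toricfacering}, and the l.s.o.p. provided by \eqref{i:mainhilbertv2} witnesses \eqref{i:mainhilbertv3} verbatim.

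For \eqref{i:mainhilbertv3} $\Rightarrow$ \eqref{i:maineffectivev2}, the key observation is that the support of the fan $\Sigma_\cS$ is exactly $C_P$, so as a \emph{graded $\C G$-module} (only the multiplication differs) we have $S_\cS = \C[C_P \cap \tM] = S_P$, and hence $\Hilb_G(S_\cS;t) = \Hilb_G(S_P;t) = \Ehr(P,\rho;t)$. Now $\cS$ has convex support $P$, so $S_\cS$ is Cohen--Macaulay by Theorem~\ref{t:faceringisCM}, and $S_\cS$ has Krull dimension $d+1$. The l.s.o.p. $F_1,\ldots,F_{d+1}$ generates a $G$-invariant ideal $J$, since the hypothesis $[\C F_1 + \cdots + \C F_{d+1}] = [\tM_\C]$ presupposes that the degree-one piece $J_1 := \C F_1 + \cdots + \C F_{d+1}$ is $G$-invariant (the argument before Theorem~\ref{t:mainfull} applies to $S_\cS$ as well). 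Applying Lemma~\ref{l:equivHilbertofquotient} with $r = 1$ gives
\[
\Ehr(P,\rho;t) \;=\; \frac{\Hilb_G(S_\cS/J;t)}{\det(I - J_1 t)}.
\]
Since $[J_1] = [\tM_\C]$ in $R(G)$ forces $J_1 \cong \tM_\C$ as $\C G$-modules, we get $\det(I - J_1 t) = \det(I - \tM_\C t)$, whence $h^*(P,\rho;t) = \det(I - \tM_\C t)\Ehr(P,\rho;t) = \Hilb_G(S_\cS/J;t)$. As $J$ is generated by a l.s.o.p., $S_\cS/J$ is a finite-dimensional graded $\C G$-module, so its equivariant Hilbert series is effective, proving \eqref{i:maineffectivev2}. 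Finally, \eqref{i:maineffectivev2} $\Rightarrow$ \eqref{i:mainpolynomialv2} is Remark~\ref{r:effectiveimpliespolynomial}.

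I do not expect a genuine obstacle here: the only points requiring a line of justification are the identification $\Hilb_G(S_\cS;t) = \Ehr(P,\rho;t)$ (immediate from $|\Sigma_\cS| = C_P$) and the identity $\det(I - J_1 t) = \det(I - \tM_\C t)$ (immediate since $\det(I - Vt)$ depends only on $[V] \in R(G)$, cf. Remark~\ref{r:deteigenvalues}). The content of the corollary over Theorem~\ref{t:mainfull} is entirely that Lemma~\ref{l:equivHilbertofquotient} and Theorem~\ref{t:faceringisCM} apply to toric face rings of nonregular subdivisions, which is already built into those statements.
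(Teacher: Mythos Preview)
Your proof is correct and follows exactly the same approach as the paper: the trivial subdivision gives \eqref{i:mainhilbertv2} $\Rightarrow$ \eqref{i:mainhilbertv3}, the combination of Theorem~\ref{t:faceringisCM} and Lemma~\ref{l:equivHilbertofquotient} gives \eqref{i:mainhilbertv3} $\Rightarrow$ \eqref{i:maineffectivev2}, and Remark~\ref{r:effectiveimpliespolynomial} gives the last implication. The paper's proof is a one-line citation of these ingredients, whereas you have spelled out the identification $\Hilb_G(S_\cS;t) = \Ehr(P,\rho;t)$ and $\det(I - J_1 t) = \det(I - \tM_\C t)$ explicitly, but the content is identical.
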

\begin{proof}
	 The implications \eqref{i:mainhilbertv2} $\Rightarrow$ 
	\eqref{i:maineffectivev2} $\Rightarrow$
	\eqref{i:mainpolynomialv2} are Theorem~\ref{t:mainfull}. Also, 
	\eqref{i:mainhilbertv2} $\Rightarrow$ \eqref{i:mainhilbertv3}  by letting $\cS$ be the trivial subdivision, and \eqref{i:mainhilbertv3} $\Rightarrow$ 
	\eqref{i:maineffectivev2} follows from 
	Lemma~\ref{l:equivHilbertofquotient} and Theorem~\ref{t:faceringisCM}.
\end{proof}

%Note that if condition \eqref{i:mainhilbertv3} in Corollary~\ref{c:extendproof} was modified to require that $\cS$ is regular, then conditions \eqref{i:mainhilbertv2} and \eqref{i:mainhilbertv3} would be equivalent by  Lemma~\ref{l:invarianttriangulation} and Lemma~\ref{l:existencenondegeneraterefinement}. r:sturmfelsgeneralization

\begin{question}\label{q:extendconjecture}
	To what extent are conditions of Corollary~\ref{c:extendproof} equivalent?
\end{question}

%
%\begin{remark}
%	In light of Remark~\ref{r:sturmfelsgeneralization} and Lemma~\ref{l:existencenondegeneraterefinement}, the main difference between conditions \eqref{i:mainhilbertv2} and \eqref{i:mainhilbertv3} in Corollary~\ref{c:extendproof} is that \eqref{i:mainhilbertv3} allows nonregular $G$-invariant lattice polyhedral subdivisions $\cS$. 
%\end{remark}

In the examples presented in this paper, we will show that the answer to Question~\ref{q:extendconjecture} is that all conditions  of Corollary~\ref{c:extendproof} are equivalent.  The only exception is Example~\ref{e:primenofixedpts}, where we leave this question open (cf. Question~\ref{q:primetriangulations}).

\begin{proof}[Proof of Corollary~\ref{c:maincorollaryshort}]
	Suppose there exists a $G$-invariant triangulation $\cS$ of $P$ with vertices in $\frac{1}{N} M$. Recall that $S_\cS = \oplus_{m \in \Z_{\ge 0}} S_{\cS,m}$ is the toric face ring associated to $\cS$. 
	Consider  the subalgebra $S' = \oplus_{m \in N\Z_{\ge 0}} S_{\cS,m} $ of $S_\cS$. 
	Then $S_\cS$ is integral over $S'$ and hence $S_\cS$ is a finite $S'$-module by Lemma~\ref{l:integral=finite}. 
	
%	We may interpret the equivariant Hilbert series of $S'$ as equivariant Ehrhart series of  
	
	Recall that $\tM = M \oplus \Z$ and 	
	$\HT: \tM \to \Z$ is projection onto the last coordinate, with corresponding linear map 
	$\HT_\R: \tM_\R \to \R$. As in Remark~\ref{r:altformulation}, we may consider the affine lattice $M' = \HT^{-1}(N)$ in the real affine space $\HT_\R^{-1}(N)$, with corresponding affine representation $\rho': G \to \Aff(M')$ and the $G$-invariant polytope $P' := C_P \cap \HT_\R^{-1}(N)$. In particular, the equivariant Hilbert series of $S'$ is $\Ehr(P',\rho';t)$. The latter is naturally identified with 
	$\Ehr(P,\rho_N;t)$ where  $\rho_N: G \to \Aff(\frac{1}{N} M)$ is the corresponding affine representation of $G$ on $\frac{1}{N} M$ considered in the introduction and Section~\ref{ss:equivarianthNstar}.

%	Let $\tM' = \pr^{-1}(N\Z)$ with
%	%. Then composing $\pr$ with division by $N$ induces a 
%	corresponding surjection $\pr': \tM' \to 
%	\Z$, defined by $\pr'(u') = \frac{1}{N}\pr(u')$ for all $u' \in \tM'$. 
%	By Remark~\ref{r:altformulation}, we may consider the corresponding equivariant Ehrhart invariants. In particular, the equivariant Hilbert series of $S'$ is equal to $\Ehr(P,\rho_N;t)$ where  $\rho_N: G \to \Aff(\frac{1}{N} M) \subset \GL(\frac{1}{N} \tM)$ is the induced affine representation of $G$ on $\frac{1}{N} M$ considered in Section~\ref{ss:equivarianthNstar}. 
	
	As in the proof of Theorem~\ref{t:mainfull}, by Lemma~\ref{l:JacobianisomtotM} and  Lemma~\ref{l:invarianttriangulation} there exists a h.s.o.p. $F_1,\ldots,F_{d + 1}$ for $S'$ of degree $N$ (here we view $S'$ as graded by $N\Z$ rather than $\Z$) such that $\C F_1 + \cdots + \C F_{d + 1}$ is $G$-invariant and isomorphic to $\tM_\C$ as a $\C G$-module. 
	By Corollary~\ref{c:hsopfiniteextension}, $F_1,\ldots,F_{d + 1}$ is a h.s.o.p. of degree $N$ of $S_\cS$. 
By Lemma~\ref{l:equivHilbertofquotient} and Theorem~\ref{t:faceringisCM}, 
$h^*_N(P,\rho;t^N) = \Hilb_G(S_\cS/(F_1,\ldots,F_{d + 1}))$.
 %is the equivariant Hilbert series of $S_\cS/(F_1,\ldots,F_{d + 1})$. 
 In particular, $h^*_N(P,\rho;t)$ is effective. 
% i.e. \eqref{i:maineffective} holds.
		
Finally, if $h^*_N(P,\rho;t)$ is effective, then $h^*_N(P,\rho;t^N)(1) = h^*(P;t)(1 + t + \cdots + t^{N- 1})^{d + 1}$ has nonnegative integer coefficients and 
encodes the dimensions of the representations of the coefficients of  $h^*_N(P,\rho;t^N)$. In particular, $h^*_N(P,\rho;t)$ is a polynomial (cf. Remark~\ref{r:effectiveimpliespolynomial}). 
	
\end{proof}

\begin{proof}[Proof of Theorem~\ref{t:monotonicity}]
	Let $\cS$ be a $G$-invariant lattice triangulation of $P$ that restricts to a $G$-invariant lattice triangulation of $Q$.     Then restriction induces a 	$G$-invariant
	surjection  	$p: S_\cS \to S_{\cS|_Q}$   of graded $\C$-algebras between the correponding toric face rings. Explicitly, for any $u \in C_P \cap \tM$, 
	$$p(x^u) = \begin{cases}
		x^u &\textrm{ if } u \in C_Q, \\
		0  &\textrm{ otherwise.}
	\end{cases}
$$
		Let $A$ be the set of vertices of $\cS$, and let $f = \sum_{a \in A} x^a \in \C[M]$. 
		%Then  $f|_Q = \sum_{a \in A \cap Q} x^a \in \C[M]$. 
		%Then both $f$ and $f|_Q = \sum_{a \in A \cap Q} x^a \in \C[M]$ are $G$-invariant. 
		If $F = \sum_{a \in A} x^{(a,1)}$ is the element in $S_{\cS,1}$ corresponding to $f$, then $p(F)$ is the element in $(S_{\cS|_Q})_1$ corresponding to  $f|_Q$. 		Let $\tM_Q$ be the intersection of $\tM$ with the linear span of $C_Q$.
		For any $v \in \tN_\C = \Hom_\C(\tM_\C,\C)$,  we have $p(\partial_v F) = \partial_{v_Q} p(F)$, where $v_Q$ is the restriction of $v$ to $(\tM_Q)_\C$. 
		It follows that $p(J_{f,\cS}) = J_{f|_Q,\cS|_Q}$, and $p$ induces a %surjection 
		map of graded $\C G$-modules
		$ 
		\bar{p}: S_\cS/J_{f,\cS} \to S_{\cS|_Q}/J_{f|_Q,\cS|_Q}.
		$		
		By Lemma~\ref{l:invarianttriangulation}, 
		$h^*(P,\rho;t) = \Hilb_G(S_\cS/J_{f,\cS};t)$ and 
		 $h^*(P,\rho_Q;t) = \Hilb_G(S_{\cS|_Q}/J_{f|_Q,\cS|_Q};t)$.
%		
%		$h^*(P,\rho;t)$ and $h^*(P,\rho|_Q;t)$  are the equivariant Hilbert series of $S_\cS/J_{f,\cS}$ and $S_{\cS|_Q}/J_{f|_Q,\cS|_Q}$ respectively, and 
		The result now follows from the fact that $\bar{p}$ is surjective.
		
%		As in the proof of Lemma~\ref{l:invarianttriangulation}, by Example~\ref{e:simplexprincipalAdet} and Corollary~\ref{c:nondegeneratecSequiv}, $f$ is nondegenerate with respect to $\cS$, and $f|_Q$ is nondegenerate with respect to $\cS|_Q$. 
%		As in the proof of Theorem~\ref{t:mainfull}, 
%		Lemma~\ref{l:equivHilbertofquotient} and Theorem~\ref{t:faceringisCM} imply that 
%		$h^*(P,\rho;t)$ and $h^*(P,\rho|_Q;t)$  are the equivariant Hilbert series of $S_\cS/J_{f,\cS}$ and $S_{\cS|_Q}/J_{f|_Q,\cS|_Q}$ respectively, and the result follows from the fact that $\bar{p}$ is surjective.

\end{proof}

The proposition below gives another example where the effectiveness conjecture (Conjecture~\ref{c:origmod}) holds. Recall that $\Vol(P) \in \Z_{>0}$ denotes the normalized volume of $P$.

\begin{proposition}\label{p:d+2}
	Assume that $P$ has $d + 2$ vertices.
	 %and is simplicial i.e. every facet is a simplex. 
	 Let $A$ be the set of vertices of $P$ and assume that $A$ affinely generates $M$ i.e.
	$M = \{ \sum_{a \in A} \lambda_a a : \lambda_a \in \Z, \sum_{a} \lambda_a = 1 \}$. 
	Suppose that there does not exist a $G$-invariant %regular 
	lattice triangulation of $P$. Then the following are equivalent: 
	\begin{enumerate}
		\item\label{i:d+2_1} $\Vol(P)$ is odd. 
		\item\label{i:d+2_2} There exists $f \in \C[M]$ that is $G$-invariant and nondegenerate with respect to $P$. 
		
		\item\label{i:d+2_3}
		There exists a l.s.o.p. $F_1,\ldots,F_{d + 1}$ of $S_P$ such that 
		$[\C F_1 + \cdots + \C F_{d + 1}] = [\tM_\C]$ in $R(G)$.
		
		\item\label{i:d+2_4} $h^*(P , \rho ; t)$ is effective.
		
		\item\label{i:d+2_5} $h^*(P , \rho ; t)$ is a polynomial.
	\end{enumerate}
\end{proposition}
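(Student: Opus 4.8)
The plan is to supplement the implications \eqref{i:d+2_2} $\Rightarrow$ \eqref{i:d+2_3} $\Rightarrow$ \eqref{i:d+2_4} $\Rightarrow$ \eqref{i:d+2_5} supplied by Theorem~\ref{t:mainfull} with the two further implications \eqref{i:d+2_5} $\Rightarrow$ \eqref{i:d+2_1} and \eqref{i:d+2_1} $\Rightarrow$ \eqref{i:d+2_2}, which closes the cycle. The first task is to unwind the combinatorics of a $d$-dimensional lattice polytope with $d+2$ vertices. Since $A=\ver(P)$ affinely generates $M$ and $|A|=d+2$, the group of integral affine dependences among $A$ is free of rank one; fix a generator $\sum_{v\in A}b_v v=0$ (with $\sum_v b_v=0$ and the $b_v$ of greatest common divisor $1$) and set $Z_{\pm}=\{v:\pm b_v>0\}$, $C=\{v:b_v=0\}$. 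The two circuit triangulations $\mathcal{T}^{\pm}$ of $P$, with maximal simplices $\Conv{A\setminus\{v\}}$ for $v\in Z_{\mp}$ (see \cite{DRSTriangulations10}), are lattice triangulations, and each of them is $G$-invariant as soon as $G$ preserves $Z_+$ (equivalently $Z_-$) as a set. As each element of $G$ permutes $A$ and therefore acts by $\pm 1$ on the generator $(b_v)$, each element either preserves both $Z_+$ and $Z_-$ or interchanges them; since $P$ admits no $G$-invariant lattice triangulation, some $g_0\in G$ must interchange them. In particular $|Z_+|=|Z_-|=:r$, the multiset $\{b_v:v\in Z_+\}$ coincides with $\{-b_v:v\in Z_-\}$, and $d=2r-2+|C|$.

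Next I would reduce to the case $C=\varnothing$. Each $w\in C$ lies outside the affine span of $A\setminus\{w\}$ (the latter still carries the circuit), so $P$ is an iterated pyramid over $P_0:=\Conv{Z_+\cup Z_-}$ with the coloops as apices; equivalently $P$ is the free join of $P_0$ with a unimodular $(|C|-1)$-simplex on which $G$ acts by permuting the coloops. By Lemma~\ref{l:freejoin} together with the fact that the equivariant $h^*$-polynomial of a unimodular simplex equals $1$ (\cite[Proposition~6.1]{StapledonEquivariant}), one gets $h^*(P,\rho;t)=h^*(P_0,\rho_0;t)$, where $\rho_0$ is the induced action on $M_0:=M\cap(\text{affine span of }P_0)$; moreover $\ver(P_0)$ affinely generates $M_0$, $\Vol(P_0)=\Vol(P)$, $P_0$ again has no $G$-invariant lattice triangulation (one would cone up to a triangulation of $P$), and each of conditions \eqref{i:d+2_1}, \eqref{i:d+2_2}, \eqref{i:d+2_4}, \eqref{i:d+2_5} transfers between $P$ and $P_0$ (for \eqref{i:d+2_2}, restrict to the face $P_0$ resp.\ extend by $\sum_{w\in C}x^w$). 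So I may assume $C=\varnothing$ and $d=2r-2$; labelling $Z_+=\{v_1^+,\dots,v_r^+\}$ with $b_{v_i^+}=a_i$ and $v_i^-:=g_0 v_i^+$ (so $b_{v_i^-}=-a_i$), the triple $(M,P,\rho)$ is precisely the one of Example~\ref{e:d+2v2}, with $\gcd(a_1,\dots,a_r)=1$ and $\Vol(P)=\sum_i a_i$.

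For \eqref{i:d+2_5} $\Rightarrow$ \eqref{i:d+2_1}: the computation in Example~\ref{e:d+2v2}, applied to $g=g_0$, gives $h^*(P,\rho;t)(g_0)=(1+t^{e})/(1+t)$ with $e=|\{i:a_i\textrm{ odd}\}|$, which is a polynomial if and only if $e$ — equivalently $\Vol(P)=\sum_i a_i$ — is odd; hence \eqref{i:d+2_5} forces \eqref{i:d+2_1}. For \eqref{i:d+2_1} $\Rightarrow$ \eqref{i:d+2_2}, observe that every proper face of $P$ is a simplex (a proper subset of $A$ omits a vertex, hence cannot contain the circuit, hence is affinely independent), so by Definition~\ref{d:nondegeneratesmooth} a polynomial $f=\sum_{v\in A}\lambda_v x^v$ with all $\lambda_v\neq 0$ is nondegenerate with respect to $P$ exactly when $\{f=0\}$ is smooth in the torus, i.e.\ when the $A$-discriminant does not vanish. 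The $A$-discriminant of a circuit is, up to a nonzero scalar, $\prod_i\lambda_{v_i^-}^{a_i}-(-1)^{\sum_i a_i}\prod_i\lambda_{v_i^+}^{a_i}$ (using that $\{a_i:i\in Z_+\}$ and $\{a_i:i\in Z_-\}$ are equal multisets; compare Example~\ref{e:square} and \cite{GKZ94}). Imposing $G$-invariance makes $\lambda$ constant — say $=\mu_l$ — on each $G$-orbit $O_l=P_l\sqcup g_0 P_l$ of $A$, where $P_1,\dots,P_t$ are the orbits of the index-two subgroup $G_+\le G$ preserving $Z_+$ and $a_i$ takes a constant value $\alpha_l$ on $P_l$; then both products collapse to $\prod_l\mu_l^{\alpha_l|P_l|}$, so the $A$-discriminant restricted to $G$-invariant coefficients equals a nonzero scalar times $(1-(-1)^{\Vol(P)})\prod_l\mu_l^{\alpha_l|P_l|}$. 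If $\Vol(P)$ is odd this is not identically zero, so a generic choice of nonzero $\mu_l$ yields a $G$-invariant $f$ with all $\lambda_v\neq 0$ and nonvanishing $A$-discriminant, which is nondegenerate with respect to $P$ by Theorem~\ref{t:nondegenerateequiv}; this is \eqref{i:d+2_2}, and the cycle is complete.

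The main obstacle is the last step: identifying the $A$-discriminant of a circuit and, above all, the sign $(-1)^{\Vol(P)}$ that decides whether it survives restriction to $G$-invariant coefficients — this sign is exactly what produces the parity dichotomy and therefore the whole statement. The coloop reduction is routine but requires some care with the compatibility of the lattices and the free-join structure with the $G$-action; everything else is an assembly of Theorem~\ref{t:mainfull}, Example~\ref{e:d+2v2} and Theorem~\ref{t:nondegenerateequiv}.
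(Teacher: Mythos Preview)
Your proof is correct and follows the paper's line closely: close the cycle via \eqref{i:d+2_5}$\Rightarrow$\eqref{i:d+2_1} (reduce to the coloop-free case by free join and invoke Example~\ref{e:d+2v2}) and \eqref{i:d+2_1}$\Rightarrow$\eqref{i:d+2_2} (use the GKZ formula for the discriminant of a circuit and the parity sign $(-1)^{\Vol(P)}$). The one noteworthy difference is in \eqref{i:d+2_1}$\Rightarrow$\eqref{i:d+2_2}. The paper works with the principal $A$-determinant of the \emph{original} $P$: by \cite[Theorems~10.1.4 and~10.1.14]{GKZ94}, up to a nonzero scalar and a monomial $E_A$ equals $\prod_{i=1}^{\ell} z_i - (-1)^{\Vol(P)}\prod_{i=1}^{\ell} z_{\ell+i}$, so $E_A(1,\dots,1)\neq 0$ when $\Vol(P)$ is odd and $f=\sum_{a\in A}x^a$ is the desired $G$-invariant nondegenerate polynomial by Theorem~\ref{t:nondegenerateequiv}. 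This avoids two things you do: the orbit-by-orbit collapse of the $A$-discriminant, and the transfer of condition~\eqref{i:d+2_2} back from $P_0$ to $P$ via ``extend by $\sum_{w\in C}x^w$'' --- that transfer is true (since $P$ is a free join with a unimodular simplex) but not entirely immediate, and the paper's route sidesteps it.
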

\begin{proof}
	Theorem~\ref{t:mainfull} gives the implications \eqref{i:d+2_2} $\Rightarrow$ \eqref{i:d+2_3} $\Rightarrow$ \eqref{i:d+2_4} $\Rightarrow$ \eqref{i:d+2_5}.
	Let  $A = \{ x_1,\ldots,x_{d + 2}\}$. 
	Configurations of $d + 2$ points that affinely span a $d$-dimensional space are well studied, and we refer the reader to \cite[Section~2.4.1]{DRSTriangulations10} and \cite[Section~7.1.B]{GKZ94} for details. 
	There is a unique (up to scaling) affine relation between the elements of $A$. 
%	 
%	The elements of $A$ contain a circuit in $M_\R$ (see \cite[Section~7.1.B]{GKZ94} or \cite[Section~2.4.1]{DRSTriangulations10} for details). That is, 
	This implies that
	there exists unique positive integers $a_1,\ldots,a_\ell$ and $a_1',\ldots,a_{\ell'}'$ such that $\ell + \ell' \le d + 2$, $\sum_{i = 1}^\ell a_i = \sum_{i = 1}^{\ell'} a_i'$, $\gcd(a_1,\ldots,a_\ell,a_1',\ldots,a_{\ell'}') = 1$ and $a_1 x_1 + \cdots + a_\ell x_\ell - (a_1' x_{\ell + 1} + \cdots + a_{\ell'}' x_{d + 2}) = 0$.  
	By \cite[Lemma~2.4.2]{DRSTriangulations10}, there exists precisely two (regular) lattice triangulations of $P$ with vertices in $A$:  $T$ with facets $\{ \Conv{A \smallsetminus \{ x_i \}} : 1 \le i \le \ell \}$, and $T'$ with facets $\{ \Conv{A \smallsetminus \{ x_i \}} : \ell + 1 \le i \le \ell + \ell' \}$.
%	In particular,  $\Vol(P) =  \sum_{i = 1}^\ell a_i = \sum_{i = 1}^{\ell'} a_i'$. 
	Since $G$ acts permuting $A$, $G$ acts on $\{ T, T' \}$. By assumption, there does not exist a $G$-invariant triangulation of $P$, and hence there exists $g$ in $G$ such that $g \cdot T = T'$. Hence $\ell = \ell'$ and, after possibly reindexing coordinates, we may assume that $g \cdot x_i = x_{\ell + i}$ and $a_i = a_{i}'$  for $1 \le i \le \ell$.
	
	By \cite[Theorem~10.1.4]{GKZ94} and \cite[Theorem~10.1.14]{GKZ94}, 
	the Newton polytope of $E_A$ (equal to the secondary polytope of $A$ by Remark~\ref{r:secondarypolytope}) is isomorphic to a unit interval, and 
	up to multiplication by a nonzero scalar and a monomial, $E_A(z_1,\ldots,z_{d + 1})$ equals $\prod_{i = 1}^\ell z_i - (-1)^{\Vol(P)} \prod_{i = 1}^\ell z_{\ell + i}$. In particular, if $\Vol(P)$ is odd, then $E_A(1,\ldots,1) \neq 0$. By Theorem~\ref{t:nondegenerateequiv}, we deduce that \eqref{i:d+2_1} $\Rightarrow$ \eqref{i:d+2_2}. 
	
	It remains to show that \eqref{i:d+2_5} $\Rightarrow$ \eqref{i:d+2_1}. 
	Note that $Q = \Conv{x_1,\ldots,x_{2\ell}}$ is the 
	unique minimal face $Q$ of $P$ that is not a simplex. Note that $Q$ is $G$-invariant.
	Since $A$ affinely generates $M$, it follows that $P$ is isomorphic to a free join of $Q$ and a standard simplex. In particular,  $\Vol(P) = \Vol(Q)$ and Lemma~\ref{l:freejoin} implies that the equivariant $h^*$-polynomials of $P$ and $Q$ agree. After possibly replacing $P$ with $Q$, we reduce to the case when $2 \ell = d + 2$. 	 Since $A$ affinely generates $M$, then  by regarding $M$ as an affine lattice we may assume that we are in the case of Example~\ref{e:d+2v2}. Consider an element $g$ in $G$ such that  $g \cdot T = T'$. By Example~\ref{e:d+2v2}, $h^*(P,\rho;t)(g)$ is a polynomial if and only if $\Vol(P)$ is odd, and we deduce that 
	\eqref{i:d+2_5} $\Rightarrow$ \eqref{i:d+2_1}.

\end{proof}

The example below shows that the converse to the implication \eqref{i:maintriangulation}  $\Rightarrow$
\eqref{i:mainnondegenerate} in Theorem~\ref{t:mainfull} is false. It also shows that 
$h^*(P,\rho;t)$ being effective  (\eqref{i:maineffective} in Theorem~\ref{t:mainfull}) does not necessarily imply that $P$ admits a $G$-invariant lattice triangulation. 
Moreover, it provides a counterexample to Conjecture~\ref{c:trivialrepalwaysappears}.
%Finally, it provides another counterexample to \cite[Conjecture~12.1]{StapledonEquivariant}.

\begin{example}\label{e:d+2v4}
	%We have the following example of Proposition~\ref{p:d+2}. 
	With the notation of Example~\ref{e:d+2v2}, let
	 $d = 4$, $r = 3$ and $a_1 = a_2 = a_3 = 1$.  That is, let 
    Let $e_1,e_2,e_3,f_1,f_2,f_3$ be a basis of $\Z^{6}$, and let 
     $\bar{e}_i$ and $\bar{f_i}$ denote the images of $e_i$ and $f_i$ respectively in the lattice
    $L := \Z^{6}/\Z(e_1 + e_2 + e_3 - f_1 - f_2 - f_3)$. 
		Consider the group homomorphism  $\psi: L \to \Z$ satisfying $\psi(\bar{e}_i) = \psi(\bar{f}_i) = 1$ for all $1 \le i \le r$. 
		As in Example~\ref{e:constructaffine},  
	$M' = \psi^{-1}(1)$ is an affine lattice in the real affine space $W = \psi_\R^{-1}(1)$. 
	Let $P = \Conv{\bar{e}_1,\bar{e}_2,\bar{e}_3,\bar{f}_1,\bar{f}_2,\bar{f}_3} \subset W$.
	Then $P$ is a $4$-dimensional lattice polytope with $6$ vertices and
	%, $\Vol(P) = 3$ and 
	%Its $h^*$-polynomial is 
	$h^*(P;t) = 1 + t + t^2$. 
	Let $G = \Z/2\Z = \langle \sigma \rangle$ and consider the affine representation $\rho: G \to \Aff(M')$ determined by $\sigma \cdot \bar{e}_i = \bar{f}_i$ for $1 \le i \le 3$. 	Since the only lattice points of $P$ are its $d + 2$ vertices, as in the proof of Proposition~\ref{p:d+2}, $P$ has precisely two (regular) lattice triangulations $T,T'$, and $\sigma \cdot T = T'$. In particular, there does not exist a $G$-invariant lattice triangulation of $P$. Since $\Vol(P) = 3$ is odd, 
	Proposition~\ref{p:d+2} implies that there exists a $G$-invariant polynomial that is nondegenerate with respect to $P$. 
	
	Moreover, by Example~\ref{e:d+2v2}, $h^*(P,\rho;t)(\sigma) = 1 - t + t^2$, and hence  $h^*(P,\rho;t) = 1 + \chi t + t^2$, where $\chi$ is the nontrivial representation of $G$ i.e. $\chi(\sigma) = -1$. In particular, since the multiplicity of the trivial representation in the (nonzero) linear term of $h^*(P,\rho;t)$ is zero, this gives a counterexample to Conjecture~\ref{c:trivialrepalwaysappears}. 

\end{example}

%A positive answer to the following question is given in a special case in Corollary~\ref{c:trivialconjtriangulations}.

\begin{question}\label{q:trivialtriangulation}
	Does Conjecture~\ref{c:trivialrepalwaysappears} hold if we further assume that $P$ admits a $G$-invariant lattice triangulation?
\end{question}

%\alan{TODO: question whether conjecture holds if there exists a $G$-invariant lattice triangulation; see formula; this would give an additional test for existence of lattice triangulations; more generally, are there additional properties of $h^*(P,\rho;t)$ stronger than effectiveness that hold if there exists a $G$-invariant lattice triangulation?}

The example below shows that Proposition~\ref{p:d+2} can not be extended to the case when $P$ has $d + 3$ vertices. It 
also shows that the converse to the implication 
\eqref{i:mainnondegenerate} $\Rightarrow$
\eqref{i:mainhilbert} is false in Theorem~\ref{t:mainfull}, and, in particular,
 provides another counterexample to \cite[Conjecture~12.1]{StapledonEquivariant}. %Conjecture~\ref{c:orig}.  

\begin{example}\label{e:d+3counterexample}
	Let $M = \Z^3/\Z(e_1 + e_2 + e_3) \oplus \Z$ and let $\bar{e}_i$ denote the image of $e_i$ in $M$ for $1 \le i \le 3$. Let $Q = \Conv{\bar{e}_1,\bar{e}_2,\bar{e}_3}$ and let $P = Q \times [0,e_4]$. Let $G = \Z/3\Z$ act on $M$ by cycling the first $3$ coordinates. Then $P$ is a $G$-invariant lattice polytope with $6$ vertices and 
		$h^*(P,\rho;t) = 1 + (1 + \chi_{\reg})(t + t^2)$, where $\chi_{\reg}$ is the  regular representation of $G$. Let $\cS$ be the pulling refinement of $P$ at $e_4$. Then $\cS$ has $4$ facets: the simplex $\Conv{Q,e_4}$ and $G \cdot F$, where $F = \Conv{[\bar{e}_1,\bar{e}_2] \times [0,e_4], e_4}$ is not a  simplex. Applying a pulling refinement to $F$ at any vertex of   
		$[\bar{e}_1,\bar{e}_2] \times [0,e_4]$ gives a lattice triangulation $\T$ of $F$. 
		Then there is a unique $G$-invariant (non-regular) lattice triangulation of $P$ that contains $\Conv{Q,e_4}$ and restricts to $g \cdot \T$ on $g \cdot F$ for all $g$ in $G$. On the other hand, by Lemma~\ref{l:applysquare},
		 there does not exist $f \in \C[M]$ that is $G$-invariant and nondegenerate with respect to $P$. In particular, by Theorem~\ref{t:mainfull}, 
       there does not exist a $G$-invariant regular lattice triangulation of $P$. 
	
	Let $x_1,x_2,x_3,x_4$ be the degree $1$ variables in $S_P$ corresponding to the lattice points $\bar{e}_1,\bar{e}_2,\bar{e}_3,0$ of $Q$. Let $y_1,y_2,y_3,y_4$ denote the degree $1$ variables in $S_P$ corresponding to the lattice points $\bar{e}_1 + e_4,\bar{e}_2 + e_4,\bar{e}_3 + e_4,e_4$ of $Q \times \{ e_4\}$. If $\sigma$ is a generator of $G$, let  
	$\theta_1 = x_1 + x_2 + 2x_3 + y_1 + 2y_2 + y_3$, $\theta_2 = \sigma \cdot \theta_1$, $\theta_3 = \sigma^2 \cdot \theta_1$ and $\theta_4 = x_4 - y_4$. 
	Then one may verify using Macaulay2 that $\theta_1,\theta_2,\theta_3,\theta_4$ is a l.s.o.p. for $S_P$. Also, 
	$[\C \theta_1 + \cdots + \C \theta_4] = [\tM_\C] = 1 + \chi_{\reg}$ in $R(G)$.	
\end{example}

Below we show that Example~\ref{e:p=5example} is another example that 
the converse to the implication 
\eqref{i:mainnondegenerate} $\Rightarrow$
\eqref{i:mainhilbert} is false in Theorem~\ref{t:mainfull}. In particular, it
provides another counterexample to  \cite[Conjecture~12.1]{StapledonEquivariant}. %Conjecture~\ref{c:orig}.  

\begin{example}\label{e:p=5examplev2}
	We continue with Example~\ref{e:p=5example}. 
	Recall that $G = \Z/5\Z = \langle \sigma \rangle$ and  $P$ is a $4$-dimensional reflexive polytope with 
	10 vertices in two orbits $\{ \bar{e}_i : 1 \le i \le 5  \}$ and $\{ \bar{f}_i : 1 \le i \le 5  \}$, and that $P$ admits a non-regular $G$-invariant lattice triangulation.  	
	Recall that $P$ has a face 
	   $Q = \Conv{\bar{e}_1,\bar{e}_3,\bar{f}_1,\bar{f}_2} \cong [0,1]^2$. Then 
	    Lemma~\ref{l:applysquare} implies that 
	   there does not exist $f \in \C[M]$ that is $G$-invariant and nondegenerate with respect to $P$. 
	   %In particular, by Theorem~\ref{t:mainfull}, 
	   %there does not exist a $G$-invariant regular lattice triangulation of $P$.
    Let $x_i$ and $y_i$ denote the degree $1$ variables in $S_P$ corresponding to the lattice points $\bar{e}_i$ and $\bar{f}_i$ respectively for $1 \le i \le 5$.
    Let $\theta_i = \sigma^{i - 1} \cdot (x_1 + y_2)$ for $1 \le i \le 5$. Then    
    one may verify using Macaulay2 that $\theta_1,\ldots,\theta_5$
    %$\{ g \cdot \theta : g \in G \}$ 
    is a l.s.o.p. for $S_P$.  
     Also, 
    $[\C \theta_1 + \cdots + \C \theta_5] = [\tM_\C] = \chi_{\reg}$ in $R(G)$, where 
    $\chi_{\reg}$ is the regular representation of $G$. 
%     Also, the $\C$-vector space spanned by $\{ g \cdot \theta : g \in G \}$ is 
%     isomorphic as a $\C G$-module to the regular representation $\chi_{\reg}$, which is isomorphic to $\tM_\C$. 

	%	We have checked on Macaulay2 the existence of a l.s.o.p. in main theorem. Take $\theta = x^{(e_1,1)} + x^{(f_2,1)}$ and its $G$-orbit as a simple example. 
	%	As mentioned above, might be true that there exists a twisted by a character $G$-invariant nondegenerate, that would explain this!! Can this be checked in Macaulay2? Need to rewrite without using complex numbe
\end{example}

We now consider the original counterexample to  \cite[Conjecture~12.1]{StapledonEquivariant} %Conjecture~\ref{c:orig} 
given by Santos and Stapledon and appearing in \cite[Theorem~1.2]{EKS22}. We show that it provides a counterexample to
the implication 
\eqref{i:mainnondegenerate} $\Rightarrow$
\eqref{i:mainhilbert} in Theorem~\ref{t:mainfull}, where $P$ does not admit an $G$-invariant lattice triangulation.

\begin{example}\label{e:SantosStapledon}
	Recall that a signed permutation in $n$ variables is a permutation $\pi$ of $\{ \pm 1, \ldots, \pm n \}$ such that $\pi(-i) = -\pi(i)$ for $1 \le i \le n$. 
	Let $B_3$ be the group of signed permuations in $3$ variables. 
	Let $G = \{ \id , \sigma, \tau, \sigma \tau \} \cong \Z/2\Z \times \Z/2\Z$ 
	be the subgroup of $B_3$ with elements $\sigma = (12)(-3)$, $\tau = (1 -2)(-3)$ and $\sigma \tau = \tau \sigma = (-1)(-2)$.  Here we use cycle notation so, for example, $\tau$ has orbits $\{1,-2\}$,$\{-1,2\}$ and $\{3,-3\}$.  
	Consider the affine lattice $M' = \{ (u_1,u_2,u_3) \in \Z^3 : u_1 + u_2 + u_3 = 1 \mod 2 \}$ in $\R^3$ with the corresponding action of $B_3$, and let $P = [-1,1]^3$. Then $P$ is $B_3$-invariant and is a `unit cube' with respect to the lattice $M'$. Geometrically, $\sigma$, $\tau$ and $\sigma \tau$ correspond to 
	 $180$ degree rotations through the lines $\{ x = y, z = 0 \}$,  $\{ x = -y, z = 0 \}$ and $\{ x = y = 0 \}$ respectively. See \cite[Figure~12]{EKS22} for a beautiful illustration. 
%	 Let $\rho|_H: H \to \Aff(M')$ denote the corresponding affine representation of $H$. 
	 There are two $G$-orbits of lattice points in $P$:
	 $\{(1,1,1),(1,1,-1),(-1,-1,1),(-1,-1,-1) \}$ and $\{(1,-1,1),(1,-1,-1),(-1,1,1),(-1,1,-1)\}$. Also, $h^*(P,\rho;t) = 1 + \chi_{\reg}t + t^2$, where $\chi_{\reg}$ is the regular representation of $G$ (see the proof of \cite[Theorem~1.2]{EKS22}).
	 
	 Then \cite[Theorem~1.2]{EKS22} implies that 	   there does not exist a  $G$-invariant polynomial that is nondegenerate with respect to $P$. This also follows from Lemma~\ref{l:applysquare} applied to the face $Q$ of $P$ with vertices $(1,1,1),(1,1,-1),(1,-1,1),(1,-1,-1)$. In particular, by Theorem~\ref{t:mainfull}, $P$ does not admit a $G$-invariant regular lattice triangulation. Since all lattice triangulations of the unit cube are regular
	 \cite[Theorem~6.3.10]{DRSTriangulations10}, we deduce that $P$ does not admit a $G$-invariant lattice triangulation.

	  On the other hand, 
	 let $x_{i,j,k}$ denote the degree $1$ variable in $S_P$ corresponding to the vertex $(i,j,k)$ of $P$.
	 Fix a nonzero $\lambda \in \C$ with $\lambda^4 \neq 1$. 
	  Let $$\theta_1 = x_{-1,-1,1} - \lambda x_{1,-1,-1}, 
	 \theta_2 = \sigma \cdot \theta_1 = x_{-1,-1,-1} - \lambda x_{-1,1,1},$$$$
	 \theta_3 = \tau \cdot \theta_1 = x_{1,1,-1} - \lambda x_{1,-1,1},
	\theta_4 = (\sigma \tau) \cdot \theta_1 = x_{1,1,1} - \lambda x_{-1,1,-1}.
	 $$ 
	 If we let $a := x_{-1,-1,1}, b := x_{-1,-1,-1}, c := x_{1,1,-1}, d := x_{1,1,1}$, then one may compute that $S_P/(\theta_1,\theta_2,\theta_3,\theta_4)$ is isomorphic to the polynomial ring in the variables $a,b,c,d$, subject to the relations
	 $a^2 = b^2 = c^2 = d^2 = ad = cb = 0$ and $ab = cd = \lambda^2 ac = \lambda^2 bd$.   In particular, $\theta_1,\theta_2,\theta_3,\theta_4$ is a l.s.o.p. for $S_P$. Also,
	 $[\C \theta_1 + \cdots + \C \theta_4] = [\tM_\C'] = \chi_{\reg}$ in $R(G)$.
% Note that we  have a $\C$-basis $1,a,b,c,d,ab$ and hence $h^*(P,\rho;t) = 1 + \chi_{\reg}t + t^2$, where $\chi_{\reg}$ is the regular representation (see proof of \cite[Theorem~1.2]{EKS22}).

Finally, although $P$ does not admit a $G$-invariant lattice triangulation, one may verify that $P$ admits an $H$-invariant lattice triangulation for every cyclic subgroup $H$ of $G$. In fact, this counterexample to \cite[Conjecture~12.1]{StapledonEquivariant} was originally discovered looking for a polytope with this property. The idea being that Theorem~\ref{t:mainfull} guarantees that $h^*(P,\rho;t)$ is a polynomial, but does not guarantee that $h^*(P,\rho;t)$ is effective and hence $P$ is a candidate for a counterexample to the effectiveness conjecture (Conjecture~\ref{c:origmod}). In this particular case $h^*(P,\rho;t)$ turns out to be effective, and the effectiveness conjecture remains open.

%	  $$a = \lambda x_{1,-1,-1}, 
%	 b = \lambda x_{-1,1,1},$$$$
%	 c = \lambda x_{1,-1,1},
%	 d = \lambda x_{-1,1,-1}.
%	 $$ 
%	 
%	 
%	 relations: $$x_{-1,-1,-1}x_{1,1,1} = x_{-1,-1,1}x_{1,1,-1} = 
%	 x_{-1,1,1}x_{1,-1,-1} = x_{1,-1,1}x_{-1,1,-1}.$$ 
%	 becomes
%$$b d = a c = 
%\lambda^{-2} a b = \lambda^{-2}c d.$$ 
%	 
%	 relation: $x_{1,1,1}x_{-1,-1,1} = x_{1,-1,1}x_{-1,1,1}$ 
%	 becomes
%	 $$
%	 d a = \lambda^{-2} c b 
%	 $$
%	 
%	 relation: $x_{1,1,-1}x_{-1,-1,-1} = x_{1,-1,-1}x_{-1,1,-1}$ 
%	 becomes
%	 $$
%     cb = \lambda^{-2}ad
%	 $$
%	 
%	 relation: $x_{1,1,1}x_{1,-1,-1} = x_{1,1,-1}x_{1,-1,1}$ becomes
%	 $$
%	 \lambda^{-1} d a = \lambda^{-1} c^2
%	 $$
%	 
%	 	 relation: $x_{-1,1,1}x_{-1,-1,-1} = x_{-1,1,-1}x_{-1,-1,1}$ becomes
%	 $$
%\lambda^{-1}b^2 = \lambda^{-1}da
%	 $$
%	 
%	 relation: $x_{1,1,1}x_{-1,1,-1} = x_{1,1,-1}x_{-1,1,1}$ becomes
%	 $$
%	 \lambda^{-1} d^2 = \lambda^{-1} c b
%	 $$
%	 	 relation: $x_{1,-1,1}x_{-1,-1,-1} = x_{1,-1,-1}x_{-1,-1,1}$ becomes
%	 $$
%\lambda^{-1}cb = \lambda^{-1}a^2
%	 $$
	 
\end{example}

%
%\alan{something missing and commented out below is Hodge interpretation; might only add this in the introduction}

%\subsection{A formula for the equivariant $h^*$-polynomial given an invariant lattice triangulation}\label{ss:formula}
%
%\alan{probably combine with previous subsection?}

%
%\alan{filler; need a reference for SR rings/cohomology of toric varieties} 

%\alan{filler/rewrite below}

Our final goal in this section is to give a formula for $h^*(P,\rho;t)$ when $P$ admits a $G$-invariant lattice triangulation.
We first recall some basic properties of Stanley-Reisner rings. We refer the reader to \cite{MillerSturmfelsCombinatorial} for more details.
Let $\Sigma$ be a rational simplicial fan with respect to a lattice $L$. Assume that $\Sigma$ has convex support $|\Sigma|$, and all cones in $\Sigma$ are pointed i.e. do not contain a nonzero linear subspace. Recall from Definition~\ref{d:toricfacering} that one may consider the toric face ring $\C[\Sigma]$ associated to $\Sigma$.
%, which we denote $S(\Sigma)$.
%$\C[\Sigma]$.  
%For each ray $r$ of $\Sigma$, the associated \emph{primitive integer vector} is the generator of the semigroup $r \cap L$. 
Let 
%$r_1,\ldots,r_t$ be the rays of $\Sigma$, with corresponding primitive integer vectors 
$u_1,\ldots,u_t$ %in $L$. 
be the primitive integer vectors of the rays of $\Sigma$.
The \emph{Stanley-Reisner ring} $\SR(\Sigma)$ is the subring of $\C[\Sigma]$ 
%the toric face ring of $\Sigma$ 
generated by $\{ x^{u_i} : 1 \le i \le t \}$.  We consider $ x^{u_i}$ to have degree $1$ for $1 \le i \le t$,  so that $\SR(\Sigma)$ has the structure of a graded $\C$-algebra. Reisner's criterion \cite[Theorem~5.53]{MillerSturmfelsCombinatorial} implies that $\SR(\Sigma)$ is Cohen-Macaulay.
%\alan{notation for toric face ring?}
Let $F = \sum_{i = 1}^t x^{u_i} \in \SR(\Sigma)$ and consider the elements $\partial_v F = \sum_{i = 1}^t \langle u_i, v \rangle x^{u_i}$ for all $v \in L_\C^* := \Hom_\C(L_\C,\C)$. Then any $\C$-basis of $\{ \partial_v F : v \in L_\C^* \}$ is a l.s.o.p. for $\SR(\Sigma)$, and we may consider the graded finite-dimensional $\C$-algebra
$\overline{\SR}(\Sigma) := \SR(\Sigma)/( \partial_v F : v \in L_\C^* )$. 
Consider the subfan $\Star_\Sigma(C)$ consisting of all faces of all maximal cones of $\Sigma$ that contain $C$. Let $\langle C \rangle$ be the linear span of $C$ in $L_\R$ and consider the projection $p: L_\R \to L_\R/\langle C \rangle$. We also consider the rational fan $\lk_\Sigma(C) = \{ p(C') : C \subset C' \in \Sigma \}$ with respect to the lattice $p(L)$. Then $p$ induces a map of fans from $\Star_\Sigma(C)$ to $\lk_\Sigma(C)$ and a corresponding surjection from $\SR(\Star_\Sigma(C))$ to $\SR(\lk_\Sigma(C))$ that induces an isomorphism of the corresponding quotients $\overline{\SR}(\Star_\Sigma(C)) \cong \overline{\SR}(\lk_\Sigma(C))$. 
%\begin{equation}\label{e:lkstar} 
%	\overline{\SR}(\Star_\Sigma(C)) \cong \overline{\SR}(\lk_\Sigma(C)).
%\end{equation}
Moreover, if $G$ acts linearly on $L$ preserving the fan $\Sigma$, then $\overline{\SR}(\Sigma)$ is naturally a graded $\C G$-module and the isomorphism $\overline{\SR}(\Star_\Sigma(C)) \cong \overline{\SR}(\lk_\Sigma(C))$ is  an isomorphism of $\C G$-modules. Geometrically, if $X_{\lk_\Sigma(C)}$ is the complex toric variety corresponding to the fan $\lk_\Sigma(C)$, then there is a natural isomorphism of $\C$-graded algebras between $\overline{\SR}(\lk_\Sigma(C))$ and the cohomology ring $H^*(X_{\lk_\Sigma(C)}; \C)$, where the degree of $H^{2m}(X_{\lk_\Sigma(C)}; \C)$ is considered to be $m$. 

Let $C$ be a cone in $\Sigma$ with primitive integer vectors $u_{i_1},\ldots,u_{i_s}$. Define
\[
\BBox^\circ(C) := \{ u \in C \cap \tM : u = \sum_{j = 1}^s \lambda_j u_{i_j} \textrm{ for some } 0 < \lambda_j < 1 \}.
\]
Here $\BBox^\circ(C) = \{ 0 \}$ when $C = \{ 0 \}$. 
Then every element of $|\Sigma| \cap L$ can be uniquely written as a sum of an element in $\BBox^\circ(C)$ and a nonnegative integer combination of the primitive integer vectors of the rays of $\Star_\Sigma(C)$ for some $C$ in $\Sigma$. Hence, we have an isomorphism of $\C$-vector spaces:
%the toric face ring of $\Sigma$ decomposes as a direct sum of $\C$-vector spaces
\[
\C[\Sigma] \cong \bigoplus_{C \in \Sigma} \bigoplus_{u \in \BBox^\circ(C)} x^u \SR(\Star_\Sigma(C)),
\]
inducing an isomorphism of $\C$-vector spaces:
\begin{equation}\label{e:decomposetoricfacering}
\C[\Sigma]/(\partial_v F : v \in L_\C^*) \cong \bigoplus_{C \in \Sigma} \bigoplus_{u \in \BBox^\circ(C)} x^u \overline{\SR}(\lk_\Sigma(C)). 
\end{equation}
As above, if $G$ acts linearly on $L$ preserving the fan $\Sigma$, then the isomorphism in \eqref{e:decomposetoricfacering} is an isomorphism of $\C G$-modules.

Let $\cS$ be a $G$-invariant lattice triangulation of $P$. Recall that 
the fan over the faces of $\cS$ is the fan $\Sigma_\cS = \{ C_F : F \in \cS \}$ that refines $C_P$. Let $\BBox(\cS) := \bigcup_{F \in \cS} \BBox^\circ(C_F)$. 
Then $G$ acts permuting the elements of $\BBox(\cS)$. Let $\BBox(\cS)/G$ denote the set of $G$-orbits of $\BBox(\cS)$. 
For $u \in \BBox^\circ(C_F)$, write $C_u := C_F$ i.e. $C_u$ is the smallest cone in $\Sigma_\cS$ containing $u$, and let $G_u$ be the stablizer of $C_u$ in $G$. Also, let $[u] \in \BBox(\cS)/G$ denote the $G$-orbit of $u$. Recall that $\pr: \tM \to \Z$ denotes projection onto the last coordinate. 
Given a subgroup $H$ of $G$ and an element $r \in R(H)$, there is well-defined element $\Ind_H^G r \in R(G)$ such that if $r = [W]$ for a  $\C H$-module $W$, then
$\Ind_H^G r = [\Ind_H^G W]$. If $r(t) = \sum_{m \ge 0} r_m t^m \in R(H)[[t]]$ then we write $\Ind_H^G r(t) := \sum_{m \ge 0} (\Ind_H^G r_m) t^m \in R(G)[[t]]$.
% and 
%if $H$ is a subgroup of $G$ and $r(t) = \sum_{m \ge 0} r_m t^m \in R(H)[[t]]$, 
%then $\Ind_H^G r(t) := \sum_{m \ge 0} (\Ind_H^G r_m) t^m \in R(G)[[t]]$. 
We now prove  the following formula for $h^*(P,\rho;t)$ that generalizes 
%both 
%the formula %for the formulas %equivariant $h^*$-polynomial of a simplex in 
known formulas when $P$ is a simplex
\cite[Proposition~6.1]{StapledonEquivariant} or when $\Sigma_\cS$ is a smooth fan i.e. $\cS$ is unimodular
\cite[Proposition~8.1]{StapledonEquivariant}. The case when $G$ is trivial is due to 
Betke and McMullen \cite[Theorem~1]{BMLatticePoints}. 
Geometrically, the proposition below states that $h^*(P,\rho;t)$  is the equivariant Hilbert polynomial of the orbifold cohomology ring of the toric variety $X_{\Sigma_\cS}$ associated to $\Sigma_{\cS}$ (see, for example, \cite[(1.1)]{KaruEhrhart}).

\begin{proposition}\label{p:formulatriangulation}
				Let $G$ be a finite group.
	Let $M$ be a lattice of rank $d$ and let $\rho: G \to  \Aff(M)$ be an affine representation.  
	Let $P \subset M_\R$ be a $G$-invariant $d$-dimensional lattice polytope. 
%	Let $\tM = M \oplus \Z$ and let $C_P$ be the cone over $P \times \{ 1\}$ in $\tM_\R$ with corresponding graded semigroup algebra $S_P = \C[C_P \cap \tM]$.
%	%With the notation above, 
	Suppose there exists a $G$-invariant lattice triangulation $\cS$  of $P$. 
	 Then, with the notation above, 
	\[
	h^*(P,\rho;t) = \sum_{[u] \in \BBox(\cS)/G} t^{\pr(u)} \Ind_{G_u}^G \Hilb_{G_u}(\overline{\SR}(\lk_{\Sigma_\cS}(C_{u}));t).
	\]
\end{proposition}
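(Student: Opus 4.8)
The strategy is to combine Lemma~\ref{l:invarianttriangulation}, which identifies $h^*(P,\rho;t)$ with the equivariant Hilbert series $\Hilb_G(S_\cS/J_{f,\cS};t)$ for $f = \sum_{a \in A} x^a$ (with $A$ the vertex set of $\cS$), with the $\C G$-module decomposition \eqref{e:decomposetoricfacering} of a toric face ring modulo its Jacobian-type ideal, applied to the fan $L = \tM$, $\Sigma = \Sigma_\cS$. The main point is to match up the ideals: the ideal $J_{f,\cS}$ is generated by $\{\partial_v F : v \in \tN_\C\}$ where $F = \sum_{a \in A} x^{(a,1)}$, whereas the ideal appearing in \eqref{e:decomposetoricfacering} is generated by $\{\partial_v F' : v \in L_\C^*\}$ where $F' = \sum_i x^{u_i}$ runs over the primitive ray generators $u_i$ of $\Sigma_\cS$. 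Since $\cS$ is a triangulation of the lattice polytope $P$, the rays of $\Sigma_\cS$ are exactly the $C_F$ for $F$ a vertex of $\cS$, and the primitive integer vector of the ray through a vertex $a$ of $\cS$ is precisely $(a,1) \in \tM$ (because $a \in M$). Hence $F = F'$, the linear forms $\partial_v F$ in $S_\cS \subset \C[\Sigma_\cS]$ are the same, and $J_{f,\cS} = (\partial_v F : v \in \tN_\C)$ is literally the ideal $(\partial_v F : v \in L_\C^*)$ of \eqref{e:decomposetoricfacering}. So $S_\cS/J_{f,\cS} \cong \C[\Sigma_\cS]/(\partial_v F : v \in \tN_\C)$ as graded $\C G$-modules.

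\textbf{Key steps.} First I would set $L = \tM$, $\Sigma = \Sigma_\cS$ and verify the hypotheses of the Stanley--Reisner discussion preceding the proposition: $\Sigma_\cS$ is rational simplicial (each cone $C_F$ is simplicial because $F$ is a simplex), has convex support $C_P$, and all cones are pointed (they lie in the pointed cone $C_P$). Second, I would record the identification $J_{f,\cS} = (\partial_v F : v \in \tN_\C)$ from the paragraph above, so that by Lemma~\ref{l:invarianttriangulation},
\[
h^*(P,\rho;t) = \Hilb_G\bigl(\C[\Sigma_\cS]/(\partial_v F : v \in \tN_\C);t\bigr).
\]
Third, I would apply the $\C G$-module isomorphism \eqref{e:decomposetoricfacering}, giving
\[
\C[\Sigma_\cS]/(\partial_v F : v \in \tN_\C) \cong \bigoplus_{C \in \Sigma_\cS} \bigoplus_{u \in \BBox^\circ(C)} x^u\, \overline{\SR}(\lk_{\Sigma_\cS}(C))
\]
as graded $\C G$-modules, where the $G$-action permutes the cones $C$ and the box elements $u$. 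Taking equivariant Hilbert series, the grading shift by $x^u$ contributes $t^{\pr(u)}$ (using that $\deg x^u = \pr(u)$). Fourth, I would organize the direct sum by $G$-orbits of $\BBox(\cS) = \bigcup_{F} \BBox^\circ(C_F)$: for a fixed orbit $[u]$, the subspace $\bigoplus_{u' \in G\cdot u} x^{u'}\,\overline{\SR}(\lk_{\Sigma_\cS}(C_{u'}))$ is, as a $\C G$-module, the induced representation $\Ind_{G_u}^G\bigl(x^u\,\overline{\SR}(\lk_{\Sigma_\cS}(C_u))\bigr)$, where $G_u = \Stab_G(C_u)$ acts on $\overline{\SR}(\lk_{\Sigma_\cS}(C_u))$ via the identification $\overline{\SR}(\Star_{\Sigma_\cS}(C_u)) \cong \overline{\SR}(\lk_{\Sigma_\cS}(C_u))$ of $\C G_u$-modules noted in the excerpt. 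Since $\pr(u') = \pr(u)$ for all $u'$ in the orbit (projection onto the last coordinate is $G$-equivariant for affine transformations), the grading shift is constant on the orbit, and passing to Hilbert series yields the stated sum over $[u] \in \BBox(\cS)/G$.

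\textbf{Main obstacle.} The routine-but-delicate part is the orbit bookkeeping in step four: one must check that $G_u$ indeed stabilizes $C_u$ as a set (so the $G_u$-action on $\lk_{\Sigma_\cS}(C_u)$ and hence on $\overline{\SR}(\lk_{\Sigma_\cS}(C_u))$ is well defined), that the induction formula for the equivariant Hilbert series of a $G$-set-indexed direct sum of isomorphic $\C G_u$-modules with constant grading shift is exactly $t^{\pr(u)}\Ind_{G_u}^G\Hilb_{G_u}(-;t)$, and that the ``box plus Star'' unique decomposition used in \eqref{e:decomposetoricfacering} is compatible with the $G$-action — i.e.\ $g \cdot \BBox^\circ(C_F) = \BBox^\circ(g\cdot C_F)$, which follows since $g$ is linear and sends primitive ray generators to primitive ray generators. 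None of these is deep, but they are where care is needed; everything else is a direct application of Lemma~\ref{l:invarianttriangulation}, Theorem~\ref{t:faceringisCM}, and the displayed isomorphism \eqref{e:decomposetoricfacering}.
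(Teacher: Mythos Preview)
Your proposal is correct and follows essentially the same approach as the paper: apply Lemma~\ref{l:invarianttriangulation} to write $h^*(P,\rho;t)$ as the equivariant Hilbert series of $S_\cS/J_{f,\cS}$, invoke the $\C G$-module decomposition \eqref{e:decomposetoricfacering}, and then group the summands into $G$-orbits to recognize induced representations. Your write-up is in fact more explicit than the paper's about why the primitive ray generators of $\Sigma_\cS$ coincide with $(a,1)$ for vertices $a$ of $\cS$, so that the ideal in \eqref{e:decomposetoricfacering} is literally $J_{f,\cS}$, and about the $G$-equivariance of $\pr$; these details are implicit in the paper but worth spelling out.
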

\begin{proof}
	Recall that $S_\cS$ denotes the toric face ring $\C[\Sigma_\cS]$. By Lemma~\ref{l:invarianttriangulation}, if $A$ is the set of vertices of $\cS$ and $f = \sum_{a \in A} x^a \in \C[M]$, then $h^*(P,\rho;t) = \Hilb_G(S_{\cS}/J_{f,\cS};t)$.
%	
%	
%	 is $G$-invariant and nondegenerate with respect to $\cS$
%	%, any ordered $\C$-basis of 
%	%$(J_{f,\cS})_1$ is a l.s.o.p. for $S_\cS$ 
%	and $h^*(P,\rho;t) = \Hilb_G(S_{\cS}/J_{f,\cS};t)$. Then 
	Applying \eqref{e:decomposetoricfacering} gives an isomorphism of graded $\C G$-modules:
	\begin{equation}\label{e:SoverJdecomp}
		S_\cS/J_{f,\cS} \to \bigoplus_{F \in \cS} \bigoplus_{u \in \BBox^\circ(C_F)} x^u \overline{\SR}(\lk_{\Sigma_\cS}(C_F))). 
	\end{equation}
%	\[
%	\psi: S_\cS/J_{f,\cS} \to \bigoplus_{F \in \cS} \bigoplus_{u \in \BBox^\circ(C_F)} x^u \overline{\SR}(\lk_{\Sigma_\cS}(C_F))). 
%	\]
	On the right hand side, for any $u \in \BBox^\circ(C_F)$, $\bigoplus_{u' \in [u]} x^{u'} \overline{\SR}(\lk_{\Sigma_\cS}(C_{u'})) 
	$ is isomorphic as a graded $\C G$-module to the induced representation $\Ind_{G_u}^G x^u \overline{\SR}(\lk_{\Sigma_\cS}(C_u))$. The result follows by taking equivariant Hilbert series of both sides of \eqref{e:SoverJdecomp}.

%	Let $\BBox(\cS) := \prod_{F \in \cS} \BBox^\circ(C_F)$. 
%	Then $G$ acts permuting the elements of $\BBox(\cS)$. Let $\BBox(\cS)/G$ denote the set of $G$-orbits of $\BBox(\cS)$. 
%	For $u \in \BBox^\circ(C_F)$, write $C_u = C_F$ i.e. $C_u$ is the smallest cone in $\Sigma_\cS$ containing $u$, and let $G_u$ be the stablizer of $C_u$ in $G$. If $[u] \in \BBox(\cS)/G$ denotes the $G$-orbit of $u$, then 
%	we may consider the $\C G$-module
%	$\bigoplus_{u' \in [u]} x^{u'} \overline{\SR}(\lk_{\Sigma_\cS}(C_{u'}))
%	$, which is isomorphic as a graded $\C G$-module to the induced representation $\Ind_{G_u}^G x^u \overline{\SR}(\lk_{\Sigma_\cS}(C_u))$. 
%	By Lemma~\ref{l:triangulationhasnondeg},  $h^*(P,\rho;t) = \Hilb_G(S_\cS/J_{f,\cS};t)$. % is the equivariant Hilbert series of $S_\cS/J_{f,\cS}$.
%	Putting this all together, we deduce the following formula of $h^*(P,\rho;t)$ that generalizes the formula for the equivariant $h^*$-polynomial of a simplex \cite[Proposition~6.1]{StapledonEquivariant}. 
\end{proof}

\begin{remark}
	A corollary of the proof of Proposition~\ref{p:formulatriangulation} is that, with the notation of Proposition~\ref{p:formulatriangulation},  if  there exists a $G$-invariant lattice triangulation $\cS$  of $P$, then
	\[
	\Ehr(P,\rho;t) = \sum_{[u] \in \BBox(\cS)/G} t^{\pr(u)} \Ind_{G_u}^G \Hilb_{G_u}(\SR(\Star_\Sigma(C_u));t).
	\]
%	$\SR(\Star_\Sigma(C_u))$ to $\SR(\lk_\Sigma(C_u))$
	In the case when $\cS$ is a unimodular triangulation, the right hand side is 
	$\Hilb_{G}(\SR(\Sigma_\cS);t)$ (cf. the proof of \cite[Proposition~8.1]{StapledonEquivariant} and \cite[Theorem~5.2]{DDEquivariantEhrhart}). 
\end{remark}

\bibliography{Equivariant20230202}

\providecommand{\bysame}{\leavevmode\hbox to3em{\hrulefill}\thinspace}
\providecommand{\MR}{\relax\ifhmode\unskip\space\fi MR }
% \MRhref is called by the amsart/book/proc definition of \MR.
\providecommand{\MRhref}[2]{%
  \href{http://www.ams.org/mathscinet-getitem?mr=#1}{#2}
}
\providecommand{\href}[2]{#2}
\begin{thebibliography}{ASVM21}

\bibitem[AM69]{AtiyahMacdonald}
M.~F. Atiyah and I.~G. Macdonald, \emph{Introduction to commutative algebra},
  Addison-Wesley Publishing Co., Reading, Mass.-London-Don Mills, Ont., 1969.

\bibitem[AN99]{ACampoGeneralizedHVectors}
A.~A'Campo-Neuen, \emph{On generalized {$h$}-vectors of rational polytopes with
  a symmetry of prime order}, Discrete Comput. Geom. \textbf{22} (1999), no.~2,
  259--268.

\bibitem[ASVM20]{ASV20}
Federico Ardila, Mariel Supina, and Andr\'{e}s~R. Vindas-Mel\'{e}ndez,
  \emph{The equivariant {E}hrhart theory of the permutahedron}, Proc. Amer.
  Math. Soc. \textbf{148} (2020), no.~12, 5091--5107.

\bibitem[ASVM21]{ASV21}
Federico Ardila, Anna Schindler, and Andr\'{e}s~R. Vindas-Mel\'{e}ndez,
  \emph{The equivariant volumes of the permutahedron}, Discrete Comput. Geom.
  \textbf{65} (2021), no.~3, 618--635.

\bibitem[Bat93]{BatyrevVariations}
Victor~V. Batyrev, \emph{Variations of the mixed {H}odge structure of affine
  hypersurfaces in algebraic tori}, Duke Math. J. \textbf{69} (1993), no.~2,
  349--409.

\bibitem[BBR07]{BBRCohomology07}
Morten Brun, Winfried Bruns, and Tim R\"{o}mer, \emph{Cohomology of partially
  ordered sets and local cohomology of section rings}, Adv. Math. \textbf{208}
  (2007), no.~1, 210--235.

\bibitem[BD24]{BDSupersolvable}
Christin Bibby and Emanuele Delucchi, \emph{Supersolvable posets and fiber-type
  abelian arrangements}, Selecta Math. (N.S.) \textbf{30} (2024), no.~5, Paper
  No. 89, 39.

\bibitem[Ber09]{BergerGeometry}
Marcel Berger, \emph{Geometry {I}}, Universitext, Springer-Verlag, Berlin,
  2009, Translated from the 1977 French original by M. Cole and S. Levy, Fourth
  printing of the 1987 English translation.

\bibitem[BHW07]{BHWRotesEhrhart}
Christian Bey, Martin Henk, and J\"{o}rg~M. Wills, \emph{Notes on the roots of
  {E}hrhart polynomials}, Discrete Comput. Geom. \textbf{38} (2007), no.~1,
  81--98.

\bibitem[BJM13]{BJMLatticePointGenerating}
Matthias Beck, Pallavi Jayawant, and Tyrrell~B. McAllister, \emph{Lattice-point
  generating functions for free sums of convex sets}, J. Combin. Theory Ser. A
  \textbf{120} (2013), no.~6, 1246--1262.

\bibitem[BKN23]{BKNThinPolytopes}
Christopher Borger, Andreas Kretschmer, and Benjamin Nill, \emph{Thin
  polytopes: Lattice polytopes with vanishing local {$h^*$}-polynomial}, Int.
  Math. Res. Not. IMRN (2023).

\bibitem[BM85]{BMLatticePoints}
U.~Betke and P.~McMullen, \emph{Lattice points in lattice polytopes}, Monatsh.
  Math. \textbf{99} (1985), no.~4, 253--265.

\bibitem[BM03]{BorisovMavlyutov03}
Lev~A. Borisov and Anvar~R. Mavlyutov, \emph{String cohomology of
  {C}alabi-{Y}au hypersurfaces via mirror symmetry}, Adv. Math. \textbf{180}
  (2003), no.~1, 355--390.

\bibitem[BR15]{BeckRobinsComputing}
Matthias Beck and Sinai Robins, \emph{Computing the continuous discretely},
  second ed., Undergraduate Texts in Mathematics, Springer, New York, 2015,
  Integer-point enumeration in polyhedra, With illustrations by David Austin.

\bibitem[CHK23]{CHK23}
Oliver Clarke, Akihiro Higashitani, and Max K\"{o}lbl, \emph{The equivariant
  {E}hrhart theory of polytopes with order-two symmetries}, Proc. Amer. Math.
  Soc. \textbf{151} (2023), no.~9, 4027--4041.

\bibitem[DD21]{DDStanleyReisner}
Alessio D'Al\`{i} and Emanuele Delucchi, \emph{Stanley-{R}eisner rings for
  symmetric simplicial complexes, {$G$}-semimatroids and {A}belian
  arrangements}, J. Comb. Algebra \textbf{5} (2021), no.~3, 185--236.

\bibitem[DD23]{DDEquivariantEhrhart}
Alessio D'al\`{i} and Emanuele Delucchi, \emph{Equivariant {H}ilbert and
  {E}hrhart series under translative group actions}, preprint
  arXiv:2312.14088v2, 2023.

\bibitem[DLRS10]{DRSTriangulations10}
J.~De~Loera, J.~Rambau, and F.~Santos, \emph{Triangulations}, Algorithms and
  Computation in Mathematics, vol.~25, Springer-Verlag, Berlin, 2010,
  Structures for algorithms and applications.

\bibitem[DR18]{DRSemimatroids}
Emanuele Delucchi and Sonja Riedel, \emph{Group actions on semimatroids}, Adv.
  in Appl. Math. \textbf{95} (2018), 199--270.

\bibitem[Dwo62]{DworkZetaI}
Bernard Dwork, \emph{On the zeta function of a hypersurface}, Inst. Hautes
  \'{E}tudes Sci. Publ. Math. (1962), no.~12, 5--68.

\bibitem[Dwo64]{DworkZetaII}
\bysame, \emph{On the zeta function of a hypersurface. {II}}, Ann. of Math. (2)
  \textbf{80} (1964), 227--299.

\bibitem[Ehr77]{EhrhartPolynomes}
E.~Ehrhart, \emph{Polyn\^{o}mes arithm\'{e}tiques et m\'{e}thode des
  poly\`edres en combinatoire}, International Series of Numerical Mathematics,
  vol. Vol. 35, Birkh\"{a}user Verlag, Basel-Stuttgart, 1977.

\bibitem[Eis95]{EisenbudCommutativeAlgebra}
David Eisenbud, \emph{Commutative algebra}, Graduate Texts in Mathematics, vol.
  150, Springer-Verlag, New York, 1995, With a view toward algebraic geometry.

\bibitem[EKS23]{EKS22}
Sophia Elia, Donghyun Kim, and Mariel Supina, \emph{Techniques in equivariant
  {E}hrhart theory}, Ann. Comb. (2023).

\bibitem[GKZ94]{GKZ94}
I.~M. Gelfand, M.~M. Kapranov, and A.~V. Zelevinsky, \emph{Discriminants,
  resultants, and multidimensional determinants}, Mathematics: Theory \&
  Applications, Birkh\"{a}user Boston, Inc., Boston, MA, 1994.

\bibitem[Hoc72]{HochsterRingsInvariants}
Melvin Hochster, \emph{Rings of invariants of tori, {C}ohen-{M}acaulay rings
  generated by monomials, and polytopes}, Ann. of Math. (2) \textbf{96} (1972),
  318--337.

\bibitem[HT09]{HTLowerBounds}
Martin Henk and Makoto Tagami, \emph{Lower bounds on the coefficients of
  {E}hrhart polynomials}, European J. Combin. \textbf{30} (2009), no.~1,
  70--83.

\bibitem[IR07]{IRToric07}
Bogdan Ichim and Tim R\"{o}mer, \emph{On toric face rings}, J. Pure Appl.
  Algebra \textbf{210} (2007), no.~1, 249--266.

\bibitem[Kar08]{KaruEhrhart}
Kalle Karu, \emph{Ehrhart analogue of the {$h$}-vector}, Integer points in
  polyhedra---geometry, number theory, representation theory, algebra,
  optimization, statistics, Contemp. Math., vol. 452, Amer. Math. Soc.,
  Providence, RI, 2008, pp.~139--146.

\bibitem[MS05]{MillerSturmfelsCombinatorial}
Ezra Miller and Bernd Sturmfels, \emph{Combinatorial commutative algebra},
  Graduate Texts in Mathematics, vol. 227, Springer-Verlag, New York, 2005.

\bibitem[Rei02]{Reiner02}
Victor Reiner, \emph{Equivariant fiber polytopes}, Doc. Math. \textbf{7}
  (2002), 113--132.

\bibitem[Ser77]{SerreLinearRepresentations}
Jean-Pierre Serre, \emph{Linear representations of finite groups}, french ed.,
  Graduate Texts in Mathematics, vol. Vol. 42, Springer-Verlag, New
  York-Heidelberg, 1977.

\bibitem[Sta76]{StanleyMagic}
Richard~P. Stanley, \emph{Magic labelings of graphs, symmetric magic squares,
  systems of parameters, and {C}ohen-{M}acaulay rings}, Duke Math. J.
  \textbf{43} (1976), no.~3, 511--531.

\bibitem[Sta80]{StanleyDecompositions}
\bysame, \emph{Decompositions of rational convex polytopes}, Ann. Discrete
  Math. \textbf{6} (1980), 333--342.

\bibitem[Sta84]{StanleyIntroductionCombinatorialCommutative}
\bysame, \emph{An introduction to combinatorial commutative algebra},
  Enumeration and design ({W}aterloo, {O}nt., 1982), Academic Press, Toronto,
  ON, 1984, pp.~3--18.

\bibitem[Sta87]{StanleyGeneralized87}
\bysame, \emph{Generalized {$H$}-vectors, intersection cohomology of toric
  varieties, and related results}, Commutative algebra and combinatorics
  ({K}yoto, 1985), Adv. Stud. Pure Math., vol.~11, North-Holland, Amsterdam,
  1987, pp.~187--213.

\bibitem[Sta93]{StanleyMonotonicity}
\bysame, \emph{A monotonicity property of {$h$}-vectors and {$h^*$}-vectors},
  European J. Combin. \textbf{14} (1993), no.~3, 251--258.

\bibitem[Sta08]{StapledonWeighted}
Alan Stapledon, \emph{Weighted {E}hrhart theory and orbifold cohomology}, Adv.
  Math. \textbf{219} (2008), no.~1, 63--88.

\bibitem[Sta11a]{StapledonEquivariant}
\bysame, \emph{Equivariant {E}hrhart theory}, Adv. Math. \textbf{226} (2011),
  no.~4, 3622--3654.

\bibitem[Sta11b]{StapledonRepresentations11}
\bysame, \emph{Representations on the cohomology of hypersurfaces and mirror
  symmetry}, Adv. Math. \textbf{226} (2011), no.~6, 5268--5297.

\bibitem[Sta12]{StanleyEnumerative}
Richard~P. Stanley, \emph{Enumerative combinatorics. {V}olume 1}, second ed.,
  Cambridge Studies in Advanced Mathematics, vol.~49, Cambridge University
  Press, Cambridge, 2012.

\bibitem[Ste94]{StembridgeSomePermutation}
John~R. Stembridge, \emph{Some permutation representations of {W}eyl groups
  associated with the cohomology of toric varieties}, Adv. Math. \textbf{106}
  (1994), no.~2, 244--301.

\bibitem[Stu94]{SturmfelsNewtonPolytope}
Bernd Sturmfels, \emph{On the {N}ewton polytope of the resultant}, J. Algebraic
  Combin. \textbf{3} (1994), no.~2, 207--236.

\bibitem[Var76]{Varchenko76}
A.~Varchenko, \emph{Zeta-function of monodromy and {N}ewton's diagram}, Invent.
  Math. \textbf{37} (1976), no.~3, 253--262.

\bibitem[Vos65]{VosOnTwoDimensional}
V.~E. Voskresenski\u{\i}, \emph{On two-dimensional algebraic tori}, Izv. Akad.
  Nauk SSSR Ser. Mat. \textbf{29} (1965), 239--244.

\bibitem[Zie95]{ZieglerLectures}
G\"{u}nter~M. Ziegler, \emph{Lectures on polytopes}, Graduate Texts in
  Mathematics, vol. 152, Springer-Verlag, New York, 1995.

\end{thebibliography}
\bibliographystyle{amsalpha}

\end{document}